\documentclass[oneside,reqno]{amsart}

\usepackage{macros}
\usepackage[left=2.5cm, right=2.5cm, top=2cm, bottom=4cm]{geometry}
%\papersize={15cm,21cm}, margin=2.5cm]{geometry}
% 
%\usepackage[left=2.5cm, right=4cm, top=2cm, bottom=4cm]{geometry} ORIGINAL

\usepackage[round]{natbib}

\usepackage[colorlinks=true, pdfstartview=FitV, linkcolor=blue,citecolor=blue, urlcolor=blue]{hyperref}

\usepackage{seqsplit}
\usepackage{xstring}

\usepackage{xcolor}

\definecolor{labelkey}{rgb}{0,0,1}
\definecolor{Red}{rgb}{0.7,0,0.1}
\definecolor{Green}{rgb}{0,0.7,0}
\definecolor{jgreen}{rgb}{0,0.7,0}

\numberwithin{equation}{section}

\usepackage{amsfonts, amssymb, amsmath, amsthm, mathrsfs, bbm, cjhebrew, gensymb, textcomp, mathtools, dsfont, calligra}

\usepackage[normalem]{ulem}
%normalem gets rid of underlining of emph

%\usepackage{lineno}
%\linenumbers

\usepackage{commath, setspace, subcaption, parcolumns, multirow, multicol, accents, comment, marginnote, verbatim, empheq, enumerate, stackrel, enumitem, float, physics}
\usepackage[capitalize,nameinlink,noabbrev]{cleveref}

\usepackage{graphicx, graphics, epsfig, psfrag, tikz, tikz-cd,svg}

\usepackage{todonotes}
\usepackage{multicol}

\usepackage{tikz}

%%%%%%%%%%%%%%%%%%%%%%%%%%%%%%%MACROS%%%%%%%%%%%%%%%%%%%%%%%%%%%%%%%

\newtheorem{Thm}{Theorem}[section]
\newtheorem{Lem}[Thm]{Lemma}
\newtheorem{Prop}[Thm]{Proposition}
\newtheorem{Cor}[Thm]{Corollary}

\newtheorem{Def}[Thm]{Definition}
\newtheorem{Rmk}[Thm]{Remark}

\newtheorem*{Thm*}{Theorem}

%%%%%%%%%%%%VM MACROS%%%%%%%%%%%%

\DeclareMathOperator{\esssup}{ess\ sup}

\newcommand{\T}{\mathbb{T}}

\newcommand{\bbD}{\mathbb{D}}

\newcommand{\OO}{\mathcal{O}}
\newcommand{\PP}{\mathcal{P}}

\newcommand{\Pe}{\mathrm{Pe}}

\newcommand{\goesto}{\rightarrow}

\DeclareFontFamily{U}{matha}{\hyphenchar\font45}
\DeclareFontShape{U}{matha}{m}{n}{
      <5> <6> <7> <8> <9> <10> gen * matha
      <10.95> matha10 <12> <14.4> <17.28> <20.74> <24.88> matha12
      }{}
\DeclareSymbolFont{matha}{U}{matha}{m}{n}
\DeclareFontSubstitution{U}{matha}{m}{n}

% Math symbol font mathb
\DeclareFontFamily{U}{mathx}{\hyphenchar\font45}
\DeclareFontShape{U}{mathx}{m}{n}{
      <5> <6> <7> <8> <9> <10>
      <10.95> <12> <14.4> <17.28> <20.74> <24.88>
      mathx10
      }{}
\DeclareSymbolFont{mathx}{U}{mathx}{m}{n}
\DeclareFontSubstitution{U}{mathx}{m}{n}

% Symbol definition
\DeclareMathDelimiter{\vvvert}{0}{matha}{"7E}{mathx}{"17}

\newcommand{\al}{\alpha}
\newcommand{\be}{\beta}
\newcommand{\de}{\delta}
\newcommand{\De}{\Delta}
\newcommand{\eps}{\epsilon}

\newcommand{\gam}{\gamma}
\newcommand{\Gam}{\Gamma}
\newcommand{\kap}{\kappa}
\newcommand{\lam}{\lambda}

\newcommand{\ph}{\varphi}

\newcommand{\si}{\sigma}
\newcommand{\sig}{\varsigma}

\newcommand{\ze}{\zeta}

\newcommand{\tp}{\tilde{p}}

\newcommand{\mbl}{\mathbf{l}}
\newcommand{\mbf}{\mathbf{f}}

\newcommand{\mbg}{\mathbf{g}}

\newcommand{\bg}{\mathbf{g}}
\newcommand{\bh}{\mathbf{h}}
\newcommand{\mbe}{\mathbf{e}}

\newcommand{\bu}{\mathbf{u}}
\newcommand{\bv}{\mathbf{v}}
\newcommand{\bw}{\mathbf{w}}

\newcommand{\mbphi}{\boldsymbol{\phi}}

\newcommand{\Rone}{R}
\newcommand{\bdy}{\partial}
\newcommand{\lb}{(}
\newcommand{\rb}{)}

\newcommand{\til}[1]{{\tilde{#1}}}
\newcommand{\no}[2]{\|#1\|_{#2}}
%\newcommand{\mbf}[1]{{\mathbf{#1}}}

% Jochen's edits 
\newcommand{\jadd}[1]%
{\color{jgreen}}
\newcommand{\jdel}[1]%
{\color{red}}
 \title[Reconstructing wide spectrum forcing in transport-diffusion and NSE]{Reconstruction of wide spectrum forcing in transport-diffusion and Navier--Stokes equations}
 \author{Jochen Br\"{o}cker$^1$, Giulia Carigi$^2$, Tobias Kuna$^3$, Vincent R. Martinez$^4$}

\begin{document}

\begin{abstract}
This article considers the problem of reconstructing unknown driving forces based on incomplete knowledge of the system and its state. This is studied in both a linear and nonlinear setting that is paradigmatic in geophysical fluid dynamics and various applications. Two algorithms are proposed to address this problem: one that iteratively reconstructs forcing and another that provides a continuous-time reconstruction. 
%These are subsequently proven to successfully reconstruct a large class of unknown driving forces. 
Convergence is shown to be guaranteed provided that observational resolution is sufficiently high and algorithmic parameters are properly tuned according to the prior information; these conditions are quantified precisely. The class of reconstructable forces identified here include those which are time-dependent and potentially inject energy at all length scales.
This significantly expands upon the class of forces in previous studies, which could only accommodate those with band-limited spectra. The second algorithm moreover provides a conceptually streamlined approach that allows for a more straightforward analysis and simplified practical implementation.
\end{abstract}

\maketitle
\date{\today}

\vspace{1em}
\noindent\textbf{Keywords}: inferring unknown external force, parameter estimation, model error, system identification, data assimilation, feedback control, nudging, synchronization, transport-diffusion, convergence analysis, sensitivity analysis \\
\textbf{MSC 2010 Classifications}: 35Q30, 35B30, 93B30, 35R30, 76B75
\setcounter{tocdepth}{2}

\tableofcontents
\section{Introduction}\label{sec:intro}
Data assimilation is a term used in the geophysical community and refers to the problem of reconstructing the underlying (and time dependent) state of a dynamical system, potentially stochastic, from measurements. 
The measurements are typically not in one--to--one relationship with the underlying state and may be corrupted by noise.
Data assimilation plays an important role in forecasting of dynamical processes (in particular of the atmosphere and oceans), but also in the identification of dynamical models. 
Indeed, data assimilation and model identification are strongly linked; the latter can often be considered as a special case of the former by treating unknown model parameters as part of the dynamical states which are then reconstructed by data assimilation.
Vice versa, approaches to model identification in general require estimates of the dynamical states and thus rely on data assimilation as part of their implementation.
In the present paper, we will analyse simple data assimilation schemes that reconstruct unknown components of the dynamics while simultaneously estimating the underlying states.
We focus on applications to infinite dimensional dissipative systems, namely the two--dimensional Navier--Stokes equations and transport--diffusion equations (for instance for atmospheric aerosols or tracer gases). 
In both cases, the unknown components of the dynamics are additive forcings.
In the case of transport--diffusion equations, these forcings can be interpreted as surface fluxes of the transported tracers. 
Throughout the manuscript, we will refer to all such external drivers of the dynamics simply as {\em forcings}.
The analysis of the transport--diffusion equations is technically easier since its dynamics are linear. We nevertheless develop an analytical study of both the transport--diffusion and Navier-Stokes equations in order to clarify how nonlinearity can influence the class of forcings that can be reconstructed from observations.
Tracer gases and aerosols play an important role in the dynamics of the atmosphere.
Aerosols act as condensation nuclei and thus have a major influence on precipitation.
Tracer gases such as ozone, methane, or CO$_2$ impact the radiative transfer and are thus linked to important atmospheric phenomena such as the ozone hole and the energy budget of the planet (``greenhouse effect''), respectively.
Gases as well as aerosols (especially in the lower troposphere) are common pollutants with strong and potentially adverse effects on the environment, human activity, and health.
Ozone in urban areas as a consequence of smog has health implications, as do the aerosols and dust particles comprising the smog itself.
Volcanic ash clouds can impede air travel, and the presence of smog might require to reduce or shut down road traffic in affected areas.
For these reasons, modelling and forecasting the atmospheric transport of tracers is of great scientific and socioeconomic importance.
We discuss two algorithms that both deal with the estimation of the dynamical state and the reconstruction of forcings simultaneously. 
Both algorithms apply in the context of the Navier--Stokes as well as the transport--diffusion equations.
The first algorithm (which we refer to as the {\em Sieve Algorithm}) works iteratively and permits the asymptotic reconstruction of time-dependent forcings. 
Each iteration leverages the data to estimate the unobserved portion of the state via a feedback control equation in order to produce an approximation of the true state of the system. This, in turn, is processed through the original equation to produce an approximation of the unknown forcing. In this way, the equation is iteratively applied as a ``sieve" to filter out state errors that pollute the estimation of the unknown force.
The second approach (which we refer to as the {\em Nudging Algorithm}) leverages the observations to simultaneously drive the state and model errors towards zero, but only on the observational subspace. One of the main advantages of this algorithm over the Sieve Algorithm is the ease of its implementation, as well as its analysis. However, it is only guaranteed to reconstruct \textit{time--independent} forcings.
Finally, we remark that both algorithms are applicable to stationary (i.e.\ time--independent) transport--diffusion problems.
This setting is relevant in situations where both the forcing as well as the transporting velocity field can be regarded as constant in time.

In what follows, we provide a brief description of the Sieve Algorithm and Nudging Algorithm for each equation, then informally state the convergence results that we obtain for them. One novelty of the class of source terms considered in this study is that the small-scale features of the source in space are functionally enslaved to its large-scale features. This class of forces encompasses those which are restricted entirely to large scales, but additionally contains those whose small scales obey a prescribed power law behavior with respect to length scale. The large scale features of a function, $f$, will be modeled by the low-pass filter, $P_Nf$, representing the low-mode Fourier projection of $f$ onto wavenumbers $|k|\leq N$, so that the small-scale features are represented by the complementary projection, $Q_Nf=(I-P_N)f$.
We refer to the class of sources for which $Q_Nf={F}(P_Nf)$, for some function ${F}$ and some $N$, as a class of {\em functions of quasi--finite rank $N$}.
The map ${F}$ is assumed Lipschitz in some appropriate norms.
\subsection{Sieve Algorithm}\label{sect:sieve:intro}
First we will describe the Sieve Algorithm for the transport-diffusion equation for time-dependent velocity field, then a version of the algorithm for time-independent velocity fields and, finally, we will describe the Sieve Algorithm for the two-dimensional (2D) Navier-Stokes equations (NSE).
\subsubsection{Sieve Algorithm for the Transport-Diffusion equation}\label{sect:def:sieve:td}

We first describe the Sieve Algorithm for reconstructing forcings in the transport--diffusion equations.
Our state space is the $d$--dimensional torus ${\bbT^d}=[0,L]^d$, where $d=2$ or $d=3$. 
For convenience, we will set $L=2\pi$.
Of course, the physical dimension can be recovered upon rescaling.

We consider the transport diffusion equation
\begin{align}\label{eq:td:intro}
\partial_t \phi + \bv \cdot \nabla \phi = \kappa \Delta \phi + g, \quad \phi(0,x)=\phi_0(x),
\end{align}
for some fixed, and possibly time-dependent velocity field, $\bv$, which we assume to be known; the function $\phi_0$ represents the initial distribution of the scalar field and the function $g$ represents forcings or surface fluxes; both $\phi_0$ and $g$ are assumed to be {\em unknown}.
We will work in situations (see \cref{sect:notation:td}) where $\eqref{eq:td:intro}$ has a unique solution $\phi(t,x)= \phi(t,x;\phi_0,g)$ and we frequently drop the $x$-argument when no confusion seems possible. 

Our aim is to reconstruct $g$ at future times, and subsequently $\phi$, from (a)~observations on the scalar field, represented by the time-series $\{P_N\phi(t)\}_{t\geq0}$, consisting of Fourier modes of $\phi$ up to wave-number $|k|\leq N$, and (b)~{\em knowledge of the model} \eqref{eq:td:intro} that gives rise to $\phi$.
We initialize the algorithm by a guess $\psi_0^{(0)}$ for the  initial condition $\phi_0$  and a guess $f^{(0)}$ for the forcing $g$. 
Our guess for the initial condition has the form ${\psi^{(0)}_0=P_N\phi_0+Q_N\psi^{(0)}_0}$.
%
%\jadd{}
Here $Q_N\psi^{(0)}_0$ is an arbitrary (square-integrable) function, while the low modes $P_N\psi^{(0)}_0$ are set equal to the observations $P_N\phi_0$ at time $t = 0$.
%\off{}
% 
The guess for the forcing is assumed to be a potentially time-dependent function of quasi--finite rank $N$, that is, 
$f^{(0)}=l^{(0)}+{{F}}(l^{(0)})$, where $l^{(0)} = P_Nf^{(0)}$ can be chosen  arbitrarily as well.
Though the initial guesses can, in principle, be chosen arbitrarily, surely in practice one would choose guesses that seems reasonable in the hope to enhance performance. 
We refer to $f^{(0)}$ as the \textit{stage-$0$ approximation to} or \textit{reconstruction of} $g$. 
At this initial stage, we propose to reconstruct the unobserved scales in $\phi$ over the time interval $I=[0,\infty)$ via
  \begin{align}\label{eq:nudge:0}
  \bdy_t{\psi^{(0)}}+{\bv\cdotp\nabla{\psi^{(0)}}}={\kap\De{\psi^{(0)}}}+f^{(0)}+\mu P_N \phi-\mu P_N{\psi^{(0)}},\quad {\psi^{(0)}}(0,x)={\psi^{(0)}_0}(x),
  \end{align}
  where $\mu>0$, and $\sigma_0$ is the identity map $\sigma_0 \phi=\phi$.
  Denote the solution of \eqref{eq:nudge:0} (which is unique in the setting considered here) by
    \begin{align}\notag%\label{def:ttht}
        {\psi^{(0)}}(t,x)={\psi^{(0)}}(t,x;{\psi^{(0)}_0},f^{(0)},\OO^{(0)}),
    \end{align}
    where $\OO^{(0)}= \{ P_N \sigma_0 \phi(t)\}_{t \geq 0}$ are the available observations 
    %\jadd{}
    (this cumbersome notation will soon make more sense).
    %\off{}.
    %
    Then the stage-$0$ reconstruction over $I$ of the true scalar field, $\phi$, will be defined as 
    \begin{align}\label{eq:theta:stage:1}
        {\phi^{(0)}(t,x)}:=P_N\phi(t,x; g , \phi_0)+Q_N{\psi^{(0)}(t,x;{\psi^{(0)}_0},f^{(0)},\OO^{(0)})},
    \end{align}
    for any $x \in \bbT^d$ and any $t \in I$.
Subsequently, the stage-$1$ approximation $f^{(1)}$ to $g$ is defined as 
\begin{align}\label{def:force:complete:stage:1}
        f^{(1)}:=l^{(1)}+{{F}}(l^{(1)}),\quad \text{over the interval~}I, 
    \end{align}
where $l^{(1)}$ is obtained from 
\begin{align}\label{def:force:large:stage:1}
        l^{(1)}:=\bdy_tP_N\phi^{(0)}+{P_N(\bv\cdotp\nabla\phi^{(0)})-\kap\De P_N\phi^{(0)}}.
    \end{align}
In particular, the equation \eqref{eq:td:intro} is used as a ``sieve" to filter state errors and produce a new approximation to the large scales of the true forcing.
This approximation, $f^{(1)}$, is not expected to be good as long as the synchronisation error stays large, which may be the case initially. 
More precisely, at stage $0$, the synchronisation error refers to the error between state-approximation $\phi^{(0)}$ and the true state $\phi$. 
Hence we will consider $f^{(1)}$ as an approximation to $g$ only after a time $t_1 \geq 0$. 
Note that we construct the stage-$1$ approximation to the forcing $g$ using the information obtained from stage-$0$ exclusively.
We may thus iterate this procedure.

Consider a sequence of positive time increments $t_1, t_2, \dots$ (we set $t_0 = 0$) and let $\tau_s$ denote the translation operator
\begin{align}\label{def:tau}
        (\tau_s\ph)(t):=\ph(s+t),\quad \text{for all}\ s,t\geq0.
    \end{align}
We will make use of the following shorthand notation:          
\begin{align}\label{def:tau:sigma}
        \tau_j:=\tau_{t_j}\quad\text{and}\quad \sigma_j:=\tau_{t_1 + \ldots + t_j}.
    \end{align}
Assume we have have completed the stage-$(j-1)$ approximation, that is we have constructed ${\phi^{(j-1)}}$ as an approximation of $\si_{j-1}\phi$ and  $f^{(j)}$, constructed from $\phi_{j-1}$, as an approximation of  $\si_{j-1}g$. 
Then using the observations $\OO^{(j)} = \{ P_N \sigma_j \phi(t)\}_{t \geq 0}$ in order to construct the stage-$j$ approximation to $\phi$, we consider ${\psi^{(j)}}$ which satisfies over $I$ the equation
\begin{align}\label{eq:ttheta:stage:j}
  \bdy_t{\psi^{(j)}}+{(\si_j\bv)\cdotp\nabla{\psi^{(j)}}-\kap\De\psi^{(j)}}=\tau_{j}f^{(j)}+\mu P_N\si_{j}\phi-\mu P_N{\psi^{(j)}},\quad {\psi^{(j)}}(0,x)=\psi^{(j)}_0(x),
    \end{align}
where    $\psi^{(j)}_0(x) := \psi^{(j-1)}_0(t_j,x; \psi^{(j-1)}_0, f^{(j)}, \OO^{(j)})$.
Denote the solution of \eqref{eq:ttheta:stage:j} by
    \begin{align}\notag%\label{def:ttht:j}
        {\psi^{(j)}}(t,x)={\psi^{(j)}}(t,x;{\psi^{(j)}_0},\tau_jf^{(j)}, \OO^{(j)}).\notag
    \end{align}
Then the stage-$j$ approximation of $\phi$ is given by
    \begin{align}\label{eq:theta:stage:j}
        {\phi^{(j)}(t, x) }=P_N\si_{j}\phi(t, x; \phi_0, g) +Q_N{\psi^{(j)}(t,x; \psi^{(j)}_0, \tau_jf^{(j)}, \OO^{(j)})},
    \end{align}
    for $x \in \bbT^d$.
    The stage-$(j+1)$ large-scale approximation $l^{(j+1)}$ to $P_Nf$ will be defined by
    \begin{align}\label{def:force:large:stage:j}
        l^{(j+1)}:=\bdy_tP_N{\phi^{(j)}}+P_N {((\sigma_j\bv) \cdotp \nabla{\phi^{(j)}})-\kap\De P_N\phi^{(j)}}.
    \end{align}
Hence, the stage-$(j+1)$ approximation to $g$ will be given by
    \begin{align}\label{def:force:complete:stage:j}
        f^{(j+1)}=l^{(j+1)}+{{F}}(l^{(j+1)}).
    \end{align}
As before, we expect $f^{(j+1)}$ to be an improved approximation to $\si_{j}g$ only after the synchronisation error has decreased, that is after some time $t_{j+1}$.

Hence, the algorithm produces a sequence of forcings, $f^{(1)},f^{(2)},\dots$ and $\phi^{(1)}, \phi^{(2)}, \ldots $ such that $f^{(j)}$ approximates $\si_{j-1}g$ and $\phi^{(j)}$ approximate $\si_{j} \phi$, where we only expect 
$\tau_jf^{(j)}$ to be a good approximation of $\sigma_jg$ and $\tau_{j} \phi^{(j-1)}$ for $\si_{j} \phi$ over the interval $[0,\infty)$, provided that $t_j,\dots, t_1$ are sufficiently large, sufficiently many scales $N$ are observed, and $\mu$ is suitably tuned. 
In other words, $f^{(j)}$ is considered  as a good approximation for $g$ and $\phi^{(j-1)}$ for $\phi$ for times $t \in [t_1 + \ldots + t_j, \infty)$ if we shift $f^{(j)}$ and $\phi^{(j-1)}$ appropriately, that is $\tau_{-t_1 - \ldots - t_{j-1}}f^{(j)}$ and  $\tau_{-t_1 - \ldots - t_{j-1}}\phi^{(j-1)}$.
In general though, the algorithm may be defined for any sequence of times $t_j\geq0$, for $j\geq1$, and $t_0=0$. 

Ultimately, we are able to prove the following (see \cref{thm:converge:sieve:td:detailed} for details):
\begin{Thm}\label{thm:main:sieve:td:intro}
  Let $d = 2$ or~$3$.
Assume that $f$ is of quasi-finite rank $N_0$.
Provided that $N \geq N_0$ is large enough and that $\mu$ is chosen accordingly, depending on $\kappa, N$ and the size of $\bv$, there exists a relaxation time $t_* > 0$ such that the sequence of quasi-finite forces $\{f^{(j)}\}_{j\geq0}$ constructed above satisfies
\begin{align}\label{eq:converge:sieve:td}
        \sup_{t\geq 0}\Abs{\tau_{j+1}f^{(j+1)}(t)-\si_{j+1} g(t)}{}\to 0,\quad\sup_{t\geq0}|\tau_{j+1}\psi^{(j)}(t)-\si_{j+1}\phi(t)|\goesto0, \quad \text{as $j \to \infty$},
    \end{align}
for $t_{j} := j t_*$ for all $j \in \N$.
\end{Thm}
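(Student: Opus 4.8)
The plan is to track two coupled errors through the iteration: the state (synchronization) error $e^{(j)} := \psi^{(j)} - \sigma_j\phi$ on the low-mode–complemented equation, and the forcing error $d^{(j)} := \tau_j f^{(j)} - \sigma_j g$. Because $P_N e^{(j)}(t)$ satisfies a damped ODE with the nudging term $-\mu P_N e^{(j)}$, one expects $P_N e^{(j)}$ to decay at rate $\mu$, while the high modes $Q_N e^{(j)}$ satisfy a transport–diffusion equation forced only by $Q_N(\tau_j f^{(j)} - \sigma_j g) = Q_N d^{(j)}$ together with transport coupling from $P_N e^{(j)}$. The first step is therefore a standard nudging energy estimate (as in Azouani–Olson–Titi–type arguments): show that for $N$ large and $\mu$ tuned to $\kappa$, $N$, and $\|\bv\|$, the map from initial state error and forcing error to the state error over one window $[0,t_*]$ is a contraction up to a term controlled by $\sup_t |d^{(j)}(t)|$; quantitatively, $\sup_{t\ge t_*}|e^{(j)}(t)| \lesssim \theta \sup_t|e^{(j)}(0)| + C\mu^{-1}\sup_t|d^{(j)}(t)|$ for some $\theta<1$, using Poincaré/Gronwall and the spectral gap at wavenumber $N$. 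One must also handle that $\psi^{(j)}_0$ is inherited as the time-$t_j$ value of the previous stage, so the window structure $t_j = jt_*$ is exactly what lets the decay compound.

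The second step is to convert the state error bound into a forcing error bound. By definition \eqref{def:force:large:stage:j}, $l^{(j+1)} = \partial_t P_N\phi^{(j)} + P_N((\sigma_j\bv)\cdot\nabla\phi^{(j)}) - \kappa\Delta P_N\phi^{(j)}$; subtracting the exact identity $P_N\sigma_j g = \partial_t P_N\sigma_j\phi + P_N((\sigma_j\bv)\cdot\nabla\sigma_j\phi) - \kappa\Delta P_N\sigma_j\phi$ (which holds because $\phi$ solves \eqref{eq:td:intro}), and using $P_N\phi^{(j)} = P_N\sigma_j\phi$ exactly (the low modes are the observations, so the $\partial_t$ and $\Delta$ contributions cancel identically), one is left with $l^{(j+1)} - P_N\sigma_j g = P_N((\sigma_j\bv)\cdot\nabla Q_N e^{(j)})$, i.e. the large-scale forcing error is a bounded operator applied to the high-mode state error. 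Then the quasi-finite-rank structure and Lipschitz continuity of $F$ give $\tau_{j+1}f^{(j+1)} - \sigma_{j+1}g = (l^{(j+1)} - P_N\sigma_{j}g) + (F(l^{(j+1)}) - F(P_N\sigma_j g))$ after accounting for the shift $\tau$ vs $\sigma$ bookkeeping, so $\sup_t|d^{(j+1)}(t)| \le C_F\, \|P_N((\sigma_j\bv)\cdot\nabla Q_N e^{(j)})\|$, which is controlled by $\sup_{t}|Q_N e^{(j)}(t)|$ times a constant depending on $N$ and $\|\bv\|$ (the $P_N$ projection tames the derivative loss).

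The third step closes the loop: combining the two estimates yields a system of the schematic form $a_{j+1} \le \theta\, a_j + c_1 b_j$ and $b_{j+1} \le c_2\, a_{j+1}$, where $a_j := \sup_{t\ge t_{j}}|e^{(j)}(t)|$-type quantities evaluated on the shifted windows and $b_j := \sup_t|d^{(j)}(t)|$. Substituting gives $a_{j+1} \le (\theta + c_1 c_2) a_j$, so provided $N$ is large enough and $\mu$ tuned so that $\theta + c_1 c_2 < 1$, both $a_j$ and $b_j$ decay geometrically, which is exactly \eqref{eq:converge:sieve:td}.

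The main obstacle I anticipate is the second step's derivative loss: $l^{(j+1)} - P_N\sigma_j g$ involves $\nabla Q_N e^{(j)}$, one derivative more than the $L^2$-type quantity the nudging estimate naturally controls. Resolving this cleanly requires either (i) using that $P_N$ in front absorbs the derivative at the cost of a factor $N$, so the constant $c_1$ grows with $N$ — which then must be beaten by making $\theta$ and/or $c_2$ small, forcing a careful joint tuning of $\mu$ and $N$ rather than choosing them independently; or (ii) upgrading the nudging estimate to control $\sup_t\|Q_N e^{(j)}\|_{H^1}$ via a higher-order energy estimate, which is where the dimension ($d=2$ or $3$) and the regularity assumptions on $\bv$ from \cref{sect:notation:td} will be used. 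Getting the order of quantifiers right — choose $N$, then $\mu = \mu(N,\kappa,\|\bv\|)$, then $t_* = t_*(N,\mu,\ldots)$ — is the delicate part, and the honest statement of "$\mu$ chosen accordingly" and "$t_*$ large enough" in the theorem is precisely absorbing this.
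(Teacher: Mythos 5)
Your proposal follows essentially the same approach as the paper: the same decomposition into synchronization error $z^{(j)} = \psi^{(j)} - \sigma_j\phi$ and model error $\tau_j f^{(j)} - \sigma_j g$, the same key identity $l^{(j+1)} - P_N\sigma_j g = P_N\bigl((\sigma_j\bv)\cdot\nabla Q_N z^{(j)}\bigr)$, the same nudging/Gr\"onwall energy estimate with spectral gap at $N$, and the same iteration producing a contraction factor $\theta + c_1 c_2 < 1$ provided $\mu$ and $N$ are jointly tuned. Your anticipated obstacle (the derivative loss in passing from $z^{(j)}$ to $l^{(j+1)}$) is exactly the real one, and you correctly name the two resolutions the paper uses: option~(ii) is the paper's strong version (\cref{thm:converge:sieve:td:detailed:strong}). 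For the weak version you are somewhat vague: the paper resolves the loss not by paying a raw factor of $N$, but by measuring the forcing error in $V^*$ and using skew-symmetry $b(\bv,Q_N z,P_N\varphi) = -b(\bv,P_N\varphi,Q_N z)$ to shift the gradient onto the test function, then Bernstein to pick up only $N^\epsilon$ with $\epsilon\in[0,d/2]$ (see \eqref{est:ejplus1:Hminus1:duality}); this duality step is what makes the $L^2$ nudging estimate and the $V^*$ model-error estimate close against each other under the condition \eqref{cond:mu:kap:N:convergence}. Apart from that clarification and the low/high split of $z^{(j)}$ (which the paper handles in one go via Poincar\'e), your outline matches the paper's proof.
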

%
%\jadd{}
Here, $\Abs{\cdotp}$~stands for a generic norm.
We state a precise version of this theorem in \cref{thm:converge:sieve:td:detailed} and \cref{thm:converge:sieve:td:detailed:strong}, where, in particular, $\Abs{\cdotp}$ will denote different Sobolev norms. 
%\off{}
% 
\subsubsection{Sieve Algorithm for Stationary Transport Equation}\label{sect:stationary:td}
In applications where the velocity field $\bv$ as well as the forcing $g$ are time independent, the solution $\phi$ of the transport equation~\eqref{eq:td:intro} will asymptotically be independent of time as well, satisfying the equation
\begin{equation}%\label{eq:td:stationary}
\bv \cdot \nabla \phi = \kappa \Delta \phi + g.\notag
\end{equation}
The following version of the Sieve Algorithm for reconstructing both $\phi$ and the forcing $g$ from a {\em single} observation of the form $P_N\phi$ is therefore of interest.
Let $\phi^{(j-1)}$ be the stage-$(j-1)$ approximation to $\phi$.
These quantities are time independent.
The stage-$j$ approximation $l^{(j)}$ to the large scales $P_Nf$ of the forcing is defined through
\begin{equation}%\label{def:force:large:stage:j_t}
        P_N (\bv \cdotp \nabla \phi^{(j-1)})
        - \kap \De P_N\phi^{(j-1)} =: l^{(j)}.\notag
    \end{equation}
Hence, the stage-$j$ approximation to $g$ will be given by
\begin{align}%\label{def:force:complete:stage:j_t}
        f^{(j)}=l^{(j)}+{{F}}(l^{(j)}).\notag
    \end{align}
The stage-$j$ approximation to $\phi$ is basically constructed as in the time--dependent case, although no time shifts are necessary, and we only retain the final state of the synchronisation step.
More specifically, 
\begin{align}%\label{eq:theta:stage:j1_t}
        {\phi^{(j)}} = P_N\phi+Q_N{\psi^{(j)}(t_j, \cdot)},\notag
    \end{align}
where ${\psi^{(j)}}$ satisfies over ${I=(0,\infty)}$
    \begin{align}%\label{eq:ttheta:stage:j1_t}
        \bdy_t{\psi^{(j)}}+{\bv\cdotp\nabla{\psi^{(j)}}-\kap\De\psi^{(j)}}=f^{(j)}+\mu P_N\phi-\mu P_N{\psi^{(j)}},\quad {\psi^{(j)}}(0,\cdot)=\phi^{(j-1)}.\notag
    \end{align}
    For this algorithm, the statement of \cref{thm:main:sieve:td:intro} holds verbatim (except that the supremum in Eq.~\eqref{eq:converge:sieve:td} is superfluous).
    The proof follows the same lines as that of \cref{thm:main:sieve:td:intro} and will be omitted for the sake of brevity.% 
\subsubsection{Sieve Algorithm for the 2D NSE}\label{sect:def:sieve:nse}
We will consider incompressible Navier-Stokes equations as given by
\begin{align}\notag
       \bdy_t\bu+\bu\cdotp\nabla\bu=-\nabla p+{\nu}\De\bu+\mbg,\quad \nabla\cdotp\bu=0,
    \end{align}
over the periodic box $\bbT^2=[0,2 \pi]^2$, where $\bu,\mbg$ are assumed to be mean-free, that is $\int_{\bbT^2} \bu = \int_{\bbT^2} \mbg = 0$.
Let ${\PP}$ denote the Leray-Helmholtz projection onto divergence-free vector fields. We note that $\PP$ commutes with $\De$ in the setting of periodic boundary conditions. We ultimately consider the projected system 
    \begin{align}\label{eq:nse:intro}
        \bdy_t\bu+B(\bu, \bv)={\nu}\De\bu+{\PP}\mbg ,
    \end{align}
    with the bilinear form $B(\bu, \bv) = \PP (\bu \cdot \nabla \bv)$.
    The Sieve Algorithm for 2D NSE will proceed in a sequences of stages similar to the Sieve Algorithm for the transport-diffusion equation and will be applied to reconstruct the \textit{non-potential} component, $\PP\mbg$, of $\mbg$. Since the potential component of $\mbg$ can be absorbed into $-\nabla p$, we will assume for the sake of convenience that 
    \begin{align}%\label{eq:g}
        \mbg=\PP\mbg.\notag
    \end{align}
    We will work in situations where \eqref{eq:nse:intro} has unique solution $\bu(t)=\bu(t;\bu_0,\mbg)$ corresponding to initial condition $\bu(0)=\bu_0$ over $t\in I=[0,\infty)$, and we define the algorithm as follows. 
      As before, at stage $j=0$, we initialize the algorithm with a guess, ${\mbl^{(0)}}={P_N}{\mbl^{(0)}}$, for the large-scale features of the force $g$ and subsequently get a guess for the full force $g$
      \begin{equation}%\label{eq:G0}
        {\mbf^{(0)}}:={\mbl^{(0)}}+{{F}}({\mbl^{(0)}}).\notag
    \end{equation}
    Furthermore, as a guess $\bv_0^{(0)}$ for the initial condition, we set $\bv_0^{(0)}=P_N\bu_0+Q_N\bv_0^{(0)}$, where $Q_N\bv_0^{(0)}$ can be chosen arbitrarily.
    Observations are given as $\OO^{(0)}=\{P_N \bu(t)\}_{t\geq0}$.
    With $\bv_0^{(0)}, \mbf^{(0)}$ given, we solve
\begin{align}\label{eq:bv0}
       \bdy_t\bv^{(0)}+B(\bv^{(0)},\bv^{(0)})
       =\nu \De\bv^{(0)}+{\mbf^{(0)}}+\mu {P_N}\bu-\mu {P_N}\bv^{(0)},\quad \bv^{(0)}(0)=\bv_0^{(0)},
   \end{align}
over $t\in I$, where $\mu>0$.
We denote the unique solution of \eqref{eq:bv0} by $\bv^{(0)}(t)=\bv^{(0)}(t;\bv_0^{(0)},\mbf^{(0)},\OO^{(0)})$ (we dropped the $x$ dependence to simplify notation).
%
%\jadd{}
The high modes of $\bv^{(0)}$ are now combined with the observations to form our stage-$0$ approximation of the true velocity:
%\off{}
%
\begin{align}%\label{def:u1}
  {\bu^{(0)}(t)
    :={P_N}\bu(t;\bu_0,\mbg)+{Q_N}\bv^{(0)}(t;\bv_0^{(0)},\mbf^{(0)},\OO^{(0)})}. \notag
\end{align}
We obtain an approximation of the true large-scale features of the forcing, ${P_N}\mbl$, via
    \begin{align}\label{eq:g1}
        \mbl^{(1)}:=\bdy_t{P_N}\bu^{(0)}-\nu {P_N}\De \bu^{(0)}+{P_NB}(\bu^{(0)},\bu^{(0)}).
    \end{align}
    Finally, we conclude the stage-$0$ with proposing the following approximation of $\mbg$, the true forcing:
    \begin{align}\label{eq:G1}
        {\mbf^{(1)}}:=\mbl^{(1)}+{{F}}(\mbl^{(1)}).
    \end{align}
Although $\mbf^{(1)}$ is defined on $I$, we will only consider it as an improvement to $\mbf^{(0)}$ for times $t\geq t_1$, where $t_1>0$ is taken sufficiently large. 
This will conclude stage-$0$.
At stage-$j$ for $j>0$, we are given $\bv^{(j-1)}(t; \bv_0^{(j-1)},\mbf^{(j-1)},\OO^{(j-1)})$ for $t\ge 0$ as well as $f^{(j)}$.
Taken $t_j >0$ sufficiently large, and with observations $\OO^{(j)}=\{P_N\si_j\bu(t)\}_{t\geq0}$, we consider the solution $\bv^{(j)}=\bv^{(j)}(t;\bv_0^{(j)},\mbf^{(j)},\OO^{(j)})$ of the equation
\begin{align}\label{eq:vj}
        \bdy_t\bv^{(j)}+B(\bv^{(j)},\bv^{(j)})=\nu \De\bv^{(j)}+{\tau_j\mbf^{(j)}}+\mu{P_N}\si_j\bu-\mu {P_N}\bv^{(j)},\quad \bv^{(j)}(0)=\bv^{(j)}_0,
    \end{align}
for $t\in I$, initialized by $\bv_0^{(j)} := \bv^{(j-1)}(t_j; \bv_0^{(j-1)},\mbf^{(j-1)},\OO^{(j-1)})$, where $\si_j$ and $\tau_j$ are defined as in Equations~(\ref{def:tau},\ref{def:tau:sigma}).
We then propose a reconstruction of the true velocity by
    \begin{align}%\label{eq:uj}
      {\bu^{(j)}(t)
        :={P_N}\si_j\bu(t;\bu_0,\mbg)
        + {Q_N}\bv^{(j)}(t;\bv_0^{(j)},\mbf^{(j)},\OO^{(j)})},
      \quad \bu^{(j)}(0)
      = \bu_0^{(j)}={P_N}\si_j\bu+{Q_N}\bv_0^{(j)},\notag
    \end{align}
    for $t\in I$.
    Thus, an approximate reconstruction of the true large-scale features of the force will be given by
    \begin{align}\label{eq:gjplus1}
        {\mbl^{(j+1)}}:=\bdy_t{P_N}\bu^{(j)}-\nu {P_N}\De\bu^{(j)}+{P_N}B(\bu^{(j)},\bu^{(j)}).
    \end{align}
We then define the approximate reconstruction of ${\mbg}$ by
    \begin{align}\label{eq:Gjplus1}
        {\mbf^{(j+1)}}:={\mbl^{(j+1)}}+{{F}}({\mbl^{(j+1)}}).
    \end{align}
This force can only be expected to be an {improvement over $\mbf^{(j)}$} for times $t_{j+1}>0$ sufficiently large. This will conclude stage $j>0$.
Proceeding in this fashion, we generate a sequence of forces
   $
        {\mbf^{(0)}}, {\tau_1\mbf^{(1)}}, {\tau_2\mbf^{(j)}}\dots
    $
    and a sequence of velocity fields
    $
    \tau_1\bu^{(0)}, \tau_2 \bu^{(1)}, \tau_3 \bu^{(2)} \dots .
    $
The remarks made at the end of the construction of the Sieve Algorithm for the transport-diffusion equation in \cref{sect:def:sieve:td} also apply for the case of 2D~NSE.
Our main result for this algorithm is then the following (see \cref{thm:main:sieve:a} for its precise version):
\begin{Thm}\label{thm:main:sieve:nse:intro}
  Assume that ${\mbg}$ is of quasi--finite rank $N_0$ and that $d = 2$.
  Provided that $N \geq N_0$ is large enough and that $\mu$ is chosen accordingly, depending on $\kap, N$, and the size of $\mbg$, there exists a relaxation time $t_*>0$ such that the sequence of quasi-finite rank $N$ forces $\{{\mbf^{(j)}}\}_{j\geq0}$ and approximate states $\{\bu^{(j)}\}_{j\geq0}$ constructed above satisfies
    \begin{align}\notag
        \sup_{t\geq0}\Abs{\tau_{j+1}\mbf^{(j+1)}(t)-\si_{j+1}\mbg(t)}\goesto0,\quad \sup_{t\geq0}|\tau_{j+1}\bu^{(j)}(t)-\si_{j+1}\bu(t)|\goesto0,\quad \text{as $j\goesto\infty$},
    \end{align}
for $t_j=jt_*$, where $j\in\N$. 
\end{Thm}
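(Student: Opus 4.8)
The plan is to track two coupled error quantities through each stage: the \emph{synchronization error} $\bw^{(j)} := \bv^{(j)} - \si_j\bu$ between the nudged field and the true (shifted) solution, and the \emph{forcing error} $\tau_j\mbf^{(j)} - \si_j\mbg$. The central observation is that $\phi^{(j)}$ (here $\bu^{(j)}$) agrees with $\si_j\bu$ exactly on the low modes $P_N$ by construction, so the only source of error in the sieve step \eqref{eq:gjplus1} is $Q_N\bw^{(j)}$ together with the nonlinear terms it feeds into $B$. Concretely, subtracting the true (shifted) NSE from \eqref{eq:vj} gives
\begin{align}\notag
  \bdy_t\bw^{(j)} - \nu\De\bw^{(j)} + \mu P_N\bw^{(j)} = -\big(B(\bv^{(j)},\bv^{(j)}) - B(\si_j\bu,\si_j\bu)\big) + \big(\tau_j\mbf^{(j)} - \si_j\mbg\big).
\end{align}
The first step is the nudging/synchronization estimate: for $N$ large and $\mu$ tuned (to $\nu$, $N$, and an a priori bound on $\|\mbg\|$ that survives through the stages because the $\mbf^{(j)}$ remain uniformly bounded — this uniform-boundedness must itself be propagated inductively), standard $2$D NSE energy methods (testing against $\bw^{(j)}$, using Ladyzhenskaya/Agmon inequalities and absorbing the bilinear term) yield, over $t\in I=[0,\infty)$, an inequality of Gronwall type showing $\|\bw^{(j)}(t)\|$ decays exponentially down to a level controlled by $\sup_t\|\tau_j\mbf^{(j)}(t)-\si_j\mbg(t)\|$ plus a transient governed by $\|\bw^{(j)}(0)\|$. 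This is where $d=2$ is essential: the global well-posedness and the absorption of $B$ into the dissipative term both rely on it.

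The second step converts the synchronization estimate into a forcing estimate. Applying $P_N$ to the true shifted equation and subtracting \eqref{eq:gjplus1} (shifted by $\tau_j$), one gets
\begin{align}\notag
  \tau_j\mbl^{(j+1)} - \si_j(P_N\mbg) = -\nu P_N\De Q_N\bw^{(j)} + P_N\big(B(\bu^{(j)},\bu^{(j)}) - B(\si_j\bu,\si_j\bu)\big),
\end{align}
wait — more carefully, since $\bu^{(j)}$ and $\si_j\bu$ share the same $P_N$ part, the right side is expressed entirely through $Q_N\bw^{(j)}$ and is bounded (in whatever Sobolev norm $\Abs{\cdotp}$ names) by $C(N)\|Q_N\bw^{(j)}\|$ times constants depending on uniform a priori bounds. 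Then the quasi-finite-rank structure and the Lipschitz property of $F$ upgrade this to a bound on the \emph{full} forcing error $\tau_j\mbf^{(j+1)} - \si_j\mbg = (\tau_j\mbl^{(j+1)} - \si_j P_N\mbg) + (F(\tau_j\mbl^{(j+1)}) - F(\si_j P_N\mbg))$, since $\si_j Q_N\mbg = F(\si_j P_N\mbg)$ by hypothesis on $\mbg$; the Lipschitz constant of $F$ costs only a harmless multiplicative factor. So each stage gives, schematically,
\begin{align}\notag
  e_{j+1} := \sup_{t\geq0}\Abs{\tau_{j+1}\mbf^{(j+1)}(t)-\si_{j+1}\mbg(t)} \;\lesssim\; \sup_{t\geq t_{j+1}}\|Q_N\bw^{(j)}(t)\| \;\lesssim\; e^{-\delta t_*}\,\|\bw^{(j)}(0)\| + C\, e_j,
\end{align}
where the first term uses the exponential decay from Step~1 evaluated at times $\geq t_{j+1} - t_j = t_*$, and $C<1$ once $N$ is large enough (the constant in front of $e_j$ is $\sim C(N)/\mu$ or a negative power of $N$ from the dissipation gap, so it is made $<\tfrac12$, say, by increasing $N$). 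The time-shift bookkeeping $\sigma_{j+1}=\tau_{t_*}\circ\sigma_j$ with $t_j=jt_*$ is what makes the "evaluated at times $\geq t_*$" step legitimate and uniform in $j$.

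The third step is the iteration/closure. From the recursion $e_{j+1}\leq C e_j + r_j$ with $C<1$ and $r_j \lesssim e^{-\delta t_*}\|\bw^{(j)}(0)\|$, one needs the residual $r_j\to0$. Here $\bw^{(j)}(0) = \bv_0^{(j)} - \si_j\bu(0) = \bv^{(j-1)}(t_j) - (\si_j\bu)(0)$, and since $(\si_j\bu)(0) = (\si_{j-1}\bu)(t_j)$ — wait, $\si_j\bu(0)=\bu(t_1+\cdots+t_j)= \si_{j-1}\bu(t_j)$ — this is precisely $Q_N$ of the stage-$(j-1)$ synchronization error at time $t_j=t_*$ (the low modes match because of how $\bv_0^{(j)}$ and $\bu_0^{(j)}$ are initialized), so $\|\bw^{(j)}(0)\| \lesssim \|Q_N\bw^{(j-1)}(t_*)\| \to 0$ by the same decay estimate once the stage-$(j-1)$ field has synchronized; thus $r_j\to0$. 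A standard lemma (if $e_{j+1}\leq Ce_j+r_j$, $C<1$, $r_j\to0$, then $e_j\to0$) closes the forcing convergence, and feeding this back into the Step~1 estimate gives $\sup_t\|\tau_{j+1}\bw^{(j)}(t)\|\to0$, i.e.\ $\sup_t|\tau_{j+1}\bu^{(j)}(t)-\si_{j+1}\bu(t)|=\sup_t|Q_N\tau_{j+1}\bw^{(j)}(t)|\to0$. One still owes the inductive verification that all a priori norms ($\|\mbf^{(j)}\|$, $\|\bu^{(j)}\|$, $\|\bv^{(j)}\|$) stay within a fixed ball independent of $j$ so that the constants $C,\delta,C(N)$ above are genuinely $j$-independent; this is bootstrapped from $e_0 < \infty$ and the contraction.

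\textbf{Main obstacle.}
The hardest part is \emph{not} any single estimate but the simultaneous tuning: one must choose $N$ and $\mu$ (and then $t_*$) so that (i) the nudged $2$D NSE \eqref{eq:vj} is globally well-posed and its synchronization error contracts exponentially \emph{uniformly over all stages}, which forces an a priori bound on the forcing that one only controls \emph{after} knowing the $\mbf^{(j)}$ stay bounded — a potential circularity to be broken by an induction on $j$ — and (ii) the amplification constant $C$ in $e_{j+1}\leq Ce_j + r_j$ is strictly less than $1$; the latter is delicate because differentiating in time in \eqref{eq:gjplus1} can cost derivatives, so controlling $\tau_j\mbl^{(j+1)}-\si_j P_N\mbg$ in the desired (possibly higher) Sobolev norm $\Abs{\cdotp}$ may require the synchronization estimate in a \emph{stronger} norm than the basic energy norm — hence the two versions promised in the detailed theorems. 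Managing this regularity gap, while keeping the bilinear-term bounds in $2$D sharp enough that increasing $N$ actually shrinks $C$, is the technical crux.
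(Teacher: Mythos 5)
Your plan follows the paper's proof of the precise version (\cref{thm:main:sieve:a}) essentially verbatim: after the exact cancellation of the $\partial_t$ and $\Delta$ contributions on the observed modes (your displayed $-\nu P_N\Delta Q_N\bw^{(j)}$ term is in fact identically zero), the model error reduces to differences of bilinear terms in $Q_N\bw^{(j)}$, which are bounded via the Brézis--Gallouet inequality and the Lipschitz enslaving map, and this is combined with an exponential synchronization estimate and an induction propagating uniform a~priori bounds to produce a geometric contraction of $\sup_t\Abs{\tau_j\mbf^{(j)}-\si_j\mbg}$. The norm issue you flag but leave open is exactly where the paper commits: since $\mbe^{(j+1)}$ costs one derivative of $Q_N\bw^{(j)}$, the synchronization estimate must be closed in $\bbV$ (testing against $A\bw^{(j)}$, using \eqref{eq:trilinear:Aid} and Brézis--Gallouet) with the model error in $\bbH$ where $F_N$ is Lipschitz of order $(0,0)$ --- the basic $L^2$ energy estimate you lead with does not suffice here, which is why only the ``strong'' version of the Sieve Algorithm is proved for the 2D NSE.
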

Again, $\Abs{\cdotp}$~stands for a generic norm here while the precise version of this theorem (\cref{thm:converge:sieve:td:detailed}) contains several statements for different Sobolev norms.
\subsection{Nudging Algorithm}\label{sect:nudging:intro}
In contrast to the Sieve Algorithm, the Nudging Algorithm processes observations in a more straightforward way, in the manner of an on-line continuous data assimilation algorithm.
As before, we describe the algorithm and main result for the transport-diffusion equation, as well as the Navier-Stokes equation afterwards.

Suppose $\phi$ is a solution to \eqref{eq:td:intro} with known velocity field $\bv$, but unknown source $g$. The corresponding nudging system for source-reconstruction is then given by

\begin{align}\label{eq:nudging:td}
\begin{split}
  \pdd_t \psi + \bv \cdot \nabla (P_N \phi + Q_N \psi)
  & = \kappa \Delta (P_N \phi + Q_N \psi) + l + {F}(l)
   + \mu_1( P_N\phi -P_N \psi),\\
   \pdd_t l & = \mu_2 (P_N\phi - P_N\psi),
\end{split}
\end{align}
with initial conditions $\psi|_{t = 0} = \psi_0$ and $f|_{t=0} = f_0$, where $\mu_1, \mu_2 > 0$ are the corresponding nudging parameters, $\bv$ is the same vector field appearing in \eqref{eq:td:intro} for the solution $\phi$. 
Our main result regarding \eqref{eq:nudging:td} is the following (see \cref{thm:1.30} for precise version):
\begin{Thm}\label{thm:nudging:intro}
  Assume that $g$ is independent of time and of quasi--finite rank $N_0$.
  Provided that $N \geq 0$ is large enough and that $\mu$ is chosen accordingly, depending on $\kap, N$, and the size of $\bv$, one has
    \begin{align}\notag
    \Abs{f(t) - g} \to 0\quad\text{and}\quad \Abs{{\psi(t) - \phi(t)}} \to 0, \quad\text{as $t\to\infty$},
    \end{align}
at an exponential rate.
\end{Thm}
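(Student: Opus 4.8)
The plan is to pass to error variables and construct a Lyapunov functional. Write $\xi:=Q_N(\psi-\phi)$, $\zeta:=P_N(\psi-\phi)$, and $\rho:=l-P_Ng$. Since $g$ has quasi-finite rank $N$ (which follows from its quasi-finite rank $N_0\le N$, the induced map inheriting the Lipschitz bound), one has $\psi-\phi=\xi+\zeta$ and $f-g=\rho+\bigl(F(l)-F(P_Ng)\bigr)$, so it suffices to prove that $(\xi,\zeta,\rho)$ decays exponentially. Projecting the nudging system \eqref{eq:nudging:td} and the true equation \eqref{eq:td:intro} onto high and low modes and subtracting — using that $P_N,Q_N$ commute with $\Delta$, that $P_N\bigl(P_N\phi+Q_N\psi\bigr)=P_N\phi$, and that $F$ maps into the high modes — yields the closed system
\begin{align*}
\partial_t\xi + Q_N(\bv\cdot\nabla\xi) &= \kappa\Delta\xi + F(l)-F(P_Ng),\\
\partial_t\zeta + P_N(\bv\cdot\nabla\xi) &= \rho - \mu_1\zeta,\\
\partial_t\rho &= -\mu_2\zeta.
\end{align*}
Structurally, the first line is an advection--diffusion equation whose dissipation acts only on modes $|k|>N$, hence with effective damping rate $\kappa N^2$, driven by $\rho$ through the Lipschitz map $F$; eliminating $\zeta$ shows that $(\zeta,\rho)$ obeys $\partial_t^2\rho+\mu_1\partial_t\rho+\mu_2\rho=\mu_2\,P_N(\bv\cdot\nabla\xi)$, a damped harmonic oscillator forced only by the transport coupling with $\xi$. (Global well-posedness of \eqref{eq:nudging:td} is not an issue: $l=P_Nl$ lives in a finite-dimensional space and $\psi$ solves a parabolic equation with forcing Lipschitz in $l$; standard fixed-point/Galerkin arguments together with the a priori bounds below give a unique global solution.)

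Next I would run energy estimates. Pairing the $\xi$-equation with $\xi$ in $L^2$ (the transport term is antisymmetric, or at worst absorbed by the dissipation, under the standing hypotheses on $\bv$) and using $\|F(l)-F(P_Ng)\|\le\mathrm{Lip}(F)\|\rho\|$ with Young's inequality gives
\[
\frac{d}{dt}\|\xi\|^2 + \kappa\|\nabla\xi\|^2 \;\le\; \frac{\mathrm{Lip}(F)^2}{\kappa N^2}\,\|\rho\|^2,
\qquad \|\nabla\xi\|^2\ge N^2\|\xi\|^2.
\]
Pairing the $\zeta$- and $\rho$-equations with $\zeta$ and $\rho$ and computing $\tfrac{d}{dt}\langle\zeta,\rho\rangle$, I would form
\[
\Lambda:=\tfrac12\|\xi\|^2+\tfrac12\|\zeta\|^2+\tfrac1{2\mu_2}\|\rho\|^2-\delta\,\langle\zeta,\rho\rangle,
\]
with $\delta>0$ small. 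In $\tfrac{d}{dt}\Lambda$ the $\langle\zeta,\rho\rangle$ contributions cancel up to an $O(\delta)$ remainder; the transport couplings $\langle P_N(\bv\cdot\nabla\xi),\zeta\rangle$ and $\delta\langle P_N(\bv\cdot\nabla\xi),\rho\rangle$ are bounded by $\|\bv\|_\infty\|\nabla\xi\|(\|\zeta\|+\delta\|\rho\|)$ and absorbed into a fraction of $\kappa\|\nabla\xi\|^2$; the negative term $-\delta\|\rho\|^2$ produced by differentiating the cross term is the only source of $\rho$-dissipation and must dominate $\tfrac{\mathrm{Lip}(F)^2}{\kappa N^2}\|\rho\|^2$. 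One then fixes the constants in the order: $N$ large (depending on $\kappa$, $\|\bv\|_\infty$, $\mathrm{Lip}(F)$); $\delta\simeq\mathrm{Lip}(F)^2/(\kappa N^2)$; $\mu_1$ large (depending on $\kappa$, $\|\bv\|_\infty$) to control the $\|\zeta\|^2$-terms from the transport coupling; and $\mu_2$ in an admissible range so that $\delta\mu_2$ is negligible and $\Lambda$ remains coercive, i.e.\ $\Lambda\asymp\|\xi\|^2+\|\zeta\|^2+\|\rho\|^2$. With these choices $\tfrac{d}{dt}\Lambda\le-c_1\Lambda$ for some $c_1>0$, so Gr\"onwall's inequality yields $\Lambda(t)\le\Lambda(0)e^{-c_1t}$; hence $\|\psi(t)-\phi(t)\|\le\|\xi(t)\|+\|\zeta(t)\|$ and $\|f(t)-g\|\le(1+\mathrm{Lip}(F))\|\rho(t)\|$ decay at the exponential rate $c_1/2$.

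The main obstacle is precisely the absence of any direct dissipation for the forcing error $\rho$: it can only be drained through its oscillator coupling with $\zeta$, which is why the indefinite cross term $-\delta\langle\zeta,\rho\rangle$ must enter $\Lambda$. Balancing $\delta$ is delicate — it must be small enough that $\Lambda$ stays positive definite and that the spurious $\|\zeta\|^2$ and $\|\rho\|^2$ terms generated on differentiation stay controllable, yet large enough that $-\delta\|\rho\|^2$ beats the $\rho$-forcing $\mathrm{Lip}(F)^2/(\kappa N^2)$ coming from the quasi-finite-rank structure; this pins down the required smallness of $\mathrm{Lip}(F)/(\sqrt\kappa\,N)$ and the admissible window for $\mu_1,\mu_2$. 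Finally, if $F$ is Lipschitz only at the level of a Sobolev norm $H^s$ (as the quantitative version of the theorem requires), the same argument is carried out after applying $(-\Delta)^{s/2}$ to the error system; the structure — parabolic high-mode damping, a damped-oscillator pair, and a transport coupling — is unchanged, only the transport estimates invoking the standing regularity hypotheses on $\bv$ need more care.
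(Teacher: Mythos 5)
Your proposal correctly derives the error system and isolates the central obstruction — the forcing error $\rho = l - P_N g$ has no direct damping and must be drained through its oscillator coupling with $\zeta$ — and your cross-term Lyapunov $\Lambda = \tfrac12\abs{\xi}^2 + \tfrac12\abs{\zeta}^2 + \tfrac{1}{2\mu_2}\abs{\rho}^2 - \delta\langle\zeta,\rho\rangle$ is a legitimate way to exploit it. The paper's proof of \cref{thm:1.30} handles the same oscillator pair by the linear change of variables $r := e/\lambda_2 - p$ with $\lambda_1+\lambda_2 = \mu_1$, $\lambda_1\lambda_2 = \mu_2$, which near-diagonalizes the damped-oscillator block and makes the dissipation manifest (see Equations~\eqref{equ:1.55.1}--\eqref{equ:1.55.3} and \eqref{equ:1.90}); it then forms the $\alpha$-weighted energy $\abs{q}^2 + \alpha E$ with $E = \norm{p}_*^2 + \norm{r}_*^2$ and invokes the quadratic Gr\"onwall \cref{thm:1.50} rather than computing a cross-term Lyapunov directly. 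Both routes impose the same kind of smallness condition on $\norm{F_N}_L/N$ relative to the P\'eclet number and give exponential decay, so the two arguments are interchangeable at the level of ideas — yours is perhaps more elementary, while the paper's change of variables makes the role of the roots $\lambda_{1,2}$ transparent. Two points to tighten: first, the precise \cref{thm:1.30} works with $F_N$ Lipschitz in $V^*$, so the $\zeta,\rho$ estimates must be carried out in $\norm{\cdot}_*$ while $\xi$ stays in $H$, and the transport couplings are bounded via \eqref{est:trilinear:Sobolev1}, \eqref{est:Bernstein} and the time-averaged $W_{\epsilon,d}$ of \cref{def:class_W} rather than $\abs{\bv}_\infty$ — your closing remark about applying $(-\Delta)^{s/2}$ heads in that direction but glosses over this mixed-norm structure. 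Second, $\mu_1$ is not taken ``large'': absorbing the spurious $\delta\mu_1\langle\zeta,\rho\rangle$ term forces $\delta \lesssim 1/\mu_1$, which combined with $\delta \gtrsim \norm{F_N}_L^2/(\kappa N^2)$ confines $\mu_1$ to a bounded window depending on $N$ — consistent with the paper's remark that only positivity (and not magnitude) of $\lambda_{1,2}$ is required.
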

%
%\jadd{}
As before, the precise version of this theorem holds with several different Sobolev norms in place of $ \Abs{\cdotp}$.
%\off{}
% 
The Nudging Algorithm can also be used no matter whether the velocity field $\bv$ is time dependent or not. 
Unlike the Sieve Algorithm, however, we do not have a more economical version of the Nudging Algorithm when the $\bv$ is time-independent (see \cref{sect:stationary:td}).
On the other hand, like the Sieve Algorithm, the nudging approach can also be adapted to accommodate nonlinear systems such as the 2D NSE \eqref{eq:nse:intro}. Indeed, we consider the system
    \begin{align}\label{eq:NS_v}
        \begin{split}
        \partial_t \bv & + P_N B(P_N \bu + Q_N \bv, P_N \bu + Q_N \bv) + Q_NB(P_N \bu + Q_N \bv,\bv) \\
        &= \nu \Delta (P_N\bu + Q_N\bv) + \mbl + {F} (\mbl) + \mu_1 (P_N\bu- P_N\bv),\\
        \partial_t \mbl &= \mu_2 (P_N\bu- P_N\bv),
        \end{split}
    \end{align}
with the bilinear form $B(\bu, \bv) = \PP (\bu \cdot \nabla \bv)$ and with initial conditions $\bv(0) = \bv_0$, $\mbl(0) = \mbl_0$, for some $\mbl_0$ such that $\mbl_0=P_N\mbl_0$. Here, $\bu$ is solution of \eqref{eq:nse:intro}, and $\mu_1,\mu_2>0$ are the nudging parameters. 
Note that slightly different versions of the bilinear form are applied in the low versus the high modes in Equation~\eqref{eq:NS_v}.
We are now ready to state the convergence result for the nudging-based algorithm for reconstructing external forces in the 2D-NSE (see \cref{thm:NS_nudg} for a precise statement):
\begin{Thm}%\label{thm:nse:nudge}
  Assume that $g$ is independent of time and of quasi--finite rank $N_0$.
  Provided that $N \geq 0$ is large enough and that $\mu$ is chosen appropriately, depending on $\nu, N$, and the size of $\mbg$, one has 
\begin{align}\notag
   \Abs{\mbf(t) - {\bg}} \goesto 0 \quad \text{and}\quad
   \Abs{\bv(t) - \bu(t)} \goesto 0, \quad\text{as $t\to\infty$},
\end{align}
at an exponential rate.
\end{Thm}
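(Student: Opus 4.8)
The plan is to run an energy argument on the error variables, in the spirit of the transport--diffusion analysis behind \cref{thm:1.30} but now contending with the quadratic nonlinearity of \eqref{eq:NS_v}. Since $\mbg$ has quasi--finite rank $N_0\le N$, write $\mbg=\mbl_\infty+F(\mbl_\infty)$ with $\mbl_\infty:=P_N\mbg$, noting that $F$ is $Q_N$--valued; one then checks directly that $(\bv,\mbl)=(\bu,\mbl_\infty)$ solves \eqref{eq:NS_v}, so this steady pair is the target of convergence. Introduce $\bw:=\bu-\bv$, decompose $\bw=\bp+\bq$ with $\bp:=P_N\bw$ and $\bq:=Q_N\bw$, and set $\widetilde{\mbl}:=\mbl_\infty-\mbl$ and $R_F(\widetilde{\mbl}):=F(\mbl_\infty)-F(\mbl)$, which is $Q_N$--valued and satisfies $\|R_F(\widetilde{\mbl})\|\lesssim\|\widetilde{\mbl}\|$ by the Lipschitz hypothesis on $F$. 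Since $\mbf-\mbg=-\widetilde{\mbl}-R_F(\widetilde{\mbl})$, it suffices to prove exponential decay of $\bw$ and $\widetilde{\mbl}$. Subtracting \eqref{eq:NS_v} from \eqref{eq:nse:intro} (the reconstructed field $P_N\bu+Q_N\bv=\bu-\bq$ appears inside the nudging equation), the error system has the structure
\[
\bdy_t\bp=\widetilde{\mbl}-\mu_1\bp+P_N\NN,\qquad \bdy_t\bq=\nu\De\bq+R_F(\widetilde{\mbl})+Q_N\NN,\qquad \bdy_t\widetilde{\mbl}=-\mu_2\bp,
\]
where $\NN:=-P_NB(\bq,\bu-\bq)-P_NB(\bu,\bq)-Q_NB(\bq,\bv)-Q_NB(\bu,\bw)$ collects all bilinear contributions. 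The essential structural points are: the low--mode diffusion cancels (the nudging equation carries the \emph{observed} $P_N\bu$ inside the Laplacian), so the only damping on $\bp$ is the nudging term $-\mu_1\bp$; the high modes retain their diffusive damping $\nu\lambda_{N+1}$; and $(\bp,\widetilde{\mbl})$ forms, modulo $\NN$, a damped oscillator.

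For this system I would use the Lyapunov functional
\[
\Lambda:=\tfrac{\al}{2}\|\bq\|^2+\tfrac{\mu_2}{2}\|\bp\|^2+\tfrac12\|\widetilde{\mbl}\|^2-\eps\langle\bp,\widetilde{\mbl}\rangle,
\]
with $\al>0$ and $\eps>0$ small enough that $\Lambda$ is coercive, i.e.\ comparable to $\|\bq\|^2+\|\bp\|^2+\|\widetilde{\mbl}\|^2$. Differentiating and using the error system, the purely linear contribution (before inserting $\NN$, after one Young inequality on the cross term) is bounded by
\[
\tfrac{d}{dt}\Lambda\Big|_{\text{lin}}\ \le\ -\al\nu\lambda_{N+1}\|\bq\|^2\ -\ c_1\|\bp\|^2\ -\ \tfrac{\eps}{2}\|\widetilde{\mbl}\|^2\ +\ \al\,\|R_F(\widetilde{\mbl})\|\,\|\bq\|,
\]
with $c_1:=\mu_1\mu_2-\eps\mu_2-\tfrac{\eps\mu_1^2}{2}>0$ for $\eps$ small; here the $-\eps\langle\bp,\widetilde{\mbl}\rangle$ term is precisely what converts the unsigned $\|\widetilde{\mbl}\|^2$ generated by the coupling $\bdy_t\bp\ni\widetilde{\mbl}$ into the dissipative term $-\tfrac{\eps}{2}\|\widetilde{\mbl}\|^2$, and the last term is absorbed into $\tfrac{\al\nu\lambda_{N+1}}{2}\|\bq\|^2+\tfrac{\eps}{4}\|\widetilde{\mbl}\|^2$ once $\nu\lambda_{N+1}$ is large, i.e.\ $N$ is large. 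Thus the linear part alone yields $\tfrac{d}{dt}\Lambda|_{\text{lin}}\le-c\Lambda$.

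It then remains to absorb $\NN$. The key cancellation is that the placement of $P_N,Q_N$ in \eqref{eq:NS_v} is arranged so that the dangerous self--interaction vanishes, $\langle Q_NB(\bu,\bq),\bq\rangle=\langle\bu\cdot\nabla\bq,\bq\rangle=0$; the remaining bilinear terms are handled by the standard 2D estimates for $B$ together with uniform--in--time bounds on $\bu$ (which lies in an absorbing ball, quantified by the Grashof number $G=\|\mbg\|/(\nu^2\lambda_1)$) and on $\bv$ (an absorbing--ball/bootstrap argument for \eqref{eq:NS_v}, as in the well-posedness discussion preceding the theorem), the low--mode factors being tamed by the Bernstein inequality $\|\nabla\bp\|\le N\|\bp\|$. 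Each such term is then dominated by an arbitrarily small multiple of $\nu\lambda_{N+1}\|\bq\|^2+\mu_1\|\bp\|^2+\|\widetilde{\mbl}\|^2$ at the price of taking $N$ large and $\mu_1,\mu_2$ tuned relative to $\nu$ and $G$. Combined with the previous paragraph this gives $\tfrac{d}{dt}\Lambda\le-\tfrac{c}{2}\Lambda$, hence $\Lambda(t)\le\Lambda(0)e^{-ct/2}$; coercivity yields exponential decay of $\|\bw(t)\|$ and $\|\widetilde{\mbl}(t)\|$, and therefore of $\|\bv(t)-\bu(t)\|$ and $\|\mbf(t)-\mbg\|$. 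The versions in stronger Sobolev norms follow by bootstrapping: the $\bq$--equation tested against higher powers of $-\De$ enjoys parabolic smoothing from $\nu\De\bq$, while $\bp$ and $\widetilde{\mbl}$ are low--mode and all norms are equivalent there.

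The main difficulty, relative to the linear transport--diffusion case, is precisely this nonlinear closure: the bilinear coupling must be controlled using only the damping that is structurally available --- $\nu\lambda_{N+1}$ on the high modes, $\mu_1$ on the low modes, and the oscillator dissipation on $\widetilde{\mbl}$ --- which forces a mutually consistent choice of $N,\mu_1,\mu_2$ in terms of $\nu$ and $G$ and leans on the cancellation built into the specific form of \eqref{eq:NS_v}. Securing the uniform--in--time a priori bound on the nudged field $\bv$, needed to legitimize the bootstrap and the absorption of $\NN$, is the other step requiring genuine care.
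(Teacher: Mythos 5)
Your setup mirrors the paper's: the same reduction $\mbf-\mbg=-(I+F_N)\widetilde{\mbl}$ (so $\norm{\mbf-\mbg}_*\le(1+\norm{F_N}_{L,*})\norm{\widetilde{\mbl}}_*$), the same decomposition of the state error into $\bp=P_N\bw$, $\bq=Q_N\bw$, and the same recognition that $(\bp,\widetilde{\mbl})$ is a damped oscillator coupled to a parabolic $\bq$-equation. Where you diverge is in how you diagonalize the oscillator: you carry the cross-term $-\eps\langle\bp,\widetilde{\mbl}\rangle$ inside a Lyapunov functional, whereas the paper makes the linear change of variables $\mbr:=\mbe/\lambda_2-\mbp$ with $\lambda_1+\lambda_2=\mu_1$, $\lambda_1\lambda_2=\mu_2$, and then works with the plain sum $|\mbp|^2+|\mbq|^2+\alpha\norm{\mbr}_*^2$. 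These are essentially equivalent bookkeeping devices for the same linear damping; your version avoids introducing $\lambda_{1,2}$ but has to track one extra Young constant. Your linear energy computation (the coefficient $c_1=\mu_1\mu_2-\eps\mu_2-\tfrac{\eps\mu_1^2}{2}$) is correct.

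The genuine gap is in your treatment of the nonlinearity. After the cancellation $b(\bu,\bq,\bq)=0$ and after pairing $\bp$ with $\bq$ contributions, the remaining dangerous term is the one \emph{quadratic in $\bq$} tested against the low-mode error variable: in your scheme it is $b(\bq,\bq,\bp)$ and $b(\bq,\bq,\widetilde{\mbl})$, in the paper's it is $b(\mbq,\mbq,A^{-1}\mbr)$ (Equation~\eqref{equ:4.125}, which yields $\tfrac{C}{N}\norm{\mbr}_*\norm{\mbq}^2$). This term is \emph{cubic} in the error variables, so it is not bounded by ``an arbitrarily small multiple of $\nu\lambda_{N+1}\norm{\bq}^2+\mu_1\norm{\bp}^2+\norm{\widetilde{\mbl}}^2$'' by any choice of $N,\mu_1,\mu_2$ --- Young's inequality turns it into a quadratic term whose coefficient is proportional to the sup of $\norm{\mbr}_*$ (or of $|\bp|,|\widetilde{\mbl}|$), and no amount of tuning the damping parameters eliminates that dependence. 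The paper handles this with a barrier/continuation argument: it defines the stopping time $t_*:=\inf\{t:\norm{\mbr(t)}_*>\kappa/\alpha\}$, shows that on $[0,t_*]$ the cubic term can be absorbed by taking $N$ large (condition~\eqref{equ:6.1030}) and by the constraint $N\geq 16C_*$, derives the exponential decay~\eqref{equ:4.180}, and then chooses $\alpha$ small enough relative to the \emph{initial} error (Equation~\eqref{equ:4.191}) so that $\sup_{[0,t_*]}\norm{\mbr}_*^2\le\kappa^2/(2\alpha^2)<\kappa^2/\alpha^2$, forcing $t_*=\infty$ by continuity. Consequently, as Remark~\ref{rmk:lam} records, $\alpha$ and hence the admissible $\lambda_1,\lambda_2$ (i.e.\ $\mu_1,\mu_2$) and the decay rate all depend on the initial condition. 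Your sketch gestures at ``bootstrap'' and a priori bounds on $\bv$ in the last paragraph, but it neither identifies the cubic structure of $b(\bq,\bq,\cdot)$ as the obstruction nor carries out the stopping-time/contradiction argument that closes it, and the headline claim that each nonlinear term can be absorbed by arbitrarily small multiples of the quadratic functional is false as stated.
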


The main differences between the Sieve and Nudging Algorithm described above can be briefly summarized by follows: while the framework of the Nudging Algorithm presented above can only accommodate reconstruction of \textit{time-independent} forces, it provides a conceptually streamlined approach, both in its analytical study and in its practical implementation, when compared to the Sieve Algorithm. Indeed, although the Sieve Algorithm is able to provably reconstruct unknown time-dependent forces, the manner in which it does so requires the user to carry forward the historical information for each of the previous approximations to the unknown force that it generates along the way. In contrast, the Nudging Algorithm requires one to simply integrate the nudging system forward-in-time, so that the approximate forcing is generated continuously in time. We reserve a systematic numerical study on the efficacy, efficiency, and practical limitations of the two algorithms for a future work. We refer the reader to \cref{sect:conclusion} for a more detailed comparison between the results obtained between the two cases and two systems.

\subsection{Relation to Previous Work}\label{sec:previous_work}
Identification of dynamic models from noisy and incomplete observations is a vast field with a long history. 
More recently, ideas from machine learning have been brought into this field, which was then rebranded in the geosciences as combining data assimilation and machine learning. 
These works have drawn due attention to the fact that reconstructing dynamic models from data in a realistic setting is inextricably linked to data assimilation.
Specifically about the problem of reconstructing forcings, a substantial body of literature can be found, mainly in the context of transporting atmospheric tracers, a case of particular scientific and societal importance.
To the best of our knowledge, there is much less work devoted to the reconstruction of forcings in infinite-dimensional, nonlinear equations such as Navier--Stokes and related equations of geophysical fluid dynamics.
Broadly speaking, existing work on reconstructing source terms of atmospheric tracers falls into two categories.
Works of the first category focus directly on identifying ``clouds'' of atmospheric pollutants by estimating the parameters of explicit functional representations of such clouds, called {\em puffs} or {\em plumes}~\citep{HUTCHINSON2017130}.
These are often either exact or approximate solutions of the {\em stationary} transport--diffusion equation in either two or three dimensions~\citep{Sharan_mathematical_model_dispersion_pollutants_low_wind_1996}.
The transporting velocity as well as the diffusion are considered spatially homogenous, the concentration decays to zero at infinity, a no--flux condition is imposed at the ground in three dimensional situations, and the sources are taken as one or several Dirac~delta functions.
A plethora of methods for estimation of the plume parameters have been discussed in the literature, including least squares~\citep{SINGH2015402} and various Bayesian approaches~\citep{SENOCAK20087718}.
Particularly challenging is the estimation of source numbers and locations; implementations use for instance MCMC or ABC~\citep{ANNUNZIO2012580,THOMSON20071128}.
These methods are not dynamical in character and presumably more suitable for the assessment of single discharge events from relatively few ($\sim \mathcal{O}(10)$) sources.
The second category uses data assimilation approaches to estimate the tracer concentrations or the forcings (or both) in a dynamical fashion, akin to the approaches in the present paper.
The unknown forcings are treated as additional dynamic states and are estimated along with the tracer concentrations.
Standard techniques of data assimilation are used such as 4DVAR~\citep{elbern_emission_estimation_4DVAR_2007,Elbern1999ImplementationOA,elbern_Ozone_episode_analysis_4DVAR_2001} or various versions of the Kalman Filter or Kalman Smoother~\citep{wu_observability_chemical_emissions_atmospheric_2022,wu_observation_locations_2016} in case the focus is on estimating the forcings.
If the governing equations are linear and the stochastic perturbations are Gaussian, these techniques even satisfy various criteria of optimality.
The approaches discussed in the present paper have similarities in that they are dynamical in character and can be regarded as leveraging a very simple data assimilation scheme (namely nudging) to the simultaneous estimation of dynamical states and model components. 

The nudging approach is a classical one going back to  \citet{HokeAnthes1976} and was introduced to in the context of systems of ordinary differential equations (ODEs). In a seminal work~\citep{DuaneTribbiaWeiss2006}, it was observed that the ability of nonlinear systems to synchronize~\citep{PecoraCarroll1990} could be facilitated through nudging and therefore leveraged for the purposes of data assimilation, even for partial differential equations (PDEs). Many studies on nudging and synchronization-based techniques for data assimilation have since been carried out since the classical work of~\citet{HokeAnthes1976}, but mostly in the context of nonlinear systems of ODEs~\citep[see][]{ZouNavonLedimet1992, AurouxBlum2008, PazoCarrassiLopez2016, PinheirovanLeeuwenGeppert2019}.
The convergence of the nudging scheme as applied to PDEs was investigated without mathematical rigour until the works of \citet{BlomkerLawStuartZygalakis2013, AzouaniOlsonTiti2014}, where convergence analysis of the nudging scheme was established in the context of the two-dimensional Navier-Stokes equations.
These two works have since served as a paradigm for establishing rigorous results regarding, for instance, accuracy and stability, and in particular, for obtaining quantitative error estimates in the context of nonlinear PDEs. These ideas have recently matured enough to carry out analyses of sensitivity, consistency, asymptotic normality, and convergence for parameter estimation in nonlinear PDEs~\citep[see for instance][]{CialencoGlattHoltz2011, CarlsonHudsonLarios2020, CarlsonLarios2021, CarlsonHudsonLariosMartinezNgWhitehead2021, Martinez2022viscosity}. In a recent work \cite{Martinez2022force}, an algorithm was introduced for reconstructing unknown source terms in the context of the 2D NSE and its convergence analyses were carried out. In \cite{FarhatLariosMartinezWhitehead2024}, a similar algorithm was also studied, both analytically and numerically, where reconstruction of the external forcing was robustly observed. Even in the regime of perfect model and perfect observations, the problem of source reconstruction is decidedly non-trivial due to the presence of nonlinearity. It is shown in \cite{Martinez2022force, FarhatLariosMartinezWhitehead2024} that one could nevertheless leverage the ability of the system itself to systematically filter errors with continuous observation of sufficiently many length scales of the state. It is important to point out that the results of both works \cite{Martinez2022force, FarhatLariosMartinezWhitehead2024} require the range of length scales excited by the force to be limited to the those that are already observed. In this regard, the present work constitutes a notable extension of those previous results by allowing the force to inject energy into system beyond the length scales which are directly observed. One notable example of forcings that quasi-finite class can accommodate are ones that are highly localized in space, i.e., those with exponentially decaying Fourier spectrum. In fact, forcings whose spectrum decay coherently, e.g., according to some power law, belongs to the class of quasi-finite forcings.

In spite of these recent advances, there is no claim to optimality with those methods (or the methods discussed in the present paper).
Rather, their advantage lies in simplicity and feasibility of implementation.
For similar reasons, the presented approaches permit a satisfactory mathematical analysis regarding their asymptotic behaviour. 
To the best of our knowledge, no such analysis has so far been carried out for any other data assimilation method when applied to the simultaneous reconstruction of state and forcings in transport--diffusion and two--dimensional Navier--Stokes models.

Finally, we note an interesting connection between the nudging algorithm as in Equation~\eqref{eq:nudging:td} and concepts from Control Theory, more specifically robust control. 
The nudging algorithm achieves the asymptotic reconstruction of the (partially observed) state $\phi$, despite the fact that the true forcing $g$ is not known. 
It can therefore be considered a data assimilation method that is {\em robust} with respect to certain types of model error. 
This is achieved through a feedback term comprising the assimilation error $\phi - \psi$ as well as its {\em integral} over time; indeed, by the second line in Equation~\eqref{eq:nudging:td}, the variable $f$ might be regarded as another nudging term proportional to the integral over time of the assimilation error.
This strategy is well known in control theory and implemented for instance in proportional--integral~(PI) controllers~(see e.g.~\cite{anderson2007optimal}, Sec.9.3, or the book by~\cite{astrom1995pid_controllers}, in particular Ch.~3).
Stability and robustness of PI~(and other) controllers where analysed in a famous paper by \cite{Maxwell1867governors}, generally considered to be the first application of dynamical stability theory to the problem of process control.
\section{Notation and Mathematical Background}\label{sect:notation}
First we develop the precise functional framework of our results and introduce some notation.
Throughout, we will denote $\T^d=[0,2 \pi]^d$ where $d = 2$ or~$3$.
We will introduce the functional framework for \eqref{eq:td:intro} and \eqref{eq:nse:intro} separately, but will recycle the same notation; it will be clear from the context how to interpret the notation.
\subsection{Functional Setting of Transport-Diffusion Equation}\label{sect:notation:td}
We define the Hilbert spaces $H := \{\phi \in {L^2}({\bbT^d}): \int_{\bbT^d} \phi(x) \idd x = 0\}$, $V := H \cap {H^1}({\bbT^d})$, $\bbH := \{\bv\in{L^2}(\bbT^d)^d:\int_{\bbT^d}\bv(x)\idd x=0\}$, and $\bbV := {\{\bv \in {H^1}(\bbT^d)^d: \nabla\cdotp \bv = 0\}}$. We let $V^*,\bbV^*$ denote the dual space of $V$, $\bbV$, respectively.

We write $\abs{\cdot}$ for the norms on $H$ and on $\bbH$ and $\lb\cdot,\cdot\rb$ for their inner product.
Recall that every $\phi \in H$ has an expansion of the form
\begin{align}%\label{def:fourier}
\phi = \sum_{\mbk \in \Z^d, \mbk \neq 0} \phi_\mbk \exp( i \scp{\mbk}{\cdot}),\notag
\end{align}
which is convergent in ${L^2}$.
On $V$ we use the norm $\norm{\phi} := \left( (2\pi)^d\sum_{\mbk \in \Z^d, \mbk \neq 0} |\mbk|^2 |\phi_\mbk|^2 \right)^{1/2}$ which is equivalent to the usual (homogeneous) Sobolev norm on $H^1(\T^d)$. On $V^*$ we use $\norm{\phi}_*$$:=$$\left( (2\pi)^d\sum_{\mbk \in \Z^d, \mbk \neq 0} |\mbk|^{-2} |\phi_\mbk|^2 \right)^{1/2}$, which is equivalent to the Sobolev norm on $H^{-1}(\T^d)$.
We also write $\norm{\cdot}$, $\norm{\cdot}_*$ for the corresponding norms on $\bbV$, $\bbV^*$.
We write $\abs{\cdot}_p$ for the norm on $L^{p}({\bbT^d})$; on the Sobolev space ${H^\al}({\bbT^d})$ we use the semi-norm $\norm{\phi}_{\alpha} := \left((2\pi)^d \sum_{\mbk \in \Z^d, \mbk \neq 0} |\mbk|^{2\alpha} |\phi_\mbk|^2 \right)^{1/2}$, for any $\al\in\mathbb{R}$. 

With this notation, one sees that $|\cdot|=|\cdot|_2=\lVert\cdot\rVert_0$ on $H$, $\lVert\cdot\rVert=\lVert\cdot\rVert_1$ on $V$, and $\|\cdot\|_*=\|\cdot\|_{-1}$ on $V^{*}$.
We let $P_N$ be the projection in $H$ onto the first $N$ modes, that is
\begin{align}%\label{def:PN}
    P_N\phi = \sum_{0 < \abs{\mbk} \leq N} \phi_{\mbk} \exp(i \scp{\mbk}{\cdot}),\notag
\end{align}
and we write $Q_N := I- P_N$.
We will rely on the following elementary existence and uniqueness theorem for \eqref{eq:td:intro}, which will suffice for our purposes (proof can be found in the appendix for convenience). It will be convenient to rewrite \eqref{eq:td:intro} in functional form. To this end, we let
    \begin{align}\label{def:L}
        {L(t):=\bv(t)\cdotp\nabla-\kap\De}
    \end{align}
    and rewrite \eqref{eq:td:intro} as 
       \begin{align}\label{def:td:weak}
      \frac{\dd\phi}{\dd t} + L(t)\phi
      = g.
    \end{align}
    We will consider so--called weak and strong solutions to Equation~\eqref{def:td:weak}, defined as follows:
\begin{Def}\label{def:td:sweak:strong}
      Let $d\geq2$ and $T>0$,
       $\phi_0\in H$, $g\in L^\infty(0,T; V^*)$, and $\bv\in L^{\frac{2p}{p-d}}(0,T; L^{p}(\T^d)^d)$, for some $p\in(d,\infty]$ satisfying $\nabla\cdotp\bv=0$ in the sense of distribution.
    A \emph{weak solution over $[0,T]$} of the initial value problem corresponding to \eqref{def:td:weak} is any $\phi\in C([0,T);H)\cap L^2(0,T;V)$ such that $\frac{\dd\phi}{\dd t}\in L^2(0,T;V^*)$, Equation~\eqref{def:td:weak} holds as an equation in $V^*$ for a.a.\ $t \in [0,T]$, and  $\phi(0)=\phi_0$ in $H$. 
    Furthermore
        \begin{align}\notag
            \frac{1}2|\phi(t)|^2+\nu\int_0^t\|\phi(s)\|^2ds= \frac{1}2|\phi_0|^2+\int_0^t(g(s),\phi(s))ds,
        \end{align}
    holds for all $t\in[0,T]$.
If $\phi_0\in V$, $g\in L^\infty(0,T;H)$, and $\bv\in L^{\frac{2p}{p-d}}(0,T;\bbV)$, a solution is a \emph{strong solution over $[0,T]$} if $\phi\in C([0,T);V)\cap L^2(0,T; H^2)$, $\frac{\dd\phi}{\dd t}(t)\in L^2(0,T;H)$, 
Equation~\eqref{def:td:weak} holds as an equation in $H$ for a.a.\ $t \in [0,T]$, and furthermore $\phi(0) = \phi_0$ in $V$.
\end{Def}
\begin{Thm}\label{thm:td:exist}
  Let $d\geq2$, and $T>0$, $\phi_0\in H$, $g\in L^\infty(0,T; V^*)$, and $\bv\in L^{\frac{2p}{p-d}}(0,T; L^{p}(\T^d)^d)$, for some $p\in(d,\infty]$ satisfying $\nabla\cdotp\bv=0$ in the sense of distribution.
    Then there exists a unique weak solution of \eqref{def:td:weak} over $[0,T]$, and furthermore $\phi(0)=\phi_0$.
    If, additionally, $\phi_0\in V$, $g\in L^\infty(0,T;H)$, and $\bv\in L^{\frac{2p}{p-d}}(0,T;\bbV)$, then the weak solution is also a strong solution.
\end{Thm}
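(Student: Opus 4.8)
The plan is a Faedo--Galerkin construction together with a priori estimates calibrated to the borderline integrability $\bv\in L^{2p/(p-d)}(0,T;L^p)$. Writing $P_m$ for the projection onto the first $m$ Fourier modes (a fresh index, not the observation level), I would first seek $\phi_m(t)\in P_mH$ solving $\tfrac{\dd}{\dd t}\phi_m+P_mL(t)\phi_m=P_mg$, $\phi_m(0)=P_m\phi_0$. Since on the finite--dimensional space $P_mH$ the map $t\mapsto P_m(\bv(t)\cdot\nabla\,\cdot)$ has operator norm $\lesssim_m|\bv(t)|$ and $t\mapsto|P_mg(t)|\lesssim_m\|g(t)\|_*$ is in $L^\infty(0,T)$, the right--hand side has $L^1(0,T)$ coefficients with linear growth, so Carath\'{e}odory's theorem yields a unique absolutely continuous solution on all of $[0,T]$.

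Next I would derive the energy estimates. Pairing with $\phi_m$ and using $\nabla\cdot\bv=0$, so that $(\bv\cdot\nabla\phi_m,\phi_m)=0$, gives $\tfrac{\dd}{\dd t}|\phi_m|^2+2\kap\|\phi_m\|^2=2(g,\phi_m)\le\kap\|\phi_m\|^2+\kap^{-1}\|g\|_*^2$, hence a uniform bound for $\{\phi_m\}$ in $L^\infty(0,T;H)\cap L^2(0,T;V)$. For the time derivative I would bound the transport term in $V^*$: the divergence--free integration by parts and H\"{o}lder give $|(\bv\cdot\nabla\phi_m,\psi)|=|(\bv\cdot\nabla\psi,\phi_m)|\le|\bv|_p\,\|\psi\|\,|\phi_m|_{2p/(p-2)}$, and Gagliardo--Nirenberg (valid for $d=2,3$ as $p>d$) bounds $|\phi_m|_{2p/(p-2)}\lesssim|\phi_m|^{1-d/p}\|\phi_m\|^{d/p}$; integrating in time and applying H\"{o}lder with exponents $p/(p-d)$ and $p/d$, the $|\bv|_p$ factor lands precisely in $L^{2p/(p-d)}(0,T)$, so $\tfrac{\dd\phi_m}{\dd t}$ is bounded in $L^2(0,T;V^*)$. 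Weak and weak--$*$ compactness plus the Aubin--Lions lemma then furnish a subsequence converging to some $\phi$, strongly in $L^2(0,T;H)$; together with $\bv\in L^2(0,T;L^p)$ this lets me pass to the limit in every term, including the transport term. Lions--Magenes gives $\phi\in C([0,T];H)$ with $\phi(0)=\phi_0$, and the energy equality follows from $\tfrac{\dd}{\dd t}|\phi|^2=2\langle\tfrac{\dd\phi}{\dd t},\phi\rangle$ together with $\langle\bv\cdot\nabla\phi,\phi\rangle=0$, the latter justified by a mollification/density argument using $\phi\in L^2(0,T;V)$ and $d\le3$. Uniqueness is immediate because the equation is linear in $\phi$ with $\bv$ common to both solutions: the difference $w$ obeys $\tfrac{\dd}{\dd t}|w|^2+2\kap\|w\|^2=0$ with $w(0)=0$, whence $w\equiv0$.

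For the strong solution I would rerun the Galerkin scheme under the stronger hypotheses and pair with $-\De\phi_m$. The only delicate term, $(\bv\cdot\nabla\phi_m,-\De\phi_m)$, is controlled by H\"{o}lder and Gagliardo--Nirenberg as $\lesssim|\bv|_p\,|\nabla\phi_m|_{2p/(p-2)}\,|\De\phi_m|\lesssim|\bv|_p\,\|\phi_m\|^{1-d/p}\,|\De\phi_m|^{1+d/p}$; Young's inequality with exponents $\tfrac{2}{1-d/p}$ and $\tfrac{2}{1+d/p}$ absorbs $\tfrac{\kap}{2}|\De\phi_m|^2$ and leaves the term $C|\bv|_p^{2p/(p-d)}\|\phi_m\|^2$, whose time--coefficient is in $L^1(0,T)$ by hypothesis. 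Gr\"{o}nwall then bounds $\{\phi_m\}$ uniformly in $L^\infty(0,T;V)\cap L^2(0,T;H^2)$ and, via the equation, $\tfrac{\dd\phi_m}{\dd t}$ in $L^2(0,T;H)$; passing to the limit upgrades the already--unique weak solution to a strong one with $\phi(0)=\phi_0$ in $V$.

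\textbf{Main obstacle.} The only genuinely non--routine point is arranging the space--time interpolation so that the $\bv$--factor lands \emph{exactly} in the assumed class $L^{2p/(p-d)}(0,T;L^p)$ --- both in the $V^*$--bound on $\tfrac{\dd\phi_m}{\dd t}$ and in the $H^2$--estimate for the strong solution; everything else is textbook parabolic theory. A minor secondary point is the rigorous justification of $\langle\bv\cdot\nabla\phi,\phi\rangle=0$ at the level of weak solutions, which requires a density argument rather than a naive integration by parts.
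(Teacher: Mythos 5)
Your proposal is correct and the overall Galerkin skeleton (finite--dimensional projection, energy estimates, Aubin--Lions compactness, passage to the limit, uniqueness by linearity) mirrors the paper's appendix. The interesting divergence is in the strong--solution a priori estimate, where you and the authors take genuinely different routes. You bound $(\bv\cdot\nabla\phi_m,-\De\phi_m)$ directly by H\"older as $|\bv|_p\,|\nabla\phi_m|_{2p/(p-2)}\,|\De\phi_m|$, interpolate via Gagliardo--Nirenberg to $|\bv|_p\,\|\phi_m\|^{1-d/p}\,|\De\phi_m|^{1+d/p}$, and apply Young with conjugate exponents $\tfrac{2p}{p-d}$, $\tfrac{2p}{p+d}$; this is dimension--uniform, leaves a coefficient $|\bv|_p^{2p/(p-d)}\in L^1(0,T)$, and requires only $\bv\in L^{2p/(p-d)}(0,T;L^p)$. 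The paper instead integrates by parts to pass the derivative onto $\bv$, yielding the commutator term $\sum_k(\partial_k\bv\cdot\nabla\phi_m,\partial_k\phi_m)$, and then uses Ladyzhenskaya's inequality $|\nabla\phi_m|_4^2\lesssim\|\phi_m\|\,\|\phi_m\|_2$; this requires $\nabla\bv\in L^2$ (hence $\bv\in\bbV$, the hypothesis in the statement) and, as written, relies on a two--dimensional interpolation. Your variant therefore proves the strong--solution claim under a formally weaker hypothesis on $\bv$ and without dimension restrictions. A second, minor difference: you invoke $\nabla\cdot\bv=0$ from the outset to annihilate the transport term in the $L^2$ energy balance, whereas the appendix deliberately presents the computation without assuming divergence--free (so as to make transparent which terms vanish under that hypothesis and which genuinely require it), estimating $(\bv\cdot\nabla\phi,\phi)$ by H\"older + interpolation + Young instead. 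Both are valid under the stated assumptions. Your care about justifying $\scp{\bv\cdot\nabla\phi}{\phi}=0$ at the regularity of weak solutions via density is appropriate and matches the level of rigour the argument needs.
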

In the study of the reconstruction of source terms for the transport diffusion equation, our analysis will assume stronger regularity assumptions on the advecting velocity than the conditions presented in \cref{thm:td:exist}. In particular, we will assume that $\bv\in L^\infty(0,\infty;L^p(\T^d)^d)$, for some $p>2$, or that $\bv\in L^\infty(0,\infty;W^{1,d}(\T^d)^d)$. 
\subsection{Functional Setting of the Navier-Stokes Equation}\label{sect:notation:nse}
In the case of the 2D NSE, we will set $d=2$, we will instead define ${\bbH={\PP}(H\times H)}$ and ${\bbV={\PP} (V\times V)}$, where $H, V$ are defined as in \cref{sect:notation:td}, and $\PP$ denotes the Leray-Helmholtz projection onto divergence-free vector fields. As in \cref{sect:notation:td}, for each $N>0$, we let $P_N:{L^2}({\bbT^2})^2\goesto {L^2}({\bbT^2})^2$ denote projection onto finitely many Fourier modes corresponding to wavenumbers $|k|\leq N$. We then let
\begin{align}\notag
        P_N={\PP} P_N{\PP} ,\quad Q_N={\PP}(I-P_N){\PP}={\PP}-P_N.
    \end{align}
We will make use of the same notation for $L^p$ over $\T^2$, as well as their norms.
Let
\begin{align}\label{def:Stokes}
        A=-\PP\De,
    \end{align}
denote the Stokes operator and its domain by $\bbD(A) = \bbV\cap H^2(\T^2)^2$. We recall that in the setting of periodic boundary conditions, $\PP$ commutes with $\De$, so that we simply have $A=-\De$. It can be shown that $\bbV = \bbD(A^{1/2})$ and $\| \bu \| = |A^{1/2}\bu|$.
Furthermore, the Sobolev norms $\no{\bu}{\al}$ defined in \cref{sect:notation:td} are equivalent to $\Abs{A^{\al/2}\bu}$, for all $\al\geq0$.
Let us recall the definitions of weak and strong solution for \eqref{eq:nse:intro} and the classical existence and uniqueness associated to them~\citep[see for instance][]{ConstantinFoiasBook,TemamNSEBook}.
\begin{Def}\label{def:nse:weak:strong}
  Given $T>0$, $\mbf\in L^2(0,T;\bbV^*)$, and $\bu_0\in\bbH$, a {\em Leray-Hopf weak solution over $[0,T]$} of the initial value problem corresponding to \eqref{eq:nse:intro} is any  $\bu\in C([0,T);\bbH)\cap L^2(0,T;\bbV)$ such that $\frac{\dd\bu}{\dd t}\in L^2(0,T;\bbV^*)$ and
    \begin{align}\label{def:nse:ivp}
        \frac{\dd\bu}{\dd t}+\nu A\bu+B(\bu,\bu)=\mbf,\quad \bu(0)=\bu_0,
    \end{align}
    where the equation holds in $\bbV^*$ for a.a.\ $t \in [0,T]$, and moreover,
  \begin{align}\label{est:energy:inequality}
    \frac{1}2|\bu(t)|^2
    +\nu\int_{0}^t\|\bu(s)\|^2ds
    &=\frac{1}2|\bu_0|^2+\int_{0}^t\lb \mbf(s),\bu(s)\rb ds,
    \end{align}
  holds for $t\in[0,T]$.
  If $\mbf\in L^2(0,T;\bbH)$ and $\bu_0\in\bbV$, a solution is a {\em strong solution over $[0,T]$} if $\bu\in C([0,T);\bbV)\cap L^2(0,T;\bbD(A))$ such that $\frac{\dd\bu}{\dd t}\in L^2(0,T;\bbH)$ and Equation~\eqref{def:nse:ivp} holds in $\bbH$ for a.a.~$t \in [0, T]$.
\end{Def}
% 
%Note that for the definition of weak solution given above, it automatically follows that $\bu\in C_{wk}([0,T);\bbH)$, i.e., $\lim_{t\goesto0^+}(\bu(t),\bv)=(\bu_0,\bv)$, for all $\bv\in\bbH$.
  %
  Note, moreover, that if $\bu$ is a strong solution, then one has
  \begin{align}\label{est:enstrophy:equality}
    \frac{1}2\|\bu(t)\|^2
    +\nu\int_{0}^t|A\bu(s)|^2ds
    &=\frac{1}2\|\bu_0\|^2+\int_{0}^t\lb \mbg(s),\bu(s)\rb ds,
    \end{align}
  for all $t \in [0, T]$.
\begin{Thm}\label{thm:nse:exist:uniq}
  Suppose $d=2$. Given $T>0$, $\bu_0\in \bbH$, and $\mbf\in L^2(0,T;\bbV^*)$, there exists a unique Leray-Hopf weak solution of \eqref{eq:nse:intro} over $[0,T]$. If $\bu_0\in \bbV$ and $\mbg\in L^2(0,T;\bbH)$, then the Leray-Hopf weak solution $\bu$ is also a strong solution over $[0,T]$.
\end{Thm}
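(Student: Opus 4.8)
The plan is to establish existence by the classical Faedo--Galerkin method, exploiting the favourable two-dimensional estimates throughout, and then to deduce uniqueness and the strong-solution assertion by energy arguments. First I would fix, for each $m$, the orthogonal projection $P_m$ of $\bbH$ onto the span of its first $m$ Fourier modes and solve the finite-dimensional system
\begin{align}\notag
  \frac{\dd\bu_m}{\dd t}+\nu A\bu_m+P_mB(\bu_m,\bu_m)=P_m\mbf,\qquad \bu_m(0)=P_m\bu_0 ;
\end{align}
local existence is immediate since the right-hand side is locally Lipschitz, and globality follows from the a priori bound below. Pairing with $\bu_m$ and using the cancellation $(B(\bw,\bv),\bv)=0$ for divergence-free $\bw$, together with Young's inequality on $(\mbf,\bu_m)$, yields $\tfrac{\dd}{\dd t}|\bu_m|^2+\nu\|\bu_m\|^2\le\nu^{-1}\|\mbf\|_*^2$; integrating in time bounds $\{\bu_m\}$ uniformly in $L^\infty(0,T;\bbH)\cap L^2(0,T;\bbV)$. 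In $d=2$ this control alone handles the nonlinearity: the Ladyzhenskaya inequality $|\bu|_4\le c|\bu|^{1/2}\|\bu\|^{1/2}$ gives $\|B(\bu_m,\bu_m)\|_*\le c|\bu_m|_4^2\le c|\bu_m|\,\|\bu_m\|$, so $B(\bu_m,\bu_m)$ and hence $\tfrac{\dd\bu_m}{\dd t}$ are bounded in $L^2(0,T;\bbV^*)$.

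Next I would extract, by Banach--Alaoglu, a subsequence with $\bu_m\rightharpoonup\bu$ weakly in $L^2(0,T;\bbV)$ and weak-$*$ in $L^\infty(0,T;\bbH)$, and $\tfrac{\dd\bu_m}{\dd t}\rightharpoonup\tfrac{\dd\bu}{\dd t}$ weakly in $L^2(0,T;\bbV^*)$; the Aubin--Lions--Simon lemma, using the compactness of $\bbV\imb\bbH$, upgrades this to strong convergence in $L^2(0,T;\bbH)$. The linear terms pass to the limit by weak convergence; for the nonlinearity I would test against time-dependent functions from a dense set, write $B(\bu_m,\bu_m)-B(\bu,\bu)=B(\bu_m-\bu,\bu_m)+B(\bu,\bu_m-\bu)$, and estimate each summand by H\"older and Ladyzhenskaya, using the strong $L^2(0,T;\bbH)$ convergence on the differences and the weak $L^2(0,T;\bbV)$ bound on the gradients. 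This exhibits $\bu$ as a weak solution; since $\bu\in L^2(0,T;\bbV)$ with $\tfrac{\dd\bu}{\dd t}\in L^2(0,T;\bbV^*)$, the Lions--Magenes lemma yields $\bu\in C([0,T];\bbH)$, the chain rule $\tfrac{\dd}{\dd t}|\bu|^2=2(\tfrac{\dd\bu}{\dd t},\bu)$, and---because in two dimensions $(B(\bu,\bu),\bu)=0$ remains valid for $\bu$ in this class---the energy \emph{equality} \eqref{est:energy:inequality}; the identity $\bu(0)=\bu_0$ follows from weak continuity and $P_m\bu_0\to\bu_0$ in $\bbH$.

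For uniqueness I would take two weak solutions $\bu_1,\bu_2$, set $\bw=\bu_1-\bu_2$, pair the difference of the equations with $\bw$, and use $(B(\bu_2,\bw),\bw)=0$ together with $B(\bu_1,\bu_1)-B(\bu_2,\bu_2)=B(\bw,\bu_1)+B(\bu_2,\bw)$ to obtain $\tfrac12\tfrac{\dd}{\dd t}|\bw|^2+\nu\|\bw\|^2=-(B(\bw,\bu_1),\bw)$; bounding the right-hand side by $c|\bw|^{1/2}\|\bw\|^{3/2}|\bu_1|_4$ and using Young leaves $\tfrac{\dd}{\dd t}|\bw|^2\le c\nu^{-3}|\bu_1|_4^4\,|\bw|^2$, and since $\int_0^T|\bu_1|_4^4\,\dd t\le c\int_0^T|\bu_1|^2\|\bu_1\|^2\,\dd t<\infty$ by the basic bound, Gr\"onwall forces $\bw\equiv0$. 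Finally, for the strong-solution statement, now with $\bu_0\in\bbV$ and $\mbg\in L^2(0,T;\bbH)$, I would run the Galerkin estimate one derivative higher by pairing with $A\bu_m$: the two-dimensional bound $|(B(\bu_m,\bu_m),A\bu_m)|\le c|\bu_m|^{1/2}\|\bu_m\|\,|A\bu_m|^{3/2}$ (Ladyzhenskaya applied to $\bu_m$ and to $\nabla\bu_m$), absorbed by Young, reduces matters to a Riccati-type inequality of the shape $\tfrac{\dd}{\dd t}\|\bu_m\|^2\le C\|\bu_m\|^2\cdot\|\bu_m\|^2+\nu^{-1}|\mbg|^2$ whose global solvability is secured by the already-established bound on $\int_0^T\|\bu_m\|^2\,\dd t$ via Gr\"onwall; this gives a uniform bound in $L^\infty(0,T;\bbV)\cap L^2(0,T;\bbD(A))$, and passing to the limit---which coincides with the weak solution by uniqueness---delivers the strong solution, for which the enstrophy identity \eqref{est:enstrophy:equality} also holds.

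The hard part is the nonlinear term: it both obstructs an easy passage to the limit---resolved only through the strong $L^2$-in-time compactness furnished by Aubin--Lions---and obstructs global higher regularity, where it is precisely the two-dimensional Ladyzhenskaya inequalities (which fail in $d=3$) that turn the Riccati inequality for $\|\bu_m\|^2$ from merely locally into globally solvable, thereby delivering global strong solutions and, with them, uniqueness within the Leray--Hopf class.
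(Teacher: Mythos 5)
The paper does not supply its own proof of this theorem; it is presented as a classical fact with a pointer to the standard references (Constantin--Foias, Temam). Your Faedo--Galerkin sketch is precisely the argument found there and is essentially correct. One remark worth making: in the paper's periodic, two-dimensional setting the strong-solution estimate can be done much more cleanly using the orthogonality $b(\bu,\bu,A\bu)=0$, which the paper records explicitly following \cref{thm:trilinear}. Testing the Galerkin equation against $A\bu_m$, the trilinear term drops out identically and one obtains directly
\begin{equation*}
\frac{\dd}{\dd t}\|\bu_m\|^2 + \nu|A\bu_m|^2 \le \nu^{-1}|\mbg|^2,
\end{equation*}
with no Gr\"onwall or Riccati step at all. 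Your Ladyzhenskaya-based route is the one needed when this cancellation is unavailable (e.g.\ Dirichlet boundary conditions) and it also works here, but the inequality you should arrive at is
\begin{equation*}
\frac{\dd}{\dd t}\|\bu_m\|^2 \le C\nu^{-3}\,|\bu_m|^2\,\|\bu_m\|^2\cdot\|\bu_m\|^2 + \nu^{-1}|\mbg|^2,
\end{equation*}
i.e.\ you dropped the prefactor $|\bu_m|^2$. It is harmless -- it is uniformly bounded by the energy estimate -- but it must appear for the dimensions and the Gr\"onwall coefficient $|\bu_m|^2\|\bu_m\|^2 \in L^1(0,T)$ to come out right. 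Everything else (a priori bounds, Aubin--Lions compactness, passage to the limit in the nonlinear term, energy equality via the Lions--Magenes lemma and the $d=2$ validity of $(B(\bu,\bu),\bu)=0$ in the $\bbV$--$\bbV^*$ duality, uniqueness by the Ladyzhenskaya--Gr\"onwall argument) is the standard treatment.
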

We will also make use of several local-in-time and global-in-time bounds whenever the force, $\mbg$, is either locally bounded or globally bounded in time~\citep[see][]{FoiasManleyRosaTemamBook2001}: let $G_*$ and $G$ denote generalized Grashof numbers defined by
\begin{align}\label{def:Grashof}
        G_*:=\frac{\sup_{t\geq0}\|\mbg(t)\|_*}{\nu^2}\,\quad 
        G:=\frac{\sup_{t\geq0}|\mbg(t)|}{\nu^2},
    \end{align}
After an application of the Cauchy-Schwarz inequality and Young's inequality in \eqref{est:energy:inequality} and \eqref{est:enstrophy:equality}, one has
\begin{align}\label{est:enstrophy:inequality:G}
    \begin{split}
  |\bu(t)|^2 +\nu\int_{t_0}^t\|\bu(s)\|^2ds
  &\leq|\bu(t_0)|^2+\nu^2G_*^2(t-t_0),%\label{est:energy:inequality:G}
        \\
  \|\bu(t)\|^2+\nu\int_{t_0}^t|A\bu(s)|^2ds
  &\leq\|\bu(t_0)\|^2+\nu^2G^2(t-t_0),
    \end{split}
    \end{align}
for all $t\geq0$. Moreover, for any Leray-Hopf weak solution corresponding to $(\bu_0,\mbg)$, it holds that
\begin{align}%\label{def:absorbing:ball:L2}
        \Abs{\bu(t)}^2&\leq e^{-\nu t}\Abs{\bu_0}^2 +2\nu^2 G_*^2(1-e^{-\nu t}),\notag
    \end{align}
for all $t\geq0$, while any strong solution satisfies
    \begin{align}%\label{def:absorbing:ball:H1}
        \no{\bu(t)}{}^2&\leq e^{-\nu t}\no{\bu_0}{}^2+2\nu^2 G^2(1-e^{-\nu t}),\notag
    \end{align}
    for all $t\geq0$.
    Therefore, if we set $R := \sqrt{2} \nu G$ and $R_* := \sqrt{2} \nu G_*$, we find
    \beq{est:uniform:bounds}
       \begin{split}
          \Abs{\bu(t)}^2
          & \leq (\al^2-1) R_*^2 e^{-\nu t} + R_*^2,
          \quad \text{if}
          \quad \al \geq \frac{\Abs{\bu_0}}{R_*},\\
          \no{\bu(t)}{}^2
          &\leq (\al^2-1) R^2 e^{-\nu t} + R^2,
          \quad\text{if}
          \quad \al \geq \frac{\no{\bu_0}{}}{R},
        \end{split}
    \eeq
    %}
    %
    for all $t\geq0$.
    Hence, if $\al>1$, then $\Abs{\bu(t)}\leq \sqrt{2}R_*$, $\Abs{\bu(t)}\leq \sqrt{2}R$,  and $\no{\bu(t)}{}\leq \sqrt{2} R_*$, for all $t\geq T_1$, where $T_1$ depends only on $\al,\kap_0,\nu,G$.
    Otherwise, $\Abs{\bu(t)}\leq R_*$, $\Abs{\bu(t)}\leq R$, and $\no{\bu(t)}{}\leq R_*$, for all $t\geq0$.
    Bounds in $\bbD(A)$ and higher-order norms are also available and developed in the generality required for our purposes, for instance, in \cite{DascaliucFoiasJolly2005, BiswasBrownMartinez2022, Martinez2022force}: there exists $T_2>0$, depending only on $R_*$, such that
\begin{align}\label{def:absorbing:ball:H2}
        \Abs{A\bu(t)}^2\leq 2c_2^2\nu^2(\sig+G)^2G^2=c_2^2\left[\sig+\frac{\sqrt{2}}{2}\left(\frac{R}{\nu}\right)\right]^2R^2=:2R_2^2,
    \end{align}
for all $t\geq  T_2$, for some universal constant $c_2\geq1$, and shape factor $\sig:=\|\mbg\|_{L^\infty(0,\infty;\bbV)}/\|\mbg\|_{L^\infty(0,\infty;\bbH)}.$
Moreover, for any $\al>0$
\begin{align}%\label{est:uniform:bounds:H2}
        \Abs{A\bu(t)}\leq (1+\al^2)^{1/2}R_2,\quad \text{if}\quad |A\bu_0|\leq \sqrt{2}\al R_2,\notag
    \end{align}
for all $t\geq0$. We emphasize that $R_2$ is a function of $R$ and $\sig$.
With a view on the analysis of the batch algorithm later on, it will be useful to summarize the above estimates.
Recall the shorthand notation \eqref{def:tau:sigma}; if $t_1\geq T_1$, then $t_0+t_1+\dots t_j\geq T_1$, and the estimates for $\no{\bu}{}$ can be written as
\begin{align}\notag%\label{est:uniform:bounds:summary}
        \sup_{t\geq0}\|\bu(t;\si_j\bu_0,\si_j\mbg)\|^2\leq R^2\times \begin{cases}
        \al^2,&j=0,\ \al>1\\
        2,&j>0,\ \al>1\\
        1,&j\geq0,\ \al\leq1.
        \end{cases}
    \end{align}
\subsection{General Functional Inequalities and the Trilinear Form}\label{sect:trilinear_est} 
We recall H\"older's inequality:
    \begin{align}%\label{eq:Holder}
        |\phi\psi|_1\leq |\phi|_p|\psi|_q,\quad \text{whenever}\quad \frac{1}p+\frac{1}q=1.\notag
    \end{align}
Given  $0 \leq \alpha < \beta$ and $\phi\in H$, we also have the
{\em generalised Poincar\'{e} inequality}:
    \begin{align}\label{est:Poincare}%\label{equ:2.100}
        \norm{Q_N \phi}_{\alpha} & \leq \frac{1}{N^{\beta - \alpha}} \norm{Q_N \phi}_{\beta},   
    \end{align}
For $\phi\in H$, we also have {\em Bernstein inequalities}: there exists a constant $C>0$, depending only on the dimension $d>0$, for all $N\geq0$, and $\al\geq0$, such that
    \begin{align}
        \norm{P_N \phi}_{\beta}  \leq N^{\beta - \alpha} \norm{P_N \phi}_{\alpha},\qquad
        \abs{P_N \phi}_{\infty}  \leq CN^{\frac{d}{2}} \abs{P_N \phi},\label{est:Bernstein}%\label{equ:2.120} 
    \end{align}

Given $p\in(2,\infty)$, $s>0$, and $\phi\in H^s(\T^d)\cap H$,
we have the {\em Sobolev embedding inequality}:
\begin{align}\label{est:Sobolev}
    \abs{\phi}_{p}&\leq c_{p,s}\norm{\phi}_{s},\quad \text{whenever}\quad \frac{1}p=\frac{1}2-\frac{s}d.
\end{align}
Note that the case $p=2$, \eqref{est:Sobolev} trivially holds, so that the constant is simply given by
    \begin{align}%\label{def:c20}
        c_{p,s}=c_{2,0}=1.\notag
    \end{align}

In the special case $d=2$, we have the {\em Ladyzhenskaya}, {\em Agmon}, and {\em Br\'ezis-Gallouet interpolation inequalities}:
\begin{align}
    \abs{\phi}_4^2&\leq c_L\|\phi\||\phi|,\qquad \Abs{\nabla\phi}_4^2\leq c_L'\norm{\phi}_2\|\phi\|,\quad \phi\in H^1(\T^2)\label{est:Ladyzhenskaya}
\end{align}
\begin{align}
    \abs{\phi}_\infty^2&\leq c_A\|\phi\|_2|\phi|,\quad \phi\in H^2(\T^2)\label{est:Agmon}
\end{align}
\begin{align}
    \abs{\phi}_\infty&\leq c_{BG}\|\phi\|\left[1+\log\left(\frac{|A\phi|^2}{|\phi|^2}\right)\right]^{1/2},\quad \phi\in H\cap H^2(\T^2)\label{est:BrezisGallouet}
\end{align}
where $c_L,c_L',c_A, c_{BG}$ are the constants of interpolation and are dimensionless.

A functional that is central to the analysis presented in this paper is the following trilinear form
    \begin{align}\label{def:trilinear}%\label{equ:2.10}
    b(\bu, \phi, \psi) := \scp{\bu \cdot \nabla \phi}{\psi}
 = \sum_{i=1}^d \int_{\T^d}  \bu_i(x) \pdd_i \phi(x) \psi(x) \idd x
    \end{align}
for sufficiently smooth functions. % 
For the 2D~NSE, we need to consider the following ``vectorial'' variant of the trilinear form, for which we will use the same symbol
    \begin{align}\label{def:trilinear:vec}%\label{equ:2.20}
        b(\bu, \bv, \bw)
        = \sum_{i,j=1}^d \int_{\T^d}  \bu_i(x) \pdd_i \bv_j(x) \bw_j(x) \idd x
        = \sum_{j = 1}^d b(\bu, \bv_j, \bw_j),
    \end{align}
for $\bu, \bw \in \bbH$ and $\bv \in \bbV$, whenever the integral makes sense.

The following proposition establishes the functional settings over which the functionals \eqref{def:trilinear}, \eqref{def:trilinear:vec} are well-defined and possess useful boundedness properties. 
The inequalities asserted therein can be derived by repeated application of the general functional inequalities above and invoking the identities \eqref{eq:trilinear:skew}, \eqref{eq:trilinear:Aid} below, as needed. We omit the details.
\begin{Prop}[Properties and estimates of the trilinear form]\label{thm:trilinear}
%\label{thm:trilinear}
% 
Suppose $\nabla \cdot \bu = 0$.
Then
    \begin{align}\label{eq:trilinear:skew}%\label{equ:2.30}
        b(\bu, \phi, \psi) = -b(\bu, \psi, \phi),
    \end{align}
and 
    \begin{align}\label{eq:trilinear:Aid}
        b(\bv,\bu,A\bu)+b(\bu,\bv,A\bu)+b(\bu,\bu,A\bv)=0.
    \end{align}
Furthermore, for all $d\geq2$, we have the following estimates:
\begin{enumerate}
\item For $p, q, r \in [1, \infty]$ so that $\frac{1}{p} + \frac{1}{q} + \frac{1}{r} = 1$ we have 
\begin{align}\label{est:trilinear:Holder}%\label{equ:2.35}
|b(\bu, \phi, \psi)| \leq \abs{\bu}_{p} \abs{\nabla \phi}_{q} \abs{\psi}_r.
\end{align}

\item There exists $C>0$, for all $N\geq0$, $\bu,\bv,\bw\in\bbH$, such that
    \begin{align}\label{est:trilinear:Bernstein}%\label{eq:trilinearN}
        |b(\bu, \bv, P_N \bw)|&\leq CN^{\frac{d}2}\abs{\bu} \abs{\bv}\norm{P_N \bw}.
    \end{align}

\item There exists $C>0$, for all $N>0$, $\bu,\bv,\bw\in\bbH$ such that
    \begin{align}\label{est:trilinear:BrezisWainger}
        |b(\bu,\bv,P_N\bw)|&\leq C(1+\log_+ N)^{1/2}|\bu||\bv|\|\nabla P_N\bw\|_{d/2}
    \end{align}

\item For $p \in (2, \infty]$, $\bu\in L^p(\T^d)^d$, $\nabla\phi\in H^{d/p}(\T^d)$, $\psi\in H$ it holds that
    \begin{align}\label{est:trilinear:Sobolev1}%\label{equ:2.70}
        |b(\bu, \phi, \psi)| \leq c_{2p/(p-2),d/p}\abs{\bu}_{p} \norm{\nabla \phi}_{d/p} \abs{\psi}.
    \end{align}
\item For $\bu\in \bbH$ such that $\nabla\bu\in L^d(\T^d)^d$, $\phi\in H^2$, it holds that
    \begin{align}\label{est:trilinear:Sobolev2}%\label{equ:2.90}
        |b(\bu, \phi, \Delta \phi)| \leq c_{2d/(d-1),1/2}^2\abs{\nabla \bu}_d \norm{\phi} \abs{\Delta \phi}. 
    \end{align}

\end{enumerate}

\noindent Lastly, in the special case $d=2$, we have the following inequality:

\begin{enumerate}\setcounter{enumi}{5}

\item For $\bu\in\bbV$, $\phi,\psi\in V$, it holds that
    \begin{align}\label{est:trilinear1}%{equ:2.85}
        |b(\bu, \phi, \psi)| \leq c_L \abs{\bu}^{1/2} \norm{\bu}^{1/2}\norm{\phi}^{1/2}\norm{\psi}^{1/2}  \min\left\{\abs{\phi}^{1/2}  \norm{\psi}^{1/2},\norm{\phi}^{1/2}\abs{\psi}^{1/2}\right\}. 
    \end{align}

\end{enumerate}
\end{Prop}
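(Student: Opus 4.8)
The plan is to first establish the two algebraic identities \eqref{eq:trilinear:skew} and \eqref{eq:trilinear:Aid}, and then to derive each bound (1)--(6) by inserting the appropriate general functional inequality from \cref{sect:trilinear_est} into the definitions \eqref{def:trilinear}--\eqref{def:trilinear:vec}, invoking the identities whenever a factor is too rough to differentiate directly. For \eqref{eq:trilinear:skew}, expand $b(\bu,\phi,\psi)=\sum_i\int_{\T^d}\bu_i(\partial_i\phi)\psi$, integrate by parts in $x_i$ (no boundary terms, by periodicity), and drop the term in which the derivative falls on $\bu_i$, since $\sum_i\partial_i\bu_i=\nabla\cdot\bu=0$; what remains is $-\sum_i\int\bu_i(\partial_i\psi)\phi=-b(\bu,\psi,\phi)$. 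For \eqref{eq:trilinear:Aid}, one writes $A=-\De$ and integrates by parts in each of the three terms, repeatedly using $\nabla\cdot\bu=\nabla\cdot\bv=0$ to discard the divergence-form terms that thereby arise; this is the classical enstrophy-type identity for divergence-free fields in the periodic setting (cf.\ \cite{ConstantinFoiasBook,TemamNSEBook}), and I would quote it rather than grind through the index bookkeeping.

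For the estimates, (1) is immediate from the three-factor H\"older inequality applied to $\sum_i\int\bu_i(\partial_i\phi)\psi$ with exponents $p,q,r$. For (2) and (3), since in the vectorial form $\bv$ lies only in $\bbH$, I first use \eqref{eq:trilinear:skew} to rewrite $b(\bu,\bv,P_N\bw)=-b(\bu,P_N\bw,\bv)$, which is legitimate because $P_N\bw$ is smooth; H\"older with exponents $(2,\infty,2)$ then yields $|b(\bu,P_N\bw,\bv)|\le|\bu|\,|\nabla P_N\bw|_\infty\,|\bv|$. For (2), the Bernstein inequality \eqref{est:Bernstein} applied componentwise gives $|\nabla P_N\bw|_\infty\le CN^{d/2}\|P_N\bw\|$. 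For (3), instead bound $|\nabla P_N\bw|_\infty$ by the sum of the moduli of its Fourier coefficients and apply Cauchy--Schwarz, which introduces the factor $\big(\sum_{1\le|\mbk|\le N}|\mbk|^{-d}\big)^{1/2}\le C(1+\log_+N)^{1/2}$ and leaves $\no{\nabla P_N\bw}{d/2}$.

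For (4), H\"older with exponents $\big(p,\tfrac{2p}{p-2},2\big)$ gives $|b(\bu,\phi,\psi)|\le|\bu|_p\,|\nabla\phi|_{2p/(p-2)}\,|\psi|$, and the Sobolev embedding \eqref{est:Sobolev} with $s=d/p$ (the exponent for which $\tfrac{p-2}{2p}=\tfrac12-\tfrac{s}{d}$) converts $|\nabla\phi|_{2p/(p-2)}$ into $c_{2p/(p-2),d/p}\no{\nabla\phi}{d/p}$; the case $p=\infty$ is the trivial $L^2$ estimate with constant $1$. For (5), I first integrate $b(\bu,\phi,\De\phi)=\sum_{i,k}\int\bu_i(\partial_i\phi)(\partial_k^2\phi)$ by parts in $x_k$; the term $\sum_{i,k}\int\bu_i(\partial_i\partial_k\phi)(\partial_k\phi)=\tfrac12\int\bu\cdot\nabla|\nabla\phi|^2$ vanishes since $\nabla\cdot\bu=0$, leaving $b(\bu,\phi,\De\phi)=-\sum_{i,k}\int(\partial_k\bu_i)(\partial_i\phi)(\partial_k\phi)$; H\"older with exponents $\big(d,\tfrac{2d}{d-1},\tfrac{2d}{d-1}\big)$ bounds this by $|\nabla\bu|_d\,|\nabla\phi|_{2d/(d-1)}^2$, then \eqref{est:Sobolev} with $s=\tfrac12$ gives $|\nabla\phi|_{2d/(d-1)}\le c_{2d/(d-1),1/2}\no{\phi}{3/2}$, and Cauchy--Schwarz in Fourier gives $\no{\phi}{3/2}^2\le\|\phi\|\,|\De\phi|$, which supplies the squared constant. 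For (6) with $d=2$, H\"older with $(4,2,4)$ gives $|b(\bu,\phi,\psi)|\le|\bu|_4\|\phi\|\,|\psi|_4$, and applying \eqref{est:Ladyzhenskaya} to $|\bu|_4$ and $|\psi|_4$ yields the bound with the second entry of the minimum; using \eqref{eq:trilinear:skew} first to interchange $\phi$ and $\psi$ produces the first entry, and one keeps the smaller. The vectorial versions of (1)--(6) follow by applying the scalar estimates to each summand of $b(\bu,\bv,\bw)=\sum_j b(\bu,\bv_j,\bw_j)$ and summing with Cauchy--Schwarz over $j$.

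None of the individual steps is hard, so the main obstacle is organizational: the only genuinely delicate point is the identity \eqref{eq:trilinear:Aid}, whose verification by integration by parts involves enough bookkeeping that citing it is preferable, together with the care needed to land on the stated dimensionless constants (notably the squared $c_{2d/(d-1),1/2}$ in (5)), and the observation that $b(\bu,\bv,P_N\bw)$ with $\bv$ merely in $\bbH$ only acquires meaning once \eqref{eq:trilinear:skew} has been used to shift the derivative onto the smooth factor.
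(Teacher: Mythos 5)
Your proposal is correct and does precisely what the paper says but does not show: the paper explicitly declines to prove \cref{thm:trilinear}, stating only that the inequalities ``can be derived by repeated application of the general functional inequalities above and invoking the identities \eqref{eq:trilinear:skew}, \eqref{eq:trilinear:Aid} below, as needed. We omit the details.'' You have supplied exactly those details, following the recipe the paper indicates. In particular, the observation that (2) and (3) only make classical sense after \eqref{eq:trilinear:skew} shifts the derivative onto the band-limited factor $P_N\bw$ is the one non-mechanical step, and you have it; the exponent bookkeeping in (4) ($s=d/p$) and (5) ($s=1/2$, squared constant via $\norm{\phi}_{3/2}^2\le\norm{\phi}\,\abs{\Delta\phi}$), the Fourier--Cauchy--Schwarz derivation of the $(1+\log_+N)^{1/2}$ factor in (3), and the two-branch Ladyzhenskaya argument for (6) via skew-symmetry all check out against the stated constants and norms. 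The only small implicit point worth flagging is that the hypothesis $\nabla\cdot\bu=0$ stated at the top of the proposition must be understood to apply throughout (not just to the two identities), since your reductions for (2), (3), (5) all rely on it; this is how the paper uses the estimates, so it is not a gap, merely worth making explicit.
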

Note that \eqref{est:trilinear:Sobolev2} is not a direct application of \eqref{est:Sobolev} but makes use of the identity \eqref{eq:trilinear:skew} before applying \eqref{est:Sobolev}. Morover, one immediately sees that \eqref{eq:trilinear:skew}, \eqref{eq:trilinear:Aid} imply
    \begin{align}\notag%label{eq:trilinearA0}
        &b(\bu,\bv,\bv)=b(\bu, \bu, A\bu) = 0.
    \end{align}

\subsection{Quasi-Finite Forcing}\label{sect:quasi:finite} 
Our methodology for reconstructing the surface fluxes or forcings in transport-diffusion or 2D Navier-Stokes equations from low-mode observations will rely on the assumption that the reference forcing $g$, that is, the forcing that we seek to reconstruct, is of {\em quasi--finite rank}. We define this concept now. 
\begin{Def}\label{thm:1.10}
  A subset $\Gam \subset H^\beta$  is of {\em quasi--finite rank} $N \in \N$ if there exist $\alpha, \beta \in \R$ and a Lipschitz map $F:P_N(H^{\alpha}) \to Q_N(H^{\beta})$ so that for any $\phi \in \Gamma$ we have $Q_N \phi = F(P_N \phi)$.
  We will refer to the map $F$ as the {\em Lipschitz enslaving map} of $\Gamma$.
  Further, $N$ is the {\em rank} and the pair $(\alpha, \beta)$ is the {\em order} of $\Gamma$.
  The union of all sets of quasi-finite rank $N$, order $(\al,\be)$, and Lipschitz enslaving map $F$ is denoted by $\Gamma(N,F,\al,\be)$.
\end{Def}
Note that in this definition we assume that $P_N(H^{\alpha}) = P_N(H^{\beta})$ as these spaces are finite~dimensional. 
  We stress however that the Lipschitz property of the enslaving map $F$ is understood with respect to the norm $\norm{.}_{\alpha}$ on the domain and the norm $\norm{\cdot}_{\beta}$ on the range.
  The enslaving map and in particular the Lipschitz constant will therefore depend on the rank $N$ and the order $(\alpha, \beta)$ as is clarified in the following lemma:
\begin{Lem}\label{thm:1.20}
  Suppose $\Gamma\subset\Gamma(N,{F},\al,\be)$. Then for any $M > N$, there exist $\Gamma_M \supset \Gamma$ of quasi--finite rank $M$ and the same order with some Lipschitz enslaving map $F_M$ which has the same Lipschitz constant as ${F}$.
  Furthermore, if $\tilde{\alpha}, \tilde{\beta} \in \R$ with $\tilde{\beta} \leq \beta$, then $\Gamma$ is of order $(\tilde{\alpha}, \tilde{\beta})$ with the same Lipschitz enslaving map, now regarded as a map from $P_N(H^{\tilde{\alpha}})$ to $Q_N(H^{\tilde{\beta}})$, with Lipschitz constant $\|F\|_{L,\tilde{\al},\tilde{\be}}=N^{(\alpha - \tilde{\alpha})_+ - (\beta - \tilde{\beta})} \norm{F}_{L,\al,\be}$.
\end{Lem}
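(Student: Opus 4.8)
The plan is to prove the two assertions separately, each time by simply exhibiting the claimed enslaving map and verifying its Lipschitz constant against the appropriate pair of norms. For the first assertion, fix $M > N$ and define $\Gamma_M := \Gamma$. We must produce a set of quasi-finite rank $M$ containing $\Gamma$. Write $P_M = P_N + (P_M - P_N)$ and observe that for any $\phi \in \Gamma$ we have $Q_M \phi = Q_N\big(Q_M\phi\big) = Q_M\big(Q_N\phi\big) = Q_M F(P_N\phi)$, using that the Fourier projections commute and $Q_M Q_N = Q_M$. Hence if we set $F_M(w) := Q_M F(P_N w)$ for $w \in P_M(H^\alpha)$ — where $P_N w$ makes sense because $P_N P_M = P_N$ — then $Q_M\phi = F_M(P_M\phi)$ for all $\phi\in\Gamma$. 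The Lipschitz estimate is immediate: for $w_1,w_2 \in P_M(H^\alpha)$,
\begin{align}\notag
\norm{F_M(w_1) - F_M(w_2)}_\beta
= \norm{Q_M\big(F(P_Nw_1) - F(P_Nw_2)\big)}_\beta
\leq \norm{F(P_Nw_1) - F(P_Nw_2)}_\beta
\leq \norm{F}_{L,\al,\be}\,\norm{P_N(w_1 - w_2)}_\al
\leq \norm{F}_{L,\al,\be}\,\norm{w_1 - w_2}_\al,
\end{align}
since $Q_M$ and $P_N$ are both norm-nonincreasing on every $H^s$ (they act diagonally in Fourier space with entries in $\{0,1\}$). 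Thus $\Gamma_M = \Gamma \in \Gamma(M, F_M, \al, \be)$ with the same Lipschitz constant, as claimed.

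For the second assertion, fix $\tilde\alpha,\tilde\beta \in \R$ with $\tilde\beta \leq \beta$. Since $P_N(H^{\tilde\alpha}) = P_N(H^\alpha)$ (both are the same finite-dimensional space of trigonometric polynomials) and likewise for $\beta,\tilde\beta$, the same map $F$ is well-defined as a map $P_N(H^{\tilde\alpha}) \to Q_N(H^{\tilde\beta})$ — here one uses $\tilde\beta \leq \beta$ so that $Q_N(H^\beta) \subset Q_N(H^{\tilde\beta})$ — and still satisfies $Q_N\phi = F(P_N\phi)$ for $\phi \in \Gamma$. It remains to compute the Lipschitz constant in the new norms. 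The key point is a pair of reverse Poincaré/Bernstein-type inequalities on the relevant frequency-localized subspaces. On the \emph{range} side, for $Q_N$-supported functions $v$ one has $\norm{v}_{\tilde\beta} \leq N^{-(\beta - \tilde\beta)}\norm{v}_\beta$ by the generalised Poincaré inequality \eqref{est:Poincare} (valid since $\beta - \tilde\beta \geq 0$). On the \emph{domain} side, for $P_N$-supported functions $w$ we compare $\norm{w}_\alpha$ and $\norm{w}_{\tilde\alpha}$: if $\alpha \geq \tilde\alpha$ then the Bernstein inequality \eqref{est:Bernstein} gives $\norm{w}_\alpha \leq N^{\alpha - \tilde\alpha}\norm{w}_{\tilde\alpha}$, while if $\alpha < \tilde\alpha$ then $\norm{w}_\alpha \leq \norm{w}_{\tilde\alpha}$ trivially (frequencies $\leq N$, so $|k|^{2\alpha} \leq |k|^{2\tilde\alpha}$ when... careful: this needs $|k|\geq 1$, which holds since the zero mode is excluded). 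In either case $\norm{w}_\alpha \leq N^{(\alpha - \tilde\alpha)_+}\norm{w}_{\tilde\alpha}$. Chaining these,
\begin{align}\notag
\norm{F(w_1) - F(w_2)}_{\tilde\beta}
\leq N^{-(\beta-\tilde\beta)}\norm{F(w_1)-F(w_2)}_\beta
\leq N^{-(\beta-\tilde\beta)}\norm{F}_{L,\al,\be}\norm{w_1 - w_2}_\alpha
\leq N^{(\alpha-\tilde\alpha)_+ - (\beta-\tilde\beta)}\norm{F}_{L,\al,\be}\norm{w_1 - w_2}_{\tilde\alpha},
\end{align}
which is exactly the stated formula $\norm{F}_{L,\tilde\al,\tilde\be} = N^{(\alpha - \tilde\alpha)_+ - (\beta - \tilde\beta)}\norm{F}_{L,\al,\be}$.

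The proof is essentially bookkeeping; there is no genuine obstacle, but the one place that requires a moment of care is getting the exponents and the direction of each inequality right — in particular remembering that on the range one wants to \emph{lower} the Sobolev index (using Poincaré on high modes, which improves the constant by a negative power of $N$) while on the domain one is forced to possibly \emph{raise} the index (using Bernstein on low modes, which costs a nonnegative power of $N$), and that the excluded zero mode is what makes the $\alpha < \tilde\alpha$ comparison on $P_N$-supported functions hold without any constant. One should also note explicitly that the resulting constant is sharp in general, so no further optimization is possible, though this is not needed for the statement.
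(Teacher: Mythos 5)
Your proof is correct and follows essentially the same route as the paper: define $F_M := Q_M \circ F \circ P_N$ to get the rank-$M$ enslaving map, and then chain the Bernstein inequality (on the $P_N$-supported domain) with the generalised Poincar\'e inequality (on the $Q_N$-supported range) to convert the Lipschitz constant between orders. The paper leaves both the Lipschitz verification and the order-conversion step as "straightforward"; you have simply written them out, and your bookkeeping of the exponent $(\alpha-\tilde\alpha)_+ - (\beta-\tilde\beta)$ matches what the paper intends.
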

\begin{proof}
  For any $g \in \Gamma$ we have $g = P_N g + {F}(P_N g)$.
  If $M > N$, then this implies $Q_M g = Q_M {F}(P_N g) = Q_M {F}(P_N P_M g)$.
  On the other hand $g = P_M g + Q_M g$ and therefore $g = P_M g + F_M(P_M g)$ with $F_M := Q_M \circ {F} \circ P_N$.
  Clearly, $F_M$ maps $P_M(H^{\alpha})$ to $Q_M(H^{\beta})$, hence $g \in \Gamma_M:= \Gamma(M, F_M, \alpha, \beta)$.
It is now straightforward to check that ${F}$ and $F_M$ have the same Lipschitz constants.
The remaining statements are also straightforward and follows by repeated application of the inequalities \eqref{est:Poincare} and \eqref{est:Bernstein}.
\end{proof}
\begin{Rmk}\label{rmk:canonical}
  The Lemma~\ref{thm:1.20} shows that for any $\Gam\subset\Gam(N,F,\al,\be)$, there is a  family
    \begin{align}\notag%\label{def:canonical}
        \cG := \{\Gam_M:M\geq N\},
    \end{align}
    where each $\Gamma_M$ is of quasi-finite rank $M$ and order $(\alpha, \beta)$ with Lipschitz enslaving map $F_M$ so that
    \begin{enumerate}
    \item $\Gamma_N = \Gamma$, 
    \item $\cG$ is increasing, that is $\Gamma_{M_1} \subset \Gamma_{M_2}$ for $M_1 \leq M_2$, 
    \item\label{itm:4.10} $\norm{F_M}_{L, \alpha, \beta} = \norm{F}_{L, \alpha, \beta}$ for all $M \geq N$.
      \end{enumerate}
Later, we will encounter similar increasing families of quasi--finite rank sets which however do {\em not} satisfy item~\ref{itm:4.10}, that is the Lipschitz constant of $F_M$ may depend on $M$.
  \end{Rmk}
\subsubsection{Reconstruction of Functions from Spectral Data and its Connection to the Notion of Function Sets with Quasi--Finite Rank}
The purpose of this section is to provide further insight into the concept of quasi--finite rank function sets.
As the material will not be needed for the remainder of the paper, the reader may skip this material and proceed to Section~\ref{sect:algorithm1}.
Given $\al,\be\in\mathbb{R}$, for each $k\in\mathbb{Z}^d$, suppose that $F_k:H^\al\goesto \mathbb{C}^d$ is Lipschitz and $F_k(0)=0$. We define a mapping $F$ such that
    \begin{align}\label{eq:F:Fourier}
    (F(f))(x):=\sum_{k\in\mathbb{Z}^d}F_k(f)e^{ik\cdotp x}.
    \end{align}
By uniqueness of Fourier coefficients, we see that $F_k(f)=(F(f))_k$, for all $f\in H^\al$, whenever $F$ is well-defined, in which case, we may identify $F$ with $\{F_k\}_k$ and can write $F\sim\{F_k\}_{k}$ to denote the relation \eqref{eq:F:Fourier}. 
We now provide conditions that ensure \eqref{eq:F:Fourier} is well-defined and possesses additional regularity properties.

\begin{Lem}\label{lem:E:Lipschitz}
Let $\al,\be\in\mathbb{R}$ and $d\geq1$. 
Suppose that $F_k\in\Lip(H^\al,\mathbb{C}^d)$ and $F_k(0)=0$, for each $k\in\mathbb{Z}^d$, and that $\{F_k\}_k$ satisfies
    \begin{align}\label{cond:Fk:Lip}
        \sum_{k}\|F_k\|_{L,\al}^2|k|^{2\be}<\infty,
    \end{align}
where $\|F_k\|_{L,\al}$ denotes the Lipschitz norm of $F_k$. Then $F:H^\al\goesto H^\be$ is well-defined and $F\in\Lip(H^\al, H^\be)$. 
In particular
    \begin{align}\label{eq:F:Lip}
        \|F\|_{L,\al,\be}\leq\left(\sum_{k}\|F_k\|_{L,\al}^2|k|^{2\be}\right)^{1/2}.
    \end{align}
\end{Lem}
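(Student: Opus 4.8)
The plan is to verify the claim directly from the definitions, treating $F$ formally as $F \sim \{F_k\}_k$ via \eqref{eq:F:Fourier} and showing that the partial sums converge in $H^\be$, uniformly enough to pass to the limit. First I would fix $f, g \in H^\al$ and estimate, for each finite set $S \subset \bbZ^d$, the quantity $\Abs{\sum_{k \in S} (F_k(f) - F_k(g)) e^{ik\cdot x}}_\be^2 = (2\pi)^d \sum_{k \in S} |k|^{2\be} |F_k(f) - F_k(g)|^2$, where I have used the Parseval-type identity defining $\no{\cdot}{\be}$. Since each $F_k$ is Lipschitz with $\no{F_k}{L,\al}$ its Lipschitz norm, one has $|F_k(f) - F_k(g)| \leq \no{F_k}{L,\al} \no{f - g}{\al}$, so the sum is bounded by $(2\pi)^d \no{f-g}{\al}^2 \sum_{k \in S} \no{F_k}{L,\al}^2 |k|^{2\be}$, which by \eqref{cond:Fk:Lip} is bounded uniformly in $S$ by $(2\pi)^d \no{f-g}{\al}^2 \sum_{k} \no{F_k}{L,\al}^2 |k|^{2\be} < \infty$.

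Next, I would use this to establish well-definedness: applying the estimate with $g = 0$ and recalling $F_k(0) = 0$ shows that the partial sums $\sum_{k \in S} F_k(f) e^{ik\cdot x}$ form a Cauchy net in $H^\be$ as $S$ exhausts $\bbZ^d$ (because the tails $\sum_{k \notin S'} \no{F_k}{L,\al}^2 |k|^{2\be}$ vanish), so $F(f) := \sum_k F_k(f) e^{ik\cdot x}$ is a well-defined element of $H^\be$, and by uniqueness of Fourier coefficients $(F(f))_k = F_k(f)$. Then passing $S \uparrow \bbZ^d$ in the two-point estimate gives $\no{F(f) - F(g)}{\be}^2 \leq (2\pi)^d \no{f-g}{\al}^2 \sum_k \no{F_k}{L,\al}^2 |k|^{2\be}$; taking square roots yields $F \in \Lip(H^\al, H^\be)$ together with the bound \eqref{eq:F:Lip} on $\no{F}{L,\al,\be}$.

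I do not anticipate a serious obstacle here — this is a routine Parseval/Minkowski argument. The only mild subtlety is bookkeeping the normalization constant $(2\pi)^d$ consistently with the paper's convention $\no{\phi}{\al}^2 = (2\pi)^d \sum_{k \neq 0} |k|^{2\be}|\phi_k|^2$, and handling the $k = 0$ mode (which is either excluded because we work in $H$, or contributes $0$ when $\be$ is such that $|k|^{2\be}$ is interpreted as $0$ there; in any case $F_0(0) = 0$ and mean-zero functions have no zero mode, so it is harmless). One should also note the interchange of the (infinite) sum defining $F(f)$ with the Lipschitz estimate is justified precisely by the uniform-in-$S$ bound above, i.e.\ by dominated convergence for series. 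Everything else is a direct unwinding of definitions.
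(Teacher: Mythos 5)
Your proposal is correct and takes essentially the same route as the paper: expand $\|F(f)-F(g)\|_\be^2$ via the Parseval-type identity, apply the Lipschitz bound $|F_k(f)-F_k(g)| \le \|F_k\|_{L,\al}\|f-g\|_\al$ coefficient-by-coefficient, and sum. You are merely more careful than the paper about the limiting step (finite partial sums, Cauchy net), and you correctly track the $(2\pi)^d$ normalisation that the paper silently drops, which only affects the constant in \eqref{eq:F:Lip} and not the Lipschitz conclusion.
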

\begin{proof}
If \eqref{cond:Fk:Lip} holds, then we have
\begin{align}
  & \|F(f)-F(g)\|_\be^2 =\sum_k|F_k(f)-F_k(g)|^2|k|^{2\be} 
  \notag
    \\
    &\leq \|f-g\|_\al^2\sum_{k}\|F_k\|_{L,\al}|k|^{2\be}.\notag
\end{align}
Thus $F\in\Lip(H^\al,H^\be)$. In particular, since $F_k(0)=0$, we see that
    \begin{align}\notag
        \|F(f)\|_\be^2\leq \|f\|_\al^2\sum_k\|F_k\|_{L,\al}|k|^{2\be}<\infty,
    \end{align}
as desired.
\end{proof}
A converse to the estimate~\eqref{eq:F:Lip} cannot hold, however.
Indeed,
\beqn{equ:4.100}
\|F\|_{L,\al,\be}^{2}
\geq \frac{\|F(f)-F(g)\|_\be^2}{\|f-g\|_\al^2}
= \sum_k \frac{|F_k(f)-F_k(g)|^2}{\|f-g\|_\al^2} |k|^{2\be} 
\geq \frac{|F_k(f)-F_k(g)|^2}{\|f-g\|_\al^2} |k|^{2\be}, 
\eeq
for all $k$, and taking the supremum over $f, g$ we find
\beqn{equ:4.110}
\|F\|_{L,\al,\be}^{2}
\geq |F_k|^2_{L,\alpha} |k|^{2\be}, 
\eeq
and it is possible to construct examples where this inequality is (arbitrarily close to) an equality for arbitrarily many $k$, meaning that the right hand side of~\eqref{eq:F:Lip} can be arbitrarily large.
For the remainder of this section we motivate the use of quasi--finite rank function classes to represent the forcing. 
As we have seen, assuming the forcing to be of quasi--finite rank, rather than merely a truncated Fourier series, requires extra effort in the analysis.
In this section we will provide evidence that this extra effort is warranted.
Consider a square integrable function $f:\bbT \to \R$ on the (one--dimensional) torus with Fourier coefficients $\{f_k, k \in \Z\}$. 
It is well known that the convergence of the truncated Fourier series $f_N = \sum_{\abs{k} \leq N} f_k \exp( i \scp{k}{\cdot})$ to $f$ is very fast if the function $f$ is smooth (as a function on $\T$). 
In fact, if $f$ is analytic, the convergence is geometric even in the uniform topology.
In this situation, the truncated Fourier series provide more than adequate approximations of $f$.
This is no longer true though if the function $f$ has jump discontinuities (e.g.\ if it is not periodic as a function on $[0, 1]$).
The truncated series $f_N$ then exhibits the well--known Gibbs phenomenon along the discontinuities.
Moreover, the convergence slows down dramatically in the {\em entire} domain as the coefficients typically decay merely like $O(1/\abs{k})$.
This is true irrespective of the smoothness of $f$ between the discontinuities.
However, although the truncated Fourier series does not provide adequate approximations of $f$ in such situations, one might still hope that using the Fourier coefficients up to order $N$ in some other way might yield better approximations.
There is indeed a considerable body of literature on methods 
to reconstruct functions from finitely many Fourier coefficients (typically framed as attempts to avoid the Gibbs phenomenon and to restore uniform convergence everywhere except at the discontinuities).
More recently it was pointed out that these methods need to be {\em stable}, that is, exhibit a form of local Lipschitz continuity with respect to the provided Fourier data.
Several stable methods have been proposed, using what we call quasi-finite rank function classes in the present paper. 
The Lipschitz continuity of the high modes as functions of the low modes forms an essential ingredient to ensure the stability of those methods.
As a motivating example, we will discuss the core ideas of the method presented in~\citep{Hrycak_pseudospectral_fourier_reconstruction_modified_iprm_2009} but refer to the literature for more detailed discussions of this and other methods.
We also stress that to the best of our knowledge, stable methods for the reconstruction of functions from Fourier data have been fully investigated in dimension~1 only.
This means that some work is still required before such methods can be combined with the approaches presented in the present paper to reconstruct forcings in geophysical problems.
Carrying out this work is clearly beyond the scope of this paper, but we believe that by considering quasi--finite function classes as part of our methodology, it will be easy to accommodate future results regarding the stable reconstruction of functions from Fourier data.
The method presented in~\citep{Hrycak_pseudospectral_fourier_reconstruction_modified_iprm_2009} considers functions $f:[0, 1] \to \R$ which are analytic but not necessarily periodic.
Let $\{f_k, \abs{k} \leq N\}$ be the first $N$ Fourier coefficients of $f$ and write $f_N = \sum_{\abs{k} \leq N} f_k \exp(i \scp{k}{\cdot})$ as before.
The method proceeds by applying a second projection step in which $f_N$ is approximated by a linear combination of Legendre polynomials.
More specifically, let $P^{(l)}$ be the Legendre polynomial of order $l = 0, 1, 2, \ldots$.
An approximation $f_{N, L} = \sum_{l = 0}^{L-1} a_l P^{(l)}$ of $f_N$ is found by solving the equation
\beq{equ:5.10}
\sum_{l = 0}^{L-1} a_l P_k^{(l)} = f_k, 
\qquad \abs{k} \leq N
\eeq
for the parameters $a = (a_0, \ldots, a_{L-1})$, where $\{P^{(l)}_k, k \in \Z \} $ are the Fourier coefficients of the Legendre polynomial $P^{(l)}$ for each $l \in \N$.
In~\citep{Hrycak_pseudospectral_fourier_reconstruction_modified_iprm_2009} it is shown that Equation~\eqref{equ:5.10} has to be kept overdetermined and solved in a least~squares sense in order to keep the problem well conditioned. 
Namely, it is demonstrated that provided $N \geq L^2$, the condition number of this problem obeys a universal bound, while it may grow unchecked if the coefficients are determined from a less overdetermined problem.
Key to the analysis are tight bounds on the Fourier coefficients $\{P^{(l)}_k, k \in \Z \} $ of the Legendre polynomials $P^{(l)}$.
It is shown that $f_{N, L}$ converges uniformly to $f$ on $(0, 1)$ with exponential rate in $L$ (and thus root--exponential rate in $N$).
The method can easily be extended to functions that are piecewise analytic on $[0, 1]$ with a finite number of jump discontinuities.
It should be stressed however that in addition to the Fourier coefficients, the location of the discontinuities must be known to apply the method.
We will now use results from~\citep{Hrycak_pseudospectral_fourier_reconstruction_modified_iprm_2009} to demonstrate that the set of functions $\Gamma_N := \{\sum_{l = 0}^{L-1} a_l P^{(l)}: (a_0, \ldots, a_{L-1}) \in \R^{L}\}$ forms a quasi--finite rank function class if $N \geq L^2$, and we will determine the Lipschitz constant of the map $F_N$ which links the low with the high modes.
We write $p$ for the matrix with elements $p_{l, k} := P_k^{(l)}$ where $k \in \Z$ and $l = 0, \ldots, L-1$.
The parameters $a = (a_0, \ldots, a_{L-1})$ are linked to the Fourier coefficients of $g := \sum_{l = 0}^{L-1} a_l P^{(l)}$ through
\beq{equ:5.20}
(a p)_k = g_k, \qquad k \in \Z.
\eeq
In~\citep{Hrycak_pseudospectral_fourier_reconstruction_modified_iprm_2009}, it is shown that $p$ is injective, and it follows from elementary linear algebra that the solution of Equation~\eqref{equ:5.20} satisfies 
\beq{equ:5.30}
\norm{a}^2 \leq \si_{N}^{-2} \sum_{\abs{k} \leq N} \abs{g_k}^2,
\eeq
whenever $(g_k)_{k \in \Z} \in \text{Im}(p)$, where $\sigma_N$ is the smallest nonzero singular value of the row--truncated matrix $(p_{., k})_{\{\abs{k} \leq N\}}$.
% .
In~\citep{Hrycak_pseudospectral_fourier_reconstruction_modified_iprm_2009}, the lower bound $\si^2_N \geq 1 - \frac{8}{\pi} \arcsin{\frac{1}{\pi}}$ is established, provided that $N \geq  L^2$.
On the other hand, from the fact that the Legendre polynomials form an orthonormal system in $L^2([0, 1])$ it follows that $p$ is an isometry so that in particular 
\beq{}
\sum_{\abs{k} > N} \abs{g_k}^2 \leq \norm{a}^2.\notag
\eeq
Combining this with Equation~\eqref{equ:5.30}, we find that $\Gamma_N$ is a quasi–finite rank function class, with the Lipschitz constant of the map $F_N$ (with respect to values in $Q_N(H_{-1})$) given by $\abs{F_N}_L = \frac{c}{N}$, where $c = \frac{1}{ \sqrt{1 - \frac{8}{\pi} \arcsin{\frac{1}{\pi}}}}$.
Summarising the discussion in this section, it seems to be of independent interest to develop the concept of quasi-finite rank sets of functions further.
  Thereby a unifying framework could be established which would cover the techniques presented above as well as other approaches from the literature.
  Such a framework however would allow to identify the essential mathematical properties of such function sets and potentially facilitate solving open problems such as to develop the example above in higher dimensions.
\section{Reconstruction of Forcings in Transport--Diffusion Equations}\label{sect:algorithm1}
\subsection{Sieve Algorithm}
In order to state our main convergence result concerning the Sieve Algorithm for \eqref{eq:td:intro}, let us first state the corresponding well-posedness result for the nudged system \eqref{eq:nudge:0}. Note that we will omit the proof of this result since \eqref{eq:nudge:0} is a lower-order perturbation of the linear equation \eqref{eq:td:intro}. In particular, its proof follows along the lines of that of \cref{thm:td:exist} in a straightforward way; the reader is thus referred to \cref{sect:appendix} for relevant details.

\begin{Thm}\label{thm:td:nudge:exist}
  Let $d\geq2$. Suppose $T>0$, $\psi_0\in H$, $f\in L^\infty(0,T; V^*)$ and $\bv\in L^{\frac{2p}{p-d}}(0,T; L^{p}(\T^d)^d)$, for some $p\in(d,\infty]$  satisfying $\nabla\cdotp\bv=0$ in the sense of distribution.
    For all $\mu, N>0$, there exists a unique solution $\psi\in C([0,T);H)\cap L^2(0,T;V)$ such that $\frac{\dd\psi}{\dd t}\in L^2(0,T;V^*)$ and    
    \begin{align}\label{def:td:nudge:weak}
      \frac{\dd\psi}{\dd t}(t) + \til{L}(t)\psi(t)
      = f(t),
    \end{align}
    %% %
    where $\til{L}(t) := L(t)+\mu P_N,$ with $L(t)$ given as in Equation~\eqref{def:L}, holds as an equation in $V^*$ for a.a.~$t \in [0,T]$, and furthermore
      $\psi(0)=\psi_0$ in $H$.
    If, additionally, $\psi_0\in V$, $f\in L^\infty(0,T;H)$, and $\bv\in L^{\frac{2p}{p-d}}(0,T;\bbV)$, then $\psi\in C([0,T);V)\cap L^2(0,T; H^2(\bbT^d))$, $\frac{\dd\psi}{\dd t}(t)\in L^2(0,T;H)$, %
    Equation~\eqref{def:td:nudge:weak} holds as an equation in $H$ for a.a.~$t \in [0,T)$, and furthermore
    $\psi(0) = \psi_0$ in $V$.
\end{Thm}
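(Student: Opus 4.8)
The plan is to reproduce the proof of \cref{thm:td:exist} almost verbatim, exploiting that $\til{L}(t)=L(t)+\mu P_N$ differs from $L(t)$ only by the operator $\mu P_N$, which is bounded on each of $H$, $V$, $V^*$ and $H^2(\T^d)$ (with operator norm $\mu$), is symmetric and positive semidefinite on $H$, and commutes with $\De$. Concretely, for an integer $m\geq N$ let $P_m$ denote Fourier truncation to wavenumbers $|k|\leq m$ and consider the Galerkin system $\frac{d}{dt}\psi^m+P_m\til{L}(t)\psi^m=P_m f$ with $\psi^m(0)=P_m\psi_0$; since $m\geq N$ one has $P_mP_N=P_N$, so this is a linear ODE on the finite-dimensional space $P_mH$ with $L^\infty$-in-time coefficients, hence uniquely solvable on $[0,T]$.

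For the \emph{weak solution}, pair the Galerkin equation with $\psi^m$. The transport term vanishes by \eqref{eq:trilinear:skew} and $\nabla\cdot\bv=0$, the diffusion term yields $\kap\norm{\psi^m}^2$, and the perturbation contributes
\[
\mu(P_N\psi^m,\psi^m)=\mu|P_N\psi^m|^2\geq0,
\]
which we simply discard; what remains is precisely the differential inequality appearing in the proof of \cref{thm:td:exist}, so Young's inequality applied to $(f,\psi^m)$ followed by Grönwall's lemma yields bounds for $\{\psi^m\}$ in $L^\infty(0,T;H)\cap L^2(0,T;V)$ uniform in $m$. Feeding these back into the equation, and using that $\mu P_N$ is bounded on $V^*$, bounds $\{\frac{d}{dt}\psi^m\}$ in $L^2(0,T;V^*)$. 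An Aubin--Lions argument then extracts a subsequence converging weakly in $L^2(0,T;V)$, weak-$*$ in $L^\infty(0,T;H)$ and strongly in $L^2(0,T;H)$ to a limit $\psi$; passing to the limit in the Galerkin equations — the term $\mu P_N\psi^m$ posing no difficulty since $P_N$ is finite-rank — shows $\psi$ solves \eqref{def:td:nudge:weak} in $V^*$ for a.a.\ $t$, and $\psi\in C([0,T);H)$ with $\psi(0)=\psi_0$ follows exactly as in \cref{thm:td:exist}.

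For the \emph{strong solution}, add the hypotheses $\psi_0\in V$, $f\in L^\infty(0,T;H)$ and $\bv\in L^{\frac{2p}{p-d}}(0,T;\bbV)$, and pair the Galerkin equation with $-\De\psi^m$. Since $P_N$ commutes with $\De$ and is an orthogonal projection, the perturbation now contributes $\mu(P_N\psi^m,-\De\psi^m)=\mu\norm{P_N\psi^m}^2\geq0$, again discardable; the transport term $-b(\bv,\psi^m,\De\psi^m)$ is handled by the trilinear estimates of \cref{thm:trilinear} — \eqref{est:trilinear:Sobolev1} or \eqref{est:trilinear:Sobolev2}, using \eqref{eq:trilinear:skew} where appropriate exactly as in the proof of \cref{thm:td:exist} — with the top-order factor $|\De\psi^m|$ absorbed into $\kap|\De\psi^m|^2$ via Young's inequality. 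This gives bounds for $\{\psi^m\}$ in $L^\infty(0,T;V)\cap L^2(0,T;H^2)$ and $\{\frac{d}{dt}\psi^m\}$ in $L^2(0,T;H)$ uniform in $m$, and passing to the limit as above upgrades the weak solution to a strong one on $[0,T]$. Uniqueness is immediate in either case: the difference $w$ of two solutions with identical data solves $\frac{dw}{dt}+L(t)w=-\mu P_N w$, $w(0)=0$, and pairing with $w$ gives $\frac{1}{2}\frac{d}{dt}|w|^2+\kap\norm{w}^2=-\mu|P_N w|^2\leq0$, hence $w\equiv0$.

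I do not anticipate a genuine obstacle — this is precisely why the authors omit the proof and defer to \cref{sect:appendix}. The only two points needing any care are the boundedness and positive semidefiniteness of $\mu P_N$, which make every energy estimate \emph{easier} than in the unperturbed case, and the commutation $P_N\De=\De P_N$, which keeps the $V$- and $H^2$-level estimates clean; everything else is the argument already carried out for \cref{thm:td:exist}. As an alternative that avoids re-running the Galerkin scheme, one may instead treat $-\mu P_N\psi$ as an auxiliary forcing and apply \cref{thm:td:exist} directly — legitimate because $P_N\psi$ lies in a fixed finite-dimensional space, hence is bounded in every norm whenever $\psi\in C([0,T];H)$ — then close a contraction on $C([0,T^*];H)$ for small $T^*$, or on all of $[0,T]$ in an exponentially time-weighted norm, and continue the solution.
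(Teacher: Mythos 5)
Your argument is correct and matches the paper's (unwritten) proof exactly: the authors themselves omit the proof, remarking that Equation~\eqref{eq:nudge:0} is a lower-order perturbation of~\eqref{eq:td:intro} and deferring to the Galerkin argument of \cref{thm:td:exist} in \cref{sect:appendix}, which is precisely what you reproduce. Your central observation --- that $\mu P_N$ is bounded, self-adjoint, positive semidefinite, and commutes with $\De$, so the perturbation contributes nonnegative terms $\mu|P_N\psi^m|^2$ and $\mu\norm{P_N\psi^m}^2$ that can be discarded from the $H$- and $V$-level energy estimates, leaving the a priori bounds, time-derivative bound, Aubin--Lions compactness, limit passage, and uniqueness argument of \cref{thm:td:exist} untouched --- is exactly why the authors call the modification ``straightforward.''
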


Next, let us fix the following notions in connection with the vector field $\bv$ which we invoke when necessary.
\begin{Def}\label{def:V:W}
For $d\geq2$ and $t \geq 0$,we define
\begin{align}
  V_{\eps,d}(t)
  & := 
    c_{p_\eps,\eps}\abs{\bv{(t)}}_{{d/\eps}},\ \text{for}\ \eps\in[0,d/2),\quad V_{d/2,d}(t):=C\abs{\bv(t)},\ \text{for}\ \eps=d/2,\label{def:VtN}\\
  U_{d}(t)
  & := \min\left\{
  c_{{p_{1/2}},1/2}^2\abs{\nabla\bv{(t)}}_{{d}},
  \abs{\bv{(t)}}_{{\infty}}\right\}, \label{def:Wt}
\end{align}
where $p_\eps=2d/(d-2\eps)$ and ${c_{p,s}}$ refers to the constant in \eqref{est:Sobolev}. For convenience, we will define $c_{\infty,d/2}:=C$, where $C$ is the constant from \eqref{est:Bernstein}, so that $V_{d/2,d}(t)=c_{\infty,d/2}|\bv(t)|$; one will see in our analysis below that this choice of convention is consistent in our framework. Also, observe that $V_{0,d}(t)=|\bv(t)|_\infty$.
Then let
    \begin{align}
    V_{\epsilon, d} &:= \sup_{t \geq 0} V_{\eps,d}(t) \label{def:VepsdN}
    \\
        U_{d} &:= \sup_{t \geq 0} {U_{d}(t)}.\label{def:Vprime:d}
    \end{align}
We then define the following collections of velocity fields:
    \begin{align}
        \mathcal{V}_{\eps,d}&:=\{\bv\in L^\infty(0,\infty;\bbH):V_{\eps,d} <\infty\},\notag%\label{def:VepsdN:space}
        \\
        \mathcal{V}_{d}&:=\bigcup_{\eps\in[0,d/2)}\mathcal{V}_{\eps,d}\notag%\label{def:V:space}
        \\
        \mathcal{U}_{d}&:=\{\bv\in L^\infty(0,\infty;\bbH):U_{d}<\infty\}.\notag%\label{def:Vprime:d:space}.
    \end{align}
\end{Def}

Suppose $d=2,3$. Let $\bv\in\mathcal{V}_{d}$ and consider $\Gamma \subset V^*$ a set of quasi--finite rank $N_0$.
  Let
  \beq{equ:5.1000}
  \cG(\bv) := \{\Gamma_N: N \geq N_0\},
  \eeq
  be a family where each $\Gamma_N \subset V^*$ is of quasi-finite rank $N$ with Lipschitz enslaving map $F_N$ so that
    \begin{enumerate}
    \item $\Gamma_{N_0} = \Gamma$, 
    \item $\cG$ is increasing, that is $\Gamma_{M_1} \subset \Gamma_{M_2}$ for $M_1 \leq M_2$, 
    \item there exists $N\geq N_0$ such that 
      \begin{align}\label{equ:5.1010}
            \left(\frac{1+\no{F_N}{L,*}}{N}\right)\min_{\eps\in[0,d/2]}\left\{\frac{V_{\eps,d}}{\kap}N^{\eps}\right\}<\frac{\sqrt{2}}{2}.
      \end{align}
    \end{enumerate}
Likewise, if $\bv \in \mathcal{V}_{d} \cap \mathcal{U}_{d}$ and $\Gamma \subset H$ a set of quasi--finite rank $N_0$, we define $\tilde{\cG}(\bv)$ in the same way, except that $\Gamma_N \subset H$ for all $N \geq N_0$, and the condition~\eqref{equ:5.1010} is replaced with 
    \begin{align}\label{equ:5.1020:strong}
        \max \left\{\left(\frac{1+\no{F_N}{L,0}}{N} \right)\min_{\eps\in[0,d/2]}\left\{\frac{V_{\eps,d}}{\kap}N^{\eps}\right\},  \frac{1}{N}\left(\frac{U_{d}}{\kap}\right) \right\}< \frac{1}2,
      \end{align}
      for sufficiently large $N$.
The construction presented in \cref{rmk:canonical} will result in a suitable $\cG$ (or $\tilde{\cG}$) but other choices are possible in which the Lipschitz constants in~\eqref{equ:5.1010} (or \eqref{equ:5.1020:strong}) depend on $N$.
\begin{Thm}\label{thm:converge:sieve:td:detailed}
Suppose that $\bv\in\mathcal{V}_{d}$ and let $\cG(\bv) := \{\Gamma_N\}_{N \geq N_0}$ be a quasi-finite family of functions as defined above. Assume that the true forcing, $g$, in Equation~\eqref{eq:td:intro} is a member of $\Gamma_{N_0}$.
  Choose $N$ and $\mu$ in the Sieve Algorithm (in particular \eqref{eq:ttheta:stage:j}) so that
  \begin{align}\label{cond:mu:kap:N:convergence}
      (1+\no{F_N}{L,*})^2\min_{\eps\in[0,d/2]}\left\{\frac{V_{\eps,d}}{\kap} N^{\eps}\right\}^2\kap< \mu \leq \frac{N^2\kap}{2}.
    \end{align}
  Then for any $f^{(0)}\in L^\infty(0,\infty;\Gam_N)$ we have 
  \begin{align}%\label{eq:sieve:convergence}
      \lim_{j\goesto\infty}\sup_{t\geq0}\Abs{\tau_{j+1}{\psi^{(j)}}(t)-\si_{j+1}\phi(t)}
      =\lim_{j\goesto\infty}\sup_{t \geq 0}\no{\tau_{j+1}f^{(j)}(t)-\si_{j+1}g(t)}{*}=0,\notag
    \end{align}
  where $t_j=t_*$, for all $j\geq0$, and $t_*$ is any positive number satisfying
  \begin{align}%\label{cond:sync:tstar}
     e^{-\mu t_{*}}
    + \left(\frac{\kap}{\mu}\right)\left(1+\no{F_N}{L,*}\right)^2\min_{\eps\in[0,d/2]}\left\{\frac{V_{\eps,d}}{\kap}N^{\eps}\right\}^2
    < 1.\notag
    \end{align}
\end{Thm}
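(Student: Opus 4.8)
The plan is to set up a recursion on the synchronization error and the forcing error, and to show that one step of the Sieve Algorithm is a contraction in an appropriate norm, with contraction factor bounded by the quantity appearing in the definition of $t_*$. Write $\eta^{(j)} := \psi^{(j)} - \sigma_j\phi$ for the state error at stage $j$ (noting that $P_N\phi^{(j)} = P_N\sigma_j\phi$, so $\phi^{(j)} - \sigma_j\phi = Q_N\eta^{(j)}$), and $\rho^{(j)} := \tau_j f^{(j)} - \sigma_j g$ for the forcing error. First I would subtract the true equation \eqref{eq:td:intro} (shifted by $\sigma_j$) from the nudged equation \eqref{eq:ttheta:stage:j} to obtain
\begin{align}\notag
\frac{\dd\eta^{(j)}}{\dd t} + (\sigma_j\bv)\cdot\nabla\eta^{(j)} - \kappa\Delta\eta^{(j)} + \mu P_N\eta^{(j)} = \tau_j f^{(j)} - \sigma_j g.
\end{align}
Since $f^{(j)}$ and $g$ are both of quasi-finite rank $N$ with the \emph{same} enslaving map $F_N$, the forcing error satisfies $Q_N\rho^{(j)} = F_N(P_N\tau_j f^{(j)}) - F_N(P_N\sigma_j g)$, so by the Lipschitz bound $\|Q_N\rho^{(j)}\|_* \leq \|F_N\|_{L,*}\,\|P_N\rho^{(j)}\|_*$ and hence $\|\rho^{(j)}(t)\|_* \leq (1+\|F_N\|_{L,*})\,\|P_N\rho^{(j)}(t)\|_*$. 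The whole game is therefore to control $P_N\rho^{(j)}$.

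Next I would carry out two energy estimates on the error equation. Testing against $\eta^{(j)}$ in $H$ (using $b(\sigma_j\bv,\eta^{(j)},\eta^{(j)})=0$ from \eqref{eq:trilinear:skew}, and Poincaré \eqref{est:Poincare} on $Q_N\eta^{(j)}$ together with the bound $\kappa N^2 \geq 2\mu$ from the right inequality in \eqref{cond:mu:kap:N:convergence} to absorb the high modes) yields a differential inequality of the form $\frac{\dd}{\dd t}|\eta^{(j)}|^2 + \mu|\eta^{(j)}|^2 \lesssim \frac{1}{\mu}\|\rho^{(j)}\|_*^2$, which by Grönwall gives $\sup_{t\geq 0}|\eta^{(j)}(t)| \lesssim e^{-\mu t_*}\sup_t|\eta^{(j-1)}(t)| + (\kappa/\mu)^{1/2}(1+\|F_N\|_{L,*})\sup_t\|P_N\rho^{(j)}\|_*$ after using the initialization $\eta^{(j)}(0) = Q_N\eta^{(j-1)}(t_j)$ and Poincaré. (Here one uses $\sup_t\|\rho^{(j)}(t)\|_*$ of the \emph{current} stage, which is legitimate because $f^{(j)}$ was built from stage $j-1$ data — I would be careful about the indexing here, matching the construction \eqref{def:force:large:stage:j}–\eqref{def:force:complete:stage:j}.) Then I would estimate $P_N\rho^{(j+1)} = P_N(\tau_{j+1}f^{(j+1)}) - P_N\sigma_{j+1}g = l^{(j+1)} - P_N\sigma_{j+1}g$ by subtracting \eqref{def:force:large:stage:j} from $P_N$ of the true equation: the result is
\begin{align}\notag
P_N\rho^{(j+1)}(t) = P_N\big[(\sigma_j\bv)\cdot\nabla Q_N\eta^{(j)}\big](t) - \kappa\Delta P_N Q_N\eta^{(j)}(t) = P_N\big[(\sigma_j\bv)\cdot\nabla Q_N\eta^{(j)}\big](t),
\end{align}
the last term vanishing since $P_NQ_N=0$. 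So $\|P_N\rho^{(j+1)}(t)\|_* \leq \min_{\eps}\{V_{\eps,d}N^{\eps-?}\}\,|\eta^{(j)}(t)|$ by the trilinear estimate \eqref{est:trilinear:Sobolev1} combined with the generalized Poincaré inequality to convert $\|Q_N\eta^{(j)}\|$-type norms into $N^{-1}|\eta^{(j)}|$; this is exactly where the factor $\min_{\eps\in[0,d/2]}\{(V_{\eps,d}/\kappa)N^{\eps}\}/N$ enters.

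Combining these, I would close the recursion: the map $\sup_t|\eta^{(j-1)}| \mapsto \sup_t|\eta^{(j)}|$ has Lipschitz constant at most $e^{-\mu t_*} + (\kappa/\mu)(1+\|F_N\|_{L,*})^2\min_\eps\{(V_{\eps,d}/\kappa)N^{\eps}\}^2$, which is $<1$ by the defining condition on $t_*$, so $\sup_t|\eta^{(j)}(t)| \to 0$ geometrically; then the forcing error $\sup_t\|\rho^{(j+1)}(t)\|_* \to 0$ follows from the $P_N\rho^{(j+1)}$ bound and the Lipschitz-enslaving reduction, and the state error $\sup_t\|\tau_{j+1}\psi^{(j)} - \sigma_{j+1}\phi\| = \sup_t\|Q_N\eta^{(j)}\| \to 0$ follows from Poincaré. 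One also needs the left inequality in \eqref{cond:mu:kap:N:convergence} to guarantee that a valid $t_*$ exists (it forces the second term in the contraction factor to be $<1$), which is a simple consistency check. The main obstacle, as usual in this circle of ideas, is the bookkeeping: tracking the precise Sobolev exponents through the trilinear estimates so that the constants line up with the stated $V_{\eps,d}$ and the $\min_\eps$, and correctly threading the stage indices (which stage's $\rho$ controls which stage's $\eta$) so that the recursion genuinely closes on a single scalar quantity $\sup_t|\eta^{(j)}|$ rather than an entangled pair — once that is set up, the contraction is immediate. A secondary technical point is justifying the energy identities and the formula for $l^{(j+1)}$ at the regularity level guaranteed by \cref{thm:td:nudge:exist} (weak solutions, $V^*$-valued), which should follow from the well-posedness theory already cited.
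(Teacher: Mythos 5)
Your proposal takes essentially the same route as the paper's proof: the same synchronization/model error decomposition ($\eta^{(j)}$ is the paper's $z^{(j)}$ and $\rho^{(j)}=\tau_j h^{(j)}$), the same error dynamics and Lipschitz--enslaving reduction $\|\rho^{(j)}\|_*\le(1+\|F_N\|_{L,*})\|P_N\rho^{(j)}\|_*$, the same $L^2$ energy inequality under $\mu\le\kappa N^2/2$, the same low-mode model error representation $P_N\rho^{(j+1)}=P_N\big((\sigma_j\bv)\cdot\nabla Q_N\eta^{(j)}\big)$, and the same contraction factor $\lambda_*^2=e^{-\mu t_*}+(\kappa/\mu)(1+\|F_N\|_{L,*})^2\min_{\epsilon\in[0,d/2]}\{(V_{\epsilon,d}/\kappa)N^{\epsilon}\}^2$, matching the paper's Lemmas~\ref{lem:sieve:model:error}--\ref{lem:sieve:sync:error} and Section~\ref{sect:proof:sieve:td}. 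A few intermediate steps should be cleaned up but do not affect the structure or the final constant: the energy inequality's right-hand side carries $1/\kappa$, not $1/\mu$; the $N^{\epsilon}$ gain in the model-error bound comes from Bernstein applied to the low-mode test function, $\|\nabla P_N\varphi\|_{\epsilon}\le N^{\epsilon}\|\varphi\|$, not from Poincar\'e on $Q_N\eta^{(j)}$ (which points the wrong direction and would introduce a spurious $1/N$); and the stage-$j$ initial error is the full state $\eta^{(j-1)}(t_j)$, not only its high modes, which is still controlled by $\sup_t|\eta^{(j-1)}(t)|$.
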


Under stronger assumptions on the velocity field and enslaving map $F$, we obtain convergence in a stronger topology.

\begin{Thm}\label{thm:converge:sieve:td:detailed:strong}
Suppose that $\bv\in\mathcal{V}_{d}\cap\mathcal{U}_{d}$ and 
%$\Gamma \subset H$ a set of quasi--finite rank $N_0$, 
let $\tilde{\cG}(\bv) := \{\Gamma_N\}_{N \geq N_0}$ be a quasi-finite family of functions as defined above. Assume that the true forcing, $g$, in Equation~\eqref{eq:td:intro} is a member of $\Gamma_{N_0}$.
  Choose $N$ and $\mu$ in the Sieve Algorithm so that
    \begin{align}\label{cond:mu:kap:N:convergence:strong}
            \max\left\{ (1+\no{F_N}{L,0})\min_{\eps\in[0,d/2]}\left\{\frac{V_{\eps,d}}{\kap} N^{\eps}\right\}, \frac{U_{d}}{\kap}\right\}^2\kap< \mu\leq \frac{N^2\kap}{4},
    \end{align}
    then for any $f^{(0)}\in L^\infty(0,\infty;\Gam_N)$,
    \begin{align}\notag%\label{eq:sieve:convergence:strong}
      \lim_{j\goesto\infty}\sup_{t\geq0}\no{\tau_{j+1}{\psi^{(j)}}(t)-\si_{j+1}\phi(t)}
      =\lim_{j\goesto\infty}\sup_{t \geq 0}\Abs{\tau_{j+1}f^{(j)}(t)-\si_{j+1}g(t)}
      =0,
    \end{align}
    where $t_*$ given by
    \begin{align}%\label{cond:sync:tstar:strong}
      e^{-\mu t_{*}}
    + \left(\frac{\kap}{\mu}\right)\left(1+\no{F_N}{L,0}\right)^2\min_{\eps\in[0,d/2]}\left\{\frac{V_{\eps,d}}{\kap}N^{\eps}\right\}^2
    < 1.\notag
    \end{align}
\end{Thm}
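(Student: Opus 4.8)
The plan is to follow the same strategy as for \cref{thm:converge:sieve:td:detailed}, but carried out in $H^1$ rather than $L^2$. Track, at each stage $j$, the synchronisation error $w^{(j)} := \psi^{(j)} - \si_j\phi$ and the forcing error $R^{(j)} := \tau_j f^{(j)} - \si_j g$. Since $P_N\phi^{(j)}=P_N\si_j\phi$ by construction, one has $\phi^{(j)}-\si_j\phi = Q_Nw^{(j)}$ and $\tau_{j+1}\psi^{(j)}-\si_{j+1}\phi = \tau_{t_*}w^{(j)}$, so it suffices to show
\[ B_j := \sup_{t\ge t_*}\no{w^{(j)}(t)}{}^2 \to 0 \qquad\text{as } j\to\infty, \]
which yields the state convergence directly and, together with Step 2, the forcing convergence. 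Subtracting the time-shifted equation \eqref{eq:td:intro} for $\si_j\phi$ from \eqref{eq:ttheta:stage:j} shows $w^{(j)}$ is a strong solution (by \cref{thm:td:nudge:exist}, once the $\tau_j f^{(j)}$ are known to lie in a fixed bounded subset of $L^\infty(0,\infty;H)$, which is verified a posteriori) of
\[ \bdy_t w^{(j)} + (\si_j\bv)\cdot\nabla w^{(j)} - \kap\De w^{(j)} + \mu P_N w^{(j)} = R^{(j)}, \qquad w^{(j)}(0)=w^{(j-1)}(t_*), \]
the initial datum being immediate from the definition $\psi^{(j)}_0 := \psi^{(j-1)}(t_j,\cdot)$.

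The second step extracts the ``sieve identity'' that expresses the forcing error through the previous synchronisation error. Writing $g=P_Ng+F_N(P_Ng)$ and $f^{(j)}=l^{(j)}+F_N(l^{(j)})$, using $P_N\phi^{(j-1)}=P_N\si_{j-1}\phi$, and recalling that $\si_{j-1}\phi$ solves \eqref{eq:td:intro} with velocity $\si_{j-1}\bv$ and forcing $\si_{j-1}g$, the defining formula \eqref{def:force:large:stage:j} collapses to
\[ l^{(j)} - P_N\si_{j-1}g = P_N\big((\si_{j-1}\bv)\cdot\nabla Q_Nw^{(j-1)}\big). \]
Applying $\tau_j$ (so $\tau_j\si_{j-1}=\si_j$) and the Lipschitz bound for $F_N$ in the $H^0$-norm, then \eqref{est:trilinear:Sobolev1} together with \eqref{est:Bernstein} and \eqref{est:Poincare} (optimised over $\eps\in[0,d/2]$, which produces the factor $\min_{\eps}\{V_{\eps,d}N^{\eps}\}$), one gets
\[ \sup_{t\ge0}\abs{R^{(j)}(t)} \le c_0\,\sup_{t\ge0}\no{\tau_j w^{(j-1)}(t)}{} = c_0\sqrt{B_{j-1}}, \qquad c_0 := \big(1+\no{F_N}{L,0}\big)\min_{\eps\in[0,d/2]}\{V_{\eps,d}N^{\eps}\}. \]
At stage $0$, $R^{(0)}=f^{(0)}-g$ is bounded in $L^\infty(0,\infty;H)$ directly by the hypotheses $f^{(0)}\in L^\infty(0,\infty;\Gam_N)$ and $g\in L^\infty(0,\infty;\Gam_{N_0})$.

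The third step is the $H^1$ energy estimate. Pairing the error equation with $Aw^{(j)}$, the dissipation is $\kap\abs{Aw^{(j)}}^2+\mu\no{P_Nw^{(j)}}{}^2$; splitting $\kap=\mu/N^2+\kap'$ with $\kap'=\kap-\mu/N^2\ge\tfrac34\kap>0$ (this is where $\mu\le N^2\kap/4$ enters) and using \eqref{est:Poincare}, this bounds $\kap'\abs{Aw^{(j)}}^2+\mu\no{w^{(j)}}{}^2$ from below. The transport term $b(\si_j\bv,w^{(j)},Aw^{(j)})$ is controlled by $U_d\no{w^{(j)}}{}\abs{Aw^{(j)}}$ via \eqref{est:trilinear:Sobolev2} (or the $\abs{\bv}_\infty$ alternative in the definition of $U_d$) — this is precisely where the stronger hypothesis $\bv\in\mathcal{U}_d$ is used — and absorbed by Young's inequality into $\kap'\abs{Aw^{(j)}}^2$ plus a multiple of $\no{w^{(j)}}{}^2$; likewise $\abs{(R^{(j)},Aw^{(j)})}$ is split off. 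Condition \eqref{cond:mu:kap:N:convergence:strong} keeps the coefficient of $\no{w^{(j)}}{}^2$ positive and, after tracking constants, effectively equal to $\mu$, so Gr\"onwall together with $\no{w^{(j)}(0)}{}^2=\no{w^{(j-1)}(t_*)}{}^2\le B_{j-1}$ gives
\[ \no{w^{(j)}(t)}{}^2 \le e^{-\mu t}\,B_{j-1} + a\,B_{j-1}, \qquad a := \frac{\kap}{\mu}\big(1+\no{F_N}{L,0}\big)^2\min_{\eps\in[0,d/2]}\Big\{\tfrac{V_{\eps,d}}{\kap}N^{\eps}\Big\}^2, \]
for all $t\ge0$. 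Taking $\sup_{t\ge t_*}$ yields $B_j\le\big(e^{-\mu t_*}+a\big)B_{j-1}$; by \eqref{cond:mu:kap:N:convergence:strong} one has $a<1$, and the stated choice of $t_*$ is exactly the requirement $e^{-\mu t_*}+a<1$, so $B_j\le q^jB_0$ with $q<1$. Since $B_0<\infty$ (stage-$0$ estimate and Gr\"onwall), $B_j\to0$, which gives $\sup_t\no{\tau_{j+1}\psi^{(j)}(t)-\si_{j+1}\phi(t)}{}=\sqrt{B_j}\to0$ and, by Step 2, $\sup_t\abs{R^{(j+1)}(t)}\le c_0\sqrt{B_j}\to0$. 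Finally, $\sup_jB_j<\infty$ fed back into the sieve identity shows the $\tau_jf^{(j)}$ are uniformly bounded in $L^\infty(0,\infty;H)$, legitimising the appeal to strong well-posedness.

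I expect the main difficulty to be twofold. First, establishing and correctly bookkeeping the sieve identity $l^{(j)}-P_N\si_{j-1}g=P_N((\si_{j-1}\bv)\cdot\nabla Q_Nw^{(j-1)})$ through the various time-translation operators $\tau_j,\si_j$: this is the mechanism whereby applying \eqref{eq:td:intro} to $\phi^{(j-1)}$ filters out the observed low modes and reduces the forcing error to a single spatial derivative of the preceding state error. Second, closing the $H^1$ estimate: unlike the $L^2$ estimate behind \cref{thm:converge:sieve:td:detailed}, in which the transport contribution $b(\si_j\bv,w^{(j)},w^{(j)})$ vanishes by skew-symmetry, here $b(\si_j\bv,w^{(j)},Aw^{(j)})$ survives and must be absorbed, which genuinely requires $\bv\in\mathcal{U}_d$ and the extra slack in $\mu\le N^2\kap/4$ (versus $\mu\le N^2\kap/2$ in the weak case). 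Organising the recursion around the shifted supremum $B_j=\sup_{t\ge t_*}\no{w^{(j)}}{}^2$ rather than $\sup_{t\ge0}$ is the bookkeeping device that prevents the (uncontracted) initial datum of each stage from polluting the bound and makes the contraction close cleanly.
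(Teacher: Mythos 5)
Your proposal is correct and follows essentially the same approach as the paper: the sieve identity you derive is the paper's \eqref{eq:model:error:commuted} shifted by one index, the $H^1$ energy estimate mirrors the proof of Lemma~\ref{lem:sieve:sync:error} (including the use of $U_d$ via \eqref{est:trilinear:Sobolev2} to absorb the transport term, which the paper also flags as the reason $\bv\in\mathcal{U}_d$ is needed), and your recursion on $B_j=\sup_{t\ge t_*}\|w^{(j)}(t)\|^2$ is exactly the paper's contraction for $\sup_{t\ge 0}\|\tau_{j+1}z^{(j)}(t)\|$ after unwinding the time shift. The paper presents this proof merely as a pointer to the weak case (substituting \eqref{est:model:error:L2:td} and \eqref{equ:3.110} for their $V^*$/$H$ counterparts), so your write-up is a more explicit version of the same argument rather than a genuinely different one.
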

%

%\begin{Rmk}\label{rmk:Peclet}

%\end{Rmk}

\begin{Rmk}\label{rmk:Peclet}
    \begin{enumerate}
    \item In the context of \eqref{def:td:weak}, an important non-dimensional quantity is the P\'eclet number, $\Pe$. This non-dimensional quantity is defined by the ratio of a characteristic velocity scale and a characteristic diffusive scale, and thus distinguishes between the relative dominance of advective effects to diffusive ones. In our framework, we may define generalized P\'eclet numbers by:              
    \begin{align}\label{def:Peclet:sieve}
       \Pe_{\eps,d}:=\frac{V_{\eps,d}}{\kap},\quad \Pe_d:=\frac{U_d}{\kap}.
    \end{align}
Thus our conditions \eqref{cond:mu:kap:N:convergence}, \eqref{cond:mu:kap:N:convergence:strong} can be recast in terms of the P\'eclet numbers \eqref{def:Peclet:sieve}.

    \item  Note that  due to the properties of $\cG$ (or $\tilde{\cG}$, respectively), we can always find $N, \mu$ so that the conditions~\eqref{cond:mu:kap:N:convergence} or~\eqref{cond:mu:kap:N:convergence:strong} are satisfied. In the particular case when only $\bv\in L^\infty(0,\infty;H)$ is known, i.e., the minimum is achieved at the endpoint $\eps=d/2$, then \eqref{cond:mu:kap:N:convergence} imposes a balance between $\bv$, $F_N$, and $N$ such that $\sup_{t\geq0}|\bv(t)|/\kap$ be sufficiently small relative to $(1+\|F_N\|_{L,*})N^{d/2-1}$ in order for the condition to be non-trivial.

    \item Of course, a larger $N$ means more observations are required in order to deploy the algorithm. When $d=2$, then in the endpoint case when the minimum is achieved at $\eps=d/2=1$, the conditions \eqref{equ:5.1010}, \eqref{equ:5.1020:strong} are consistent with $\bv$ being fixed independently of the observational resolution, $N$, as long as $\limsup_N\|F_N\|_{L,*}<\infty$. However, when $d=3$, it is impossible to satisfy \eqref{equ:5.1010} or \eqref{equ:5.1020:strong} unless $N_0$ is sufficiently small, relative to $\sup_{t\geq0}|\bv(t)|/\kap$, when the minimum is achieved at $\eps=d/2=3/2$. We believe this to be a technical assumption of the present analysis. This is to be contrasted with the conditions imposed on the enslaving map in the Nudging Algorithm, where this technical constraint is not encountered.  Nevertheless, whether this condition can be improved or not in the Sieve Algorithm deserves further attention. 
 \end{enumerate}
\end{Rmk}
\subsection{Proof of the Main Theorems for Sieve Algorithm} 
We will now prove  \cref{thm:converge:sieve:td:detailed} and \cref{thm:converge:sieve:td:detailed:strong}. First, recall \eqref{def:L}, so that \eqref{eq:td:intro} may be rewritten as \eqref{def:td:weak}. {Note that the corresponding translated operators, $\tau_jL$ and $\si_jL$ are defined accordingly by $(\tau_jL)(t)\phi=L(t+t_j)\phi$ and $(\si_jL)(t)=(\tau_{\tau_1+\dots+\tau_j}L)(t)$, where $\tau_j, \si_j$ are defined in \eqref{def:tau:sigma}. We will often omit the dependence of $L$ on $t$ below and simply write $L$ or $\si_jL$ for convenience, although it is to be understood that the $\bv$ is generally time-dependent throughout.}

Within the functional setting of the transport--diffusion equations in Section~\ref{sect:notation:td} and~\cref{thm:td:exist}, the Sieve Algorithm corresponding to \eqref{def:td:weak} is well-defined. Moreover, all of the analysis performed below is rigorously justified within the context of \cref{thm:td:exist}. The proof of \cref{thm:converge:sieve:td:detailed} will proceed in three steps, which will constitute the subsequent three sections. First we will obtain suitable representations for the model and synchronization errors in \cref{sect:model:error:td}. Secondly, we will obtain quantitative estimates for the model and synchronization error in \cref{sect:sync:error:td}. The final step  of \cref{thm:converge:sieve:td:detailed}, which will combine the analysis of \cref{sect:model:error:td} and \cref{sect:sync:error:td} will be carried out \cref{sect:proof:sieve:td}.

\subsubsection{Error Representation and Analysis of Model Error}\label{sect:model:error:td}
Firstly, let us recall the construction of $\psi^{(j)}, f^{(j)}$ from \eqref{eq:nudge:0}--\eqref{def:force:complete:stage:j} in \cref{sect:def:sieve:td}. We then denote the stage-$(j+1)$ model error by 
    \begin{align}%\label{def:model:error:td}
        {h}^{(j+1)}:&=f^{(j+1)}-\si_{j}g,\notag
    \end{align}
{where $f^{(j+1)}$ is defined by \eqref{def:force:complete:stage:j}}. We {then denote the model error on large-scales, i.e., through frequencies $|k|\leq N$, at this stage by}
    \begin{align}\label{def:model:error:td:low}
        e^{(j+1)}:=P_N {h}^{(j+1)}
         = l^{(j+1)}-\si_{j}P_Ng,
    \end{align}
and invoke \eqref{def:force:complete:stage:1}, \eqref{def:force:complete:stage:j} to equivalently represent ${h}^{(j+1)}$ as
    \begin{align}\label{eq:model:error:equiv}
    {h}^{(j+1)}
    &=e^{(j+1)}+Q_N\left({F}(l^{(j+1)})-{F}(\si_{j}P_Ng)\right)\\
    & = e^{(j+1)} +Q_N\left({F}( e^{(j+1)} +\si_jL )-{F}(\si_jL)\right),\notag
    \end{align}
    having used~\eqref{def:model:error:td:low}.
    As ${F}$ is Lipschitz, the main step is to estimate the low modes $e^{(j+1)}$ of the model error, which we will do in terms of the synchronization error.
Indeed, we introduce
\begin{align}\label{def:ze}
        {z}^{(j)}:={\psi^{(j)}}-\si_j\phi.
    \end{align}
Observe that by \eqref{eq:theta:stage:1}, \eqref{eq:theta:stage:j}, we have
    \begin{align}\label{eq:qj:high}
        \phi^{(j)}-\si_j\phi=Q_N\psi^{(j)}-Q_N\si_j\phi=Q_N{z}^{(j)},
    \end{align}
for all $j\geq0$. Then using \eqref{def:td:weak}, \eqref{def:force:large:stage:1}, \eqref{def:force:large:stage:j}, and \eqref{def:L} we see that
    \begin{align}%\label{eq:model:error:low:td}
        e^{(j+1)}=l^{(j+1)}-\si_jP_Ng=P_N {(\si_jL)}{\phi^{(j)}}-P_N(\si_jL)\phi=P_N{(\si_jL)}Q_N{z}^{(j)}.\notag
    \end{align}
As $P_N$ commutes with $-\De$, we see that
\beq{eq:model:error:commuted}
        e^{(j+1)}=P_N({(\si_j\bv)}\cdotp\nabla Q_N{z}^{(j)}).
\eeq

We will prove the following.

\begin{Lem}\label{lem:sieve:model:error}
Suppose $\bv\in \mathcal{V}_{d}$. For each $j\geq0$, we have
    \begin{align}
         \no{e^{(j+1)}}{*}
        &\leq\min_{\eps\in[0,d/2]}\left\{(\si_jV_{\eps,d})(t)N^{\eps}\right\}\Abs{{z}^{(j)}}\label{est:ejplus1:Hminus1a}
        \\
        \Abs{e^{(j+1)}}&\leq \min_{\eps\in[0,d/2]}\left\{(\si_jV_{\eps,d})(t) N^{\eps}\right\}\no{{z}^{(j)}}.\label{est:ejp1:Hminus1}
    \end{align}
\end{Lem}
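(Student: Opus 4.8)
The plan is to work from the commuted representation~\eqref{eq:model:error:commuted}, $e^{(j+1)}=P_N\big((\si_j\bv)\cdot\nabla Q_N{z}^{(j)}\big)$, and to estimate both norms by duality, pointwise in $t$. For any test function $w$, self-adjointness of $P_N$ on $H$ together with the definition~\eqref{def:trilinear} of the trilinear form gives
\[
(e^{(j+1)},w)=\big((\si_j\bv)\cdot\nabla Q_N{z}^{(j)},\,P_N w\big)=b\big(\si_j\bv,\,Q_N{z}^{(j)},\,P_N w\big),
\]
so the two estimates reduce to bounding this trilinear expression, in the first case over $w\in V$ with $\no{w}{}\le1$ and in the second over $w\in H$ with $\Abs{w}\le1$.

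For \eqref{est:ejplus1:Hminus1a} I would move the derivative off the high-mode factor by the skew-symmetry~\eqref{eq:trilinear:skew} (legitimate since $\nabla\cdot\bv=0$), writing $(e^{(j+1)},w)=-b(\si_j\bv,P_N w,Q_N{z}^{(j)})$. For $\eps\in(0,d/2)$ one applies H\"older's trilinear inequality~\eqref{est:trilinear:Holder} with exponents $(d/\eps,\,p_\eps,\,2)$, then the Sobolev embedding~\eqref{est:Sobolev} and the Bernstein inequality~\eqref{est:Bernstein} to pay for the gradient landing on the low-mode function with a factor $N^{\eps}$, namely $\abs{\nabla P_N w}_{p_\eps}\le c_{p_\eps,\eps}\no{P_N w}{1+\eps}\le c_{p_\eps,\eps}N^{\eps}\no{w}{}$, while $\Abs{Q_N{z}^{(j)}}\le\Abs{{z}^{(j)}}$; since $V_{\eps,d}=c_{p_\eps,\eps}\abs{\bv}_{d/\eps}$ this gives $\no{e^{(j+1)}}{*}\le(\si_j V_{\eps,d})N^{\eps}\Abs{{z}^{(j)}}$. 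The endpoint $\eps=0$ is the same argument with $p=\infty$ and no Sobolev step (recall $V_{0,d}=\abs{\bv}_\infty$), and the endpoint $\eps=d/2$ follows from the Bernstein-type trilinear estimate~\eqref{est:trilinear:Bernstein}, $\no{e^{(j+1)}}{*}\le CN^{d/2}\abs{\si_j\bv}\,\Abs{Q_N{z}^{(j)}}\,\no{P_N w}{}\le CN^{d/2}\abs{\si_j\bv}\,\Abs{{z}^{(j)}}$, consistent with the convention $V_{d/2,d}=C\abs{\bv}$ fixed in \cref{def:V:W}. Taking the infimum over $\eps\in[0,d/2]$ yields \eqref{est:ejplus1:Hminus1a}.

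For \eqref{est:ejp1:Hminus1} I would instead leave the derivative on the high-mode factor: bound $b(\si_j\bv,Q_N{z}^{(j)},P_N w)$ by H\"older~\eqref{est:trilinear:Holder} with exponents $(d/\eps,\,2,\,p_\eps)$, using $\abs{\nabla Q_N{z}^{(j)}}=\no{Q_N{z}^{(j)}}{}\le\no{{z}^{(j)}}{}$, whereas the low-mode factor is controlled by Sobolev and Bernstein via $\abs{P_N w}_{p_\eps}\le c_{p_\eps,\eps}\no{P_N w}{\eps}\le c_{p_\eps,\eps}N^{\eps}\Abs{w}$; the endpoints are again handled by hand ($p=\infty$ when $\eps=0$, and $\abs{P_N w}_\infty\le CN^{d/2}\Abs{w}$ when $\eps=d/2$). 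This gives $\Abs{e^{(j+1)}}\le(\si_j V_{\eps,d})N^{\eps}\no{{z}^{(j)}}{}$ for each $\eps$, and minimizing completes \eqref{est:ejp1:Hminus1}.

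The individual estimates are routine applications of the inequalities of \cref{sect:trilinear_est}; the one real decision — and the only place a careless choice would cost an extra power of $N$ — is whether to integrate by parts so that the gradient falls on the low-frequency factor $P_N w$ or to leave it on $Q_N{z}^{(j)}$: one wants it on $P_N w$ precisely when the surviving high-mode factor should carry no derivative (the $\no{\cdot}{*}$ bound) and on $Q_N{z}^{(j)}$ when a full derivative on ${z}^{(j)}$ is available (the $\Abs{\cdot}$ bound), and in both cases it is the low-frequency projection that absorbs the $N^{\eps}$ through Bernstein. A minor additional bookkeeping point is that the endpoints $\eps=0$ and $\eps=d/2$ need separate treatment because of the conventions $V_{0,d}=\abs{\bv}_\infty$ and $V_{d/2,d}=C\abs{\bv}$ adopted in \cref{def:V:W}.
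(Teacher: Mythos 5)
Your proposal is correct, and for the first estimate~\eqref{est:ejplus1:Hminus1a} it follows the paper's route exactly: skew-symmetry moves the derivative onto $P_N\varphi$, and the regimes $\eps\in(0,d/2)$, $\eps=d/2$, $\eps=0$ are then handled by~\eqref{est:trilinear:Sobolev1} together with Bernstein, by~\eqref{est:trilinear:Bernstein}, and by~\eqref{est:trilinear:Holder} with $(\infty,2,2)$, respectively. For the second estimate~\eqref{est:ejp1:Hminus1} your route is slightly different from the paper's, though equivalent in substance and length: the paper re-uses the skew-symmetrized duality chain with the test function set to $e^{(j+1)}$ itself, paying $N^{-1}$ from Poincar\'e on $|Q_N{z}^{(j)}|$ and $N^{1+\eps}$ from Bernstein on $\no{\nabla P_N e^{(j+1)}}{\eps}$ to net $N^{\eps}$, whereas you estimate the untwisted trilinear form $b(\si_j\bv,Q_N{z}^{(j)},P_Nw)$ directly, keeping the derivative on $Q_N{z}^{(j)}$ (absorbed by $\no{{z}^{(j)}}{}$) so that Bernstein acts only once on $|P_Nw|_{p_\eps}\le c_{p_\eps,\eps}N^{\eps}|w|$. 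Both arrangements invoke the same Sobolev pair $(p_\eps,\eps)$ and therefore produce the same constant $c_{p_\eps,\eps}$ entering $V_{\eps,d}$; your version has the small advantage of treating all three $\eps$-regimes uniformly (the paper's own $\eps=0$ endpoint of the $L^2$ bound is already the direct H\"older you describe, so the paper is internally mixed) and of avoiding the $N^{-1}\cdot N^{1+\eps}$ cancellation, which is precisely the bookkeeping subtlety you flag at the end.
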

\begin{proof}
We multiply Equation~\eqref{eq:model:error:commuted} with some $\ph \in V$ and integrate. 
If $\eps\in(0,d/2)$, the estimate~\eqref{est:trilinear:Sobolev1} (with $p = d/\epsilon$) and \eqref{est:Poincare} then gives 
\beq{est:ejplus1:Hminus1:duality}\begin{split}
\lb e^{(j+1)}, \varphi \rb
& = b(\si_j\bv, Q_N{z}^{(j)}, P_N \varphi)=-b(\si_j\bv, P_N\varphi,Q_N{z}^{(j)})\\
& \leq c_{p_{\eps},\eps}  \Abs{Q_N {z}^{(j)}} \Abs{\si_j\bv}_{d/\eps} \norm{\nabla P_N \varphi}_{\eps}\\
& \leq c_{p_{\eps},\eps}  N^{\eps} \Abs{Q_N {z}^{(j)}} \Abs{\si_j\bv}_{d/\eps} \norm{\varphi},
    \end{split}\eeq
where ${c_{p_\eps,\eps}}$ is the constant from \cref{def:V:W}, and the estimate~\eqref{est:Bernstein} was used in the last line.
If $\eps=d/2$, then by invoking \eqref{est:trilinear:Bernstein}, we obtain
    \begin{align}
        |\lb e^{(j+1)},{\varphi}\rb|&\leq  CN^{\frac{d}2}|Q_N{z}^{(j)}||\si_j\bv|\|P_N\varphi\|\notag
        \\
        &\leq  CN^{\frac{d}2}|Q_N{z}^{(j)}||\si_j\bv|\|\varphi\|.\label{est:ejplus1:Hminus1:duality:endpoint} 
    \end{align}
Lastly, if $\eps=0$, by using estimate~\eqref{est:trilinear:Holder} with $(p,q,r) = (\infty,2,2)$, we also have
    \begin{align}\label{est:ejplus1:Hminus1:duality:infty} 
        |\lb e^{(j+1)},{\varphi}\rb|
        \leq \Abs{\si_j\bv}_{\infty}\no{P_N\ph}{}
        \Abs{Q_N{z}^{(j)}}
        \leq \Abs{\si_j\bv}_{{\infty}} \no{ {\varphi}}{}
        \Abs{{z}^{(j)}}.
    \end{align}
Since \eqref{est:ejplus1:Hminus1:duality}, \eqref{est:ejplus1:Hminus1:duality:endpoint}, and \eqref{est:ejplus1:Hminus1:duality:infty} each hold for all $\varphi\in V$, we deduce
\begin{align}\label{est:ejplus1:Hminus1b}
        \no{e^{(j+1)}}{*}
        \leq \min\left\{c_{{p_\eps},\eps} N^{\eps}\Abs{\si_j\bv}_{{d/\eps}},CN^{\frac{d}2}|\si_j\bv|,|\si_j\bv|_\infty\right\} \Abs{{z}^{(j)}}.
    \end{align}
Upon combining \eqref{est:ejplus1:Hminus1:duality} and \eqref{est:ejplus1:Hminus1b}, we obtain
    \begin{align}
        \no{e^{(j+1)}}{*}\leq \min_{\eps\in[0,d/2]}\left\{N^{\eps}(\si_jV_{\eps,d})(t)\right\}\Abs{{z}^{(j)}},\notag
    \end{align}
where $V_{\eps,d}(t)$ is defined in \eqref{def:VtN}, which is precisely \eqref{est:ejplus1:Hminus1a}.

Next, we estimate $\Abs{e^{(j+1)}}$ in terms of $\no{{z}^{(j+1)}}{}$. To do so, observe that if $\eps\in(0,d/2)$, then we use \eqref{est:ejplus1:Hminus1:duality} with $\varphi = e^{(j+1)}$ and find that
    \begin{align}
        \Abs{e^{(j+1)}}^2
        & \leq c_{{p_\eps},\eps} \Abs{Q_N {z}^{(j)}} \Abs{\si_j\bv}_{{d/\eps}} \no{\nabla P_N e^{(j+1)}}{{\eps}} \notag\\
        & \leq c_{{p_\eps},\eps}  N^{\eps} \no{{z}^{(j)}}{} \Abs{\si_j\bv}_{{d/\eps}} \Abs{e^{(j+1)}}. \notag
    \end{align}
Division by $\Abs{e^{(j+1)}}$ then yields
\begin{align}\label{est:ejplus1:L2a}
        \Abs{e^{(j+1)}} \leq c_{{p_\eps},\eps} N^{\eps}\Abs{\si_j\bv}_{{d/\eps}}\no{{z}^{(j)}}{}.
\end{align}
Similarly, if $\eps=d/2$, then upon using \eqref{est:Bernstein}, we have
    \begin{align}
        |e^{(j+1)}|^2&\leq  CN^{\frac{d}2+1}|Q_N{z}^{(j)}||\si_j\bv||P_Ne^{(j+1)}|\notag
        \\
        &\leq  CN^{\frac{d}2}\|Q_N{z}^{(j)}\||\si_j\bv||e^{(j+1)}|\notag,
    \end{align}
so that
    \begin{align}
        |e^{(j+1)}|&\leq CN^{\frac{d}2}\|Q_N{z}^{(j)}\||\si_j\bv|.\label{est:ejplus1:L2b}
    \end{align}
Lastly, if $\eps=0$, we estimate \eqref{eq:model:error:commuted} with a direct application of H\"older's inequality yields
      \begin{align}%\label{est:ejplus1:L2c}
        \Abs{e^{(j+1)}}
        \leq \Abs{\si_j\bv}_{{\infty}} \no{{z}^{(j)}}{},\notag
    \end{align}
Upon combining these \eqref{est:ejplus1:L2a}, \eqref{est:ejplus1:L2b}, we therefore deduce
    \begin{align}\label{est:ejplus1:L2}
        \Abs{e^{(j+1)}}\leq \min_{\eps\in[0,d/2]}\left\{N^{\eps}(\si_jV_{\eps,d})(t)\right\}\no{{z}^{(j)}}{},
    \end{align}
    which proves \eqref{est:ejp1:Hminus1}.
\end{proof}
Since $\Gamma$ is quasi-finite of some rank and order, we may deduce the following.
\begin{Lem}\label{cor:sieve:state:error}
Suppose $\bv\in\mathcal{V}$ and that $\Gamma\subset V^*$ is of quasi-finite rank $N$, for some $N>0$, with Lipschitz enslaving map $F_N$. If $\Gamma$ has order $(-1,-1)$, then  \begin{align}\label{est:model:error:Hminus1:td}
        \|{h}^{(j+1)}\|_{*}& \leq  (1 + \norm{F_N}_{L,*}) \min_{\eps\in[0,d/2]}\left\{(\si_jV_{\eps,d})(t) N^{\eps}\right\}\Abs{{z}^{(j)}},
    \end{align}
where $\no{F}{L,*}$ is the Lipschitz constant of $F_N:P_NV^*\goesto Q_NV^*$.  If $\Gamma$ has order $(0,0)$, then
    \begin{align}\label{est:model:error:L2:td}
         \Abs{{h}^{(j+1)}}
        &\leq (1+\no{F_N}{L,0})\min_{\eps\in[0,d/2]}\left\{(\si_jV_{\eps,d})(t) N^{\eps}\right\}\no{{z}^{(j)}}{},
    \end{align}
where $\no{F_N}{L,0}$ is the Lipschitz constant of $F:P_NH\goesto Q_NH$.
\end{Lem}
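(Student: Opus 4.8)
The plan is to combine the representation \eqref{eq:model:error:equiv} of the stage-$(j+1)$ model error with the Lipschitz property of the enslaving map and the two bounds for the low-mode model error $e^{(j+1)}$ established in \cref{lem:sieve:model:error}. No new ideas are needed; the whole point is bookkeeping of which norm is used on which factor.

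For the $H^{-1}$ estimate \eqref{est:model:error:Hminus1:td}, I would start from the identity $h^{(j+1)} = e^{(j+1)} + Q_N\big({F}(l^{(j+1)}) - {F}(\si_j P_N g)\big)$ from \eqref{eq:model:error:equiv} and apply the triangle inequality in $V^*$. By \eqref{def:model:error:td:low} the argument difference inside $F$ is exactly $l^{(j+1)} - \si_j P_N g = e^{(j+1)}$, which lies in $P_N V^*$; hence the hypothesis that $F_N : P_N V^* \to Q_N V^*$ is Lipschitz with constant $\no{F_N}{L,*}$ gives $\no{Q_N({F}(l^{(j+1)}) - {F}(\si_j P_N g))}{*} \leq \no{F_N}{L,*}\,\no{e^{(j+1)}}{*}$, and therefore $\no{h^{(j+1)}}{*} \leq (1 + \no{F_N}{L,*})\,\no{e^{(j+1)}}{*}$. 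Substituting the bound \eqref{est:ejplus1:Hminus1a} for $\no{e^{(j+1)}}{*}$ yields \eqref{est:model:error:Hminus1:td}.

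The $L^2$ estimate \eqref{est:model:error:L2:td} follows the same three moves with $\abs{\cdot} = \no{\cdot}{0}$ in place of $\no{\cdot}{*}$: the triangle inequality and Lipschitz continuity of $F_N : P_N H \to Q_N H$ with constant $\no{F_N}{L,0}$ give $\abs{h^{(j+1)}} \leq (1 + \no{F_N}{L,0})\,\abs{e^{(j+1)}}$, and then substituting the second bound \eqref{est:ejp1:Hminus1} from \cref{lem:sieve:model:error} (which controls $\abs{e^{(j+1)}}$ by $\no{z^{(j)}}{}$) finishes the proof.

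There is no genuine obstacle here; the only subtlety worth flagging is aligning the order $(\al,\be)$ of the set $\Gamma_N$ with the topology in which the enslaving map is assumed Lipschitz — this is where \cref{thm:1.20} would be invoked if one needed to pass between orders — together with the observation that the quantity fed into $F_N$, namely $l^{(j+1)}$, differs from $\si_j P_N g$ precisely by the already-estimated low-mode model error $e^{(j+1)}$, so the Lipschitz estimate closes cleanly without any auxiliary control on $l^{(j+1)}$ itself.
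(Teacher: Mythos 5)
Your proposal is correct and follows essentially the same argument as the paper: triangle inequality applied to the decomposition \eqref{eq:model:error:equiv}, Lipschitz continuity of $F_N$ exploiting that the argument difference $l^{(j+1)}-\si_j P_N g$ is exactly $e^{(j+1)}$, and then substitution of the appropriate bound from \cref{lem:sieve:model:error} (\eqref{est:ejplus1:Hminus1a} for the $V^*$ case, \eqref{est:ejp1:Hminus1} for the $H$ case). The only difference is that you explicitly articulate why the Lipschitz estimate closes without auxiliary control on $l^{(j+1)}$, which the paper leaves implicit.
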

\begin{proof}
If ${F_N}:P_NV^*\goesto Q_NV^*$, we estimate the synchronization error in $V^*$ by applying the triangle inequality to \eqref{eq:model:error:equiv} and then use \eqref{est:ejplus1:Hminus1a} of \cref{lem:sieve:model:error} to obtain
\begin{align}\notag
\begin{split}
\norm{{h}^{(j+1)}}_{*}
& \leq\norm{e^{(j+1)}}_{*} + \norm{Q_N(F_N(l^{(j+1)})-F_N(\si_jL))}_{*}\\
& \leq (1 + \norm{F_N}_{L,*}) \norm{e^{(j+1)}}_{*}\\
& \leq (1 + \norm{F_N}_{L,*})\min_{\eps\in[0,d/2]}\left\{(\si_jV_{\eps,d})(t) N^{\eps}\right\} \Abs{{z}^{(j)}} .
\end{split}
\end{align}
Alternatively, if ${F}:P_NH\goesto Q_NH$, then by instead applying \eqref{est:ejplus1:L2} of \cref{lem:sieve:model:error}, we obtain
\begin{align}
        \Abs{{h}^{(j+1)}}
        &\leq\Abs{e^{(j+1)}}+\Abs{Q_N(F(l^{(j+1)})-F_N(\si_jL))}\notag\\
        &\leq (1+\no{F_N}{L,0})\Abs{e^{(j+1)}}\notag\\
        &\leq (1+\no{F_N}{L,0})\min_{\eps\in[0,d/2]}\left\{(\si_jV_{\eps,d})(t) N^{\eps}\right\}\no{{z}^{(j)}}{}.\notag
\end{align}
Therefore, to complete the convergence analysis for the model error in $V^*$ and $H$, it remains to study the synchronization error $\zeta^{(j)}$ in both ${H}$ and ${V}$, respectively. 
\end{proof}

\subsubsection{Analysis of the Synchronization Error}\label{sect:sync:error:td}
Observe from \eqref{eq:ttheta:stage:j} and \eqref{def:ze} that ${z}^{(j)}$ obeys the following evolution equation:
    \begin{align}\label{eq:sync:error:td}
        \bdy_t{z}^{(j)}+{(\si_jL)}{z}^{(j)}=\tau_j {h}^{(j)}-\mu P_N{z}^{(j)},
    \end{align}
with initial value 
    \begin{align}\label{eq:sync:initial:td}
        {z}^{(j)}(0)= \tau_j \psi^{(j-1)} (0) - \sigma_j \phi(0)=\psi^{(j-1)}(t_j)-\si^{(j-1)}\phi(t_j)=\tau_j{z}^{(j-1)}(0).
    \end{align}
We derive the following differential inequalities for the synchronization error.

\begin{Lem}\label{lem:sieve:sync:error}
For any $d\geq2$, if $\mu, N$ satisfies
\begin{align}\label{cond:mu:kap:N:L2:td}
\mu \leq \frac{N^2 \kap}{2},
\end{align}
then
\begin{align}\label{equ:3.100}
\frac{d}{dt}
\Abs{{z}^{(j)}}^2 + \mu\Abs{{z}^{(j)}}^2
\leq \frac{\no{\tau_j{h}^{(j)}}{*}^2}{\kap},
\end{align}
holds for all $t\geq 0$. For $d\in\{2,3\}$, let
    \begin{align}%\label{def:sieve:Nstar}
        N_*:=\sqrt{2}\frac{U_{d}}{\kap},\notag
    \end{align}
where $U_{d}$ is defined by \eqref{def:Vprime:d}. Then
    \begin{align}\label{equ:3.110}
        \frac{d}{dt}\no{{z}^{(j)}}{}^2+\mu\no{{z}^{(j)}}{}^2\leq \frac{\Abs{\tau_j{h}^{(j)}}^2}{\kappa},
    \end{align}
for all $t\geq0$, for any $N\geq N_*$ and $\mu$ satisfying
    \begin{align}\label{cond:mu:kap:N:H1:td}
           \frac{N_*^2\kap}4\leq \mu\leq \frac{N^2\kap}{4},
    \end{align}
\end{Lem}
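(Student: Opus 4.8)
The plan is to establish both inequalities by energy estimates applied directly to the synchronization equation~\eqref{eq:sync:error:td}, which (recalling~\eqref{def:L} and suppressing the superscript $(j)$) reads $\bdy_t z + (\si_j\bv)\cdotp\nabla z - \kap\De z = \tau_j{h}^{(j)} - \mu P_N z$. The only essential difference between the two estimates is that the $V$-level estimate must absorb an advective contribution that the $H$-level estimate does not see; all the calculations are rigorous for the (strong) solutions furnished by~\cref{thm:td:nudge:exist} under the hypotheses in force.

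\emph{The $H$-estimate~\eqref{equ:3.100}.} I would pair~\eqref{eq:sync:error:td} with $z$ in $H$. Since $\nabla\cdotp(\si_j\bv)=0$, skew-symmetry~\eqref{eq:trilinear:skew} gives $b(\si_j\bv,z,z)=0$, so the advection drops out; diffusion contributes $\kap\no{z}{}^2$ and the feedback term $-\mu\Abs{P_Nz}^2$, leaving $\tfrac12\tfrac{d}{dt}\Abs{z}^2 + \kap\no{z}{}^2 + \mu\Abs{P_Nz}^2 = \scp{\tau_j{h}^{(j)}}{z}$. I would estimate the right-hand side by $V^*$--$V$ duality and Young's inequality, $2\scp{\tau_j{h}^{(j)}}{z}\le\kap\no{z}{}^2+\kap^{-1}\no{\tau_j{h}^{(j)}}{*}^2$, so that after multiplying by $2$ one copy of $\kap\no{z}{}^2$ is absorbed, leaving $\tfrac{d}{dt}\Abs{z}^2 + \kap\no{z}{}^2 + 2\mu\Abs{P_Nz}^2 \le \kap^{-1}\no{\tau_j{h}^{(j)}}{*}^2$. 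To recover $\mu\Abs{z}^2=\mu\Abs{P_Nz}^2+\mu\Abs{Q_Nz}^2$ I would use $2\mu\Abs{P_Nz}^2$ on the low modes and~\eqref{est:Poincare} on the high modes, $\kap\no{z}{}^2\ge\kap\no{Q_Nz}{}^2\ge\kap N^2\Abs{Q_Nz}^2\ge 2\mu\Abs{Q_Nz}^2$, the last step being exactly the hypothesis~\eqref{cond:mu:kap:N:L2:td} $\mu\le N^2\kap/2$. This yields~\eqref{equ:3.100}.

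\emph{The $V$-estimate~\eqref{equ:3.110}.} Here I would pair~\eqref{eq:sync:error:td} with $-\De z$ in $H$. As $P_N$ commutes with $\De$, the feedback term becomes $-\mu\no{P_Nz}{}^2$ and diffusion becomes $\kap\Abs{\De z}^2$, giving $\tfrac12\tfrac{d}{dt}\no{z}{}^2 + \kap\Abs{\De z}^2 + \mu\no{P_Nz}{}^2 = b(\si_j\bv,z,\De z) + \scp{\tau_j{h}^{(j)}}{-\De z}$. The advective term no longer vanishes; I would control it via~\cref{thm:trilinear}, using~\eqref{est:trilinear:Sobolev2} together with the $\abs{\cdot}_\infty$-bound coming from~\eqref{est:trilinear:Holder}, to get $\abs{b(\si_j\bv,z,\De z)}\le U_d\,\no{z}{}\Abs{\De z}$ with $U_d$ as in~\cref{def:V:W}. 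Young's inequality splits this into a fraction of $\kap\Abs{\De z}^2$ plus a multiple of $\kap^{-1}U_d^2\no{z}{}^2 = \kap^{-1}U_d^2(\no{P_Nz}{}^2+\no{Q_Nz}{}^2)$; the high-mode piece I would reduce by~\eqref{est:Poincare}, $\no{Q_Nz}{}^2\le N^{-2}\Abs{\De z}^2$, and absorb into the remaining $\kap\Abs{\De z}^2$ — this is exactly what forces $N\ge N_*=\sqrt2\,U_d/\kap$ — while the low-mode piece $\kap^{-1}U_d^2\no{P_Nz}{}^2$ is absorbed by $\mu\no{P_Nz}{}^2$, which forces the lower bound on $\mu$ in~\eqref{cond:mu:kap:N:H1:td}. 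The forcing term is treated this time in $L^2$--$L^2$ duality, $2\scp{\tau_j{h}^{(j)}}{-\De z}\le\kap\Abs{\De z}^2+\kap^{-1}\Abs{\tau_j{h}^{(j)}}^2$, producing the asserted right-hand side, and $\mu\no{z}{}^2$ is then recovered as in the $H$ case — low modes from the feedback term, high modes from the leftover $\kap\Abs{\De z}^2$ via $\Abs{\De z}^2\ge N^2\no{Q_Nz}{}^2$ — which is where the upper bound $\mu\le N^2\kap/4$ enters.

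The step I expect to be the main obstacle is the control of $b(\si_j\bv,z,\De z)$ in the $V$-estimate: unlike in the $H$-estimate, skew-symmetry does not remove it, and bounding it by a quantity proportional to $\no{z}{}\Abs{\De z}$ — the only form that can be played off against the diffusion and the feedback — is precisely what requires the stronger hypothesis $\bv\in\mathcal{U}_d$ and what introduces the threshold $N_*$ on the number of observed modes together with the $\mu$-window~\eqref{cond:mu:kap:N:H1:td}; the rest is routine bookkeeping with Young's and Poincar\'e's inequalities. The two differential inequalities obtained here will subsequently be combined with~\cref{cor:sieve:state:error} and a Gr\"onwall argument in~\cref{sect:proof:sieve:td} to close the iteration.
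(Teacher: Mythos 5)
Your proposal is correct and follows essentially the same route as the paper: pairing the error equation \eqref{eq:sync:error:td} with $z^{(j)}$ in $H$ (killing the advection by skew-symmetry) to get \eqref{equ:3.100}, and with $-\Delta z^{(j)}$ in $H$ (bounding the surviving trilinear term $b(\si_j\bv,z^{(j)},\Delta z^{(j)})\le U_d\|z^{(j)}\||\Delta z^{(j)}|$ via \eqref{est:trilinear:Holder} and \eqref{est:trilinear:Sobolev2}) to get \eqref{equ:3.110}, with the generalised Poincar\'e inequality \eqref{est:Poincare} converting the nudging-induced spillover into the constraints on $\mu$ and $N$. The only cosmetic difference is bookkeeping of the feedback term (the paper rewrites $-\mu\langle P_Nz,z\rangle=-\mu|z|^2+\mu|Q_Nz|^2$ up front, whereas you keep $-\mu|P_Nz|^2$ and rebuild $\mu|z|^2$ at the end), which is algebraically equivalent and does not constitute a different argument.
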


\begin{proof} To estimate $\Abs{{z}^{(j)}}$, we take the ${L^2}$ inner product of \eqref{eq:sync:error:td} with ${z}^{(j)}$ and use the fact $\bv$ is divergence-free to obtain the balance
    \begin{align}\label{eq:sync:L2:td}
        \frac{1}2\frac{d}{dt}\Abs{{z}^{(j)}}^2+\kap\no{{z}^{(j)}}{}^2=\lb \tau_j{h}^{(j)},{z}^{(j)}\rb-\mu\lb P_N{z}^{(j)},{z}^{(j)}\rb.
    \end{align}
By the Cauchy-Schwarz inequality, it follows that
    \begin{align}
        |\lb \tau_j{h}^{(j)},{z}^{(j)}\rb|&\leq \no{\tau_j{h}^{(j)}}{*}\no{{z}^{(j)}}{}\leq \frac{\no{\tau_j{h}^{(j)}}{*}^2}{2\kap}+\frac{\kap}2\no{{z}^{(j)}}{}^2.\label{est:sync:force:Hminus1:td}
    \end{align}
Furthermore
\begin{align}\label{eq:feedback:energy:td}
        -\mu\lb P_N{z}^{(j)},{z}^{(j)}\rb
        =\mu\lb Q_N{z}^{(j)},{z}^{(j)}\rb-\mu\Abs{{z}^{(j)}}^2
        \leq - \mu \Abs{{z}^{(j)}}^2+ \frac{\mu}{N^2}\no{{z}^{(j)}}{}^2.
    \end{align}
Since $\mu, N>0$ satisfy \eqref{cond:mu:kap:N:L2:td},
we can use~\eqref{est:sync:force:Hminus1:td} and~\eqref{eq:feedback:energy:td} in~\eqref{eq:sync:L2:td}, giving \eqref{equ:3.100}.

Next, we estimate $\no{{z}^{(j)}}{}$. To do so, we take the ${L^2}$ inner product of \eqref{eq:sync:error:td} with $-\De{z}^{(j)}$ to obtain the balance
    \beq{eq:sync:H1:td}
        \frac{1}{2} \frac{d}{dt}\no{{z}^{(j)}}{}^2+\kap\Abs{\De{z}^{(j)}}^2=-\lb \tau_j{h}^{(j)}, \De{z}^{(j)}\rb
        {+ b(\si_j\bv, {z}^{(j)}, \De{z}^{(j)})}
        +\mu\lb P_N{z}^{(j)},\De{z}^{(j)}\rb.
    \eeq
By H\"older's inequality, it follows that
    \beq{est:sync:force:td}
       |\lb\tau_j{h}^{(j)}, \De{z}^{(j)}\rb|
       \leq \Abs{\tau_j{h}^{(j)}}\Abs{\De{z}^{(j)}}
       \leq  \frac{\Abs{\tau_j{h}^{(j)}}^2}{2\kap}+\frac{\kap}2\Abs{\De{z}^{(j)}}^2.
       \eeq
For the second term on the right--hand--side of \eqref{eq:sync:H1:td}, we use the estimate~\eqref{est:trilinear:Holder} on the trilinear form $b$ with $(p,q,r)=(\infty,2,2)$ and find
\beq{est:sync:trilinear:1}\begin{split}
               |b(\si_j\bv, {z}^{(j)}, \De{z}^{(j)})|
               &\leq \Abs{\si_j\bv}_{\infty}\no{{z}^{(j)}}{}\Abs{\De{z}^{(j)}}\\
        & \leq \frac{\Abs{\si_j\bv}_{\infty}^2}{\kap}\no{{z}^{(j)}}{}^2+\frac{\kap}4\Abs{\De{z}^{(j)}}^2.
\end{split}    
\eeq
Note that this estimate is dimension-independent.

Now, assuming $d=2,3$, a similar estimate but with a different dependence on $\bv$ is obtained if we apply \eqref{eq:trilinear:skew} to write
    \begin{align}\notag
        b(\si_j\bv, {z}^{(j)}, \De{z}^{(j)})=-\sum_{i=1,2}b(\si_j\bdy_i\bv, {z}^{(j)}, \bdy_i{z}^{(j)}).
    \end{align}
We then apply estimate~\eqref{est:trilinear:Sobolev2}, followed by Young's inequality
    \begin{align}
    |b(\si_j\bv, \nabla{z}^{(j)}, \De{z}^{(j)})|
    &\leq c_{2d/(d-1),1/2}^2\Abs{\si_j\nabla\bv}_{d}\Abs{\De{z}^{(j)}}\no{{z}^{(j)}}{}\notag\\
        &\leq c_{2d/(d-1), 1/2}^4\frac{\Abs{\si_j\nabla\bv}_{{d}}^2}{\kappa}\no{{z}^{(j)}}{}^2+\frac{\kappa}4\Abs{\De{z}^{(j)}}^2,\label{est:sync:trilinear:2}
    \end{align}
where $c_{p,s}=c_{2d/(d-1),1/2}$ is the constant from  \eqref{est:Sobolev} when $p=2d/(d-1)$, $s=1/2$.
Combining \eqref{est:sync:trilinear:1} with~\eqref{est:sync:trilinear:2}, we may then deduce
    \begin{align}\label{est:sync:trilinear:td}
        | b(\bv,{z}^{(j)},\De{z}^{(j)})|
        \leq (\si_jU_{d})^2(t) \frac{1}{\kappa}\no{{z}^{(j)}}{}^2+\frac{\kappa}4\Abs{\De{z}^{(j)}}^2,
    \end{align}
{where $U_{d}(t)$ is defined by \eqref{def:Wt}.}
Lastly, we see that
    \begin{align}\label{eq:feedback:td}
        \mu\lb P_N{z}^{(j)},\De{z}^{(j)}\rb=-\mu\no{{z}^{(j)}}{}^2+\mu\no{Q_N{z}^{(j)}}{}^2,
    \end{align}
where we have applied that $\nabla, Q_N$ commute.
Invoking \eqref{est:Poincare} then yields
    \begin{align}%\label{est:feedback:td}
         \mu\no{Q_N{z}^{(j)}}{}^2\leq \frac{\mu}{N^2}\Abs{\De {z}^{(j)}}^2.\notag
    \end{align}
Finally, let us recall the assumption that $\mu, N>0$ satisfy \eqref{cond:mu:kap:N:H1:td}. Then upon returning to \eqref{eq:sync:H1:td} and combining \eqref{est:sync:force:td}, \eqref{est:sync:trilinear:td}, \eqref{eq:feedback:td}, we deduce \eqref{equ:3.110}, as desired.
\end{proof}

\subsubsection{Convergence of Synchronization and Model Errors }\label{sect:proof:sieve:td}
We are now ready to supply the proof of \cref{thm:converge:sieve:td:detailed} and in particular, the proof of convergence of the algorithm introduced in~\cref{sect:def:sieve:td}.

\begin{proof}[Proof of \cref{thm:converge:sieve:td:detailed}]
Suppose that $\mu$ and $N$ satisfy \eqref{cond:mu:kap:N:convergence}.
From the definition \eqref{eq:sync:initial:td} of the initial condition ${z}^{(j)}(0)$, we see that ${z}^{(j)}(0)= \tau_j {z}^{(j-1)}(0)$.
Using this fact as well as \eqref{est:model:error:Hminus1:td} for $j - 1$, we may apply \cref{lem:sieve:sync:error} where we replace $\tau_j{h}^{(j)}(t)$ in  \eqref{equ:3.100}, to obtain
\begin{align}%\label{equ:3.110b}
\frac{d}{dt}
\Abs{{z}^{(j)}}^2 + \mu\Abs{{z}^{(j)}}^2 \leq \frac{1}{\kap} \min_{\eps\in[0,d/2]}\left\{(\si_jV_{\eps,d})(t) N^{\eps}\right\}^2( 1 + \norm{F_N}_{L,*})^2 \Abs{\tau_j {z}^{(j-1)}}^2,\notag
\end{align}
and an application of the Bellman--Gr\"{o}nwall lemma gives

\begin{align}
        \Abs{{z}^{(j)}(t)}^2 
        & \leq e^{-\mu t}\Abs{\tau_j {z}^{(j-1)}(0)}^2
        +\frac{(1+\no{F_N}{L,*})^2 \min_{\eps\in[0,d/2]}\left\{V_{\eps,d} N^{\eps}\right\}^2}{\kap}
        \int_0^t e^{-\mu(t-s)}
         \Abs{\tau_j{z}^{(j-1)}(t)}^2 ds \notag \\ 
         & \leq \left( e^{-\mu t} + \frac{(1+\no{F}{L,*})^2 \min_{\eps\in[0,d/2]}\left\{V_{\eps,d} N^{\eps}\right\}^2}{\kap \mu} \right) \left(\sup_{t \geq 0} \Abs{\tau_j{z}^{(j-1)}(t)}\right)^2,\label{est:sync:error:Hminus1:td}
   \end{align}
for all $t\geq0$, where $V_{\eps,d}$ is defined by \eqref{def:VepsdN}.
Since $\mu, N$ satisfies \eqref{cond:mu:kap:N:convergence}, 
%or \eqref{cond:mu:kap:N:convergence:alt}, 
we may subsequently choose $t_*$ depending on $\|F\|_{L,*}$, $V_{\eps,d}$, $N$, $\kap$, $\mu$, $\eps$ so that for $t\geq t_*$
    \begin{align}%\label{cond:t1}
        e^{-\mu t} &+
        \left(\frac{\kap}{\mu}\right)(1+\no{F_N}{L,*})^2\min_{\eps\in[0,d/2]}\left\{\left(\frac{V_{\eps,d}}{\kap}\right)N^{\eps}\right\}^2,\notag
        \\
        &\leq e^{-\mu t_{*}} + \left(\frac{\kap}{\mu}\right)(1+\no{F_N}{L,*})^2\min_{\eps\in[0,d/2]}\left\{\left(\frac{V_{\eps,d}}{\kap}\right)N^{\eps}\right\}^2
        =: \lambda_*^2 < 1.\notag
    \end{align}
Taking $t_{j} := t_*$ for all $j \in \N$, it then follows from \eqref{est:sync:error:Hminus1:td} that
    \begin{align}\notag
        \sup_{t\geq0}\Abs{\tau_{j+1}{z}^{(j)}(t)}\leq\lam_*\sup_{t\geq0}\Abs{\tau_{j}{z}^{(j-1)}(t)},
    \end{align}
which, upon iteration, yields    \begin{align}\label{est:sync:error:Hminus1:iterated:td}
        \sup_{t\geq0}\Abs{\tau_{j+1}{z}^{(j)}(t)}\leq\lam_*^{j}\left(\sup_{t\geq0}\Abs{\tau_{1}{q}^0(t)}\right)\leq\lam_*^{j}\left(\no{Q_N\ze_0^0}{*}+\frac{1}{\mu\kap}\sup_{t\geq0}\no{{h}^0(t)}{*}\right).
    \end{align}
Finally then, we have from \eqref{est:model:error:Hminus1:td} and \eqref{est:sync:error:Hminus1:iterated:td} that the model error can be controlled as
    \begin{align}\label{est:model:error:Hminus1:conv:td}
    \sup_{t \geq 0}\no{\tau_{j+1}{h}^{(j+1)}(t)}{*}
        &\leq \lam_*^{j}(1+\no{F_N}{L,*})\min_{\eps\in[0,d/2]}\left\{V_{\de,d}N^{\de}\right\}\left(\no{Q_N\ze_0^0}{*}+\frac{1}{\mu\kap}\sup_{t\geq0}\no{{h}^0(t)}{*}\right),
    \end{align}
for all $j \in \N$.
The claimed convergence of both the {synchronization error and model error} now follows by passing to the limit $j\goesto\infty$ in \eqref{est:sync:error:Hminus1:iterated:td}, \eqref{est:model:error:Hminus1:conv:td}. This completes the proof.

\end{proof}

The proof of \cref{thm:converge:sieve:td:detailed:strong} is similar. We nevertheless point out the relevant details.

\begin{proof}[Proof of \cref{thm:converge:sieve:td:detailed:strong}]
The main claim is to establish convergence of ${h}^{(j)}$ in $H$ and ${z}^{(j)}$ in $V$. We thus follow the proof of \cref{thm:converge:sieve:td:detailed} above, but invoke~\eqref{est:model:error:L2:td}, \eqref{equ:3.110} instead of \eqref{est:model:error:Hminus1:td}, \eqref{equ:3.100}, respectively. This is possible since $\mu, N$ is assumed to satisfy \eqref{cond:mu:kap:N:convergence:strong}, which implies that it also satisfies~\eqref{cond:mu:kap:N:H1:td}, where any appearance of $\no{F_N}{L,*}$ is now replaced by $\no{F_N}{L,0}$.
\end{proof}

\subsection{Nudging Algorithm}
We first provide statements regarding the wellposedness of the initial value problem for the nudging system \eqref{eq:nudging:td}:
\begin{Thm}\label{thm:nudge:td:gwp}
Given $\bv\in L_{loc}^\infty(0,\infty;\bbH)$ and $g\in L_{loc}^\infty(0,\infty;V^*)$, for all $\psi_0\in H$, $l_0\in V^*$, $T>0$, there exists a unique $\psi\in C([0,T);H)\cap L^2(0,T;V)$ and $l\in C([0,T);V^*)$ satisfying \eqref{eq:nudging:td} for a.a.~$t\in [0,T]$ and $\psi(0)=\psi_0$ in $H$ as well as $l(0)=l_0$ in $V^*$.
\end{Thm}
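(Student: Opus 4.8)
The plan is to read \eqref{eq:nudging:td} as a linear parabolic equation for $\psi$ coupled to a time--integral equation for $l$, whose only nonlinear ingredient is the Lipschitz enslaving map $F$, and to establish existence and uniqueness by the same Galerkin construction used for \cref{thm:td:exist} (a Banach fixed--point argument in the $l$--variable is an equally viable route and yields uniqueness simultaneously). First I would record the preliminaries. By \cref{thm:td:exist} the reference field $\phi=\phi(\cdot\,;\phi_0,g)$ is a well--defined weak solution of \eqref{def:td:weak} on $[0,T]$, and since $P_N\phi$ carries only finitely many Fourier modes it lies in $C([0,T);H^s)$ for every $s\in\R$ with $\no{P_N\phi(t)}{s}\le N^{s}|\phi(t)|$; in particular $\kap\De P_N\phi$, $\bv\cdot\nabla P_N\phi$ and $\mu_1P_N\phi$ all lie in $L^\infty(0,T;H)\subset L^\infty(0,T;V^*)$ under the standing regularity on $\bv$ from \cref{sect:notation:td}. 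By \cref{thm:1.20} --- all norms on the finite--dimensional range of $P_N$ being equivalent --- the map $l\mapsto l+F(P_Nl)$ is globally Lipschitz from $V^*$ into $V^*$ and hence maps $C([0,T);V^*)$ into itself; moreover, since the second equation in \eqref{eq:nudging:td} forces $\pdd_tl\in P_NH$, the high modes $Q_Nl(t)\equiv Q_Nl_0$ are frozen. Applying $P_N$ and $Q_N$ to the first line then exhibits the structure: a linear parabolic equation for $Q_N\psi$ whose source depends only on $\phi$ and on $F(P_Nl)$; a linear ODE for the finite vector $P_N\psi$ driven by $Q_N\psi$, $P_N\phi$ and $P_Nl$; and the integral relation $l(t)=l_0+\mu_2\int_0^t\bigl(P_N\phi(s)-P_N\psi(s)\bigr)\,ds$.

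For existence I would run the Galerkin scheme: projecting onto $P_M$, $M\ge N$, yields a finite--dimensional \emph{linear} ODE system for $(\psi_M,l_M)$ with globally Lipschitz and $t$--locally integrable right--hand side (the advection and feedback terms are linear, $\phi$ is given, $F$ is Lipschitz), so Picard--Lindel\"of gives a unique solution on all of $[0,T)$. I would then produce $M$--uniform a priori bounds. Testing the $\psi_M$--equation against $\psi_M$, using $\nabla\cdot\bv=0$ together with the skew--symmetry \eqref{eq:trilinear:skew} (which kills $b(\bv,Q_N\psi_M,Q_N\psi_M)$), and handling the remaining advection and forcing terms --- all of which involve either $P_N\phi$ or $P_N\psi_M$ with a gradient, hence are controlled for \emph{fixed} $N$ via \eqref{est:Poincare}, \eqref{est:Bernstein} and Young's inequality, absorbing $\kap\no{Q_N\psi_M}{}^2$ into the diffusion --- together with the bound $\no{l_M(t)}{*}\le\no{l_0}{*}+\mu_2\int_0^t(|\phi(s)|+|\psi_M(s)|)\,ds$ from the $l_M$--equation, one arrives at a closed integral Gr\"onwall inequality $\tfrac{d}{dt}|\psi_M|^2+\kap\no{Q_N\psi_M}{}^2\le C(t)\bigl(1+|\psi_M|^2+\int_0^t|\psi_M(s)|^2\,ds\bigr)$ with $C\in L^\infty(0,T)$ depending only on $N,\kap,\mu_1,\mu_2$, the Lipschitz constant of $F$, and the norms of $\bv$ and $\phi$ on $[0,T)$, hence independent of $M$ and of the size of the data. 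This bounds $\psi_M$ in $L^\infty(0,T;H)\cap L^2(0,T;V)$ (recalling $\no{P_N\psi_M}{}\le N|\psi_M|$ for the low modes), $l_M$ in $C([0,T);V^*)$, and then $\pdd_t\psi_M$ in $L^2(0,T;V^*)$ (using the standing regularity of $\bv$ to control $\bv\cdot\nabla Q_N\psi_M$ in $V^*$ via \eqref{est:trilinear:Sobolev1}) and $\pdd_tl_M$ in $L^\infty(0,T;V^*)$. An Aubin--Lions argument extracts a subsequence with $\psi_M\goesto\psi$ weakly in $L^2(0,T;V)$, strongly in $L^2(0,T;H)$, and $l_M\goesto l$ in $C([0,T);V^*)$; passing to the limit yields a solution of \eqref{eq:nudging:td} with $\psi\in C([0,T);H)\cap L^2(0,T;V)$ (by the standard Lions--Magenes interpolation lemma, since $\pdd_t\psi\in L^2(0,T;V^*)$), $l\in C([0,T);V^*)$, and the prescribed initial data. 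The advection terms pass to the limit by weak convergence exactly as in \cref{thm:td:exist}, and the one genuinely nonlinear term $F(P_Nl_M)$ converges to $F(P_Nl)$ because $P_Nl_M\goesto P_Nl$ in $C([0,T))$ (finite dimensions) and $F$ is continuous.

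Uniqueness I would obtain directly: for two solutions with identical data, set $\delta\psi=\psi_1-\psi_2$ and $\delta l=l_1-l_2$; testing the difference of the $\psi$--equations against $\delta\psi$ as above, and using $\no{\delta l(t)}{*}\le C\int_0^t|\delta\psi(s)|\,ds$ from the $l$--equation together with the Lipschitz bound $\no{F(P_Nl_1)-F(P_Nl_2)}{*}\le\no{F}{L,*}\no{\delta l}{*}$, one gets $\tfrac{d}{dt}|\delta\psi|^2\le C(t)\bigl(|\delta\psi|^2+\int_0^t|\delta\psi(s)|^2\,ds\bigr)$ with $C\in L^1(0,T)$, whence $\delta\psi\equiv0$ by Gr\"onwall and then $\delta l\equiv0$. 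I expect the main obstacle --- beyond the routine observation that \eqref{eq:nudging:td} is a lower--order, $F$--Lipschitz perturbation of the linear transport--diffusion equation, for which \cref{thm:td:exist} already supplies the template --- to be the bookkeeping of the $\psi$--$l$ coupling: because $l$ is itself an unknown reconstructed from the time history of $P_N\psi$, the source in the $\psi$--equation depends nonlocally in time on $\psi$, so the energy estimate does not close pointwise in $t$ and one is forced to use the integral form of Gr\"onwall's inequality (and, in the fixed--point variant, a continuation argument with a time step uniform in the data, justified precisely because the system is linear). A secondary technical point, handled just as in the proof of \cref{thm:td:exist}, is making sense of the advection term $\bv\cdot\nabla(P_N\phi+Q_N\psi)$ in $V^*$ given the regularity of $\bv$.
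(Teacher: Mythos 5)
Your proof is correct in outline but takes a genuinely different path from the paper's. You work directly with the state variables $(\psi_M,l_M)$, treat $l_M(t)=l_0+\mu_2\int_0^t(P_N\phi-P_N\psi_M)\,ds$ as a Volterra functional of $\psi_M$, and thereby land on an \emph{integro}-differential inequality of Gr\"onwall type — as you correctly anticipate, the coupling is nonlocal in time in this formulation, so the energy estimate does not close pointwise. The paper instead rewrites the system in the error variables $p=P_N(\phi-\psi)$, $q=Q_N(\phi-\psi)$, $e=P_Ng-l$ (Equations~(\ref{equ:1.50.1})--(\ref{equ:1.50.3})) and, crucially, rescales $\tilde p=\sqrt{\mu_2}\,p$ before forming the energy $|\tilde p|^2+|e|^2+|q|^2$. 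Because $\pdd_t\tilde p$ carries a $+\sqrt{\mu_2}\,e$ term while $\pdd_te$ carries a $-\sqrt{\mu_2}\,\tilde p$ term, the cross terms $(e,\tilde p)$ and $(\tilde p,e)$ cancel \emph{exactly}, so the $\psi$--$l$ coupling contributes nothing to the energy balance and the resulting differential inequality \eqref{est:apriori:nudging5} closes pointwise in $t$, with no need for the integral form of Gr\"onwall. This symmetrization — exploiting the skew (nearly Hamiltonian) structure of the proportional–integral feedback — is the one idea your proposal does not contain; without it one is forced, as you are, into the Volterra argument. Both arguments are rigorous and give the same conclusion under the same hypotheses, but the paper's rescaling makes the a priori bound cleaner and is the same device that powers the convergence analysis in \cref{thm:1.30}, so it is worth internalizing. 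Everything else in your proposal — the Galerkin scheme, the use of Bernstein/Poincar\'e to tame the finite-mode advection and feedback terms, the Aubin--Lions compactness step, the observation that $F(P_Nl_M)\to F(P_Nl)$ because $P_Nl_M$ converges in the finite-dimensional range of $P_N$, and the uniqueness via a Gr\"onwall argument on the difference — matches the structure of the paper's sketch (and of the proof of \cref{thm:td:exist} to which it defers).
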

A sketch of the proof of~\cref{thm:nudge:td:gwp} will be provided at the end of~\cref{sect:nudging:proof}.
\begin{Def}\label{def:class_W}
  For $\epsilon > 0$ we define
\beq{eq:td:v:bar}
W^2_{\epsilon, d} := \limsup_{t\to \infty}\frac{1}{t}\int_0^t \abs{\bv(s)}_{d/\epsilon}^2\, ds <\infty,
\eeq
and the set of vector fields
\beq{eq:td:v:bar:set}
\mathcal{W}_{\epsilon, d} := \{\bv \in L^{\infty}(0,\infty; \bbH) : W_{\epsilon, d} < \infty\}.
\eeq
\end{Def}
  \begin{Rmk}%\label{rmk:type:WS}
  For instance weak solutions of the {2D NSE} with time independent forcing are in~$\mathcal{W}_{\epsilon, 2}$ for all $\epsilon > 0$.
\end{Rmk}
Now suppose that $\epsilon \in (0, 1)$, $d = 2$~or~$3$, $\bv \in \mathcal{W}_{\epsilon,d}$ and $\Gamma \subset V^*$ a set of quasi--finite rank $N_0$.
  Let
  \beq{def:G:nudging}
  \cG := \{\Gamma_N: N \geq N_0\},
  \eeq
  be a family where each $\Gamma_N \subset V^*$ is of quasi-finite rank $N$ with Lipschitz enslaving map $F_N$ so that
    \begin{enumerate}
    \item $\Gamma_{N_0} = \Gamma$, 
    \item $\cG$ is increasing, that is $\Gamma_{M_1} \subset \Gamma_{M_2}$ for $M_1 \leq M_2$, 
    \item for sufficiently large $N \geq N_0$ we have
      \beq{equ:5.1030}
      C_* \frac{\no{F_N}{L,*}}{N} \left(\frac{W_{\epsilon, d}}{\kap}\right)N^{\eps} < \frac{\sqrt{2}}{2},
      \eeq
    \end{enumerate}
    where $C_* = \sqrt{2} C_1$ and $C_1$ is one of the constants appearing in~\cref{thm:trilinear}.
The construction presented in~\cref{rmk:canonical} will result in a suitable $\cG$ but other choices are possible in which the Lipschitz constants in~\eqref{equ:5.1030} depend on $N$. 
\begin{Thm}\label{thm:1.30}
  Consider the system \eqref{eq:nudging:td}, where $\phi$ is the solution of \eqref{eq:td:intro} with the same vector field $\bv \in \mathcal{W}_{\epsilon,d}$.
Let $\Gamma \subset V^*$ a set of quasi--finite rank $N_0$, $\cG$  as in \eqref{def:G:nudging}, and assume that $g$, the true forcing in Equation~\eqref{eq:td:intro} is a member of $\Gamma_{N_0}$.
If $N$ is taken so that the condition~\eqref{equ:5.1030} is satisfied, there exists a choice of $\mu_1,\, \mu_2>0$ such that for the corresponding solution $(\psi, l)$ of the nudging system \eqref{eq:nudging:td} we have
  \begin{equation}
      \lim_{t\to\infty}\|f(t) - g(t)\|_* = 0 \quad \text{and}\quad \lim_{t\to \infty} |\phi(t) - \psi(t)| = 0,\notag
  \end{equation}
with exponential rate, where $f(t) = l(t)+ F_N(l(t))$.
\end{Thm}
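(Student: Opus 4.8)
The plan is to reduce the statement to exponential decay of two scalar quantities — the low‑mode model error $\no{e}{*}$, where $e:=l-P_Ng$, and the state error $|w|$, where $w:=\psi-\phi$ — and then to run a Lyapunov/Gr\"onwall argument adapted to the proportional–integral structure of \eqref{eq:nudging:td}. For the reduction: since $g\in\Gamma_{N_0}$ and, by the construction in \cref{rmk:canonical} (cf.\ \cref{thm:1.20}), $g\in\Gamma_N$ for the chosen $N\ge N_0$, we have $g=P_Ng+F_N(P_Ng)$. Because the right‑hand side of the second line of \eqref{eq:nudging:td} takes values in the range of $P_N$ and $l_0=P_Nl_0$, the variable $l(t)$ stays low‑mode for all $t$; hence $f=l+F_N(l)$ with $l$ low‑mode, and $f-g=e+\bigl(F_N(P_Ng+e)-F_N(P_Ng)\bigr)$, so $\no{f-g}{*}\le(1+\no{F_N}{L,*})\no{e}{*}$. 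Thus it suffices to prove $\no{e}{*}\to0$ and $|w|\to0$ exponentially. Subtracting \eqref{eq:td:intro} from the first line of \eqref{eq:nudging:td} and projecting onto low/high modes (using $P_N\De=\De P_N$, $P_NQ_N=0$, $P_Nf=l$, $Q_Nf=F_N(l)$, $Q_Ng=F_N(P_Ng)$) gives the closed error system
\begin{align}\label{plan:sys}
\begin{split}
\partial_t P_N w + P_N(\bv\cdot\nabla Q_N w) &= e - \mu_1 P_N w, \\
\partial_t Q_N w + Q_N(\bv\cdot\nabla Q_N w) &= \kap\De Q_N w + \bigl(F_N(P_N g + e) - F_N(P_N g)\bigr), \\
\partial_t e &= -\mu_2 P_N w,
\end{split}
\end{align}
all manipulations being justified within the regularity class of \cref{thm:nudge:td:gwp} (Lions–Magenes framework for the infinite‑dimensional $Q_N$‑component, ODE smoothness for the finite‑dimensional $P_N$‑components).

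Next I would derive the basic estimates. Testing the $Q_Nw$‑equation with $Q_Nw$ in $H$, the advection term drops by incompressibility, and using $\no{Q_Nw}{1}^2\ge N^2|Q_Nw|^2$, the Lipschitz bound on $F_N$, and Young's inequality yields
\begin{align}\label{plan:QN}
\frac{d}{dt}|Q_N w|^2 + \kap N^2 |Q_N w|^2 \le \frac{\no{F_N}{L,*}^2}{\kap}\no{e}{*}^2 .
\end{align}
For the $(P_Nw,e)$‑pair, whose linear part is governed by the Hurwitz matrix $\bigl(\begin{smallmatrix}-\mu_1 & 1\\ -\mu_2 & 0\end{smallmatrix}\bigr)$, I would fix the associated quadratic Lyapunov form
$$\Lambda_1 := \tfrac{p_{11}}{2}\no{P_N w}{*}^2 - \delta\,\langle P_N w, e\rangle_* + \tfrac{p_{22}}{2}\no{e}{*}^2, \qquad p_{11} = p_{22}\mu_2 - \delta\mu_1,$$
with $\delta>0$ chosen small (relative to $\mu_1,\mu_2$) so that $\Lambda_1$ is comparable to $\no{P_Nw}{*}^2+\no{e}{*}^2$ and, crucially, its linear drift is $\le -(p_{11}\mu_1-\delta\mu_2)\no{P_Nw}{*}^2-\delta\no{e}{*}^2$ — the $\langle P_Nw,e\rangle_*$ cross terms cancelling thanks to the relation between $p_{11}$ and $p_{22}$. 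The advective forcing of the $P_Nw$‑equation is controlled exactly as in the proof of \cref{lem:sieve:model:error} (i.e.\ \cref{thm:trilinear}\,(4) followed by a Bernstein inequality), $\no{P_N(\bv\cdot\nabla Q_Nw)}{*}\le C_1|\bv(t)|_{d/\eps}N^{\eps}|Q_Nw|$; after Young's inequality this gives
\begin{align}\label{plan:PIE}
\frac{d}{dt}\Lambda_1 + c_9\Lambda_1 \le C_5\,C_1^2\,|\bv(t)|_{d/\eps}^2\,N^{2\eps}\,|Q_N w|^2 ,
\end{align}
with $c_9,C_5>0$ depending only on $\mu_1,\mu_2,\delta$.

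Then I would close the feedback loop. Applying Duhamel to \eqref{plan:QN} and \eqref{plan:PIE}, substituting the former into the latter, using $\no{e}{*}^2\lesssim\Lambda_1$, and exploiting the time‑average bound $\limsup_{t\to\infty}\frac1t\int_0^t|\bv(s)|_{d/\eps}^2\,ds=W_{\eps,d}^2$ to estimate $\int_r^t|\bv(s)|_{d/\eps}^2 e^{-\kap N^2(s-r)}\,ds$ uniformly in $r\le t$, I expect to arrive at a scalar integral inequality of the form
$$e^{c_9 t}\Lambda_1(t)\le D_0 + \rho\int_0^t e^{c_9 r}\Lambda_1(r)\,dr, \qquad \rho\;\asymp\;\frac{C_5\,C_1^2\,\no{F_N}{L,*}^2\,N^{2\eps}\,W_{\eps,d}^2}{\kap^2 N^2},$$
with $D_0$ depending on initial data and parameters; Gr\"onwall then gives $\Lambda_1(t)\lesssim D_0\,e^{-(c_9-\rho)t}$. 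Finally I would choose $\mu_1,\mu_2$ (e.g.\ $\mu_1=\mu_2=:\mu$ with $\delta$ small and fixed and $\mu$ ranging over an interval) so that $c_9>\rho$; tracking how $c_9$ scales with $\mu$ shows this is possible precisely when $C_1^2\no{F_N}{L,*}^2 N^{2\eps}W_{\eps,d}^2/(\kap^2N^2)$ lies below an absolute constant — which is exactly the hypothesis \eqref{equ:5.1030} (with $C_*=\sqrt2\,C_1$). For such a choice $\Lambda_1\to0$ exponentially, whence $\no{e}{*}\to0$; feeding this into \eqref{plan:QN} gives $|Q_Nw|\to0$ exponentially, and since $P_Nw$ lies in a finite‑dimensional space, $\no{P_Nw}{*}\to0$ forces $|P_Nw|\to0$. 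Hence $|w|=(|P_Nw|^2+|Q_Nw|^2)^{1/2}\to0$ and $\no{f-g}{*}\le(1+\no{F_N}{L,*})\no{e}{*}\to0$, both exponentially, as claimed.

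I expect the main obstacle to be precisely this last step: because $\bv$ is controlled only in time‑average — it belongs to $\mathcal{W}_{\eps,d}$, not to a class with pointwise‑small advection — one cannot absorb the advective coupling into a single monotone Lyapunov functional, and the loop $e\to Q_Nw\to P_Nw\to e$ must instead be unwound via Duhamel plus a Gr\"onwall inequality; the smallness condition \eqref{equ:5.1030} is exactly the requirement that the resulting loop gain $\rho$ be strictly below the $e$‑damping rate $c_9$. A secondary subtlety is the PI structure itself: the damping on $e$ originates solely from the cross term $\langle P_Nw,e\rangle_*$ in $\Lambda_1$ (dropping it would yield only $P_Nw\to0$ with $e$ merely bounded), so the sign and size of $\delta$ must be chosen carefully and consistently with positive‑definiteness of $\Lambda_1$.
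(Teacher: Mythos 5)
Your reduction to the coupled $(P_N w, Q_N w, e)$ error system, the dissipative estimate for the high--mode block $Q_N w$, and the idea of running a quadratic Lyapunov argument on the finite--dimensional $(P_N w, e)$ block all match the paper's strategy; indeed the paper's change of variables $r := e/\lambda_2 - p$ with $\lambda_1\lambda_2 = \mu_2$ and $\lambda_1+\lambda_2 = \mu_1$ is simply the explicit diagonalization underlying your Lyapunov form $\Lambda_1$, so that part of your plan is a genuinely equivalent alternative. The difficulty lies in the ``close the loop'' step. After applying Young's inequality to the trilinear term, your differential inequality for $\Lambda_1$ carries the factor $\abs{\bv(t)}_{d/\eps}^2 N^{2\eps}$ multiplying $\abs{Q_N w}^2$; closing the loop by Duhamel on $\abs{Q_N w}^2$ then requires a bound on $\int_r^t \abs{\bv(s)}_{d/\eps}^2 e^{-\lambda(s-r)}\,ds$ that is uniform over $0\le r\le t$. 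This is \emph{not} implied by the time--average bound $W_{\eps,d}^2 = \limsup_{t\to\infty} \tfrac{1}{t}\int_0^t\abs{\bv(s)}_{d/\eps}^2\,ds$: take $\abs{\bv(s)}_{d/\eps}^2 = 2^n$ on $[2^n, 2^n+1]$ for $n\ge 0$, zero elsewhere (concentrating $\bv$ in space so that $\abs{\bv(s)}_2$ stays uniformly bounded and $\bv\in\calW_{\eps,d}\subset L^\infty(0,\infty;\bbH)$ is respected); then the time average is bounded by~$2$, yet $\int_{2^n}^{2^n+1}\abs{\bv(s)}_{d/\eps}^2\, e^{-\lambda(s-2^n)}\,ds\to\infty$.

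The paper avoids this trap by never applying Young's inequality to the trilinear cross term. After assembling the single quantity $\xi^2 + \eta^2$ with $\xi = \abs{q}$ and $\eta = \sqrt{\alpha E}$, the resulting differential inequality~\eqref{equ:1.140} retains the advective coupling in the \emph{cross} form $c(t)\xi\eta$ with $c(t)\propto\abs{\bv(t)}_{d/\eps}N^{\eps}$, rather than in a diagonal form $c(t)^2\xi^2$. It then invokes \cref{thm:1.50}, whose Gr\"onwall argument keeps the time dependence inside the exponent $\int_0^t\rho(s)\,ds$, where $\rho(s)$ is the largest eigenvalue of the symmetric $2\times 2$ matrix with off--diagonal $c(s)/2$. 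The crucial point is that $\rho(s)$ grows only \emph{linearly} in $\abs{c(s)}$ through $\sqrt{(a-b)^2+c(s)^2}$, and the concavity of the square root combined with H\"older's inequality yields $\frac{1}{t}\int_0^t\rho(s)\,ds \le \frac{1}{2}\bigl(a+b+\sqrt{(a-b)^2 + \frac{1}{t}\int_0^t c(s)^2\,ds}\bigr)$, which \emph{is} controlled by the time average. To rescue your version you would need to withhold Young's inequality until after the energy $\abs{Q_N w}^2 + \Lambda_1$ has been assembled, so that the advective coupling enters as a product of the two components rather than as a time--varying coefficient on a single one.
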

\begin{Rmk}
\begin{enumerate}
  \item As in \cref{rmk:Peclet}, we note that we can introduce another P\'eclet-type number in this context:
    \begin{align}\label{def:Peclet:nudging}
        \widetilde{\Pe}_{\eps,d}:=\frac{W_{\eps,d}}{\kap}.
    \end{align}
    Thus \eqref{equ:5.1030} can also be viewed in terms of $\widetilde{\Pe}_{\eps,d}$.
  \item Suppose that the forcing $g$ in \eqref{eq:td:intro} is an element of some $\Gamma$ of finite rank $N_0$ and order $(\alpha, \beta)$ where $\beta \geq -1$ and so that $(\alpha + 1)_+ - (\beta + 1) < 1$.
  Then the family $\{\Gamma_N\}_{N \geq N_0}$ constructed in \cref{thm:1.20} has the properties required for the increasing family of sets in \cref{thm:nudging:intro}.
  Therefore the statement of the Theorem applies and in fact $l(t) + {F_N}(l(t)) \to g$ in $H^{\beta}$.
  \item Note that the magnitude of the roots $\lambda_{1,2}$ is irrelevant for the result to hold as long as they are positive.
   \item In order to deploy the methodology and use \eqref{eq:nudging:td} to reconstruct the forcing, knowledge of the observations $P_N \phi$, the vector field $\bv$ (both as functions of space and time) and of the set $\Gamma_N$ (which includes knowledge of the function $F_N$) is required, for the sufficiently large $N$ mentioned in the Theorem.
   \end{enumerate}
  \end{Rmk}
The proof of \cref{thm:1.30} is deferred to Section~\ref{sect:nudging:proof} and will make crucial use of Bellman-Gr\"onwall type inequality of a quadratic form. We will invoke \cref{thm:1.50} as a blackbox to prove \cref{thm:1.30} and supply the proof of \cref{thm:1.50} afterwards.

  \begin{Lem}\label{thm:1.50}
    Let $\xi, \eta$ be nonnegative and absolutely continuous functions on $\R_{\geq 0}$ satisfying the inequality
    \beq{equ:1.160}
      \frac{\dd}{\dd t} (\xi^2(t) + \eta^2(t))
      \leq a \xi^2(t) + b \eta^2(t) + c(t) \xi(t) \eta(t),
    \eeq
    where $c: \R_{\geq 0} \to \R$ is measurable and $a, b \in \R$.
    Then for every $\epsilon > 0$ there exists a $C_{\epsilon} > 0$ so that 
    \beqn{equ:1.200}
    \xi^2(t) + \eta^2(t)
    \leq  (\xi^2_0 + \eta^2_0) C_{\epsilon} \exp (t \rho_{\eps}),
    \eeq
    for $t \geq 0$, with
    \beq{equ:1.170}
    \begin{split}
    \rho_{\eps}  :&= \frac{1}{2} \left(a + b + \sqrt{ \left(a - b\right)^2
      + (1 + \epsilon) \overline{c}^2} \right) \\
      & = \frac{1}{2} \left(a + b + \sqrt{ \left(a + b\right)^2
      - \left(4 ab - (1 + \epsilon) \overline{c}^2 \right)} \right),
    \end{split}
    \eeq
    and $\overline{c}^2 := \limsup_{t \to \infty} \frac{1}{t} \int_0^t c(s)^2 \dd s$.
  \end{Lem}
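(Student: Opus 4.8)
The plan is to reduce the coupled differential inequality \eqref{equ:1.160} to a scalar Gr\"onwall inequality for $w(t):=\xi^2(t)+\eta^2(t)$, and then to extract the sharp exponential rate $\rho_\epsilon$ from a time-averaged estimate of the resulting integrating factor. The only real work lies in this last estimate.

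First I would observe that the right-hand side of \eqref{equ:1.160} is the quadratic form $v^{T}M(t)v$ evaluated at $v=(\xi(t),\eta(t))$, where
\[
M(t):=\begin{pmatrix} a & c(t)/2 \\ c(t)/2 & b\end{pmatrix}
\]
is symmetric, hence is bounded above by $\lambda(t)\,|v|^2=\lambda(t)\,w(t)$, where $\lambda(t)$ is the largest eigenvalue of $M(t)$, namely $\lambda(t)=\tfrac12(a+b)+\tfrac12\sqrt{(a-b)^2+c(t)^2}$. Since $\xi,\eta$ are absolutely continuous and nonnegative, $w$ is locally absolutely continuous and $\tfrac{\dd}{\dd t}w\le\lambda(t)w$ for a.e.\ $t\ge0$; the scalar Gr\"onwall lemma then yields $w(t)\le w(0)\exp\!\big(\int_0^t\lambda(s)\,\dd s\big)$. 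It therefore suffices to show that $\int_0^t\lambda(s)\,\dd s\le t\rho_\epsilon+\log C_\epsilon$ for all $t\ge0$, for some constant $C_\epsilon>0$.

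Splitting $\int_0^t\lambda(s)\,\dd s=\tfrac{t(a+b)}2+\tfrac12\int_0^t\sqrt{(a-b)^2+c(s)^2}\,\dd s$, I would estimate the remaining integral by Cauchy--Schwarz:
\[
\int_0^t\sqrt{(a-b)^2+c(s)^2}\,\dd s\le \sqrt{t}\Big(\int_0^t\big((a-b)^2+c(s)^2\big)\,\dd s\Big)^{1/2}=t\Big((a-b)^2+\tfrac1t\int_0^t c(s)^2\,\dd s\Big)^{1/2}.
\]
Since $\overline{c}^2=\limsup_{t\to\infty}\tfrac1t\int_0^t c(s)^2\,\dd s$ is finite (in particular $c\in L^2_{loc}$, for otherwise $\tfrac1t\int_0^t c^2=+\infty$ eventually), for each $\epsilon>0$ there is $T_\epsilon$ with $\tfrac1t\int_0^t c(s)^2\,\dd s\le(1+\epsilon)\overline{c}^2$ for all $t\ge T_\epsilon$, so the displayed bound becomes $t\sqrt{(a-b)^2+(1+\epsilon)\overline{c}^2}=t\big(2\rho_\epsilon-(a+b)\big)$ by the definition \eqref{equ:1.170} of $\rho_\epsilon$; hence $\int_0^t\lambda(s)\,\dd s\le t\rho_\epsilon$ for $t\ge T_\epsilon$. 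On the compact interval $[0,T_\epsilon]$ one has $\int_0^t\lambda(s)\,\dd s\le\int_0^{T_\epsilon}|\lambda(s)|\,\dd s<\infty$, so putting $C_\epsilon:=\exp\!\big(\int_0^{T_\epsilon}|\lambda(s)|\,\dd s+T_\epsilon|\rho_\epsilon|\big)$ gives $\int_0^t\lambda(s)\,\dd s\le t\rho_\epsilon+\log C_\epsilon$ for all $t\ge0$. Feeding this into the Gr\"onwall bound produces \eqref{equ:1.200}. The degenerate case $a=b$ with $\overline{c}=0$, where $\rho_\epsilon=a$, is handled directly from $\tfrac{\dd}{\dd t}w\le a\,w+c(t)\xi\eta\le\big(a+\tfrac12|c(t)|\big)w$.

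The step I expect to be the crux is passing from the \emph{pointwise} eigenvalue bound $\lambda(t)=\tfrac12(a+b)+\tfrac12\sqrt{(a-b)^2+c(t)^2}$ to the \emph{averaged} rate $\rho_\epsilon$: because $\tfrac1t\int_0^t c(s)^2\,\dd s$ need only approach $\overline{c}^2$ through a subsequence, one cannot in general replace $c(s)^2$ by its average with no loss, and it is precisely the $(1+\epsilon)$ slack that absorbs the sublinear-in-$t$ error for large $t$ while the contribution of small $t$ is swallowed by the multiplicative constant $C_\epsilon$. Everything else --- the Rayleigh-quotient bound, the Cauchy--Schwarz step, and the scalar Gr\"onwall estimate --- is routine.
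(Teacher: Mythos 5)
Your proof is correct and follows essentially the same route as the paper: bound the right-hand side of \eqref{equ:1.160} by the largest eigenvalue of the symmetric matrix $\begin{pmatrix} a & c(t)/2 \\ c(t)/2 & b\end{pmatrix}$ times $\xi^2+\eta^2$, apply scalar Gr\"onwall, push the time-average inside the square root via Cauchy--Schwarz/Jensen (the paper invokes H\"older for the same step), and use the $\limsup$ definition to absorb the transient into $C_\epsilon$. The only stylistic difference is that you construct $C_\epsilon$ explicitly from $\int_0^{T_\epsilon}|\lambda|\,\dd s$ and $T_\epsilon|\rho_\epsilon|$, whereas the paper defines it as the supremum $\sup_{t\ge0} e^{-\rho_\epsilon t}(\xi^2(t)+\eta^2(t))/(\xi_0^2+\eta_0^2)$; both yield a constant independent of the data.
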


\begin{Rmk}
  Note that $\overline{c}^2$ is well defined but in general might be infinite. 
  Note also that due to the first expression in ~\eqref{equ:1.170}, if $\overline{c}^2 < \infty$ then $\rho_{\eps} \in \R$.
\end{Rmk}

\subsection{Proofs of the Main Theorems for Nudging Algorithm}\label{sect:nudging:proof}
Let us first prove \cref{thm:1.30}, assuming \cref{thm:1.50}. We will then prove \cref{thm:1.50} immediately afterwards, thereby formally completing the proof of \cref{thm:1.30}.
\begin{proof}[Proof of \cref{thm:1.30}]
  First note that 
  \begin{align}
  \label{eq:f-g}
  \begin{split}
      \norm{f(t)-g}_{*}=\norm{l(t) + {F_N}(l(t)) - g}_{*} &\leq \norm{l(t) - P_N g}_{*} + \norm{{F_N}(l(t)) - {F_N}(P_N g)}_{*} \\
      &\leq (1 + \norm{F_N}_{L,*}) \norm{l(t) - P_N g}_{*},
      \end{split}
  \end{align} 
  so it is sufficient to consider the large-scale error dynamics for ${e(t)} := P_N g - l(t)$, as well as error dynamics for $\phi(t) - \psi(t)$, the latter of which we split into $P_N(\phi(t) - \psi(t)) =: p(t)$ and $Q_N(\phi(t) - \psi(t)) =: q(t)$.
  We obtain the error dynamics
  \begin{align}
  \pdd_t {p} + P_N \bv \cdot \nabla {q}
  & = {e} - \mu_1 {p}, \label{equ:1.50.1}\\
  \pdd_t {q} + Q_N \bv \cdot \nabla {q}
  & = \kappa \Delta {q} + {F_N}(P_N g) - {F_N}(l), \label{equ:1.50.2}\\
  \pdd_t {e} & = - \mu_2 {p}.  \label{equ:1.50.3}
  \end{align}
We consider the following change of variables and parameters: 
\begin{equation}\notag
    r := \frac{e}{\lambda_2} - p,  \quad \lambda_1 + \lambda_2 = \mu_1, \quad \lambda_1\lambda_2 = \mu_2
\end{equation}
where we assume that $\lambda_1 \geq \lambda_2$ without loss of generality.
We can then write ~\eqref{equ:1.50.1}-\eqref{equ:1.50.3} as 
\begin{align}
    %
    % {equ:1.55}
  \pdd_t {p} + P_N \bv \cdot \nabla {q}
  & = \lambda_2 {r} - \lambda_1 {p}, \label{equ:1.55.1}\\
  \pdd_t {q} + Q_N \bv \cdot \nabla {q}
  & = \kappa \Delta {q} + {F_N}(l + \lambda_2 ({r} + {p})) - {F_N}(l), \label{equ:1.55.2}\\
  \pdd_t {r} - P_N \bv \cdot \nabla {q} & = - \lambda_2 {r}.  \label{equ:1.55.3}
  \end{align}
  We now take the $H$ scalar product of~\eqref{equ:1.55.2} with ${q}$, and then treat \eqref{equ:1.55.1}-\eqref{equ:1.55.3} jointly.
  
  By \eqref{eq:trilinear:skew} it follows $({q}, Q_N \bv \cdot \nabla {q}) = ({q}, \bv \cdot \nabla {q}) = 0$.
  Furthermore, by the Lipschitz property of $F_N$, we get 
  $$|({q}, {F_N}(P_N g) - {F_N}(l))| \leq \norm{{q}} \norm{{F_N}(P_N g) - {F_N}(l)}_{*} \leq \norm{{q}}\norm{F_N}_{L,*} \norm{e}_{*}=\lambda_2 \norm{{q}}\norm{F_N}_{L,*} \norm{{r} + {p}}_{*}.$$
      Using these facts and Young's inequality we find
  \beq{equ:1.70}
  \frac{d}{dt} \abs{{q}}^2
  \leq - \kappa \norm{{q}}^2 + \frac{\lambda_2^2 \norm{F}_{L,*}^2}{\kappa} \norm{{r} + {p}}_{*}^2.
  \eeq
  Next, we take the $V^*$ inner product of \eqref{equ:1.55.1}-\eqref{equ:1.55.3} with ${p}$ and ${r}$, respectively, then add the results to obtain  
\beq{equ:1.80}
  \frac{1}{2}\frac{d}{dt} (\norm{{p}}^2_{*} + \norm{{{r}}}^2_{*}) 
  + b(\bv, {q}, \Delta^{-1} {p})
  - b(\bv, {q}, \Delta^{-1} {r})
  \leq  \lambda_2 \norm{{p}}_{*} \norm{{r}}_{*} - \lambda_1 \norm{{p}}^2_{*} - \lambda_2 \norm{{r}}^2_{*},
  \eeq
  where we have used that $|\scp{\Delta^{-1} {p}}{{r}}| \leq \norm{{p}}_{*} \norm{{r}}_{*}$.
  By ~\eqref{est:trilinear:Sobolev1} and~\eqref{est:Bernstein} from ~\cref{thm:trilinear}, we obtain
\begin{align}\notag
        |b(\bv, {q}, \Delta^{-1} {r})| \leq C_1 \abs{\bv}_{d/\epsilon} \abs{q} N^{\epsilon} \norm{{r}}_{*}\quad\text{and}\quad |b(\bv, {q}, \Delta^{-1} {p})| \leq C_1 \abs{\bv}_{d/\epsilon} \abs{q} N^{\epsilon} \norm{{p}}_{*}.
\end{align}
  Next, we note that 
  \beq{equ:1.90}
  \lambda_2 \norm{{p}}_{*} \norm{{r}}_{*} - \lambda_1 \norm{{p}}^2_{*} - \lambda_2 \norm{{r}}^2_{*} \leq -\frac{\lambda_2}{2}(\norm{{p}}_{*}^2 + \norm{{r}}_{*}^2)
  \eeq
  due to the fact that $\lambda_{1} \geq \lambda_2$.
  In summary we obtain
     \beq{equ:1.100}
  \frac{1}{2}\frac{d}{dt} (\norm{{p}}^2_{*} + \norm{{r}}^2_{*})
  \leq 
  C_1 \abs{\bv}_{d/\epsilon} \abs{q} N^{\epsilon} (\norm{{p}}_{*} + \norm{{r}}_{*})
  -\frac{\lambda_2}{2}(\norm{{p}}_{*}^2 + \norm{{r}}_{*}^2)
  \eeq
   This relation is coupled to ~\eqref{equ:1.70}.
   To analyze these relations simultaneously, we introduce the variable $E := \norm{{p}}^2_{*} + \norm{{r}}^2_{*}$ and note that $\norm{{r} + p}^2_{*} \leq (\norm{{r}}_{*} + \norm{p}_{*})^2 \leq 2 E$.
   Next we multiply ~\eqref{equ:1.100} with a parameter $\alpha > 0$ (to be determined), add to ~\eqref{equ:1.70} and replace with $E$ accordingly.
   We obtain
   \beq{equ:1.140}
  \frac{d}{dt} (\abs{{q}}^2 + \alpha E)
  \leq - \kappa N^2 \abs{{q}}^2 + 2\frac{\lambda_2^2 \norm{F_N}_{L,*}^2}{\kappa} E
+ \alpha C_2 \abs{\bv}_{d/\epsilon} \abs{q} N^{\epsilon} E^{1/2}
  - \alpha \lambda_2 E;
  \eeq
  where we have also used the generalized Poincar\'{e} inequality~\eqref{est:Poincare}.
  Our analysis of this differential inequality makes use of \cref{thm:1.50}.

  Indeed, let us now apply \cref{thm:1.50} to the estimate~\eqref{equ:1.140} with $\xi = \abs{{q}}$ and $\eta = \sqrt{\alpha E}$.
  We find
  \beq{equ:1.180}
  \abs{{q}}^2 + \alpha E \leq (\abs{{q}_0}^2 + \alpha E(0)^2) C_{\epsilon} \exp(t \rho_{\eps}),
  \eeq
  with
  \beqn{equ:1.210}
    \begin{split}
    2\rho_{\eps} 
    & = -\left(\kappa N^2 + \lambda_2 
    - \frac{2 \lambda^2_2\norm{F_N}_{L,*}^2}{\alpha \kappa}\right) \\
    & \quad  + \left\{ \left(\kappa N^2 + \lambda_2 
    - \frac{2\lambda^2_2\norm{F_N}_{L,*}^2}{\alpha \kappa}\right)^2 
    - \left( 4\kappa N^2(\lambda_2 - \frac{2\lambda^2_2\norm{F_N}_{L,*}^2}{\alpha \kappa}) 
    - (1 + \epsilon) \alpha C_2^2 W_{\epsilon,d}^2 N^{2\epsilon}\right) \right\}^{\frac{1}{2}}
\end{split}
\eeq
where $W_{\epsilon,d}^2 $ is as in Definition~\ref{def:class_W} and finite by assumption.
(We are using the second expression in ~\eqref{equ:1.170} for $\rho_{\eps}$.)
We now need to demonstrate that for sufficiently large $N$ and appropriate choices of $\lambda_2, \alpha, \varepsilon$ we have $\rho_{\eps} < 0$.
For this it is sufficient that 
\begin{align}
\label{equ:1.190.1}
    \kappa N^2 + \lambda_2 
    & >  \frac{2 \lambda^2_2\norm{F_N}_{L,*}^2}{\alpha \kappa} \\
\label{equ:1.190.2}
4\kappa N^2\left(\lambda_2 - \frac{2\lambda^2_2\norm{F_N}_{L,*}^2}{\alpha \kappa}\right) 
    & >  \alpha C_2^2 W_{\epsilon,d}^2 N^{2\epsilon}
\end{align}
It is clear that~\eqref{equ:1.190.2} implies~\eqref{equ:1.190.1}.
To analyze~\eqref{equ:1.190.2}, we note that whenever $A_1, A_2, A_3$ are positive, in order that the inequality 
\beq{equ:1.200}
A_1 - \frac{A_2}{\alpha} > \alpha A_3
\eeq
is true for some $\alpha > 0$, it is sufficient that $A_1^2 - 4 A_2 A_3 > 0$.
We thus find that sufficient for the existence of an $\alpha$ satisfying~\eqref{equ:1.190.2} is the relation
\beq{equ:1.210}
0< 16 \kappa^2 N^4 \lambda_2^2 - 32 N^2 \lambda^2_2\norm{F_N}_{L,*}^2 C_2^2 W_{\epsilon,d}^2 N^{2\epsilon}
= 16 N^{2(1+\epsilon)} \lambda_2^2 (\kappa^2 N^{2(1-\epsilon)} - 2 \norm{F_N}_{L,*}^2 C_2^2 W_{\epsilon,d}^2),
\eeq
This however is satisfied as we are reconstructing a quasi-finite rank forcing term $g\in \Gamma\in \mathcal{G}_*$, taking $C_*= C_2$, and the nudging parameters $\mu_1, \mu_2$ so that and $\lambda_1 > \lambda_2>0$.
\end{proof}

Finally, let us now prove \cref{thm:1.50}, which will formally complete the proof of \cref{thm:1.30}.

\begin{proof}[Proof of Lemma~\ref{thm:1.50}]
  The inequality in display~\eqref{equ:1.160} gives
    \beqn{equ:1.220}
    \frac{\dd}{\dd t} (\xi^2(t) + \eta^2(t))
    \leq \rho(t) (\xi^2(t) + \eta^2(t)),
    \eeq
    if we let $\rho(t)$ be the largest eigenvalue of the symmetric matrix
    \beqn{equ:1.225}
      \begin{bmatrix} a& \frac{c(t)}{2} \\
        \frac{c(t)}{2} & b \end{bmatrix}.
    \eeq
Thus by the Bellmann--Gr\"{o}nwall lemma we obtain
    \beq{equ:1.240}
    \xi^2(t) + \eta^2(t)
    \leq  (\xi^2_0 + \eta^2_0) \exp \left( \int_0^t \rho(s) \idd s \right),
    \eeq
    for any $t \geq 0$.
But an elementary calculation shows that $\rho(t)$ is given by 
    \beq{equ:1.250}
    \rho(t) := \frac{1}{2} \left(a + b + \sqrt{ \left(a - b\right)^2
      + c(t)^2} \right).
    \eeq
    By H\"{o}lder's inequality, we find (now for $t > 0$)
    \beq{equ:1.260}
    \frac{1}{t}\int_0^t\rho(s) \idd s
    \leq 
    \frac{1}{2} \left(a + b + \sqrt{ \left(a - b\right)^2
      + \frac{1}{t}\int_0^tc(s)^2 \idd s} \right).
    \eeq
But for any $\epsilon > 0$ we can find $t_{\epsilon} \geq 0$ so that $e(t) \leq  \overline{c}^2 (1 + \epsilon)$ whenever $t \geq t_{\epsilon}$ due to the definition of $\overline{c}$.
    Using this estimate in~\eqref{equ:1.260} and applying it to~\eqref{equ:1.240} we find that
    \beq{equ:1.280}
    \xi^2(t) + \eta^2(t)
    \leq  (\xi^2_0 + \eta^2_0) \exp \left( \rho_{\eps} t\right),
    \eeq
    for $t \geq t_{\epsilon}$ and $\rho_{\eps}$ as in ~\eqref{equ:1.210}.
    As the left hand side of~\eqref{equ:1.280} is clearly also bounded if $t < t_{\varepsilon}$, we have 
    \beqn{equ:1.290}
    \sup_{t \geq 0}e^{-\rho_{\eps}t}\frac{\xi^2(t) + \eta^2(t)}{\xi^2_0 + \eta^2_0} =: C_{\epsilon} < \infty,
    \eeq
    finishing the proof.
        \end{proof}
Finally, we provide a brief outline of the proof of \cref{thm:nudge:td:gwp}.
\begin{proof}[Proof sketch of~\cref{thm:nudge:td:gwp}]
% \label{rmk:nudge:td:gwp}
% 
The proof of global existence and uniqueness of weak solutions to \eqref{eq:nudging:td} follows from applying the Galerkin method to the Equations~(\ref{equ:1.50.1}--\ref{equ:1.50.3}).
Let $M$ be large enough so that $M > N$ and therefore $P_M H \supset P_N H$.
The solutions $p_M,q_M,r_M$ to the order~$M$ Galerkin truncation of equations~(\ref{equ:1.50.1}--\ref{equ:1.50.3}) respectively exist locally in time since the nonlinearity is locally Lipschitz.
Furthermore, they satisfy the following apriori estimate:
employing a more straightforward variation of the analysis of the proof of \cref{thm:1.30} above, and upon setting $\tp=\sqrt{\mu}_2p$, we obtain the following basic energy balance:
    \begin{align}
        \frac{1}2\frac{d}{dt}\left(|\tp|^2+|e|^2\right)+\mu_1|\tp|^2&=-(P_N\bv\cdotp\nabla q,\tp)\notag
        \\
        \frac{1}2\frac{d}{dt}|q|^2+\kap\|q\|^2&=(F_N(P_Ng)-F_N(\ell),q).\notag
    \end{align}
It follows that
\beq{est:apriori:nudging5}
    \begin{split}
        & \frac{d}{dt}\left(|\tp|^2+|e|^2+|q|^2\right) +\mu_1|\tp|^2+\kap\|q\|^2\leq \left[C_A\left(\frac{\mu_2}{\mu_1}\right)N^2\left(\sup_{0\leq t\leq T}|\bv(t)|\right)^2+\frac{\|F_N\|_*^2}{\kap}\right]|q|^2.
    \end{split}
    \eeq
From this a priori estimate, one may then argue as in \cref{thm:td:exist} to complete the proof: since that estimate is uniform in~$M$, the compactness theorems of Aubin--Lions show the existence of subsequences as $M \to \infty$ which converge to a solution of equations~(\ref{equ:1.50.1}--\ref{equ:1.50.3}) satisfying the requirements of the theorem.
This approach is well--known, and for additional relevant details we refer the reader to \cref{sect:appendix}, where \cref{thm:td:exist} is proved, and to~\citet{RobinsonBook,TemamBook1997}.
\end{proof}

\section{Reconstruction of Forcings in the 2D--Navier--Stokes Equation}\label{sect:alg1}
In this section, we will fix the dimension parameter to be $d=2$. We will show that both the Sieve as well as the Nudging Algorithms can be applied to reconstruct forcings or surface fluxes in the 2D--Navier-Stokes equation. 
We will consider \eqref{eq:nse:intro} over the periodic box $\T^2=[0,2\pi]^2$.
Recall that we seek to reconstruct the \textit{non-potential} component of the force $\mbg$. Thus we will assume that $\PP\mbg=\mbg$. 
Throughout this section, we will assume that $\bu_0\in \bbV\cap H^2(\T^2)^2$ and $\bg\in L^\infty(0,\infty;\bbH)$, so that \eqref{eq:nse:intro} admits a unique global strong solution solution $\bu\in C([0,T]; \bbV)\cap {L^{2}}(0,T; \bbD(A))$, for every $T\geq 0$ (see \cite{ConstantinFoiasBook, TemamNSEBook}). We denote the unique solution corresponding to data $\bu_0,\mbg$ by $\bu=\bu(\cdotp;\bu_0,\mbg)$.

Given a solution $\bu$ of \eqref{eq:nse:intro}, force $\mbf$ such that ${\PP}\mbf=\mbf$, the nudged equation corresponding to observations $\OO=\{P_N\bu(t)\}_{t\geq0}$ is given by
    \begin{align}\label{eq:nse:nudge}
        \bdy_t\bv+B(\bv,\bv)=-\nu{A}\bv+\mbf-\mu {P_N}\bv+\mu {P_N}\bu,
    \end{align}
where $A$ denotes the Stokes operator \eqref{def:Stokes}. The solution theory \eqref{eq:nse:nudge} in the functional setting introduced in \cref{sect:notation:nse} was developed in \cite{AzouaniOlsonTiti2014}, where the following result was proved.
\begin{Thm}\label{thm:nse:existence:nudge}
Suppose $d=2$. Given a Leray-Hopf weak solution $\bu=\bu(\cdotp;\bu_0,\mbg)$ of \eqref{eq:nse:intro} over $[0,T]$, for any $\mu, N>0$, $\bv_0\in {\bbH}$ and {$\mbf\in L^\infty(0,T;\bbV^*)$}, there exists a unique Leray-Hopf weak solution of \eqref{eq:nse:nudge} over $[0,T]$. If $\bu$ is a strong solution, $\bv_0\in\bbV$ and $\mbf\in L^2(0,T;\bbH)$, then the Leray-Hopf weak solution $\bv$ is also a strong solution.  
\end{Thm}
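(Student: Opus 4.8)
The plan is to establish \cref{thm:nse:existence:nudge} by the Galerkin method, in complete analogy with the classical well-posedness theory for the 2D~NSE summarized in \cref{thm:nse:exist:uniq}; indeed \eqref{eq:nse:nudge} differs from \eqref{eq:nse:intro} only through the bounded zeroth-order feedback term $-\mu P_N\bv$ and the replacement of $\mbg$ by the modified forcing $\mbf+\mu P_N\bu$. Since $\bu$ is a Leray--Hopf weak solution, $\bu\in C([0,T);\bbH)$ and hence $P_N\bu\in C([0,T);\bbH)$ is a finite-Fourier-mode function that is bounded on $[0,T]$; consequently $\mbf+\mu P_N\bu\in L^2(0,T;\bbV^*)$ (and lies in $L^2(0,T;\bbH)$ when $\mbf$ does), so the data for \eqref{eq:nse:nudge} are admissible in the sense of \cref{def:nse:weak:strong}. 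This observation will be used repeatedly.

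First I would fix $m>N$, let $P_m$ denote the Galerkin projection onto divergence-free fields with Fourier modes $|k|\le m$, and solve the finite-dimensional truncation of \eqref{eq:nse:nudge} for $\bv_m=P_m\bv_m$; the quadratic nonlinearity gives a local-in-time solution. Testing against $\bv_m$, using $b(\bv_m,\bv_m,\bv_m)=0$ (cf.\ \eqref{eq:trilinear:skew}), noting that the feedback term contributes the favorable sign $-\mu|P_N\bv_m|^2\le 0$, and applying Young's inequality to $(\mbf+\mu P_N\bu,\bv_m)$, one obtains
\[
\frac{d}{dt}|\bv_m|^2+\nu\|\bv_m\|^2\le \frac{C}{\nu}\big(\|\mbf\|_*^2+\mu^2|P_N\bu|^2\big),
\]
and hence uniform (in $m$) bounds for $\bv_m$ in $L^\infty(0,T;\bbH)\cap L^2(0,T;\bbV)$; the two-dimensional Ladyzhenskaya inequality \eqref{est:Ladyzhenskaya} applied to $B(\bv_m,\bv_m)$ then gives a uniform bound for $\tfrac{d}{dt}\bv_m$ in $L^2(0,T;\bbV^*)$. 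An Aubin--Lions compactness argument produces a subsequence converging strongly in $L^2(0,T;\bbH)$, weakly in $L^2(0,T;\bbV)$ and weak-$*$ in $L^\infty(0,T;\bbH)$; the strong $L^2$ convergence suffices to pass to the limit in the trilinear term, the remaining terms being linear, and the limit is a Leray--Hopf weak solution of \eqref{eq:nse:nudge}. For uniqueness, if $\bv^{(1)},\bv^{(2)}$ are two such solutions with the same data, then $\bw:=\bv^{(1)}-\bv^{(2)}$ satisfies $\partial_t\bw+B(\bw,\bv^{(1)})+B(\bv^{(2)},\bw)=-\nu A\bw-\mu P_N\bw$; testing with $\bw$, discarding the nonnegative term $\mu|P_N\bw|^2$, using $b(\bv^{(2)},\bw,\bw)=0$ and the bound $|b(\bw,\bv^{(1)},\bw)|\le c_L|\bw|\|\bw\|\|\bv^{(1)}\|$ (which follows from \eqref{est:trilinear1}), one gets $\tfrac{d}{dt}|\bw|^2\le \tfrac{c_L^2}{\nu}\|\bv^{(1)}\|^2|\bw|^2$, and Gr\"onwall with $\|\bv^{(1)}\|^2\in L^1(0,T)$ forces $\bw\equiv 0$.

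For the strong-solution assertion I would instead test the Galerkin system with $A\bv_m$. Since $A$ commutes with $P_N$ and $Q_N$, the feedback term again contributes a nonpositive quantity $-\mu\|P_N\bv_m\|^2$, the forcing terms are controlled by $|\mbf|^2+\mu^2|P_N\bu|^2$, and the nonlinear term is estimated by the standard 2D interpolation bound $|b(\bv_m,\bv_m,A\bv_m)|\le \tfrac{\nu}{4}|A\bv_m|^2+\tfrac{c}{\nu^3}|\bv_m|^2\|\bv_m\|^4$ (via \eqref{est:Ladyzhenskaya}, or equivalently \eqref{est:Agmon}/\cref{thm:trilinear}). Writing $y=\|\bv_m\|^2$, this has the form $\tfrac{dy}{dt}\le a(t)y+b(t)$ with $a(t)=\tfrac{c}{\nu^3}|\bv_m(t)|^2\|\bv_m(t)\|^2$, and the energy estimate from the previous paragraph shows that $a\in L^1(0,T)$ and $b\in L^1(0,T)$ uniformly in $m$; the Gr\"onwall lemma then yields uniform bounds for $\bv_m$ in $L^\infty(0,T;\bbV)\cap L^2(0,T;\bbD(A))$ and for $\tfrac{d}{dt}\bv_m$ in $L^2(0,T;\bbH)$, and passing to the limit as before upgrades the weak solution to a strong one.

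The argument is entirely routine; the only point requiring any care is the enstrophy estimate in the strong case, where the 2D trilinear bound must be organized so that the term not absorbed by $\nu|A\bv_m|^2$ is linear in the enstrophy with an $L^1$-in-time coefficient --- but this is exactly the classical two-dimensional argument, and neither of the two new terms in \eqref{eq:nse:nudge} causes difficulty, the feedback term being dissipative and $\mu P_N\bu$ contributing only finitely many smooth Fourier modes to the forcing. For this reason we omit the details and refer to \cite{AzouaniOlsonTiti2014} (see also \cite{ConstantinFoiasBook,TemamNSEBook}).
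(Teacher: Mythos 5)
The paper does not actually supply a proof of \cref{thm:nse:existence:nudge}; it cites \cite{AzouaniOlsonTiti2014}, where this result was established by precisely the Galerkin argument you sketch. Your proposal is correct and follows that expected route: after observing that $\mbf + \mu P_N\bu$ is an admissible forcing in the sense of \cref{def:nse:weak:strong} (because $P_N\bu$ lies in finitely many Fourier modes and is controlled by the $\bbH$ norm of $\bu$), the problem reduces to the standard 2D NSE well-posedness theory with a dissipative perturbation $-\mu P_N\bv$, and every step (energy estimate, Aubin--Lions, uniqueness via Ladyzhenskaya and Gr\"onwall, enstrophy estimate for strong solutions) goes through unchanged.

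One small point worth noting: in the enstrophy estimate you invoke the interpolation bound $|b(\bv_m,\bv_m,A\bv_m)|\le \tfrac{\nu}{4}|A\bv_m|^2+\tfrac{c}{\nu^3}|\bv_m|^2\|\bv_m\|^4$, but in the periodic 2D setting the trilinear term actually vanishes identically, $b(\bv,\bv,A\bv)=0$, as follows from the identity \eqref{eq:trilinear:Aid} with $\bu=\bv$ (this is stated explicitly right after \cref{thm:trilinear}). Your bound is of course still true, so there is no gap, but using the vanishing identity makes the enstrophy estimate cleaner and avoids the appearance that a Gr\"onwall argument is needed for the nonlinear term when it is not.
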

\subsection{Sieve Algorithm}
We will now state and prove the precise form of \cref{thm:main:sieve:nse:intro}.
We will first introduce a suitable family of quasi-finite functions, similar to what we did in Section~\ref{sect:algorithm1}.
Let $\Gamma \subset \bbV$ be a set of functions with quasi--finite rank $N_0$ and order $(0, 0)$.
Given $\al,\be,\gam,\si>0$ and $\Rone, M_0>0$ as well as a nonnegative function $C(\al,\be,\gam)$ (the precise shape of which will be given in the proof), we define a family
  \beq{equ:5.1000_NS}\notag
  \cG := \{\Gamma_N: N \geq N_0\},
  \eeq
  where $\Gamma_N \subset \bbV$ is of quasi-finite rank $N$ and order $(0, 0)$ with Lipschitz enslaving map $F_N$ such that
    \begin{enumerate}
    \item $\Gamma_{N_0} = \Gamma$, 
    \item $\cG$ is increasing, that is $\Gamma_{M_1} \subset \Gamma_{M_2}$ for $M_1 \leq M_2$, 
    \item for sufficiently large $N$ we have
      \beq{equ:6.2010}
      C_F(N) < \frac{N}{2},
      \eeq
        where
        \begin{align}\notag
          C_F(N) := \frac{\sqrt{C(\al,\be,\gam)}}{\nu}
          \max\left\{
          \frac{M_0}{R},
          (1+\|F_N\|_{L,0})
          R \sqrt{\ln\left(e+\si+\frac{R}{\nu}\right)}
          \right\}.
    \end{align}
    \end{enumerate}
    \begin{Thm}\label{thm:main:sieve:a}
Let $\Gamma \subset \bbV$ be a set of functions with quasi--finite rank $N_0$ and order $(0, 0)$.
Let $\mbg$, the true forcing in the 2D--NSE~\eqref{eq:nse:intro} be an element in $L^\infty(0,\infty;\Gam)$,
while $\bv_0^0 \in \bbV \cap H^2(\T^2)^2$
and $\mbphi \in L^\infty(0,\infty;\bbH)$.
Let $\al,\be, \gam, R, M_0, \si$ be positive numbers so that the following bounds hold:
\begin{align}%\label{est:prior:info}
    \begin{split}
  \|\bu_0\| & \leq \al R \\
  |A\bu_0| & \leq \be R\\
\no{\bv_0^0-\bu_0}{} & \leq \gam\Rone\\
\nu G & \leq R\notag\\
\frac{\sup_{t\geq0}\|\mbg(t)\|}{\sup_{t\geq0}|\mbg(t)|} & \leq \si\\
\sup_{t\geq0}\|\mbphi(t) - \mbg(t)\| & \leq M_0,
    \end{split}\notag
    \end{align}
where $G$ is the Grashof number of $\mbg$ defined in~\eqref{def:Grashof}.
Subsequently, define the family $\cG := \{\Gamma_N\}_{N \geq N_0}$ as above (with $C(\alpha, \beta, \gamma)$ given in the proof).
Then, provided $N \geq N_0$ is sufficiently large so that condition~\eqref{equ:6.2010} is satisfied, the parameter $\mu$ is chosen so that
\beq{cond:mu:sieve:a}
        C_F^2\nu\leq\mu< \frac{1}{4} N^2\nu, 
\eeq
there exists $t_* > 0$ such that the Sieve Algorithm initialized with $\bv^{(0)}_0$ and $\mbf^{(0)} = P_N \mbphi + F_N(P_N \mbphi)$ satisfies
\begin{align}\notag
\sup_{t\geq0}\Abs{\tau_{j+1}\mbf^{(j+1)}(t)-\si_{j+1}\mbg(t)}\leq\frac{1}2\sup_{t\geq 0}\Abs{\tau_j\mbf^{(j)}(t)-\si_j\mbg(t)},
\end{align}
for all $j\geq0$, where $t_j=t_*$, and moreover
\beq{eq:state:convergence}\notag
\lim_{j\goesto\infty}\sup_{t\geq0}\no{\tau_{j+1}\bu^{j}(t)-\si_{j+1}\bu(t)}{}=O(2^{-j}).
\eeq
\end{Thm}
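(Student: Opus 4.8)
The plan is to reproduce, for the 2D~NSE, the three-step architecture of the proof of \cref{thm:converge:sieve:td:detailed}, with the additional work concentrated on the bilinear term $B$. Introduce the synchronization error $\mathbf{z}^{(j)}:=\bv^{(j)}-\si_j\bu$, the model error $\mbh^{(j+1)}:=\mbf^{(j+1)}-\si_j\mbg$, and its low-mode part $\mbe^{(j+1)}:=P_N\mbh^{(j+1)}=\mbl^{(j+1)}-P_N\si_j\mbg$; exactly as in \eqref{eq:qj:high} and \eqref{eq:sync:initial:td} one has $\bu^{(j)}-\si_j\bu=Q_N\mathbf{z}^{(j)}$ and $\mathbf{z}^{(j)}(0)=\tau_j\mathbf{z}^{(j-1)}(0)$. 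A preliminary step with no counterpart in the linear transport--diffusion setting is to establish, by induction on $j$, uniform-in-$j$ absorbing-ball bounds: by \eqref{est:uniform:bounds} and \eqref{def:absorbing:ball:H2} the reference field obeys $\sup_t\no{\si_j\bu(t)}{}\lesssim R$ and $\sup_t\Abs{A\si_j\bu(t)}\lesssim R_2$, while the nudged field $\bv^{(j)}$ --- forced by $\tau_j\mbf^{(j)}=\si_j\mbg+\tau_j\mbh^{(j)}$, whose drift from $\si_j\mbg$ stays controlled by the inductive hypothesis --- lies in comparable balls. The hypotheses bounding $\no{\bv_0^0-\bu_0}{}$ and $\sup_t\no{\mbphi(t)-\mbg(t)}{}$ through $\gam R$ and $M_0$, together with the size constraints \eqref{equ:6.2010} and \eqref{cond:mu:sieve:a}, are exactly what makes this induction --- run simultaneously with the one-step contraction below --- close.

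\emph{Model error.} Since $\si_j\mbg$ and $\mbf^{(j+1)}$ are quasi-finite of rank $N$ with the common enslaving map $F_N$ of order $(0,0)$, $\mbh^{(j+1)}=\mbe^{(j+1)}+Q_N\bigl(F_N(\mbl^{(j+1)})-F_N(P_N\si_j\mbg)\bigr)$, so by \eqref{est:Poincare}, $\no{\mbh^{(j+1)}}{*}\le\no{\mbe^{(j+1)}}{*}+N^{-1}\no{F_N}{L,0}\Abs{\mbe^{(j+1)}}$. As in \eqref{eq:model:error:commuted}, the $\partial_t$- and $\De$-contributions to $\mbl^{(j+1)}-P_N\si_j\mbg$ cancel --- both $P_N$ and $P_N\De$ annihilate $Q_N\mathbf{z}^{(j)}$ --- leaving $\mbe^{(j+1)}=P_N\bigl[B(Q_N\mathbf{z}^{(j)},\si_j\bu)+B(\si_j\bu,Q_N\mathbf{z}^{(j)})+B(Q_N\mathbf{z}^{(j)},Q_N\mathbf{z}^{(j)})\bigr]$. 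To estimate $\mbe^{(j+1)}$ I would pair it with $P_N\ph$, use the skew-symmetry \eqref{eq:trilinear:skew} to shift the derivative off $Q_N\mathbf{z}^{(j)}$ and onto $P_N\ph$ (the factor $N$ that \eqref{est:Bernstein} then attaches to $\no{P_N\ph}{}$ being harmless, as it is cancelled by the $N^{-1}$ inherent in the $\bbV^*$-norm), bound the surviving $\Abs{\si_j\bu}_\infty$ by the Br\'ezis--Gallouet inequality \eqref{est:BrezisGallouet} together with $\Abs{A\si_j\bu}\lesssim R_2\lesssim R(\si+R/\nu)$, and linearize the quadratic term by absorbing one factor $\no{Q_N\mathbf{z}^{(j)}}{}\lesssim R$. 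This yields $\no{\mbh^{(j+1)}}{*}\lesssim(1+\no{F_N}{L,0})R\sqrt{\ln(e+\si+R/\nu)}\,\Abs{\mathbf{z}^{(j)}}$ --- a constant comparable to $\nu\,C_F(N)$ --- and, with a further factor $N$ from \eqref{est:Bernstein}, the same bound for $\Abs{\mbh^{(j+1)}}$. The Br\'ezis--Gallouet logarithm is precisely the origin of the $\sqrt{\ln(e+\si+R/\nu)}$ in $C_F$; an ordinary Agmon estimate would instead produce the cruder $\sqrt{\si+R/\nu}$.

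\emph{Synchronization error and conclusion.} From \eqref{eq:vj}, $\mathbf{z}^{(j)}$ solves $\partial_t\mathbf{z}^{(j)}+B(\bv^{(j)},\bv^{(j)})-B(\si_j\bu,\si_j\bu)=\nu\De\mathbf{z}^{(j)}+\tau_j\mbh^{(j)}-\mu P_N\mathbf{z}^{(j)}$. The $L^2$ inner product with $\mathbf{z}^{(j)}$ collapses the nonlinearity, via \eqref{eq:trilinear:skew} and $b(\bu,\bu,\bu)=0$, to $b(\mathbf{z}^{(j)},\si_j\bu,\mathbf{z}^{(j)})$, which by \eqref{est:trilinear1} and $\no{\si_j\bu}{}\lesssim R$ is $\le\tfrac{\nu}{4}\no{\mathbf{z}^{(j)}}{}^2+\tfrac{c}{\nu}R^2\Abs{\mathbf{z}^{(j)}}^2$; the feedback term gives $-\mu\Abs{\mathbf{z}^{(j)}}^2+\tfrac{\mu}{N^2}\no{\mathbf{z}^{(j)}}{}^2$ after \eqref{est:Poincare} on $Q_N\mathbf{z}^{(j)}$, and $(\tau_j\mbh^{(j)},\mathbf{z}^{(j)})\le\tfrac1\nu\no{\tau_j\mbh^{(j)}}{*}^2+\tfrac\nu4\no{\mathbf{z}^{(j)}}{}^2$. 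Using $\mu<\tfrac14N^2\nu$ to defeat the $\tfrac{\mu}{N^2}\no{\mathbf{z}^{(j)}}{}^2$ term and $\mu\ge C_F^2\nu$ to absorb $\tfrac{c}{\nu}R^2$, one obtains $\tfrac{d}{dt}\Abs{\mathbf{z}^{(j)}}^2+\mu\Abs{\mathbf{z}^{(j)}}^2\le\tfrac{C}{\nu}\no{\tau_j\mbh^{(j)}}{*}^2$. Combining Bellman--Gr\"onwall, $\mathbf{z}^{(j)}(0)=\tau_j\mathbf{z}^{(j-1)}(0)$, and the model-error bound yields $\sup_{t\ge0}\Abs{\tau_{j+1}\mathbf{z}^{(j)}(t)}\le\lambda_*\sup_{t\ge0}\Abs{\tau_j\mathbf{z}^{(j-1)}(t)}$ with $\lambda_*^2=e^{-\mu t_*}+\tfrac{C}{\nu\mu}\bigl[(1+\no{F_N}{L,0})R\sqrt{\ln(e+\si+R/\nu)}\bigr]^2\lesssim e^{-\mu t_*}+C_F^2\nu/\mu$; by \eqref{cond:mu:sieve:a}, with $C(\al,\be,\gam)$ taken large enough to swallow the implied constants, the second summand is at most a small absolute constant, so one fixes $t_*$ large so that $\lambda_*\le\tfrac14$. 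Iterating gives $\sup_t\Abs{\tau_{j+1}\mathbf{z}^{(j)}}=O(\lambda_*^j)$; since the model error is controlled above and below by the synchronization error, this transfers to $\sup_t\Abs{\tau_{j+1}\mbf^{(j+1)}(t)-\si_{j+1}\mbg(t)}\le\tfrac12\sup_t\Abs{\tau_j\mbf^{(j)}(t)-\si_j\mbg(t)}$, and since $\tau_{j+1}\bu^{(j)}-\si_{j+1}\bu=Q_N\tau_{j+1}\mathbf{z}^{(j)}$, interpolating through the uniform $\bbD(A)$-bound gives $\no{\tau_{j+1}\bu^{(j)}-\si_{j+1}\bu}{}\le(\sqrt2 R_2)^{1/2}\Abs{\tau_{j+1}\mathbf{z}^{(j)}}^{1/2}=O(\lambda_*^{j/2})=O(2^{-j})$.

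The principal obstacle is the first, bootstrapping, step: unlike in the transport--diffusion case the absorbing-ball bounds cannot be decoupled from the convergence estimate, because the force $\tau_j\mbf^{(j)}$ driving $\bv^{(j)}$ encodes the whole history of accumulated model errors while the nonlinear estimates above presuppose precisely those bounds. One must therefore advance, in a single induction, both (i) the uniform balls of radii $\sim R, R_2$ for $\bv^{(j)},\bu^{(j)},\si_j\bu$ and (ii) the one-step contraction $\sup_t\Abs{\tau_{j+1}\mathbf{z}^{(j)}}\le\lambda_*\sup_t\Abs{\tau_j\mathbf{z}^{(j-1)}}$, with the constraints \eqref{equ:6.2010}, \eqref{cond:mu:sieve:a} and the parameters $C(\al,\be,\gam), M_0, \gam$ calibrated so that this closes. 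The secondary difficulty is the model-error estimate: obtaining it with the sharp logarithmic growth $\sqrt{\ln(e+\si+R/\nu)}$ while keeping it \emph{linear} in $\mathbf{z}^{(j)}$, which is exactly why the Bernstein/$\bbV^*$-norm cancellation must be exploited there.
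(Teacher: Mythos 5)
The plan you sketch is genuinely different from the paper's, and unfortunately it does not prove the theorem as stated. The paper's proof of \cref{thm:main:sieve:a} works one Sobolev level higher than yours throughout: the synchronization estimate (\cref{thm:stability:H1}, \cref{lem:nse:sync:error}) is obtained by pairing the error equation with $A\bw^{(j)}$, i.e.\ it is an enstrophy-level estimate yielding pointwise-in-time control of $\no{\bw^{(j)}}{}$, and the logarithmic term arises from applying Br\'ezis--Gallouet to the trilinear term $b(\bw^{(j)},\bw^{(j)},A\si_j\bu)$ after invoking \eqref{eq:trilinear:Aid}, not to $\si_j\bu$ itself. Dually, the model-error lemma \eqref{est:model:error:L2} bounds $\Abs{\bh^{(j+1)}}$ in $\bbH$ by $\no{\bw^{(j)}}{}$ in $\bbV$. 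The loop $\Abs{\bh^{(j)}}\to\no{\bw^{(j)}}{}\to\Abs{\bh^{(j+1)}}$ therefore closes cleanly in $\bbH$ and produces the stage-to-stage factor $\tfrac12$ contraction exactly as asserted. You instead take the $L^2$ inner product with $\mathbf z^{(j)}$, obtain $\Abs{\mathbf z^{(j)}}$ control, and estimate the model error in $\bbV^*$ in terms of $\Abs{\mathbf z^{(j)}}$ — a $\bbV^*/\bbH$ loop, one level lower.

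This is not a cosmetic shift: the paper states explicitly (\cref{sec:comparison_NS}) that the authors were unable to prove the "weak'' Sieve version for NSE — state convergence in $\bbH$ and forcing convergence in $\bbV^*$ — "under essentially weaker conditions than in Theorem~\ref{thm:main:sieve:a}.'' Your loop, if closed, would yield $\sup_t\no{\tau_{j+1}\mbh^{(j+1)}(t)}{*}\le\tfrac12\sup_t\no{\tau_j\mbh^{(j)}(t)}{*}$. But the theorem claims the contraction in $\Abs{\cdot}$, the $\bbH$ norm. Your route to the $\bbH$ bound picks up an extra Bernstein factor $N$ (indeed $\abs{\mbe^{(j+1)}}\lesssim N\abs{\si_j\bu}_\infty\abs{\mathbf z^{(j)}}$ once you pass through $\no{P_N\mbe^{(j+1)}}{}\le N\abs{\mbe^{(j+1)}}$), so you obtain geometric decay of $\abs{\mbh^{(j)}}$ inherited from the $\bbV^*$ loop, but not a clean stage-to-stage ratio $\tfrac12$ in $\bbH$: the chain $\abs{\mbh^{(j+1)}}\lesssim N\nu C_F\abs{\mathbf z^{(j)}}$, $\abs{\mathbf z^{(j)}}\lesssim\frac{1}{\sqrt{\nu\mu}}\no{\mbh^{(j)}}{*}$ cannot be reversed without a lower bound on $\abs{\mbh^{(j)}}$ in terms of $\no{\mbh^{(j)}}{*}$, which you do not have (and your phrase "controlled above and below'' is not justified). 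A secondary gap: your $\bbV$-convergence of the state relies on interpolating $Q_N\mathbf z^{(j)}$ through a uniform $\bbD(A)$ bound on $\bv^{(j)}$, but under the hypotheses $\mbf^{(j)}\in L^\infty(0,\infty;\bbH)$ only strong solutions $\bv^{(j)}\in C(\bbV)\cap L^2(\bbD(A))$ are available, not a pointwise-in-time $\abs{A\bv^{(j)}}$ bound; the paper's enstrophy-level sync estimate produces $\no{\bw^{(j)}}{}$ control directly and sidesteps this.

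The fix is to abandon the energy-level sync estimate and redo it at the enstrophy level: pair the equation for $\bw^{(j)}$ with $A\bw^{(j)}$, use \eqref{eq:trilinear:Aid} so that only $b(\bw^{(j)},\bw^{(j)},A\si_j\bu)$ survives from the quadratic interactions, bound it by Br\'ezis--Gallouet and the observation that $x-\al\ln x$ is minimized at $x=\al$ (as in the proof of \cref{thm:stability:H1}), and close against $\frac1\nu\abs{\bh}^2$. Your model-error estimate should likewise be redone in $\bbH$ in terms of $\no{\bw^{(j)}}{}$ rather than in $\bbV^*$ in terms of $\abs{\mathbf z^{(j)}}$; here Br\'ezis--Gallouet is applied to $\abs{\si_j\bu}_\infty$ as you intended, but the Bernstein/Poincar\'e passage must be used to trade $N\abs{\mbe^{(j+1)}}$ for $\no{\bw^{(j)}}{}$, leaving the bound linear in $\no{\bw^{(j)}}{}$. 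That restores the tight $\bbH/\bbV$ loop and the $\tfrac12$ contraction.
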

\begin{Rmk}\label{rmk:NS_sieve}
\begin{enumerate}
   \item In order to deploy the methodology and use \cref{thm:main:sieve:a} to reconstruct the forcing, knowledge of the observations $P_N \mbphi$ and of the set $\Gamma_{N}$ (which includes knowledge of the function $F_N$) is required, for the sufficiently large $N$ mentioned in the Theorem.
\item There is no contradiction in assuming that $\Gamma_{N} \subset \bbV$ but of order $(0, 0)$; we recall that the order just refers to the regularity of the Lipschitz enslaving map.
This means that although $F_N$ have graphs in $\bbV$, namely $Q_{N} \mbg = F_N(P_{N} \mbg) \in \bbV$, the regularity of $F_N$ is only Lipschitz with values in $\bbH$, that is $F_N \in \Lip(P_{N}\bbH, Q_{N}\bbH)$.
Therefore, despite the true forcing $\mbg$ in the system \eqref{eq:nse:intro} as well as the approximations $\mbf(t)$ are elements of $\bbV$, the approximation is only in $\bbH$.
\end{enumerate}
\end{Rmk}
\subsection{Proof of Theorem \ref{thm:main:sieve:a}}
\subsubsection{Error Representation and Analysis of Model Error}
To prove \cref{thm:main:sieve:a}, 
denote the stage-$j$ synchronization error by $\bw^{(j)}=\bv^{(j)}-\si_j\bu$ and let us first observe that
    \begin{align}\label{def:state:error:high:j}
        \bu^{(j)}-\si_j\bu={Q_N}\bv^{(j)}-Q_N\si_j\bu={Q_N}\bw^{(j)}.
    \end{align}
Also, denote the stage-$(j+1)$ model error by
    \begin{align*}
        \bh^{(j+1)}:={\mbf^{(j+1)}}-{\si_{j}\mbg},\quad \mbe^{(j+1)}:= P_N\bh^{(j+1)}={\mbl^{(j+1)}}-{\si_{j}\mbl}.
    \end{align*}
Then
    \begin{align}\label{def:model:error:low:nse}
        \mbe^{(j+1)}&=\left(\bdy_t{P_N}\bu^{(j)}-\nu {P_N}A\bu^{(j)}+{P_N}(\bu^{(j)}\cdotp\nabla\bu^{(j)})\right)-\left(\bdy_t{P_N}\si_j\bu-\nu {P_N}A\si_j\bu+{P_N}(\si_j\bu\cdotp\nabla\si_j\bu)\right)\notag\\
        &={P_N}({Q_N}\bw^{(j)}\cdotp\nabla {Q_N}\bw^{(j)})+{P_N}(\si_j\bu\cdotp\nabla {Q_N}\bw^{(j)})+{P_N}(Q_N\bw^{(j)}\cdotp\nabla\si_j\bu).
    \end{align}
Then
    \begin{align}\label{def:model:error:nse}
        \bh^{(j+1)}=\mbe^{(j+1)}+({Q_N}F({\mbl^{(j+1)}})-{Q_N}F(\mbl)).
    \end{align}
We therefore see that in order to close the analysis for the model error at the current stage, $\bh^{(j+1)}$, we must show that the synchronization error from the previous stage, $\bw^{(j)}$, can be controlled by model error, $\bh^{(j)}$, {\em at the same stage}; this will be studied in the sequel. Let us then first establish estimates for $\bh^{(j+1)}$ in terms of $\bw^{(j)}$. 

\begin{Lem}
There exists $C>0$, for all $N$, for all $F\in\Lip(P_N\bbH,Q_N\bbH)$, such that
    \begin{align}\label{est:model:error:L2}
        \Abs{\bh^{(j+1)}(t)}{}\leq 
       C{(1+\norm{F}_{L,0})}\left[\no{\bw^{(j)}}{}+\|\si_j\bu\|\left(\log_+(e+{\Abs{A\si_j\bu}})\right)^{1/2}\right]\no{\bw^{(j)}}{},
    \end{align}
for all $t\geq0$. 
\end{Lem}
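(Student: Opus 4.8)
The plan is to split the estimate into two steps: first reduce the $H$-bound on $\bh^{(j+1)}$ to an $H$-bound on its low-mode part $\mbe^{(j+1)}=P_N\bh^{(j+1)}$, then estimate $\mbe^{(j+1)}$ term by term from the representation \eqref{def:model:error:low:nse}. For the reduction, recall from \eqref{def:model:error:nse} that $\bh^{(j+1)}=\mbe^{(j+1)}+Q_N\bigl(F(\mbl^{(j+1)})-F(\si_j\mbl)\bigr)$, while by the first line of \eqref{def:model:error:low:nse} the difference of arguments equals $\mbl^{(j+1)}-\si_j\mbl=\mbe^{(j+1)}$; since $F\in\Lip(P_N\bbH,Q_N\bbH)$ this gives
\[
\Abs{\bh^{(j+1)}}\leq\Abs{\mbe^{(j+1)}}+\no{F}{L,0}\Abs{\mbl^{(j+1)}-\si_j\mbl}=(1+\no{F}{L,0})\Abs{\mbe^{(j+1)}},
\]
so it remains to prove $\Abs{\mbe^{(j+1)}}\leq C\bigl[\no{\bw^{(j)}}{}+\no{\si_j\bu}{}(\log_+(e+\Abs{A\si_j\bu}))^{1/2}\bigr]\no{\bw^{(j)}}{}$.

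To estimate $\Abs{\mbe^{(j+1)}}$ I would argue by duality. Since $\mbe^{(j+1)}\in P_N\bbH$, it suffices to bound $(\mbe^{(j+1)},\varphi)$ uniformly over $\varphi\in P_N\bbH$ with $\abs{\varphi}=1$, and by \eqref{def:model:error:low:nse} this pairing equals $b(Q_N\bw^{(j)},Q_N\bw^{(j)},\varphi)+b(\si_j\bu,Q_N\bw^{(j)},\varphi)+b(Q_N\bw^{(j)},\si_j\bu,\varphi)$. I would bound the three trilinear terms as follows, all with $d=2$. For the first, use the skew-symmetry \eqref{eq:trilinear:skew} to move the derivative onto $\varphi$, then Hölder with exponents $(4,2,4)$, the Ladyzhenskaya inequality \eqref{est:Ladyzhenskaya} on the two copies of $Q_N\bw^{(j)}$, Bernstein \eqref{est:Bernstein} (in the form $\no{\varphi}{}\leq N\abs{\varphi}$), and Poincar\'e \eqref{est:Poincare} ($\abs{Q_N\bw^{(j)}}\leq N^{-1}\no{Q_N\bw^{(j)}}{}$); the two opposite powers of $N$ cancel, leaving a bound of order $\no{\bw^{(j)}}{}^2$. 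For the third term, Hölder with exponents $(4,2,4)$, Ladyzhenskaya on $Q_N\bw^{(j)}$ and on $\varphi$, the identity $\abs{\nabla\si_j\bu}=\no{\si_j\bu}{}$, and again Bernstein and Poincar\'e (so the $N$-powers cancel) yield a bound of order $\no{\si_j\bu}{}\no{\bw^{(j)}}{}$, which is dominated by the logarithmic term since $\log_+(e+\Abs{A\si_j\bu})\geq1$. For the second term, bound directly by $\abs{\si_j\bu}_\infty\no{Q_N\bw^{(j)}}{}\abs{\varphi}\leq\abs{\si_j\bu}_\infty\no{\bw^{(j)}}{}$ and then invoke the Br\'ezis--Gallouet inequality \eqref{est:BrezisGallouet}. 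Summing the three estimates and substituting into the reduction above gives \eqref{est:model:error:L2}.

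The step I expect to be the main obstacle is this last one: \eqref{est:BrezisGallouet} produces $\abs{\si_j\bu}_\infty\leq c_{BG}\no{\si_j\bu}{}\bigl[1+\log(\Abs{A\si_j\bu}^2/\abs{\si_j\bu}^2)\bigr]^{1/2}$, i.e.\ with the ratio $\Abs{A\si_j\bu}/\abs{\si_j\bu}$ rather than the quantity $e+\Abs{A\si_j\bu}$ appearing in \eqref{est:model:error:L2}. Passing from one to the other requires the global a priori control of the strong solution $\bu$ recorded in \cref{sect:notation:nse} — boundedness of $\abs{\si_j\bu}$, $\no{\si_j\bu}{}$ and $\Abs{A\si_j\bu}$ on the respective absorbing balls for $t$ beyond the relaxation times, and the corresponding initial-data bounds for small $t$ — together with the elementary inequalities $\abs{\si_j\bu}\leq\no{\si_j\bu}{}\leq\Abs{A\si_j\bu}$ and $\no{\si_j\bu}{}^2\leq\abs{\si_j\bu}\Abs{A\si_j\bu}$, which let one absorb the ratio into $\log_+(e+\Abs{A\si_j\bu})$ at the cost of a constant depending only on the ambient NSE data (hence on the bounds $R,\si$ of \cref{thm:main:sieve:a}). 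Everything else is routine H\"older/Ladyzhenskaya/Bernstein/Poincar\'e bookkeeping; the one point worth re-checking is that \eqref{eq:trilinear:skew} may indeed be applied to $Q_N\bw^{(j)}$, which holds because $Q_N$ commutes with the Leray projection so that $\nabla\cdot Q_N\bw^{(j)}=0$ — this divergence-free property is exactly what makes the first-term bound independent of $N$.
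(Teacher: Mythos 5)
Your proof is correct and takes essentially the same route as the paper: reduce to the low-mode error $\mbe^{(j+1)}=P_N\bh^{(j+1)}$ via the Lipschitz property of $F$, bound the three trilinear terms arising from \eqref{def:model:error:low:nse}, then invoke Br\'ezis--Gallouet. The paper tests directly against $\mbe^{(j+1)}$ (equivalent to your duality argument) and estimates the third trilinear term $b(Q_N\bw^{(j)},\si_j\bu,\cdot)$ the same way as the second, via $|\si_j\bu|_\infty$ and Br\'ezis--Gallouet; your Ladyzhenskaya--Bernstein--Poincar\'e treatment of that term yields a bound $\lesssim\|\si_j\bu\|\|\bw^{(j)}\|$ directly, absorbed by $\log_+\geq1$, which is a mild simplification since it avoids invoking BG twice. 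You are also right to flag the final step: the paper moves from the BG ratio $|A\si_j\bu|/|\si_j\bu|$ to $\log_+(e+|A\si_j\bu|)$ citing $|\ln(a/b)|\leq|\ln a|+|\ln b|$ and Poincar\'e, but this quietly relies on the a priori control of the reference solution $\bu$ available in the setting of \cref{thm:main:sieve:a}, so the constant $C$ in the lemma is not fully universal but implicitly depends on the absorbing-ball bounds for $\bu$. Your observation that $Q_N\bw^{(j)}$ is divergence-free because $Q_N$ commutes with the Leray projection in the periodic setting is indeed the justification for applying \eqref{eq:trilinear:skew} in the first term.
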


\begin{proof}
Suppose $F\in\Lip(P_N\bbH,Q_N\bbH)$ with Lipschitz norm $\norm{F}_{L,0}$. From \eqref{def:model:error:nse}, it follows that
    \begin{align}
        &\Abs{\bh^{(j+1)}}^2
        =\Abs{\mbe^{(j+1)}}^2+\Abs{F({\mbl^{(j+1)}})-F(\mbl)}^2\leq(1+\norm{F}_{L,0}^2)\Abs{\mbe^{(j+1)}}^2\notag.
    \end{align}
Now from \eqref{def:model:error:low:nse}, we have
    \begin{align*}
        |\mbe^{(j+1)}|^2\leq 2\left(|b (Q_N\bw^{(j)},Q_N\bw^{(j)}, \mbe^{(j+1)})|
        +|b(\si_j\bu, Q_N\bw^{(j)},\mbe^{(j+1)})|
        +|b(Q_N\bw^{(j)},\si_j\bu,\mbe^{(j+1)})|\right).
    \end{align*}
Recall that $\mbe^{(j+1)}=P_N\mbe^{(j+1)}$. We estimate these three terms with \eqref{est:trilinear:Holder}, \eqref{est:trilinear:Bernstein}, \eqref{est:Poincare}, and \eqref{est:Bernstein} to obtain
    \begin{align}\notag
        \begin{split}
        |b (Q_N\bw^{(j)},Q_N\bw^{(j)}, \mbe^{(j+1)})|
        &\leq C{N}\Abs{Q_N\bw^{(j)}}^2\|\mbe^{(j+1)}\|\leq C\no{\bw^{(j)}}{}^2|\mbe^{(j+1)}|
        \\
        |b(\si_j\bu, Q_N\bw^{(j)},\mbe^{(j+1)})|
        &\leq C|\si_j\bu|_\infty|Q_N\bw^{(j)}|\|\mbe^{(j+1)}\|\leq C|\si_j\bu|_\infty\|\bw^{(j)}\||\mbe^{(j+1)}|
        \\
        |b(Q_N\bw^{(j)},\si_j\bu,\mbe^{(j+1)})|
        &\leq C|Q_N\bw^{(j)}|\|\mbe^{(j+1)}\||\si_j\bu|_\infty\leq C|\si_j\bu|_\infty\|\bw^{(j)}\||\mbe^{(j+1)}|.
        \end{split}
    \end{align}
An application of \eqref{est:BrezisGallouet}, the property $|\ln(a/b)|\leq|\ln a|+|\ln b|$, and \eqref{est:Poincare}  yield
    \begin{align}\notag
        \Abs{\mbe^{(j+1)}}{}&\leq C\left[\no{\bw^{(j)}}{}+\|\si_j\bu\|\left(1+\left(\ln\frac{\Abs{A\si_j\bu}}{\|\si_j\bu\|}\right)^{1/2}\right)\right]\no{\bw^{(j)}}{}
        \\
        &\leq C\left[\no{\bw^{(j)}}{}+\|\si_j\bu\|\left(\log_+(e+{\Abs{A\si_j\bu}})\right)^{1/2}\right]\no{\bw^{(j)}}{},\notag
    \end{align}
Thus
    \begin{align}\notag
        \Abs{\bh^{(j+1)}}{}
        &\leq C{(1+\norm{F}_{L,0})}\left[\no{\bw^{(j)}}{}+\|\si_j\bu\|\left(\log_+(e+{\Abs{A\si_j\bu}})\right)^{1/2}\right]\no{\bw^{(j)}}{},\notag
    \end{align}
which is precisely \eqref{est:model:error:L2}.
\end{proof}

\subsubsection{Analysis of Synchronization Error}

In this section, we will establish estimates for the synchronization error $\bw^{(j)}$. First, observe from \eqref{eq:vj} and \eqref{eq:nse:intro}, that $\bw^{(j)}$ is governed by
    \begin{align*}
        \bdy_t\bw^{(j)}+B(\bw^{(j)},\bw^{(j)})+B(\si_j\bu,\bw^{(j)})+B\bw^{(j)},\si_j\bu)=-\nu A\bw^{(j)}-\mu {P_N}\bw^{(j)}+\tau_j\bh^{(j)},
    \end{align*}
with initial data $\bw^{(j)}(0)=\bv^0_0-\bu_0$, when $j=0$, and $\bw^{(j)}(0)=\bv^{(j)}_0-\si_j\bu(0)=\bv^{(j-1)}(t_j)-\si_{j-1}\bu(t_j)=\bw^{(j-1)}(t_{j})$, when $j\geq1$. The main result of this section is the following statement.

\begin{Lem}\label{lem:nse:sync:error}
Assume that $\mu ,N>0$ satisfy
    \begin{align}\label{cond:mu:wj}
        \mu\leq\frac{1}4\nu N^2.
    \end{align}
There exist a constant $C>0$ such that for any $\|\bu_0\|\leq\al R$, where $\al>0$, if
$\mu$ additionally satisfies 
    \begin{align}\label{cond:stability:H1:wj}
        \mu\geq 
        C_j(\al)R\left(\frac{R}{\nu}+\frac{\sqrt{2}}{2}\right),\quad C_j(\al):=C\begin{cases}
            \al^2,&j=0,\ \al\geq1\\
            2,&j>0,\ \al\geq1\\
            1,&j\geq0,\ \al<1.
        \end{cases}
    \end{align}
then
    \begin{align}\label{est:sync:error:H1:j}
    \no{\bw^{(j)}(t)}{}^2\leq e^{-\mu t+1}\no{\bw^{(j)}_0}{}^2+\frac{e}{\mu\nu}\left(\sup_{t\geq0}\Abs{\tau_j\bh^{(j)}(t)}{}\right)^2.
    \end{align}
\end{Lem}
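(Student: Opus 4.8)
The plan is to derive a differential inequality for $\no{\bw^{(j)}}{}^2$ by taking the $\bbV$-inner product of the synchronization error equation with $A\bw^{(j)}$, and then apply a Gr\"onwall-type argument. First I would compute
\begin{align}\notag
\frac{1}{2}\frac{d}{dt}\no{\bw^{(j)}}{}^2 + \nu|A\bw^{(j)}|^2 + \mu\no{P_N\bw^{(j)}}{}^2 = -b(\bw^{(j)},\bw^{(j)},A\bw^{(j)}) - b(\si_j\bu,\bw^{(j)},A\bw^{(j)}) - b(\bw^{(j)},\si_j\bu,A\bw^{(j)}) + (\tau_j\bh^{(j)}, A\bw^{(j)}),
\end{align}
using that $\mu(P_N\bw^{(j)}, A\bw^{(j)}) = \mu\no{P_N\bw^{(j)}}{}^2$ since $P_N$ commutes with $A$. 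The feedback term is handled by writing $\mu\no{P_N\bw^{(j)}}{}^2 = \mu\no{\bw^{(j)}}{}^2 - \mu\no{Q_N\bw^{(j)}}{}^2$ and invoking the Poincar\'e inequality \eqref{est:Poincare} to absorb $\mu\no{Q_N\bw^{(j)}}{}^2 \leq (\mu/N^2)|A\bw^{(j)}|^2$ into the dissipation, which is where condition \eqref{cond:mu:wj} enters.

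The core of the argument is controlling the three trilinear terms. For the cubic term $b(\bw^{(j)},\bw^{(j)},A\bw^{(j)})$ I would use Agmon \eqref{est:Agmon} or the Brézis-Gallouet inequality \eqref{est:BrezisGallouet} together with \eqref{est:trilinear:Holder} with $(p,q,r)=(\infty,2,2)$; the logarithmic factor in \eqref{est:BrezisGallouet} is the reason the $\log_+$ appears — but actually for the cubic term one expects to use it more carefully, perhaps via interpolation $|A^{1/2}\bw^{(j)}|$ versus $|A\bw^{(j)}|$, producing a term like $(C/\nu)\no{\bw^{(j)}}{}^2\no{\bw^{(j)}}{}\cdot$(something). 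For the two terms involving $\si_j\bu$, namely $b(\si_j\bu,\bw^{(j)},A\bw^{(j)})$ and $b(\bw^{(j)},\si_j\bu,A\bw^{(j)})$, I would use the identity \eqref{eq:trilinear:Aid} to rewrite their sum as $-b(\bw^{(j)},\bw^{(j)},A\si_j\bu)$, then estimate via \eqref{est:trilinear:Holder} with Ladyzhenskaya \eqref{est:Ladyzhenskaya} and Agmon, yielding a bound of the form $C\no{\bw^{(j)}}{}\no{\bw^{(j)}}{}^{1/2}|A\bw^{(j)}|^{1/2}\|\si_j\bu\|^{1/2}|A\si_j\bu|^{1/2}$ or similar, which after Young's inequality contributes $\frac{\nu}{8}|A\bw^{(j)}|^2 + \frac{C}{\nu^3}\|\si_j\bu\|^2\|A\si_j\bu\|^{?}\no{\bw^{(j)}}{}^2$ — the precise powers require care to produce exactly the $\|\si_j\bu\|^2\log_+(e+|A\si_j\bu|)$ coefficient as in \eqref{est:model:error:L2}. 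The forcing term is absorbed straightforwardly: $(\tau_j\bh^{(j)},A\bw^{(j)}) \leq \frac{\nu}{8}|A\bw^{(j)}|^2 + \frac{C}{\nu}|\tau_j\bh^{(j)}|^2$.

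After these estimates, and using the uniform bounds $\|\si_j\bu(t)\| \leq \sqrt{2}R$ (resp. $\leq \al R$ for $j=0$) and $|A\si_j\bu(t)| \lesssim R_2$ from \eqref{est:uniform:bounds}--\eqref{def:absorbing:ball:H2} together with the summarized bound at the end of \cref{sect:notation:nse}, I would arrive at a differential inequality
\begin{align}\notag
\frac{d}{dt}\no{\bw^{(j)}}{}^2 + \left(2\mu - \Phi_j\right)\no{\bw^{(j)}}{}^2 \leq \frac{C}{\nu}|\tau_j\bh^{(j)}|^2,
\end{align}
where $\Phi_j$ is a constant bounded by $C_j(\al)R(R/\nu + \sqrt{2}/2)$ after accounting for the $\log_+$ factor (noting $R_2$ depends on $R$ and $\si$ logarithmically in the relevant way, or absorbing the log into the constant). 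The stability hypothesis \eqref{cond:stability:H1:wj} then gives $2\mu - \Phi_j \geq \mu$, so $\frac{d}{dt}\no{\bw^{(j)}}{}^2 + \mu\no{\bw^{(j)}}{}^2 \leq \frac{C}{\nu}\left(\sup_{t\geq0}|\tau_j\bh^{(j)}(t)|\right)^2$. Integrating this linear ODE inequality yields $\no{\bw^{(j)}(t)}{}^2 \leq e^{-\mu t}\no{\bw^{(j)}_0}{}^2 + \frac{C}{\mu\nu}\left(\sup_t|\tau_j\bh^{(j)}(t)|\right)^2$; the factors of $e$ in \eqref{est:sync:error:H1:j} come from absorbing constants (or from a slightly lossy step such as bounding $e^{-\mu t} \le e^{-\mu t + 1}$ and $C \le e$). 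The main obstacle I anticipate is bookkeeping the trilinear estimates precisely enough to recover exactly the coefficient $\|\si_j\bu\|^2\log_+(e+|A\si_j\bu|)$ appearing in the model-error bound \eqref{est:model:error:L2}, since the Brézis-Gallouet inequality must be deployed on the right term (likely $b(\bw^{(j)},\bw^{(j)},A\si_j\bu)$ with $\bw^{(j)}$ in $L^\infty$) and the powers of $\no{\bw^{(j)}}{}$ and $|A\bw^{(j)}|$ must be split so that Young's inequality leaves a clean $\no{\bw^{(j)}}{}^2$ on the left without a logarithm on $\bw^{(j)}$ itself — this requires the dissipation to fully absorb the $|A\bw^{(j)}|$-dependence and is the delicate point where the large-$N$, large-$\mu$ conditions \eqref{cond:mu:wj}, \eqref{cond:stability:H1:wj} are exactly what is needed.
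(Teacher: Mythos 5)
Your outline goes astray at the single most important step. After taking the inner product with $A\bw^{(j)}$, you list three trilinear terms and propose to estimate the cubic one, $b(\bw^{(j)},\bw^{(j)},A\bw^{(j)})$, by Agmon or Br\'ezis--Gallouet, acknowledging yourself that this yields something ``like $(C/\nu)\no{\bw^{(j)}}{}^2\no{\bw^{(j)}}{}\cdot$(something)''. But such a super-linear contribution in $\no{\bw^{(j)}}{}^2$ cannot be closed by a Gr\"onwall argument for the \emph{linear} differential inequality you need, and in fact no estimate is needed at all: in the 2D periodic setting one has the identity $b(\bu,\bu,A\bu)=0$ for divergence-free $\bu$ (stated explicitly just after \cref{thm:trilinear}), so the cubic term vanishes. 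The paper exploits this together with \eqref{eq:trilinear:Aid} to collapse \emph{all three} trilinear terms into the single term $b(\bw^{(j)},\bw^{(j)},A\si_j\bu)$ (see the enstrophy identity \eqref{eq:sync:error:enstrophy} in the proof of \cref{thm:stability:H1}); without this observation the argument cannot close.

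A second, smaller confusion: the $\log_+$ factor you are trying to ``recover'' lives in the model-error estimate \eqref{est:model:error:L2}, which is proved separately in the preceding lemma; it does not appear in the synchronization-error bound \eqref{est:sync:error:H1:j}. In the paper, the Br\'ezis--Gallouet logarithm appears during the estimation of $b(\bw^{(j)},\bw^{(j)},A\si_j\bu)$ via $|\bw^{(j)}|_\infty$, but is then \emph{absorbed} using the elementary minimization $x-a\ln x\geq -a\ln a$ (applied with $x=(\Abs{A\bw^{(j)}}/\no{\bw^{(j)}}{})^2$), followed by Young's inequality. That trick, not an Agmon/Ladyzhenskaya interpolation against the dissipation, is what prevents any leftover dependence on $\Abs{A\bw^{(j)}}$. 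The route you sketch for $b(\bw^{(j)},\bw^{(j)},A\si_j\bu)$ via Ladyzhenskaya/Agmon could in principle be made to work (at the cost of replacing $\Abs{A\si_j\bu}$ by $\Abs{A\si_j\bu}^{4/3}$ in the absorbed coefficient), but it is more delicate and is not what the paper does. Finally, the paper does not substitute the uniform absorbing-ball bound on $\Abs{A\si_j\bu}$ directly into a pointwise coefficient as you suggest; it proves a general sensitivity estimate (\cref{thm:stability:H1}) with $\int_s^t\Abs{A\bu(r)}^2\,dr$ in the exponent, then uses the time-integrated enstrophy inequality \eqref{est:enstrophy:inequality:G} in \cref{cor:stability:H1} to convert this into the final exponential rate; this is where the condition \eqref{cond:stability:H1:wj} enters and where the leading factor $e$ in \eqref{est:sync:error:H1:j} comes from.
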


To prove \cref{lem:nse:sync:error}, it will be convenient to develop the estimates in a general form, as sensitivity-type estimates. These are captured by %\cref{thm:stability:L2},
\cref{thm:stability:H1} and its corollary below. We state and prove these now. We will then prove \cref{lem:nse:sync:error} afterwards.

\begin{Prop}\label{thm:stability:H1}
Given $\mbg\in L^\infty(0,\infty;{\bbH})$ and $\bu_0\in\bbV$, let $\bu$ denote the unique strong solution of \eqref{eq:nse:intro} such that $\bu(0)=\bu_0$. Given $\mbf\in L^\infty(0,\infty;{\bbH})$ and $\bv_0\in\bbV$, let $\bv$ denote the unique strong solution of \eqref{eq:nse:nudge} such that $\bv(0)=\bv_0$. There exists a constant $C>0$, such that if $\mu,N>0$ satisfy
    \begin{align}\label{cond:stability:b}
        \mu\leq \frac{1}4\nu N^2,
    \end{align}
then
    \begin{equation}\label{eq:stability:H1:a}
        \begin{split}
        &\no{\bv(t)-{\bu(t;\tau_{t'}\bu,\tau_{t'}\mbg)}}{}^2\\
        &\leq \exp\left(-\frac{7}4\mu t+\frac{C}{\mu}\int_0^t\Abs{A\bu(s;\tau_{t'}\bu,\tau_{t'}\mbg)}^2ds\right)\no{\bv(0;\bv_0,\mbf)-\bu(t';\bu_0,\mbg)}{}^2\\
        &\quad+\frac{1}{\nu}\int_0^t \exp\left(-\frac{7}4\mu (t-s)+\frac{C}{\mu}\int_s^t\Abs{A\bu(r;\tau_{t'}\bu,\tau_{t'}\mbg)}^2dr\right)\Abs{\mbf(s)-{\tau_{t'}\mbg(s)}}^2ds,
    \end{split}\end{equation}
holds for all $t,t'\geq0$, where $c_L, c_A$ are constants of interpolation defined in \eqref{est:Ladyzhenskaya}, \eqref{est:Agmon}.
\end{Prop}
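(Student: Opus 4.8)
The plan is to prove \eqref{eq:stability:H1:a} via an $H^1$ (enstrophy) energy estimate on the difference of the two solutions, closed by a Bellman--Gr\"onwall argument with a time‑dependent rate. Write $\tilde{\bu} := \bu(\cdot\,;\tau_{t'}\bu,\tau_{t'}\mbg)$, which by uniqueness of strong solutions is simply the time‑$t'$ translate of $\bu$ and solves \eqref{eq:nse:intro} with forcing $\tau_{t'}\mbg$. Set $\bw := \bv - \tilde{\bu}$. Subtracting \eqref{eq:nse:intro} (written for $\tilde{\bu}$) from \eqref{eq:nse:nudge} and using $B(\bv,\bv)-B(\tilde{\bu},\tilde{\bu}) = B(\bw,\bw)+B(\bw,\tilde{\bu})+B(\tilde{\bu},\bw)$ (together with the observation that, once the translation is accounted for, the nudging feedback collapses to $-\mu P_N\bw$), one finds
\[
\partial_t \bw + \nu A\bw + \mu P_N\bw + B(\bw,\bw) + B(\bw,\tilde{\bu}) + B(\tilde{\bu},\bw) = \mbf - \tau_{t'}\mbg,\qquad \bw(0)=\bv_0-\bu(t').
\]

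Next I would take the $\bbH$‑inner product with $A\bw$. Using $b(\bw,\bw,A\bw)=0$ and the cancellation identity \eqref{eq:trilinear:Aid} applied with the pair $(\bw,\tilde{\bu})$, the two linearized trilinear terms combine as $b(\bw,\tilde{\bu},A\bw)+b(\tilde{\bu},\bw,A\bw) = -b(\bw,\bw,A\tilde{\bu})$, so that
\[
\tfrac12\tfrac{d}{dt}\no{\bw}{}^2 + \nu\Abs{A\bw}^2 + \mu\no{P_N\bw}{}^2 = -b(\bw,\bw,A\tilde{\bu}) + (\mbf - \tau_{t'}\mbg,\, A\bw).
\]
For the feedback, $\mu\no{P_N\bw}{}^2 = \mu\no{\bw}{}^2 - \mu\no{Q_N\bw}{}^2 \geq \mu\no{\bw}{}^2 - (\mu/N^2)\Abs{A\bw}^2$ by \eqref{est:Poincare}, and the last term is $\leq (\nu/4)\Abs{A\bw}^2$ under \eqref{cond:stability:b}. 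For the nonlinearity, H\"older with $(p,q,r)=(\infty,2,2)$ in \eqref{est:trilinear:Holder} together with Agmon's inequality \eqref{est:Agmon} (or Ladyzhenskaya \eqref{est:Ladyzhenskaya}) gives $|b(\bw,\bw,A\tilde{\bu})| \leq \Abs{\bw}_\infty\no{\bw}{}\Abs{A\tilde{\bu}} \leq c_A^{1/2}\no{\bw}{}^{3/2}\Abs{A\bw}^{1/2}\Abs{A\tilde{\bu}}$, and Young's inequality then absorbs a $(\nu/4)\Abs{A\bw}^2$ into the dissipation, leaving a residual of the form $(C/\mu)\Abs{A\tilde{\bu}}^2\no{\bw}{}^2$ plus a lower‑order $C\no{\bw}{}^2$ term, the latter being fed into a $(\mu/4)\no{\bw}{}^2$ portion of the feedback. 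The forcing error is handled by Cauchy--Schwarz and Young: $|(\mbf-\tau_{t'}\mbg, A\bw)| \leq (\nu/4)\Abs{A\bw}^2 + (1/\nu)\Abs{\mbf-\tau_{t'}\mbg}^2$.

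Collecting these bounds, the $\Abs{A\bw}^2$ contributions cancel and one arrives at the differential inequality
\[
\tfrac{d}{dt}\no{\bw}{}^2 + \Bigl(\tfrac{7}{4}\mu - \tfrac{C}{\mu}\Abs{A\tilde{\bu}(t)}^2\Bigr)\no{\bw}{}^2 \;\leq\; \tfrac{1}{\nu}\Abs{\mbf(t)-\tau_{t'}\mbg(t)}^2 .
\]
Applying the Bellman--Gr\"onwall lemma with integrating factor $\exp\bigl(\tfrac{7}{4}\mu t - \tfrac{C}{\mu}\int_0^t\Abs{A\tilde{\bu}(s)}^2\,ds\bigr)$ yields exactly \eqref{eq:stability:H1:a}, with the constant $C$ expressed through $c_L, c_A$.

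The main obstacle is the nonlinear term $b(\bw,\bw,A\tilde{\bu})$: one must absorb its full $\Abs{A\bw}^2$ contribution into the viscous dissipation while arranging the surviving factor to be exactly of the form $(C/\mu)\Abs{A\tilde{\bu}}^2\no{\bw}{}^2$ (and no worse), since it is precisely this balancing that dictates the constraint \eqref{cond:stability:b} relating $\mu$ to $\nu$ and $N$, as well as the lower bound on $\mu$ needed to absorb the remaining $\no{\bw}{}^2$ term (this is what degrades the decay rate from $2\mu$ to $\tfrac74\mu$). A secondary but necessary point is the bookkeeping verifying that the feedback in the $\bw$‑equation indeed reduces to $-\mu P_N\bw$, so that no spurious observational term is left over in the error dynamics.
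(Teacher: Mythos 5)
Your overall architecture matches the paper's: you form the error $\bw := \bv - \tilde{\bu}$ (with $\tilde{\bu}$ the translate), take the $\bbH$--inner product with $A\bw$, use $b(\bw,\bw,A\bw)=0$ together with \eqref{eq:trilinear:Aid} to reduce the three trilinear terms to $-b(\bw,\bw,A\tilde{\bu})$, absorb the feedback high-mode leakage via \eqref{est:Poincare} under \eqref{cond:stability:b}, treat the forcing error by Cauchy--Schwarz and Young, and finish with Bellman--Gr\"onwall. All of this is the same as the paper. The difference is entirely in how $b(\bw,\bw,A\tilde{\bu})$ is handled, and there your argument has a gap.

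The paper estimates $b(\bw,\bw,A\tilde{\bu})$ via the Br\'ezis--Gallouet inequality \eqref{est:BrezisGallouet}, obtaining a bound of the form $C\abs{A\tilde{\bu}}\bigl(1+\ln(\abs{A\bw}^2/\no{\bw}{}^2)\bigr)\no{\bw}{}^2$, and then uses the elementary observation that $x \mapsto x - a\ln x$ attains its minimum $a-a\ln a$ at $x=a$ (applied with $x = (\abs{A\bw}/\no{\bw}{})^2$) to eliminate $\abs{A\bw}$ entirely and produce a residual depending only on $\abs{A\tilde{\bu}}$. You instead use Agmon/Ladyzhenskaya to bound $|b(\bw,\bw,A\tilde{\bu})| \leq c\no{\bw}{}^{3/2}\abs{A\bw}^{1/2}\abs{A\tilde{\bu}}$ and then apply Young's inequality. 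This is a genuinely different route, and it does \emph{not} yield the claimed residual $(C/\mu)\abs{A\tilde{\bu}}^2\no{\bw}{}^2$. Indeed, any Young split of $\no{\bw}{}^{3/2}\abs{A\bw}^{1/2}\abs{A\tilde{\bu}}$ against the available pieces $\tfrac{\nu}{4}\abs{A\bw}^2$, $\tfrac{\mu}{4}\no{\bw}{}^2$, and $\tfrac{C}{\kappa_*}\abs{A\tilde{\bu}}^2\no{\bw}{}^2$ forces, by homogeneity, the exponents $\theta_1=\tfrac14$ on the $\abs{A\bw}^2$ block, $\theta_2=\tfrac14$ on the $\no{\bw}{}^2$ block, and $\theta_3=\tfrac12$ on the $\abs{A\tilde{\bu}}^2\no{\bw}{}^2$ block; balancing the $\nu$ and $\mu$ powers then requires $\kappa_* = \sqrt{\mu\nu}$, \emph{not} $\kappa_* = \mu$. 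Thus the estimate one actually obtains from your route is
\begin{align}
\frac{d}{dt}\no{\bw}{}^2 + \Bigl(\tfrac{7}{4}\mu - \tfrac{C}{\sqrt{\mu\nu}}\abs{A\tilde{\bu}(t)}^2\Bigr)\no{\bw}{}^2 \leq \tfrac{1}{\nu}\abs{\mbf-\tau_{t'}\mbg}^2,\notag
\end{align}
with $\tfrac{C}{\sqrt{\mu\nu}}$ rather than $\tfrac{C}{\mu}$. Since \eqref{cond:stability:b} only bounds $\mu$ from \emph{above} by $\tfrac{1}{4}\nu N^2$, and the downstream application (\cref{cor:stability:H1} and \cref{lem:nse:sync:error}) needs $\mu$ large, the ratio $\sqrt{\nu/\mu}$ is small and $\tfrac{C}{\sqrt{\mu\nu}}$ is strictly worse than $\tfrac{C}{\mu}$; in particular your inequality is not equivalent to \eqref{eq:stability:H1:a}, and your subsequent claim that the leftover $\no{\bw}{}^2$ term ``is fed into a $(\mu/4)\no{\bw}{}^2$ portion of the feedback'' hides the fact that this forces either the unwanted $\sqrt{\mu\nu}$ in the denominator or a constraint of the form $\mu \lesssim \nu$. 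The missing idea is precisely the logarithmic estimate: $\abs{\bw}_\infty$ grows only like $\no{\bw}{}(1+\log(\abs{A\bw}/\no{\bw}{}))^{1/2}$ (Br\'ezis--Gallouet), so that after factoring $\nu\abs{A\bw}^2 = \nu\no{\bw}{}^2(\abs{A\bw}/\no{\bw}{})^2$ one can minimize $x-a\ln x$ in $x=(\abs{A\bw}/\no{\bw}{})^2$ to remove $\abs{A\bw}$ cleanly and obtain a residual with the right $\mu$--dependence. That step is what produces \eqref{eq:stability:H1:a} and cannot be replaced by a power-law (Agmon/Ladyzhenskaya) interpolation.
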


We observe from \eqref{est:enstrophy:inequality:G} that
    \begin{align}
        \frac{C}{\mu}\int_s^t|A\bu(r;\tau_{t'}\bu,\tau_{t'}\mbg)|^2dr&\leq\frac{C}{\mu}\left(\frac{\|\bu(s;\tau_{t'}\bu,\tau_{t'}\mbg)\|^2}{\nu}+\nu^2G^2(t-s)\right)\notag
        \\
        &\leq \frac{C}{\mu}\left(\frac{\sup_{t\geq0}\|\bu(t;\tau_{t'}\bu,\tau_{t'}\mbg)\|^2}{\nu}+\nu G^2(t-s)\right),\notag
    \end{align}
for all $0\leq s\leq t$. Therefore it follows from \cref{eq:stability:H1:a}:
    
\begin{Cor}\label{cor:stability:H1}
There exists a constant ${{C}}>0$ such that for $t'\geq0$, if $\mu$ additionally satisfies
    \begin{align}\label{cond:stability:H1}
        \mu\geq {{C}}\left(\frac{\sup_{t\geq 0}\|\tau_{t'}\bu(t;\bu_0,\mbg)\|^2}{\nu}+\nu G\right)
    \end{align}
then
    \begin{align}%label{eq:stability:H1:b}
        &\no{\bv(t)-{\bu(t;\tau_{t'}\bu,\tau_{t'}\mbg)}}{}^2\notag\\
        &\quad\leq {e^{-\mu t+1}\no{\bv(0;\bv_0,\mbf)-\bu(t';\bu_0,\mbg)}{}^2}+\frac{e}{\nu}\int_0^t e^{-\mu(t-s)}\Abs{\mbf(s)-{\tau_{t'}\mbg(s)}}^2ds.\notag
    \end{align}
In particular
    \begin{align}%\label{eq:stability:H1:c}
     &\no{\bv(t)-{\bu(t;\tau_{t'}\bu,\tau_{t'}\mbg)}}{}^2\notag\\
        &\quad\leq {e^{-\mu t+1}\no{\bv(0;\bv_0,\mbf)-\bu(t';\bu_0,\tau_{t'}\mbf)}{}^2}+\frac{e}{\nu\mu}\left(\sup_{t\geq0}\Abs{\mbf(t)-{\tau_{t'}\mbg(t)}}\right)^2.\notag
    \end{align}
\end{Cor}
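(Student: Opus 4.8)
The substantive content here is \cref{thm:stability:H1}; the Corollary then follows from it together with the enstrophy bound \eqref{est:enstrophy:inequality:G}, essentially as in the computation displayed just before the Corollary. So I would first prove the Proposition. Write $\tilde\bu(s):=\bu(s;\tau_{t'}\bu,\tau_{t'}\mbg)$ and $\tilde\mbg(s):=\tau_{t'}\mbg(s)$, so that $\tilde\bu$ is the strong solution of $\partial_s\tilde\bu+\nu A\tilde\bu+B(\tilde\bu,\tilde\bu)=\tilde\mbg$, and set $\bw:=\bv-\tilde\bu$. Subtracting the equation for $\tilde\bu$ from \eqref{eq:nse:nudge} and expanding $B(\bv,\bv)-B(\tilde\bu,\tilde\bu)=B(\bw,\bw)+B(\bw,\tilde\bu)+B(\tilde\bu,\bw)$ by bilinearity, one obtains
\begin{align}\notag
\partial_s\bw+\nu A\bw+B(\bw,\bw)+B(\bw,\tilde\bu)+B(\tilde\bu,\bw)=(\mbf-\tilde\mbg)-\mu P_N\bw,
\end{align}
with $\bw(0)=\bv_0-\bu(t')$. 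I would then take the $H$--inner product with $A\bw$. The trilinear contribution collapses: $b(\bw,\bw,A\bw)=0$, while the identity \eqref{eq:trilinear:Aid} applied to the pair $(\tilde\bu,\bw)$ gives $b(\bw,\tilde\bu,A\bw)+b(\tilde\bu,\bw,A\bw)=-b(\bw,\bw,A\tilde\bu)$, so that
\begin{align}\notag
\tfrac12\tfrac{d}{ds}\|\bw\|^2+\nu|A\bw|^2=b(\bw,\bw,A\tilde\bu)+\langle\mbf-\tilde\mbg,A\bw\rangle-\mu\langle P_N\bw,A\bw\rangle.
\end{align}

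Next I would estimate the right-hand side. Since $P_N$ commutes with $A$, the feedback term is $-\mu\langle P_N\bw,A\bw\rangle=-\mu\|\bw\|^2+\mu\|Q_N\bw\|^2\le-\mu\|\bw\|^2+\tfrac{\mu}{N^2}|A\bw|^2\le-\mu\|\bw\|^2+\tfrac{\nu}{4}|A\bw|^2$ by \eqref{est:Poincare} and the hypothesis \eqref{cond:stability:b}. The forcing term is handled by Cauchy--Schwarz and Young: $\langle\mbf-\tilde\mbg,A\bw\rangle\le\tfrac{\nu}{2}|A\bw|^2+\tfrac{1}{2\nu}|\mbf-\tilde\mbg|^2$. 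For the remaining term, in $d=2$ I would use the Agmon (or Ladyzhenskaya) inequality \eqref{est:Agmon}, \eqref{est:Ladyzhenskaya} to get $|b(\bw,\bw,A\tilde\bu)|\le|\bw|_\infty\|\bw\||A\tilde\bu|\le c_A^{1/2}|A\bw|^{1/2}|\bw|^{1/2}\|\bw\||A\tilde\bu|$, and then split this product by Young's inequality, using $|\bw|\le\|\bw\|$, so as to produce a $\tfrac{\nu}{4}|A\bw|^2$ term, an absorbable $\tfrac{\mu}{8}\|\bw\|^2$ term, and a remainder $\tfrac{C}{\mu}|A\tilde\bu|^2\|\bw\|^2$. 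Inserting these three estimates, the $|A\bw|^2$ contributions on the right ($\tfrac{\nu}{4}+\tfrac{\nu}{2}+\tfrac{\nu}{4}$) exactly cancel $\nu|A\bw|^2$ on the left, and after multiplying by two one is left with the scalar differential inequality
\begin{align}\notag
\tfrac{d}{ds}\|\bw(s)\|^2\le\Bigl(-\tfrac{7}{4}\mu+\tfrac{C}{\mu}|A\tilde\bu(s)|^2\Bigr)\|\bw(s)\|^2+\tfrac1\nu|\mbf(s)-\tilde\mbg(s)|^2.
\end{align}
The Bellman--Gr\"onwall lemma applied to this inequality gives precisely \eqref{eq:stability:H1:a}.

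To deduce the Corollary, I would use \eqref{est:enstrophy:inequality:G} to bound $\tfrac{C}{\mu}\int_s^t|A\tilde\bu(r)|^2\,dr\le\tfrac{C}{\mu}\bigl(\nu^{-1}\sup_r\|\tilde\bu(r)\|^2+\nu^2G^2(t-s)\bigr)$, exactly as in the observation preceding the Corollary; under the hypothesis \eqref{cond:stability:H1} the $s,t$--independent part is $\le1$ and the linear-in-$(t-s)$ part is $\le\tfrac34\mu(t-s)$, so the exponent in \eqref{eq:stability:H1:a} is bounded by $-\mu(t-s)+1$. This is the first displayed inequality of the Corollary; the ``in particular'' statement then follows by pulling $\sup_s|\mbf(s)-\tilde\mbg(s)|^2$ out of the convolution integral and using $\int_0^te^{-\mu(t-s)}\,ds\le\mu^{-1}$.

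The one place requiring genuine care is the Young-inequality bookkeeping for $b(\bw,\bw,A\tilde\bu)$: the three factors must be split so that the dissipative piece comes out as exactly $\tfrac{\nu}{4}|A\bw|^2$, the piece absorbed by the nudging term is small enough to leave the $\tfrac74\mu$ decay (and not merely $\mu$), and the leftover has precisely the size $\tfrac{C}{\mu}|A\tilde\bu|^2\|\bw\|^2$ — this last scaling being exactly what lets the enstrophy integral in the Corollary be absorbed into the decay rate while retaining net exponential decay $e^{-\mu t}$. Everything else is routine: the regularity needed to justify pairing with $A\bw$ is supplied by \cref{thm:nse:existence:nudge} (strong solutions), and the cancellation of the transport terms is exactly \eqref{eq:trilinear:Aid} together with $b(\bw,\bw,A\bw)=0$.
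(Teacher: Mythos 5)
Your framing is right: the Corollary is a consequence of \cref{thm:stability:H1} plus the enstrophy bound \eqref{est:enstrophy:inequality:G}, and your derivation of that last step (splitting the exponent into the $s,t$--independent part $\le 1$ and the linear-in-$(t-s)$ part $\le \tfrac34\mu(t-s)$ under \eqref{cond:stability:H1}, then pulling out the sup from the convolution) matches the paper. The error equation for $\bw$, the identity \eqref{eq:trilinear:Aid} producing $b(\bw,\bw,A\tilde\bu)$, the Young bounds for the forcing term and the feedback term, are all as in the paper.

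The gap is exactly at the point you flagged as needing ``genuine care.'' You propose to bound the surviving trilinear term by Agmon/Ladyzhenskaya,
$|b(\bw,\bw,A\tilde\bu)| \le c_A^{1/2}|A\bw|^{1/2}|\bw|^{1/2}\|\bw\|\,|A\tilde\bu|$,
and then split by Young into $\tfrac{\nu}{4}|A\bw|^2 + \tfrac{\mu}{8}\|\bw\|^2 + \tfrac{C}{\mu}|A\tilde\bu|^2\|\bw\|^2$. That last coefficient is not achievable from this estimate. After the crude bound $|\bw|\le\|\bw\|$, the product is $c_A^{1/2}|A\bw|^{1/2}\|\bw\|^{3/2}|A\tilde\bu| = c_A^{1/2}(|A\bw|^2)^{1/4}(\|\bw\|^2)^{1/4}(\|\bw\|^2|A\tilde\bu|^2)^{1/2}$, so the only admissible three-term Young has exponents $(4,4,2)$. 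Insisting on coefficients $\tfrac{\nu}{4}$ and $\tfrac{\mu}{8}$ on the first two pieces forces the third coefficient to scale like $\tfrac{C}{\sqrt{\nu\mu}}$, not $\tfrac{C}{\mu}$ (and an attempt to salvage this with a secondary Young step only worsens the $\mu$-dependence). With $\tfrac{C}{\sqrt{\nu\mu}}\int_s^t|A\tilde\bu|^2$ in the exponent, the enstrophy bound yields the requirement $\mu\gtrsim \nu G^{4/3}$ (and $\mu\gtrsim (\sup\|\bu\|^2)^2/\nu^3$), which is polynomially worse than and incompatible with the stated condition \eqref{cond:stability:H1}. So the Proposition as written---and hence the Corollary with its stated threshold---does not follow from your proposed trilinear estimate.

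The paper avoids this by using the Br\'ezis--Gallouet inequality \eqref{est:BrezisGallouet}: it gives $|b(\bw,\bw,A\tilde\bu)| \le c_{BG}|A\tilde\bu|\bigl[1+\ln(|A\bw|^2/\|\bw\|^2)\bigr]^{1/2}\|\bw\|^2$, which costs only a \emph{logarithm} of $|A\bw|/\|\bw\|$ rather than a fractional power of $|A\bw|$. The resulting differential inequality contains the bracket $\nu\bigl[(|A\bw|/\|\bw\|)^2 - \alpha(1+\ln(|A\bw|/\|\bw\|)^2)\bigr]$ with $\alpha\sim |A\tilde\bu|/\nu$; minimizing $x-\alpha\ln x$ over $x=(|A\bw|/\|\bw\|)^2$ eliminates the $|A\bw|$ dependence entirely and leaves a quantity $\sim \nu\,\alpha\ln\alpha$, which after $\ln\alpha\lesssim\alpha^{1/2}$ and a Young step against $\mu$ produces precisely the $\tfrac{C}{\mu}|A\tilde\bu|^2$ coefficient (and the leftover $\tfrac{\mu}{4}$ that gives the $-\tfrac{7}{4}\mu$ rate). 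To complete your argument you should replace the Agmon/Ladyzhenskaya step with this Br\'ezis--Gallouet and minimization argument.
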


\begin{proof}[Proof of \cref{thm:stability:H1}] 
Let $\bw=\bv-\bu$ and $\bh=\mbf-\bg$. Then
    \begin{align}\label{eq:nse:difference}
         \bdy_t\bw+B(\bw,\bw)+B(\bu,\bw)+B(\bw,\bu)=-\nu A\bw+\bh-\mu P_N\bw.
    \end{align}
Upon taking the $L^2$ inner product of \eqref{eq:nse:difference} with $A\bw$, then applying the identity \eqref{eq:trilinear:Aid} we obtain
    \begin{align}\label{eq:sync:error:enstrophy}
        \frac{1}2\frac{d}{dt}\no{\bw}{}^2+\nu\Abs{A\bw}^2+\mu\no{\bw}{}^2=b( \bw,\bw,A\bu)+\lb \bh,A\bw\rb+\mu\no{Q_N\bw}{}^2.
    \end{align}

For the trilinear term, we apply \eqref{est:trilinear:Holder}, \eqref{est:BrezisGallouet}, and Young's inequality to obtain
    \begin{align}\label{est:trilinear:H1}
        |b(\bw,\bw,A\bu)|&\leq c_{BG}\Abs{A\bu}\left[1+\left(\ln\left(\frac{\Abs{A\bw}^2}{\no{\bw}{}^2}\right)\right)^{1/2}\right]\|\bw\|^2\notag
        \\
        &\leq \frac{3c_{BG}}2\Abs{A\bu}\left(1+\ln\left(\frac{\Abs{A\bw}}{\no{\bw}{}}\right)^2\right)\|\bw\|^2.
    \end{align}
We estimate the second term in \eqref{eq:sync:error:enstrophy} with the Cauchy-Schwarz inequality and obtain
    \begin{align}\label{est:force:error}
        |\lb\bh,A\bw\rb|\leq\Abs{\bh}\Abs{A\bw}\leq\frac{1}{\nu}\Abs{\bh}^2+\frac{\nu}4\Abs{A\bw}^2.
    \end{align}
For the third term in \eqref{eq:sync:error:enstrophy}, we integrate by parts, apply the Cauchy-Schwarz inequality, Poincar\'e's inequality, and Young's inequality to obtain
    \begin{align}\label{est:spillover}
        {\mu}|\lb  {Q_N}\bw,A\bw\rb|&\leq {\mu} \no{Q_N\bw}{}^2\leq {\frac{\mu}{N^2}}\Abs{A\bw}^2.
    \end{align}
    
Under the assumption \eqref{cond:stability:b}, we may combine \eqref{est:trilinear:H1}, \eqref{est:force:error}, and \eqref{est:spillover} to arrive at
    \begin{align}\label{est:sync:error:enstrophy}
        \frac{d}{dt}\no{\bw}{}^2+\left\{2\mu+\nu\left[\left(\frac{\Abs{A\bw}}{\no{\bw}{}}\right)^2-\frac{3c_{BG}}{2}\left(\frac{\Abs{A\bu}}{\nu}\right)\left(1+\ln\left(\frac{\Abs{A\bw}}{\no{\bw}{}}\right)^2\right)\right]\right\}\no{\bw}{}^2\leq \frac{2}{\nu}\Abs{\bh}^2.
    \end{align}
Observe that the function $f(x)=x-\al\ln x$, for $x,\al>0$, takes on its minimum value at $\al$. Thus \eqref{est:sync:error:enstrophy} reduces to
    \begin{align}\notag
        \frac{d}{dt}\no{\bw}{}^2+\left[2\mu-\nu\frac{3c_{BG}}{2}\left(\frac{\Abs{A\bu}}{\nu}\right)\ln\frac{3c_{BG}}{2}\left(\frac{\Abs{A\bu}}{\nu}\right)\right]\no{\bw}{}^2\leq \frac{2}{\nu}\Abs{\bh}^2.
    \end{align}
By Young's inequality, we have $x\leq 3x^{4/3}/4\mu+\mu/4$. Also, $\ln x\leq C x^{1/2}$, for some $C>0$. Applying this for $x=3c_{BG}\Abs{A\bu}/(2\nu)\ln[3c_{BG}\Abs{A\bu}/(2\nu)]$, we deduce
    \begin{align*}
        \no{\bw(t)}{}^2&\leq \exp\left(-\frac{7}4\mu t+\frac{C\nu^2}{\mu}\int_0^t\left(\frac{\Abs{A\bu(s)}}{\nu}\right)^2ds\right)\no{\bw_0}{}^2\notag
        \\
        &\quad+\int_0^t\exp\left(-\frac{7}4\mu( t-s)+\frac{C\nu^2}{\mu}\int_s^t\left(\frac{\Abs{A\bu(r)}}{\nu}\right)^2dr\right)\Abs{\bh(s)}^2ds,
    \end{align*}
for some constant $C>0$, for all $t\geq0$. Repeating the argument for $\bu\mapsto\bu(t;\tau_{t'}\bu,\tau_{t'}\mbg)$, for any $t\geq t'$ and $t'\geq0$ yields \eqref{eq:stability:H1:a}.
\end{proof}

Finally, we turn to the proof of \cref{lem:nse:sync:error}.

\begin{proof}[Proof of \cref{lem:nse:sync:error}]
Since $G=\sqrt{2}R/(2\nu)$, we see that \eqref{cond:mu:wj}, \eqref{cond:stability:H1:wj} implies that $\mu$ satisfies \eqref{cond:stability:b}, \eqref{cond:stability:H1}, for all $j\geq0$, for some appropriately chosen constant ${{C}}$. We may thus deduce from \cref{cor:stability:H1} that
    \begin{align}
    \no{\bw^{(j)}(t)}{}^2\leq e^{-\mu t}\no{\bw^{(j)}_0}{}^2+\frac{1}{\mu\nu}\left(\sup_{t\geq0}\Abs{\tau_j\bh^{(j)}(t)}{}\right)^2,\notag
    \end{align}
holds for all $j\geq0$, and we are done.
\end{proof}

\subsubsection{Convergence of Synchronization and Model Errors}\label{sect:proof:sieve:nse}

The proof will proceed by induction. We will first prove \cref{thm:main:sieve:a}. 

\begin{proof}[Proof of \cref{thm:main:sieve:a}] Let $\mbf^0\in L^\infty(0,\infty;\bbH)$, $\bv_0^0\in\bbV$. Recall that the initial errors are assumed to satisfy
    \begin{align}\notag
        \|\bu_0\|\leq\al \Rone,\quad |A\bu_0|\leq \be R\quad\sup_{t\geq0}\Abs{\bh^0(t)}\leq M_0, 
        \quad \no{\bw_0^0}{}\leq \gam\Rone.
    \end{align}
For convenience, let us assume that $\gam,\be\geq1$. It will also be convenient to assume that $|A\bu_0|\leq \til{\be}R_2$, where $R_2$ is defined by \eqref{def:absorbing:ball:H2} and then convert back to $R$ via $\til{\be}R_2=\be R$.

When $j=0$, it follows from \eqref{est:sync:error:H1:j} and the assumption 
    \begin{align}\label{cond:mu:initial:error}
    \mu\geq \frac{2}{\gam^2}\frac{M_0^2}{\nu R^2},
    \end{align}
that
    \begin{align}\label{est:sync:error:reduced}
        \begin{split}
        \sup_{t\geq0}\no{\tau_{t_1'}\bw^0(t)}{}&\leq \frac{\sqrt{2}}{(\nu\mu)^{1/2}}\left(\sup_{t\geq0}\Abs{\bh^0(t)}{}\right)\leq \sqrt{2}\frac{M_0}{(\nu\mu)^{1/2}}\leq \gam \Rone,
        \end{split}
    \end{align}
for some $t_1'\geq 0$ satisfying
    \begin{align}\label{def:transients}
           e^{\mu t_1'}\geq \gam^2(\mu\nu)\frac{\Rone^2}{M_0^2},
    \end{align}
In particular, $\|\bw^1(0)\|\leq\gam R$. 

 Let $t_*\geq\max\{T_1,T_2, t_1'\}$, where $T_1,T_2$ are the absorbing times from the discussion following \eqref{est:uniform:bounds} and from \eqref{def:absorbing:ball:H2}, respectively. For $t_1:=t_*$, it now follows from \eqref{est:sync:error:reduced}, \eqref{est:model:error:L2} that
    \begin{align}\notag
        \sup_{t\geq 0}\Abs{\tau_1\bh^{1}(t)}\leq\frac{C{(1+\norm{F}_{L,0})}}{(\nu\mu)^{1/2}}\left[\frac{M_0}{(\nu\mu)^{1/2}}+C_1(\al)\Rone\left(\ln\left(e+\frac{\til{\be}R_2}{\nu}\right)\right)^{1/2}\right]\sup_{t\geq0}\Abs{\tau_0\bh^0(t)}{}\notag,
    \end{align}
for some sufficiently large absolute positive constant $C$. We may thus choose $\mu$ sufficiently large such that
    \begin{align}\notag
        \sup_{t\geq 0}\Abs{\tau_{1}\bh^{1}(t)}\leq \frac{1}2\sup_{t\geq 0}\Abs{\tau_{0}\bh^0(t)}.
    \end{align}
In particular, we may choose $\mu$ such that
    \begin{align}\label{cond:mu:final}
        \mu\geq C(C_1(\al))^2
        \ln(e+\be)\max\left\{\frac{M_0^2}{\nu^2 R^2},\ (1+\|F\|_{L,0})^2\frac{R^2}{\nu^2}\ln\left(e+\frac{R}{\nu}\right)\right\}\nu,
    \end{align}
for a sufficiently large absolute constant $C>0$, which, upon recalling \eqref{def:absorbing:ball:H2}, is produces the condition \eqref{cond:mu:sieve:a}. This completes the base case.

Now suppose $j\geq1$ and for $k=1,\dots, j$, we have
    \begin{align}\label{cond:induction:hypothesis:a}
        \sup_{t\geq 0}\Abs{\tau_{k}\bh^{k}(t)}\leq \frac{1}2\sup_{t\geq 0}\Abs{\tau_{k-1}\bh^{k-1}(t)},
    \end{align}
for some $t_k>0$, where $t_{k-1}=0$ for $k=1$, and for $k=2,\dots, j$,  $t_k'$ satisfies \eqref{def:transients}. Moreover, suppose it holds that
    \begin{align}\label{cond:induction:hypothesis:b}
        \max_{k=0,\dots, j}\sup_{t\geq 0}\no{\bw^k(t)}{}^2\leq \frac{\sqrt{2}}{(\nu\mu)^{1/2}}\left(\sup_{t\geq0}\Abs{\bh^0(t)}{}\right).
    \end{align}
Observe that by \eqref{cond:mu:final}, it follows from \eqref{cond:induction:hypothesis:b} that $\sup_{t\geq0}\no{\bw^k(t)}{}\leq \gam \Rone$,
and $k=0,\dots, j$. Also observe that \eqref{cond:induction:hypothesis:a} implies
    \begin{align}\label{est:model:error:uniform}
         \max_{k=1,\dots,j}\sup_{t\geq 0}\Abs{\tau_{k}\bh^{k}(t)}\leq \sup_{t\geq 0}\Abs{\bh^{0}(t)}
    \end{align}

Now recall that $\bv_0^{(j)}=\bv^{(j-1)}(t_j;\bv_0^{(j-1)})$, so that $\bw_0^{(j)}=\bw^{(j-1)}(t_j;\bw_0^{(j-1)})$. From \eqref{cond:induction:hypothesis:b}, we see that $\no{\bw^{(j)}_0}{}=\no{\bw^{(j)}(t_j)}{}\leq \gam \Rone$. We may thus choose $t_{j+1}=t_1=t_*\geq T_2$, so that $t_{j+1}$ satisfies \eqref{def:transients}. We may then deduce from \eqref{est:sync:error:H1:j} that
    \begin{align}\label{est:sync:error:H1:jplus1}
        \sup_{t\geq0}\no{\tau_{j+1}\bw^{(j)}(t)}{}\leq \frac{\sqrt{2}}{(\nu\mu)^{1/2}}\left(\sup_{t\geq0}\Abs{\tau_j\bh^{(j)}(t)}{}\right).
    \end{align}
In particular $\no{\bw^{(j+1)}_0}{}\leq\gam\Rone$
by \eqref{est:model:error:uniform}, \eqref{cond:mu:initial:error}, and \eqref{cond:mu:final}. We now apply \eqref{est:model:error:L2} for $k=j$, followed by \eqref{est:sync:error:H1:jplus1}, \eqref{est:model:error:uniform}, to obtain 
    \begin{align}
        \sup_{t\geq0}\Abs{\tau_{j+1}\bh^{(j+1)}(t)}{}
        \leq \frac{C{(1+\norm{F}_{L,0})}}{(\nu\mu)^{1/2}}\left[\frac{M_0}{(\nu\mu)^{1/2}}+C_{j+1}(\al)\Rone\left(\ln\left(e+\frac{\til{\be}R_2}{\nu}\right)\right)^{1/2}\right]\sup_{t\geq0}\Abs{\tau_j\bh^{(j)}(t)}{},\notag
    \end{align}
for some sufficiently large absolute constant $C>0$. Since $\mu$ satisfies  \eqref{cond:mu:sieve:a}, we have that \eqref{cond:mu:final} holds. We then finally deduce
    \begin{align}\label{eq:induction:step}
        \sup_{t\geq 0}\Abs{\tau_{j+1}\bh^{(j+1)}(t)}\leq \frac{1}2\sup_{t\geq 0}\Abs{\tau_j\bh^{(j)}(t)},
    \end{align}
as desired.

It remains to prove the convergence of the state error to zero. We recall from \eqref{def:state:error:high:j} that $\bu^{(j)}-\si_j\bu=Q_N\bw^{(j)}$. It then follows from our choice of $\mu$, \eqref{est:sync:error:H1:jplus1},  \eqref{eq:induction:step} that
    \begin{align}\notag
         \no{\bu^{(j+1)}(t)-\si_{j+1}\bu(t)}{}\leq \frac{\sqrt{2}}{(\mu\nu)^{1/2}}\left(\sup_{t\geq0}\Abs{\tau_j\bh^{(j)}(t)}{}\right)\leq \frac{M_0}{2^{j-1/2}},
    \end{align}
for all $t\geq0$. Thus
    \begin{align}\notag
        \lim_{j\goesto\infty}\sup_{t\geq 0}\no{\bu^{(j+1)}(t)-\si_{j+1}\bu(t)}{}\leq\lim_{j\goesto\infty}\frac{M_0}{2^{j-1/2}}=0,
    \end{align}
which completes the proof.
\end{proof}

% %
 %\input{alg2_navier_stokes.tex}

 \subsection{Nudging Algorithm}

As in the Nudging Algorithm for the transport-diffusion equation, we first state a well-posedness result for the Nudging Algorithm corresponding to the Navier-Stokes equations. Contrary to the previous cases where the wellposedness of the nudged equations can be achieved by simple modifications of established proofs in the literature, the nudged Navier-Stokes equation \ref{eq:NS_v} requires a more careful presentation. A sketch of the proof will be provided in \cref{subsec:proof_wellpos_nudg}.  
\begin{Thm}\label{thm:NS_nudg_wellpsd}
For any $T > 0$,  $N>0$ and $\mu_1,\mu_2 >  0$ the nudging system~\eqref{eq:NS_v} has a unique weak solution on the interval $[0, T]$, that is, there is $\bv \in C([0, T); \bbH) \cap L^2(0, T; \bbV)$ with $\frac{\dd \bv}{\dd t}  \in L^2(0, T; \bbV^*)$ as well as $\mbl \in C([0, T);P_N\bbH)$ with $\frac{\dd \mbl}{\dd t}  \in L^2(0, T;P_N\bbH)$ so that the Equation~\eqref{eq:NS_v} holds for a.a.~$t \in [0, T]$.
\end{Thm}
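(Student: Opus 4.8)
The plan is to prove \cref{thm:NS_nudg_wellpsd} by the Galerkin method, adapting the classical two-dimensional Navier--Stokes existence and uniqueness argument but making essential use of a cancellation in the nonlinearity of \eqref{eq:NS_v} --- this cancellation is exactly why the bilinear form is applied with $P_N$ in one place and $Q_N$ in the other. Throughout, set $\mathbf{w}:=P_N\bu+Q_N\bv$, which is divergence-free since $\PP$ commutes with $P_N$ and $Q_N$; recall that $\bu$ is a fixed strong solution of \eqref{eq:nse:intro}, so $P_N\bu$ is smooth in space and, by the Bernstein inequality \eqref{est:Bernstein}, every spatial norm of $P_N\bu(t)$ is controlled by $\Abs{\bu(t)}$ times a power of $N$, uniformly for $t\in[0,T]$; and note that $\mbl(t)\in P_N\bbH$ for every $t$ by the second equation of \eqref{eq:NS_v} together with the hypothesis $\mbl_0=P_N\mbl_0$, so $\mbl+F(\mbl)$ is a lower-order source obeying $\Abs{\mbl+F(\mbl)}\leq\Abs{F(0)}+(1+\no{F}{L,0})\Abs{\mbl}$ because $F\in\Lip(P_N\bbH,Q_N\bbH)$.

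First I would set up the order-$M$ Galerkin truncation with $M>N$: only the $\bv$-equation requires truncation (the $\mbl$-equation already lives in the finite-dimensional space $P_N\bbH$), and the resulting finite system of ordinary differential equations has a right-hand side polynomial in $\bv_M$ plus the Lipschitz term $F(\mbl_M)$, hence locally Lipschitz, so a unique local solution exists by Picard--Lindel\"of. The heart of the matter is the a priori estimate. Taking the $L^2$ inner product of the truncated $\bv$-equation with $\bv_M$ and of the $\mbl$-equation with $\mbl_M$ and adding, the diffusion contributes $-\nu\no{Q_N\bv_M}{}^2$ together with a cross term $-\nu(\nabla P_N\bu,\nabla P_N\bv_M)$ (the low-mode part $P_N\bv_M$ receives no dissipation, but being $N$-dimensional its $\bbV$-norm is controlled by its $\bbH$-norm through \eqref{est:Bernstein}), while the part of the nonlinearity quadratic in $\bv_M$ collapses: writing $\bv_M=P_N\bv_M+Q_N\bv_M$, invoking $b(\mathbf{w}_M,Q_N\bv_M,Q_N\bv_M)=0$, and observing that the cross terms $b(\mathbf{w}_M,Q_N\bv_M,P_N\bv_M)$ and $b(\mathbf{w}_M,P_N\bv_M,Q_N\bv_M)$ cancel by \eqref{eq:trilinear:skew}, one gets
\[
(P_N B(\mathbf{w}_M,\mathbf{w}_M),\bv_M)+(Q_N B(\mathbf{w}_M,\bv_M),\bv_M)=b(\mathbf{w}_M,P_N\bu,P_N\bv_M).
\]
The surviving term $b(\mathbf{w}_M,P_N\bu,P_N\bv_M)=b(P_N\bu,P_N\bu,P_N\bv_M)+b(Q_N\bv_M,P_N\bu,P_N\bv_M)$ is bounded by $C(N)\,\sup_{t\in[0,T]}\Abs{\bu}\,(1+\Abs{\bv_M}^2)$ using \eqref{est:trilinear:Holder} and \eqref{est:Bernstein}. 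Bounding the remaining linear contributions --- the diffusion cross term, the source $\mbl+F(\mbl_M)$, the feedback $\mu_1(P_N\bu-P_N\bv_M)$, and the $\mbl$-coupling $\mu_2(P_N\bu-P_N\bv_M)$ --- by Cauchy--Schwarz and Young's inequality yields $\frac{d}{dt}(\Abs{\bv_M}^2+\Abs{\mbl_M}^2)+\nu\no{Q_N\bv_M}{}^2\leq C(1+\Abs{\bv_M}^2+\Abs{\mbl_M}^2)$, with $C$ depending only on $N,\nu,\mu_1,\mu_2,\no{F}{L,0}$ and $\sup_{t\in[0,T]}\Abs{\bu}$. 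Gr\"onwall's inequality then gives bounds, uniform in $M$, on $\bv_M$ in $L^\infty(0,T;\bbH)\cap L^2(0,T;\bbV)$ and on $\mbl_M$ in $C([0,T];P_N\bbH)$; feeding these back into the equations and using the two-dimensional trilinear estimates of \cref{thm:trilinear} bounds $\frac{d}{dt}\bv_M$ in $L^2(0,T;\bbV^*)$ and $\frac{d}{dt}\mbl_M$ in $L^\infty(0,T;P_N\bbH)$.

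Next I would pass to the limit along a subsequence: $\bv_M$ converges weakly in $L^2(0,T;\bbV)$, weak-$*$ in $L^\infty(0,T;\bbH)$, and, by the Aubin--Lions--Simon lemma, strongly in $L^2(0,T;\bbH)$, while $\mbl_M\to\mbl$ in $C([0,T];P_N\bbH)$ by Arzel\`a--Ascoli in the finite-dimensional space $P_N\bbH$, using the uniform bound and the equicontinuity from the bound on $\frac{d}{dt}\mbl_M$. The strong $L^2(0,T;\bbH)$ convergence of $\bv_M$ handles the passage to the limit in the bilinear terms exactly as in the classical two-dimensional theory, and $F(\mbl_M)\to F(\mbl)$ follows from the Lipschitz continuity of $F$ and the uniform convergence of $\mbl_M$; hence the limit $(\bv,\mbl)$ is a weak solution. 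That $\bv\in C([0,T);\bbH)$ follows from $\frac{d}{dt}\bv\in L^2(0,T;\bbV^*)$ by the Lions--Magenes lemma, and $\mbl\in C([0,T);P_N\bbH)$ with $\frac{d}{dt}\mbl\in L^2(0,T;P_N\bbH)$ follows directly from the second equation because $P_N\bu$ and $P_N\bv$ belong to $C([0,T);P_N\bbH)$. For uniqueness, given two solutions I would set $\bV:=\bv_1-\bv_2$ and test the difference of the $\bv$-equations with $\bV$ and the difference of the $\mbl$-equations with $\mbl_1-\mbl_2$: the same cancellation, now $b(\mathbf{w}_2,Q_N\bV,P_N\bV)+b(\mathbf{w}_2,P_N\bV,Q_N\bV)=0$, removes the dangerous part of the nonlinearity, the residual nonlinear terms are absorbed into $\frac{\nu}{4}\no{Q_N\bV}{}^2$ at the cost of coefficients lying in $L^1(0,T)$ (via \eqref{est:Ladyzhenskaya}, \eqref{est:trilinear:Holder} and the bounds on $\bv_1,\bv_2$, in particular $Q_N\bv_i\in L^2(0,T;\bbV)$), and the difference $F(\mbl_1)-F(\mbl_2)$ is controlled by $\no{F}{L,0}\Abs{\mbl_1-\mbl_2}\,\Abs{\bV}$; Gr\"onwall's inequality then forces $\bV\equiv0$ and $\mbl_1\equiv\mbl_2$.

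The main obstacle --- and the reason this case is singled out as needing a more careful presentation than the nudged transport--diffusion system \eqref{eq:nudging:td} or the nudged equation \eqref{eq:nse:nudge} --- is exactly this bookkeeping of the nonlinearity: because $B$ enters \eqref{eq:NS_v} with $P_N$ applied to one copy and $Q_N$ to the other, one must first verify the identity $b(\mathbf{w},P_N\bv,Q_N\bv)+b(\mathbf{w},Q_N\bv,P_N\bv)=0$ and then control the surviving term $b(Q_N\bv,P_N\bu,P_N\bv)$, which is where the finite-band structure of $P_N\bu$ and the Bernstein inequality are indispensable, in order to close the energy estimate with no smallness or largeness restriction on $\mu_1,\mu_2$, or $N$. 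Once this structural point is in place, the remainder is a routine adaptation of the standard compactness and uniqueness arguments, along the lines of the proof of \cref{thm:td:exist}.
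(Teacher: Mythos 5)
Your proposal is correct and follows essentially the same route as the paper: a Galerkin approximation whose energy estimate closes because the cross trilinear terms $b(\mathbf{w},Q_N\bv,P_N\bv)+b(\mathbf{w},P_N\bv,Q_N\bv)$ cancel by skew-symmetry, leaving only the manageable term $b(\mathbf{w},P_N\bu,P_N\bv)$, followed by Aubin--Lions compactness. The only cosmetic differences are that the paper rescales $\mathbf{V}=\mu_2^{1/2}\bv$ so the $(\mbl,P_N\bv)$ and $\mu_2(P_N\bv,\mbl)$ coupling terms cancel exactly rather than being absorbed via Young, and the paper's sketch omits the uniqueness argument that you correctly supply.
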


To properly state the corresponding convergence result for the Nudging Algorithm, we introduce a few additional. Suppose that $\Gamma \subset \bbH$ a set of quasi--finite rank $N_0$ and with order $(-1, -1)$ (see \cref{rmk:NS_nudg}).
  Let
  \beq{equ:6.1020}\nonumber
  \cG := \{\Gamma_N:N \geq N_0\},
  \eeq
  be a family where each $\Gamma_N \subset \bbH$ is of quasi-finite rank $N$ with Lipschitz enslaving map $F_N$ and order $(-1,-1)$ so that
    \begin{enumerate}
    \item $\Gamma_{N_0} = \Gamma$, 
    \item $\cG$ is increasing, that is $\Gamma_{M_1} \subset \Gamma_{M_2}$ for $M_1 \leq M_2$, 
    \item for sufficiently large $N \geq N_0$ we have
      \beq{equ:6.1030}
      16 C_* 
      \left\{
        1 + 
        \frac{U(\mbg)}{\kappa}
        (1 + l_N \norm{F_N}_{L,*} )
      \right\}
      <N,
      \eeq
    \end{enumerate}
    where $l_N := 2 ((\log N)^{1/2} + 1)$, and $C_*$ is a function of the constants appearing in~\cref{thm:trilinear}, and furthermore
\begin{equation}
      U(\mbg) := \sup_{t\in [0,T]} \|\bu(t)\|,\label{eq:U}
\end{equation}
for each $T>0$.
The construction presented in \cref{rmk:canonical} will result in a suitable $\cG$ but other choices are possible in which the Lipschitz constants in~\eqref{equ:6.1030} depends on $N$.
\begin{Thm}\label{thm:NS_nudg}
If $\mbg\in \Gam$, then for $N \geq N_0$ sufficiently large there exists a choice of $\mu_1,\mu_2>0$ such that the corresponding solution $(\bv, \mbl)$ of the nudging system~\eqref{eq:NS_v} satisfies
\begin{align}\notag
   \lim_{t\to \infty} \| \mbf(t) - \mbg(t)\|_{*} = 0 \quad \text{and}\quad
   \lim_{t \to \infty} |\bu(t) - \bv(t)| = 0,
\end{align}
where $\mbf(t)=\mbl(t) + F (\mbl(t))$.
\end{Thm}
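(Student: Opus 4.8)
The plan is to follow the template of the proof of \cref{thm:1.30}, carrying along the extra nonlinear terms produced by the Navier--Stokes advection and controlling them with the two--dimensional interpolation inequalities of \cref{thm:trilinear} together with the absorbing--ball bounds for strong solutions recalled in \cref{sect:notation:nse}. First I would reduce the forcing error: since $\mbg=P_N\mbg+F_N(P_N\mbg)$ and $\mbf=\mbl+F_N(\mbl)$ with $\mbl=P_N\mbl$ (the latter preserved by the $\mbl$--equation in~\eqref{eq:NS_v}), the Lipschitz property of $F_N$ gives $\|\mbf(t)-\mbg(t)\|_*\le(1+\|F_N\|_{L,*})\|\mbe(t)\|_*$ with $\mbe:=P_N\mbg-\mbl$, so it suffices to prove $\|\mbe(t)\|_*\to0$ and $|\bu(t)-\bv(t)|\to0$ exponentially. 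Writing $p:=P_N(\bu-\bv)$, $q:=Q_N(\bu-\bv)$ and using $P_N\bu+Q_N\bv=\bu-q$, projecting~\eqref{eq:NS_v} and~\eqref{eq:nse:intro} and subtracting yields (after expanding the bilinear form about $\bu$) the coupled system
\begin{align}
\partial_t p+P_N\bigl(B(\bu,q)+B(q,\bu)-B(q,q)\bigr)&=\mbe-\mu_1 p,\notag\\
\partial_t q+Q_N\bigl(B(\bu,p)+B(\bu,q)+B(q,\bu)-B(q,p)-B(q,q)\bigr)&=\nu\Delta q+F_N(P_N\mbg)-F_N(\mbl),\notag\\
\partial_t\mbe&=-\mu_2 p.\notag
\end{align}
Then, exactly as in \cref{thm:1.30}, I would pass to $r:=\mbe/\lambda_2-p$ with $\lambda_1+\lambda_2=\mu_1$, $\lambda_1\lambda_2=\mu_2$, $\lambda_1\ge\lambda_2>0$, which turns the $(p,\mbe)$--block into one with diagonal damping $-\lambda_1,-\lambda_2$ and feeds $q$ back through the advection and through $F_N(\mbl+\lambda_2(r+p))-F_N(\mbl)$.

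Next I would test the $q$--equation in $H$ against $q$, the $p$-- and $r$--equations in $V^*$ against $p$ and $r$, and add. In the $q$--balance the viscous term gives $-\nu\|q\|^2\le-\nu N^2|q|^2$ by Poincar\'e, the cancellations $b(\bu,q,q)=b(q,q,q)=0$ hold, the drift $b(q,\bu,q)$ is bounded by $c_L\|\bu\|\,|q|\,\|q\|$ via~\eqref{est:Ladyzhenskaya}, the mixed terms $b(\bu,p,q)$ and $b(q,p,q)$ are bounded using~\eqref{est:trilinear:Holder}, \eqref{est:Bernstein}, \eqref{est:Poincare}, and the enslaving term by $\lambda_2\|F_N\|_{L,*}\|r+p\|_*\|q\|$; after Young's inequality all $\|q\|^2$ contributions are absorbed into $\nu\|q\|^2$, leaving a drift coefficient of order $\|\bu\|^2/\nu$ on $|q|^2$ plus cross terms in $E:=\|p\|_*^2+\|r\|_*^2$. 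In the $p$-- and $r$--balances the advective contributions are $b(\bu,q,\Delta^{-1}p)$, $b(q,\bu,\Delta^{-1}p)$, $b(q,q,\Delta^{-1}p)$ and their $r$--analogues; these I would estimate with~\eqref{est:trilinear:BrezisWainger} (contributing the $(\log N)^{1/2}$ factor $l_N$) and~\eqref{est:trilinear:Sobolev1}, invoking the uniform bound $\|\bu(t)\|\le U(\mbg)$ and the absorbing--ball bound on $|A\bu(t)|$ from~\eqref{def:absorbing:ball:H2}. Collecting everything, and choosing a weight $\alpha>0$ as in the passage to~\eqref{equ:1.140}, one arrives at
\begin{align}
\frac{d}{dt}\bigl(|q|^2+\alpha E\bigr)\le -a\,|q|^2+b\,(\alpha E)+c(t)\,|q|\,(\alpha E)^{1/2},\notag
\end{align}
with $a\approx\nu N^2-C\|\bu\|^2/\nu-(\text{lower order})$, $b\approx-\lambda_2+(\text{lower order})$, and $\overline{c}^2=\limsup_t\frac1t\int_0^t c^2<\infty$ governed by $W_{\epsilon,d}$--type time averages of $|\bu|$ and by $\lambda_2\|F_N\|_{L,*}$. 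Applying \cref{thm:1.50} gives $|q(t)|^2+\alpha E(t)\le C_\epsilon(|q(0)|^2+\alpha E(0))e^{\rho_\epsilon t}$, and one then checks, exactly as at the end of the proof of \cref{thm:1.30} (now with the extra Navier--Stokes drift), that as soon as $N$ is large enough to satisfy~\eqref{equ:6.1030} the parameters $\lambda_1\ge\lambda_2>0$ (equivalently $\mu_1,\mu_2$) and $\epsilon$ can be chosen so that $\rho_\epsilon<0$. Exponential decay of $E$ and $|q|$ then yields $\|\mbe(t)\|_*=\lambda_2\|r+p\|_*\to0$ and $|\bu-\bv|^2=|p|^2+|q|^2\le N^2\|p\|_*^2+|q|^2\to0$, which is the assertion.

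The hard part will be the control of the genuinely nonlinear high--mode term $b(q,\bu,q)$ and of the cubic--in--error terms $b(q,q,\Delta^{-1}p)$, $b(q,p,q)$ (and their $r$--analogues). The first is dealt with, in the spirit of Azouani--Olson--Titi, by absorbing $\|q\|^2$ into $\nu\|q\|^2\ge\nu N^2|q|^2$, so that its coefficient must beat $\nu N^2$; once the $(\log N)^{1/2}$ coming from the Brezis--Wainger estimates on the low--mode cross terms is folded in, this is precisely the resolution threshold~\eqref{equ:6.1030}, which also fixes that $\mu_1,\mu_2$ be tuned accordingly. The cubic terms require first establishing, from the energy structure of~\eqref{eq:NS_v} (using \cref{thm:NS_nudg_wellpsd}) together with the absorbing--ball bounds, an a priori bound $\sup_{t\ge0}\bigl(|q(t)|+\|p(t)\|_*+\|r(t)\|_*\bigr)<\infty$, which downgrades these terms to ones linear in $|q|^2$ or in $E$ whose coefficients are then folded into $a$ and $b$ above; the subtler point is that enlarging $\lambda_2$ to secure the damping of $E$ simultaneously enlarges the cross coefficient $\overline{c}$ through the factor $\lambda_2\|F_N\|_{L,*}$, so $\lambda_2$ and $\alpha$ must be balanced against each other — and it is exactly condition~\eqref{equ:6.1030} that makes this balance, and hence $\rho_\epsilon<0$, achievable.
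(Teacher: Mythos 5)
Your overall skeleton is sensible, but there is a genuine gap at the core of the energy estimate: you propose to test the $p$--equation in $V^*$, following the transport--diffusion template, whereas the paper tests it in $\bbH$. In the Navier--Stokes case this choice is not a matter of taste — it is what makes the argument close. When the $q$--balance and the $p$--balance are added with $p$ tested in $\bbH$, the trilinear pairs $b(\bu,\mbq,\mbp)+b(\bu,\mbp,\mbq)$ and $b(\mbq,\mbq,\mbp)+b(\mbq,\mbp,\mbq)$ cancel by skew--symmetry~\eqref{eq:trilinear:skew} (this is exactly the ``cancellation of common terms in the first and third line of Equation~\eqref{eq:NS_Bterms3}'' noted in the paper), and only the drift terms $b(\mbq,\bu,\mbp)$, $b(\mbq,\bu,\mbq)$ survive; these carry one factor of $\bu$ and are mild. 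Under your scheme, the $p$--balance produces $b(\bu,\mbq,A^{-1}\mbp)$ etc.\ while the $q$--balance still produces $b(\bu,\mbp,\mbq)$ and $b(\mbq,\mbp,\mbq)$ — no cancellation. You would then have to bound $b(\bu,\mbp,\mbq)$ directly, but since $\mbp$ is paired with only the weak $\norm{\cdot}_*$ damping rate $\lambda$, any route (H\"older, Bernstein, Brezis--Wainger) converts the derivative sitting on $\mbp$ into positive powers of $N$ acting on $\norm{\mbp}_*$ (e.g.\ $\norm{\mbp}_2\leq N^3\norm{\mbp}_*$), and after Young the damping of $\norm{\mbp}_*^2$ would have to beat a coefficient of order $N^3$--$N^4$. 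That is incompatible with a threshold of the form~\eqref{equ:6.1030}, which is only logarithmic plus $O(1)$ in the forcing. The correct Lyapunov functional is $\abs{\mbp}^2+\abs{\mbq}^2+\alpha\norm{\mbr}_*^2$, not $\abs{\mbq}^2+\alpha(\norm{\mbp}_*^2+\norm{\mbr}_*^2)$.

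The second gap concerns the cubic error term $b(\mbq,\mbq,A^{-1}\mbr)$, which gives a contribution $\tfrac{C}{N}\norm{\mbr}_*\norm{\mbq}^2$. You propose to downgrade it by first establishing an a priori bound $\sup_{t\geq0}(\abs{\mbq}+\norm{\mbp}_*+\norm{\mbr}_*)<\infty$ from the well--posedness theorem and absorbing--ball bounds. But those bounds control the reference solution $\bu$, not the error quantities; a uniform--in--time bound on $\abs{\mbq},\norm{\mbp}_*,\norm{\mbr}_*$ is precisely (a weak form of) the conclusion you are trying to prove, so this step is circular. The paper avoids the circularity with a continuation/bootstrap: it sets $t_* := \inf\{t:\norm{\mbr(t)}_*>\kappa/\alpha\}$, shows the decay inequality holds on $[0,t_*]$, chooses $\alpha$ so that the initial Lyapunov value is small enough (Equation~\eqref{equ:4.191}) to force $\sup_{[0,t_*]}\norm{\mbr}_*^2\leq\kappa^2/(2\alpha^2)$, and concludes $t_*=\infty$ by contradiction. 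This is why the decay rate (through $\alpha$, and hence $\lambda_1,\lambda_2$) ends up depending on the initial condition — a genuinely nonlinear effect noted in \cref{rmk:lam}. Relatedly, because of the cubic term the inequality is not of the quadratic form hypothesised in \cref{thm:1.50}, so the paper does not invoke that lemma here; it works directly with the sign--definite estimate~\eqref{equ:4.150}.
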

\begin{Rmk}\label{rmk:NS_nudg}
A similar remark as in~\cref{rmk:NS_sieve} applies here:
\begin{enumerate}
   \item In order to deploy the methodology and use \cref{thm:NS_nudg} to reconstruct the forcing, knowledge of the observations $P_N \phi$ and of the set $\Gamma_{N}$ (which includes knowledge of the function $F_N$) is required, for the sufficiently large $N$ mentioned in the theorem.
\item
Again, there is no contradiction in assuming that $\Gamma_{N} \subset \bbH$ but of order $(-1, -1)$.
Although the $F_N$ have graphs in $\bbH$, namely $Q_{N} \mbg = F_N(P_{N} \mbg) \in \bbH$, the regularity of $F_N$ is only Lipschitz with values in $\bbH^{-1}$, that is $F_N \in \Lip(P_{N}\bbH^{-1}, Q_{N}\bbH^{-1})$.
Therefore, despite the true forcing $\mbg$ in the system \eqref{eq:nse:intro} as well as the approximations $\mbf(t)$ being elements of $\bbH$, the approximation is only in $\bbH^{-1}$.
\end{enumerate}
\end{Rmk}
\subsection{Proof of Theorem \ref{thm:NS_nudg}}
The proof proceeds along the same lines as the proof for \cref{thm:1.30} although several new elements are needed to deal with the fact that the system is now nonlinear.
First note that 
  \begin{align}\notag
  %\label{eq:f-gNS}
  \begin{split}
      \norm{\mbf(t)-\mbg}_{*}=\norm{\mbl(t) + {F_N}(\mbl(t)) - \mbg}_{*} &\leq \norm{\mbl(t) - P_N \mbg}_{*} + \norm{{F_N}(\mbl(t)) - {F_N}(P_N \mbg)}_{*} \\
      &\leq (1 + \norm{F_N}_{L,*}) \norm{\mbl(t) - P_N \mbg}_{*},
      \end{split}
  \end{align} 
  so it is sufficient to consider the large-scale error dynamics for ${\mbe(t)} := P_N \mbg - \mbl(t)$, as well as error dynamics for $\bw(t):=\bu(t) - \bv(t)$, the latter of which we split into $P_N\bw(t) =: \mbp(t)$ and $Q_N\bw(t) =: \mbq(t)$.
Upon taking the difference between \eqref{eq:nse:intro} and the first equation in~\eqref{eq:NS_v}, then making use of the fact that  $\bdy_tP_N\mbg=0$ in the second equation in~\eqref{eq:NS_v}, we obtain the system for the state and model errors:
\begin{align}
    \partial_t \mbp + B_1 & =  \mbe - \mu_1 \mbp \label{eq:NS_vL}, \\
    \partial_t \mbq + B_2 & = -\kappa A \mbq + F_N(P_N \mbg) - F_N(P_N \mbg - \mbe) \label{eq:NS_vH},\\
    \partial_t \mbe & = -\mu_2 \mbp, \label{eq:NS_f}
\end{align}
where
\beq{eq:NS_Bterms}\begin{split}
B_1 & := P_N B(\bu, \bu) - P_N B(P_N \bu + Q_N \bv, P_N \bu + Q_N \bv)\\
B_2 & := Q_N B(\bu, \bu) - Q_N B(P_N \bu + Q_N \bv, \bv).
\end{split}\eeq
Consider the following change of variables and parameters: 
    \begin{align}\notag
        \mbr := \frac{\mbe}{\lambda_2} - \mbp.\quad \lambda_1 + \lambda_2 = \mu_1,\quad\lambda_1 \lambda_2 = \mu_2.
    \end{align}
We note that by an appropriate choice of $\mu_1, \mu_2$, the roots $\lambda_{1}, \lam_2$ can be set to any desired nonnegative value and we can assume without loss of generality that $\lambda_1 \geq \lambda_2$.
We may then rewrite \eqref{eq:NS_vL}--\eqref{eq:NS_f} as
\begin{align}
    \partial_t \mbp + \lambda_1 \mbp & = \lambda_2 \mbr - B_1 \label{eq:NS_vLx}, \\
    \partial_t \mbq + \kappa A \mbq & = F_N(P_N \mbg) - F_N(P_N \mbg - \lambda_2 (\mbr + \mbp)) - B_2
    \label{eq:NS_vHx},\\
    \partial_t \mbr + \lambda_2 \mbr & = B_1  . \label{eq:NS_fx}
\end{align}
To analyse these equations, first we simplify the bilinear terms $B_1, B_2$ in~\eqref{eq:NS_Bterms}.
We use the identity $B(a, x) - B(b, y) = B(a-b, x) + B(a, x - y) - B(a-b, x-y)$ to obtain
\beq{eq:NS_Bterms2}\nonumber\begin{split}
B_1 & = P_N \left(B(\bu, \mbq) +  B(\mbq, \bu) - B(\mbq, \mbq)\right),\\
B_2 & = Q_N \left(B(\bu, \mbp + \mbq) + B(\mbq, u) - B(\mbq, \mbp + \mbq)\right).
\end{split}\eeq
Therefore
\beq{eq:NS_Bterms3}\begin{split}
(B_1, \mbp) & = b(\bu, \mbq, \mbp) +  b(\mbq, \bu, \mbp) - b(\mbq, \mbq, \mbp),\\
(B_1, A^{-1}\mbr) & = b(\bu, \mbq, A^{-1}\mbr) +  b(\mbq, \bu, A^{-1}\mbr) - b(\mbq, \mbq, A^{-1}\mbr),\\
(B_2, \mbq) & = b(\bu, \mbp, \mbq) + b(\mbq, \bu, \mbq) - b(\mbq, \mbp, \mbq).
\end{split}\eeq
Next we take the $\bbH$ scalar product of \eqref{eq:NS_vLx} \eqref{eq:NS_vHx} with $\mbp$ and $\mbq$, respectively, and the $\bbV^*$ inner product of \eqref{eq:NS_fx} with $\mbr$.
We will furthermore add the equations for $\mbp$ and $\mbq$ and note the cancellation of common terms in the first and third line of Equation~\eqref{eq:NS_Bterms3}.
We obtain
\begin{align}
\begin{split}
    & \frac{1}{2}\frac{d}{dt} \abs{\mbp}^2 
    +  \frac{1}{2}\frac{d}{dt}\abs{\mbq}^2
    + \lambda_1 \abs{\mbp}^2 
    + \kappa \norm{\mbq}^2  \\
    & \quad = \lambda_2 (\mbr, \mbp) 
    + \big( F_N(P_N \mbg) - F_N(P_N \mbg - \lambda_2 (\mbr + \mbp)), \mbq \big)
    - b(\mbq, \bu, \mbp)
    - b(\mbq, \bu, \mbq), \end{split}\label{eq:NS_w}\\
    % %
    % 
    & \frac{1}{2} \frac{d}{dt} \norm{\mbr}_*^2 
    + \lambda_2 \norm{\mbr}^2_* 
    = b(\bu, \mbq, A^{-1}\mbr) +  b(\mbq, \bu, A^{-1}\mbr) - b(\mbq, \mbq, A^{-1}\mbr). \label{eq:NS_r}
\end{align}
We next obtain a few estimates regarding the right hand-side of~\eqref{eq:NS_vLx}-\eqref{eq:NS_fx}.
Firstly, we have 
\beq{eq:4.99}
\lambda_2 (\mbr, \mbp) 
\leq \lambda_2 \norm{\mbr}_* \norm{\mbp}.
\leq \frac{ \lambda_2^2 \norm{F_N}_{L,*}^2}{\kappa} \norm{\mbr}^2_* + \frac{\kappa }{4 \norm{F_N}_{L,*}^2}\norm{\mbp}^2,
\eeq
by Young's inequality (and weight factors chosen with hindsight).
Using the properties of the map $F_N$ and subsequently Young's inequality, we find
\beq{equ:4.100}
\begin{split}
(F_N(P_N \mbg) - F_N(P_N \mbg - \mbe),\mbq)
& \leq  \norm{F_N(P_N \mbg) - F_N(P_N \mbg - \mbe))}_{*}\norm{\mbq}\\
& \leq \norm{F_N}_{L,*}  \norm{\mbe}_* \norm{\mbq}\\
& \leq \frac{\kappa}{2} \norm{\mbq}^2 + \frac{\norm{F_N}_{L,*}^2}{2\kappa} \norm{\mbe}_{*}^2\\
& \leq \frac{\kappa}{2} \norm{\mbq}^2 + \frac{\norm{F_N}_{L,*}^2 \lambda^2_2}{\kappa} (\norm{\mbr}_{*}^2 + \norm{\mbp}_{*}^2).
\end{split}
\eeq
To treat the trilinear terms in Equation~\eqref{eq:NS_r}, we use the estimate~\eqref{est:trilinear:BrezisWainger} in the equations \eqref{eq:b1} and \eqref{eq:b2} below and the estimate~\eqref{est:trilinear1} in \eqref{eq:b3} and \eqref{eq:b4}
\begin{align}
   \abs{b(\mbq,\bu,A^{-1}\mbr)} 
   & \leq C\frac{(\log N)^{1/2}}{N} \norm{\bu} \norm{\mbq} \norm{\mbr}_*,
    \label{eq:b1}\\
\abs{b(P_N\bu,\mbq,A^{-1}\mbr)}
& \leq C\frac{(\log N)^{1/2}}{N} \norm{\bu} \norm{\mbq} \norm{\mbr}_*, \label{eq:b2}
    \\
   \abs{b(Q_N\bu,\mbq,A^{-1}\mbr)}
   & \leq \frac{C}{N}\norm{\bu}\norm{\mbq}\norm{\mbr}_*, \label{eq:b3}
    \\
   \abs{b(\mbq, \mbq ,A^{-1}\mbr)}
   & \leq \frac{C}{N} \norm{\mbq}^2 \norm{\mbr}_*, \label{eq:b4}
\end{align}
which gives
\begin{align}
  & b(\bu, \mbq, A^{-1}\mbr) +  b(\mbq, \bu, A^{-1}\mbr) - b(\mbq, \mbq, A^{-1}\mbr) \nonumber\\
  & \leq \frac{C l_N}{N} \norm{\mbq} \norm{\mbr}_* U(\mbg) + \frac{C}{N} \norm{\mbr}_*\norm{\mbq}^2\label{equ:4.125.a}\\
& \leq \frac{\kappa}{8\alpha} \norm{\mbq}^2
  + \frac{2 \alpha C^2 U(\mbg)^2 l_N^2}{N^2\kappa} \norm{\mbr}_*^2
  + \frac{C}{N} \norm{\mbr}_* \norm{\mbq}^2, \label{equ:4.125}
\end{align}
where we have used Young's inequality on the first term in~\eqref{equ:4.125.a}; the factor $\alpha$ will be set later, and we recall that $l_N = 2 ((\log N)^{1/2} + 1)$.
For the nonlinear terms in Equation~\eqref{eq:NS_w} we use the estimates~\eqref{est:trilinear1} and \eqref{eq:U} and find
\beq{equ:4.110}\begin{split}
 b(\mbq, \bu, \mbp) + b(\mbq, \bu, \mbq)
& \leq \frac{C (\log N)^{1/2}}{N}  \norm{\bu} \norm{\mbq} \norm{\mbp}
+ \frac{C}{N} \norm{\bu} \norm{\mbq}^2 \\
& \leq \frac{\kappa}{4}   \norm{\mbq}^2 
+  \frac{C^2 U(\mbg)^2 l_N^2}{N^2\kappa} \norm{\mbp}^2,
\end{split}\eeq
where we again applied Young's inequality to the first term in the second line, while for the second term we use that by condition~\eqref{equ:6.1030} we have $\frac{C}{N} U(\mbg) \leq \frac{\kappa}{8}$ if we let $C_* \geq C$.
We now apply these  estimates~\eqref{eq:4.99}, \eqref{equ:4.100}, \eqref{equ:4.125}, \eqref{equ:4.110} to the right hand sides of Equations~\eqref{eq:NS_w}, \eqref{eq:NS_r} and find
\begin{align}
    & \frac{1}{2}\frac{d}{dt}( \abs{\mbp}^2 
    + \abs{\mbq}^2)
    + \lambda_1 \abs{\mbp}^2 
    + \frac{\kappa}{2} \norm{\mbq}^2  \nonumber \\
    & \quad \leq \lambda_2 \norm{\mbr}_* \norm{\mbp} 
     + \frac{\norm{F_N}_{L,*}^2 \lambda^2_2}{\kappa} (\norm{\mbr}_{*}^2 + \norm{\mbp}_{*}^2)
+ \frac{\kappa}{4}   \norm{\mbq}^2 
+  \frac{C^2 U(\mbg)^2 l_N^2}{N^2\kappa} \norm{\mbp}^2,
\label{eq:NS_w2}\\
    % %
    % 
    & \frac{1}{2}\frac{d}{dt}\norm{\mbr}_*^2 
    + \lambda_2 \norm{\mbr}^2_* 
\leq \frac{\kappa}{8\alpha} \norm{\mbq}^2
  + \frac{2 \alpha C^2 U(\mbg)^2 l_N^2}{N^2\kappa} \norm{\mbr}_*^2
  + \frac{C}{N} \norm{\mbr}_* \norm{\mbq}^2.
\label{eq:NS_r2}
\end{align}
Next we use $\norm{\mbp} \leq N \abs{\mbp}$ and $\norm{\mbp}_* \leq \abs{\mbp}$ in all instances except in the very last (trilinear) term in Equation~\eqref{eq:NS_r2}.
To deal with that term, let
\beq{equ:4.140}\nonumber
t_* := \inf \left\lbrace t \geq 0: \norm{r(t)}_* > \frac{\kappa}{\alpha} \right\rbrace.
\eeq
Observe that $t_*>0$ if $\alpha$ is sufficiently small.
Upon restricting $t$ to the interval $[0,t_*]$, we have $\norm{r(t)}_* \leq \frac{\kappa}{\alpha}$ which we use to estimate that term:
\begin{align}%\label{equ:4.155}\nonumber
  \frac{C_*}{N} \norm{\mbr}_* \norm{\mbq}^2 
  \leq \frac{C_* \kappa}{\alpha N} \norm{\mbq}^2 
  \leq \frac{\kappa}{\alpha 16} \norm{\mbq}^2, \notag
\end{align}
where we also used that $N \geq 16 C$ by condition~\eqref{equ:6.1030}.
Upon multiplying \eqref{eq:NS_r2} by $\alpha$ and adding the result to \eqref{eq:NS_w2}, we obtain
\beq{equ:4.150}
\begin{split}
    & \frac{1}{2} \frac{d}{dt} \left( 
     \abs{\mbp}^2 + \abs{\mbq}^2 + \alpha\norm{\mbr}^2_* \right)
    + \frac{\kappa}{16} \norm{\mbq}^2 \\
    & + \left\{ 
        \lambda_1  
        - \frac{\kappa N^2}{4 \norm{F_N}_{L,*}^2} 
        - \frac{\norm{F_N}_{L,*}^2 + C_*^2 U(\mbg)^2 l_N^2}{\kappa} 
    \right\} \abs{\mbp}^2 \\
    & + \left\{
    \lambda_2
    - 2\frac{\norm{F_N}_{L,*}^2 \lambda_2^2 }{\alpha \kappa} 
    - 2\frac{\alpha C_*^2 U(\mbg)^2 l_N^2}{N^2\kappa}
    \right\} \alpha \norm{\mbr}_*^2\\
    & \leq 0.
\end{split}\eeq
for some non-dimensional constant $C_* \geq C > 0$.
The coefficient of $\alpha \norm{\mbr}^2_*$ in \eqref{equ:4.150}, regarded as a function of $\lambda_2$, is a second order polynomial with discriminant
\beq{equ:4.160}\nonumber
\delta_N := 1 - 16 \left(\frac{C_* \norm{F_N}_{L,*} U(\mbg)l_N}{N\kappa}\right)^2
\eeq
Due to our condition~\eqref{equ:6.1030}, we see that $\delta_N>0$ for sufficiently large $N$ and any $\alpha>0$.
Hence if we pick such an $N$, there exists  $\lambda_2$ so that the coefficient is positive.
With $N$ and $\lambda_2$ so chosen, we next pick $\lambda_1 > \lambda_2$ so that the coefficient of $\abs{\mbp}^2$ is also positive, and therefore
\beq{equ:4.170}\nonumber
\frac{d}{dt} \big( \abs{\mbp}^2 + \abs{\mbq}^2 + \alpha \norm{\mbr}_*^2 \big) 
\leq -\epsilon \big( \abs{\mbp}^2 + \abs{\mbq}^2 + \alpha \norm{\mbr}_*^2 \big), 
\eeq
for some $\epsilon > 0$.
After integrating over $[0,t]$, for $t\leq t^*$, we obtain
\beq{equ:4.180}
\abs{\mbp(t)}^2 + \abs{\mbq(t)}^2 + \alpha \norm{\mbr(t)}_*^2 \leq e^{-\epsilon t} \big( \abs{\mbp}^2 + \abs{\mbq}^2 + \alpha \norm{\mbr}_*^2 \big)\Bigg{|}_{t = 0}.
\eeq
This implies 
\begin{align}\label{equ:4.190}
\alpha \norm{\mbr(t)}_*^2
\leq
\left( \abs{\mbp(t)}^2 + \abs{\mbq(t)}^2 + \alpha \norm{\mbr(t)}_*^2 \right)\Bigg{|}_{t = 0},
\end{align}
which holds for all $\alpha > 0$.
Now, upon choosing $\alpha$ such that
\beq{equ:4.191}
    \left( \abs{\mbp(t)}^2 + \abs{\mbq(t)}^2 + \alpha \norm{\mbr(t)}^2_* \right)\Bigg{|}_{t = 0}\leq \frac{\kappa^2}{2\alpha},
\eeq
we may deduce from \eqref{equ:4.190} that
\beq{equ:4.200}
\sup_{0\leq t\leq t_*} \norm{\mbr(t)}^2_* \leq \frac{\kappa^2}{2 \alpha^2}.
\eeq
On the other hand, if we assume that $t_*$ is finite, then due to the definition of $t_*$ and the continuity of $\norm{\mbr}_*$ as a function of $t$, we must have
\beq{equ:4.205}\nonumber
\norm{\mbr(t_*)}^2_* = \frac{\kappa^2}{\alpha^2}.
\eeq
However, this contradicts~\eqref{equ:4.200}. We therefore deduce that $t_* = \infty$.
In particular,~\eqref{equ:4.180} is valid for all $t \geq 0$, and we find
\beq{equ:4.210}\nonumber
\abs{\mbp(t)}^2 + \abs{\mbq(t)}^2 + \alpha \norm{\mbr(t)}^2_*  \to 0,\quad\text{as}\ t\goesto\infty,
\eeq
as desired, finishing the proof.
\begin{Rmk}\label{rmk:lam} 
\begin{enumerate}
    \item
    In the above proof, if $C(\lam_2)$ is the coefficient of $\alpha \norm{\mbr}^2_*$ in the rightmost term of~\eqref{equ:4.150}, the decay rate $\eps$ can be chosen as
    \begin{align}\notag
        \eps=\min\left\{N^2\frac{\kappa}{8},2C(\lam_2)\right\},
    \end{align}
    provided we set $\lambda_1$ sufficiently large.
    In particular, we may take $\lam_2 = \frac{\kappa \alpha}{4\norm{F_N}_*^2}$ as then $C(\lam_2) = \max_{\lam} C(\lam) = \de_N \frac{\kappa \alpha}{8\norm{F_N}_*^2}$.
    This demonstrates how the Lipschitz constants of the enslaving maps $F_N$ influence the decay rate of the error.
    \item The decay rate depends on the parameter $\alpha$ and therefore (through Equation~\eqref{equ:4.191}) on the initial condition. 
    Similarly, appropriate choices of $\lambda_1, \lambda_2$ (but not of $N$) will depend on $\alpha$ and thus the initial condition.
    This is a truly ``nonlinear'' effect and due to the rightmost term in Equation~\eqref{equ:4.125}, which has its origin in the trilinear interaction term $b(\mbq, \mbq, A^{-1}\mbr)$  between the high modes of the nudging error and the parameter mismatch.
\end{enumerate}  
\end{Rmk}
\subsection{Proof of Theorem \ref{thm:NS_nudg_wellpsd}}\label{subsec:proof_wellpos_nudg}
\newcommand{\Pnv}{\mbr}
\newcommand{\Qnv}{\mathbf{s}}
\newcommand{\Pnu}{P_N\bu}
\newcommand{\bV}{\mathbf{V}}
Let $\Pnv=P_N\bv$, $\Qnv=Q_N\bv$ and write
    \[
        B_1=P_NB(\Pnu+\Qnv,\Pnu+\Qnv),\quad B_2=Q_NB(\Pnu+\Qnv,\Pnv+\Qnv).
    \]
Then Equation~\eqref{eq:NS_v} can be written as 
    \begin{align}
        \bdy_t\Pnv-\nu \Delta\Pnu+B_1&=\mbl + \mu_1(\Pnu - \Pnv), \label{eq:NS_v_split1}\\
        \bdy_t\Qnv-\nu \Delta\Qnv+B_2&=F_N(\mbl), \label{eq:NS_v_split2}\\
        \bdy_t\mbl&=\mu_2(\Pnu-\Pnv).\label{eq:NS_v_split3}
    \end{align}
To show well--posedness of these equations we again use the Galerkin method.
Let $M$ be large enough so that $M > N$ and therefore $P_M H \supset P_N H$.
The solutions $\Pnv_M,\Qnv_M,\mbl_M$ to the order~$M$ Galerkin truncation of equations~\eqref{eq:NS_v_split1}--\eqref{eq:NS_v_split3} respectively exist locally in time since the nonlinearity is locally Lipschitz.
Global existence follows from the apriori estimate~\eqref{est:apriori:nudging4} below which we will prove presently. 
Since that estimate is uniform in~$M$, there exist subsequences as $M \to \infty$ which converge to a solution of equations~\eqref{eq:NS_v_split1}--\eqref{eq:NS_v_split3} satisfying the requirements of the theorem.
As before, we will only present a formal derivation of the apriori estimate~\eqref{est:apriori:nudging4} below and refer to~\citet{RobinsonBook,TemamBook1997} for further details see.
Multiplying equations~\eqref{eq:NS_v_split1}--\eqref{eq:NS_v_split3} by $\Pnv, \Qnv$ and~$\mbl$, respectively, and integrating gives the energy estimate
\begin{align}
       \frac{1}2\frac{d}{dt}|\Pnv|^2+\nu\|\Pnu\|^2+\mu_1|\Pnv|^2
       & =-b(\Pnu,\Pnu,\Pnv)-b(\Qnv,\Pnu,\Pnv)-b(\Pnu,\Qnv,\Pnv)-b(\Qnv,\Qnv,\Pnv)\notag \\
       & \quad +\lb\mbl,\Pnv\rb+\mu_1\lb \Pnu,\Pnv\rb\notag \\
        \frac{1}2\frac{d}{dt}|\Qnv|^2+\nu\|\Qnv\|^2&=-b(\Pnu, \Pnv, \Qnv)+b(\Qnv, \Qnv, \Pnv)+\lb F_N(\mbl),\Qnv\rb\notag\\
        \frac{1}2\frac{d}{dt}|\mbl|^2&=\mu_2\lb \Pnu,\mbl\rb - \mu_2\lb\Pnv,\mbl\rb.\notag
    \end{align}
We let $\bV=\mu_2^{1/2}\bv$ and add these equations.
Noting a delicate cancelling of two trilinear terms between the equations for $\Qnv$ and $\Pnv$ we obtain
    \begin{align}
        &\frac{1}2\frac{d}{dt}\left(|\bV|^2+|\mbl|^2\right)+\nu\|Q_N\bV\|^2+\mu_1|P_N\bV|^2\notag
        \\
        &=-\mu_2^{1/2}b(\Pnu,\Pnu,P_N\bV)-b(Q_N\bV,\Pnu,P_N\bV)+\mu_1\mu_2^{1/2}(\Pnu,\bV)\notag
        \\
        &\quad +\mu_2^{1/2}(F_N(\mbl),\bV)+\mu_2(\Pnu,\mbl).\notag
    \end{align}
We emphasize that the equation is at most linear in the energy $E=\frac{1}2\left(|\bV|^2+|\mbl|^2\right)$. 
We will therefore be able to demonstrate the following apriori bound which is global~in~time and locally uniform in time:
    \begin{align}\label{est:apriori:nudging4}
        \sup_{0\leq t\leq T}\left(|\bV(t)|^2+|\mbl(t)|^2\right)+\nu_N\int_0^T\|\bV(s)\|^2ds\leq C(\mu_1,\mu_2,N,T),
    \end{align}
where $\nu_N := \min\{\nu, \frac{\mu_1}{N}\}$ (we note that $\nu_N > 0$).
We now present the details: We have
    \begin{align}
        \mu_2^{1/2}|b(\Pnu,\Pnu,P_N\bV)|&\leq \mu_2^{1/2}|\Pnu\cdotp\nabla\Pnu||P_N\bV|\notag\\
        &\leq \mu_2^{1/2}|\Pnu|_{\infty}\|\Pnu\||P_N\bV|\notag
        \\
        &\leq C\frac{\mu_2}{\mu_1}(1+\log N)\|\Pnu\|^4+\frac{\mu_1}{4}|P_N\bV|^2,\notag
    \end{align}
and
    \begin{align}
        |b(Q_N\bV,\Pnu,P_N\bV)|&\leq c_L\|Q_N\bV\|^{1/2}|Q_N\bV|^{1/2}\|\Pnu\|\|P_N\bV\|^{1/2}|P_N\bV|^{1/2}\notag
        \\
        &\leq c_L\|Q_N \bV\|\|\Pnu\||P_N\bV|\notag
        \\
        & \leq \frac{\nu}{4}\|Q_N \bV\|^2+\frac{c_L^2}{\nu}\|\Pnu\|^2|P_N\bV|^2.\notag
    \end{align}
Also
    \begin{align}
        \mu_1\mu_2^{1/2}|(\Pnu,\bV)|
        & \leq \mu_1\mu_2|\Pnu|^2+\frac{\mu_1}{4}|P_N\bV|^2\notag\\
        \mu_2^{1/2}|(F_N(\mbl),\bV)|
        & \leq C_F^2 \frac{\mu_2}{\nu} |\mbl|^2 + \frac{\nu}{4} |Q_N \bV|^2\notag\\
        \mu_2(\Pnu,\mbl)&\leq\frac{\mu_1\mu_2}2|\Pnu|^2+\frac{\mu_2}{2\mu_1}|\mbl|^2.\notag
    \end{align}
Suppose $R>0$ is such that
    \begin{align}
        \sup_{t\geq0}\|\Pnu(t)\|\leq \nu R.\notag
    \end{align}
Then
    \begin{align}
        &\frac{d}{dt}\left(|\bV|^2+|\mbl|^2\right)
        +\nu\|Q_N \bV\|^2+\mu_1|P_N \bV|^2\notag \\
        &\leq  \left(c_L^2\nu R^2 + C_F^2\frac{\mu_2}{\nu} + \frac{\mu_2}{2 \mu_1}\right) 
        \left(|P_N \bV|^2+|\mbl|^2\right)
        + C\frac{\mu_2}{\mu_1}(1+\log N)\nu^4R^4+\frac{3}{2}\mu_1\mu_2\nu^2R^2.\notag
    \end{align}
Finally we estimate $\nu \| Q_N \bV \| + \mu_1 |P_N\bV| \geq \nu_N \| \bV \|$, and an application of Gronwall's inequality yields \eqref{est:apriori:nudging4}.
As mentioned, the derivation of the apriori bound relies on the cancelling of two trilinear terms between the equations for $\Qnv$ and $\Pnv$ and relies crucially on the specific form of the equations.
The main problem in analysing such systems seems is that two different stabilising mechanisms have to be exploited at the same time, namely the diffusion for the high modes of $\bv$, and the nudging term for the low modes of $\bv$ and the parameter $\mbl$.
The latter however is not symmetric, and this requires the introduction of a different scalar product (or energy--see also~\citet{Broecker02II} for a discussion of this problem in the context of synchronisation).
Yet as a result, the energy flow between low and high modes due to nonlinear interactions no longer cancels, and this problem had to be addressed by carefully chosing the precise form of the nonlinear term. 
\section{Conclusions}\label{sect:conclusion}
We will conclude the paper with a comparison between the Nudging and the Sieve Algorithm.
The algorithms are complementary in that they provide different results under different assumptions. 
The main conceptual difference is that while the Nudging Algorithm assumes the forcing to be constant in time, the Sieve Algorithm allows to reconstruct time dependent forcings (albeit up to a transient which has to be discarded). 
The price to pay is that multiple passes over the data are required while the Nudging Algorithm processes the observational data only once and in sequential fashion.
Also from a mathematical point of view, the Nudging Algorithm requires weaker assumptions than the Sieve Algorithm, but will also give weaker results.
We will now provide a more detailed comparison, which can also be found summarised in Table~\ref{tbl:comparison}.
\begin{table}
\begin{tabular}{p{0.15\textwidth}p{0.26\textwidth}p{0.27\textwidth}}
& 
\centerline{Transport--Diffusion}
& \centerline{2D--Navier-Stokes}\\
Sieve\newline (weak version)
& $d = 2, 3 $\newline
$ \Gamma \subset V^* $ of
order $(-1, -1)$ \newline
$ F_{N, *} \sim o(N^{1-\eps}) $ \newline
$ \phi - \psi \to 0 \text{~in ~} H $ \newline
$ f - g \to 0 \text{~in ~} V^*$ \newline
$\mathbf{v}$ \textit{can} be weak solution
&  \\
\rule{0cm}{4ex}Sieve\newline (strong version)
& $d = 2, 3 $\newline
$ \Gamma \subset H $ of
order $(0, 0)$ \newline
$ F_{N, 0} \sim o(N^{1-\epsilon})$ \newline
$ \phi - \psi \to 0 \text{~in ~} V $ \newline
$ f - g \to 0 \text{~in ~} H$ \newline
$\mathbf{v}$ cannot be weak solution
& $d = 2 $\newline
$ \Gamma \subset \bbV $ of
order $(0, 0)$~~(sic!)\newline
$ F_{N, 0} \sim o(N) $ \newline
$ \bu - \bv \to 0 \text{~in ~} \bbV $ \newline
$ \mbf - \mbg \to 0 \text{~in ~} \bbH$\newline
$\mathbf{u}$ must be a strong solution
\\[2ex]
Nudging
& $d = 2, 3 $\newline
$ \Gamma \subset V^*$ of
order $(-1, -1)$  \newline
$ F_{N, *} \sim o(\frac{N}{(\log N)^{1/2}}) $ \newline
$ \phi - \psi \to 0 \text{~in ~} H $ \newline
$ f - g \to 0 \text{~in ~} V^*$\newline
$\mathbf{v}$  can be {\em LH-weak} solution
& $d = 2$\newline
$ \Gamma \subset \bbH $ of
order $(-1, -1)$~~(sic!)  \newline
$ F_{N, *} \sim o(\frac{N}{(\log N)^{1/2}}) $ \newline
$ \bu - \bv \to 0 \text{~in ~} \bbH $ \newline
$ \mbf - \mbg \to 0 \text{~in ~} \bbV^{*}$\newline
$\mathbf{u}$ must be a strong solution
\end{tabular}\\[1ex]
\caption{\label{tbl:comparison} A summary of the main assumptions ensuring the convergence of the discussed algorithms. 
The ``little--$o$ notation'' like $F_{N, *} \sim o(\phi(N))$ means that $\frac{F_{N, *}}{\phi(N)} \to 0$ as $N \to \infty$.
The $\epsilon$ which appears in the conditions for the two Sieve Algorithms relates to the regularity of the advecting vector field (see text).
We stress that the listed assumptions may be incomplete and simplified, and the reader is referred to the main text for details.}
\end{table}
\subsection{Comparison for the Transport-Diffusion Equation}\label{sec:comparison_TD}
In the context of the Transport-Diffusion Equation, the Sieve Algorithm has two versions called the ``weak'' and the ``strong'' version.
The difference is in the stated convergence for the tracer, which is in $H$ for the weak version and $V$ for the strong version, and for the forcing, which is in $V^*$ for the weak version and $H$ for the strong version.
The assumptions of the weak and the strong version mostly differ regarding the required regularity of the advecting vector field $\bv$ and the set $\Gamma$ of quasi--finite rank.
While the weak version only requires $\Gamma \subset V^*$, the strong version needs forcings in $\Gamma \subset H$.
The enslaving maps $F$ have to be Lipschitz from $V^* \mapsto V^*$ resp.~$H \mapsto H$, with Lipschitz coefficient potentially growing but slower than $N^{1-\eps}$, where $\eps$ is related to integrability properties of the advecting vector field~$\bv$.
More specifically, both Sieve Algorithms require that $\abs{\bv}_{d/\eps}$ be bounded in time, for some $\eps \in [0,d/2]$, where $d$ is the dimension. 
(The strong Sieve Algorithms imposes additional assumptions on $\abs{\nabla \bv}$).
A particularly notable case is the endpoint $\eps=d/2$, which in dimension $d=2$ corresponds to velocity fields which have bounded kinetic energy uniformly for all time.
This therefore includes weak solutions for the Navier-Stokes equations as advecting velocity fields.
However, the maximum energy of the velocity field has to be sufficiently small in relation to the viscosity, or in other words the advecting velocity field has to have a sufficiently small P\'eclet~number, in order that the conditions imposed on $\mu, N$ can actually be satisfied.
This necessary condition on the smallness of the velocity field makes no explicit reference to the observational cut--off scale $N$, meaning that the class of admissible velocity fields cannot be increased by simply increasing $N$. 
This is in contrast to the Nudging Algorithm, which is able to accommodate \textit{Leray-Hopf} weak solutions of the Navier-Stokes equations with potentially very high P\'eclet~number.
This is because the condition guaranteeing convergence \eqref{equ:5.1030} allows to compensate for velocity fields with large P\'eclet~number by choosing a larger cutoff scale~$N$, if at the same time the enslaving maps $F_N$ can be chosen so as to have a Lipschitz constant decreasing to zero with $N$.
\subsection{Comparison for 2D~Navier Stokes Equations}\label{sec:comparison_NS}
Only one version for the Sieve Algorithm was analysed for the 2D--Navier~Stokes equation.
It would be desirable to have a version of the Sieve Algorithm applied to Navier--Stokes where the convergence of the velocity field $\bu - \bv \to 0 $ is in $\bbH$ rather than $\bbV$, while the convergence of the forcing term $\mbf - \mbg \to 0 $ is in $\bbV^*$ rather than $\bbH$, as is the case in the Nudging Algorithm applied to Navier--Stokes.
We were unable to prove such a theorem under essentially weaker conditions than in Theorem~\ref{thm:main:sieve:a}.
Comparing the Sieve Algorithm with the Nudging Algorithm we see that, as a rule of thumb, the Sieve Algorithm requires all function spaces to have ``one order'' of additional regularity.
Specifically, while the Nudging Algorithm requires $\Gamma \subset \bbH$ (but with order $(-1, -1)$, see \cref{rmk:NS_nudg}), the Sieve requires forcings in $\Gamma \subset \bbV$ (but with order $(0, 0)$, see \cref{rmk:NS_sieve}).
For both algorithms, the Lipschitz coefficients of the enslaving maps may potentially grow like $o(N)$ but with an additional logarithmic correction for the Nudging Algorithm.
Also for both algorithms, the underlying true solution $\bu$ must be a strong solution of the 2D--Navier~Stokes equation.
In terms of results, we obtain that the Sieve provides convergence in spaces that have, once again, ``one order'' of additional regularity when compared to the Nudging Algorithm. 
The convergence for the velocity field $\bv$ is in $\bbH$ for the Nudging Algorithm and in $\bbV$ for the Sieve Algorithm, while the convergence for the forcing $\mbg$ is in $\bbV^{*}$ for the Nudging Algorithm and in $\bbH$ for the Sieve Algorithm.
Finally we remark that we faced challenges in the proof of well--posedness of the system~\eqref{eq:NS_v} for the Nudging algorithm. 
Various versions of these equations are conceivable, differing in how the observed low modes $P_N \bu$ are re--inserted into the nonlinear term.
We were unable to prove the well--posedness result of Theorem~\ref{thm:NS_nudg_wellpsd} for other versions of the equations except those stated in~\eqref{eq:NS_v}, as the analysis relies on a delicate cancellation of trilinear terms between the energy estimates of low and high modes.
\subsection{Future work}
Two particular interesting directions to study further are: 1)~computational experiments probing the efficacy of the above algorithms and the sharpness of the theoretical limitations, and 2)~addressing the issue of higher-order convergence in the Nudging Algorithm for both the Transport-Diffusion and Navier-Stokes Equations. 
Regarding the first issue, it would be very interesting to investigate the stability of the proposed algorithms with respect to misspecification of the models, numerical approximations, and observations corrupted by noise.
Furthermore, the sharpness of the scaling between the Lipschitz constant of the enslaving map and the observational cut-off~$N$ could be investigated numerically as well.
Regarding the second issue, we in fact anticipate few problems with the transport--diffusion equations but we have not explored this further, not least for the sake of conciseness.
For the Navier--Stokes equations however, the problem will certainly be considerably more complicated. 
To begin with, one would need to prove a well--posedness result for the nudging system analogous to \cref{thm:NS_nudg_wellpsd} but for higher--order regularity. 
That theorem proved difficult already in the  present situation and required a strategic insertion of the observations into the nonlinearity.
We will explore this in future work.
\subsection*{Acknowledgments} The authors would like to thank the Isaac Newton Institute for Mathematical Sciences and their first satellite programme ``Geophysical fluid dynamics; from mathematical theory to operational prediction" in September 2022, where this collaboration was initiated.  V.R.M.\ was in part supported by the National Science Foundation through DMS 2213363 and DMS 2206491, as well as the Dolciani Halloran Foundation. G.C.\ and T.K.\ are members of INdAM-GNAMPA.
G.C., T.K., and V.R.M.\ acknowledge financial support and hospitality from the Centre for the Mathematics of Planet Earth at the University of Reading while this work was carried out. T.K.~gratefully acknowledge the partial support by 
 PRIN2022-PNRR--Project N.P20225SP98 ``Some mathematical approaches to climate change and its impacts.''
\appendix

\section{Background results}\label{sect:appendix}

In this section, we supply the relevant details for the well-posedness result stated in \cref{thm:td:exist}. 
% There, as it is stated in the $L^p$ space setting, we note that $L^2(0,\infty;V)=L^2([0,\infty);V)=L^2((0,\infty);V)$; the classes are in one to one correspondence. 
The solutions asserted in \cref{thm:td:exist} are strongly continuous in time, as $\phi \in L^2(0,T;V)$ and $\frac{d}{dt} \phi \in L^2(0,T; V^*)$ implies that there exists a version of $C([0,T]; H)$, see Theorem 7.2 in \cite{RobinsonBook}.  We note that this fails for 3D Navier-Stokes because $\frac{d}{dt} \phi \in L^{4/3}(0,T; V^*)\not \subset  L^2(0,T; V^*)$. 
Note that below, we proceed without assuming that $\bv$ is divergence-free.
 \begin{proof}[Proof of \cref{thm:td:exist}]
    Let us first do the $L^2$ estimate
     \[
     \frac{1}{2}\frac{d}{dt} |\phi |^2 + \kappa \| \phi\|^2 = (g,\phi) - (\bv\cdotp\nabla \phi , \phi).
     \]
     By \eqref{est:trilinear:Holder} with $p\geq2=q$ and $r=\frac{2p}{p-2}$, we have
    \[
        (\bv\cdotp \nabla \phi , \phi) \leq |\bv |_p \| \phi\| | \phi |_r 
    \]
    We then interpolate with $p >d$ and $r< \infty$, that is $p >2$ and $-\frac{d}{r} \leq  \frac{d}{p} - \frac{d}{2} = \frac{d}{p} (1 - \frac{d}{2} ) + (1 - \frac{d}{p}) ( -\frac{d}{2})$, to obtain
     \[
    | (\bv\cdotp\nabla \phi , \phi) | \leq |\bv|_p  \|\phi\|^{1+d/p} | \phi|^{1-d/p}.
     \]
     This shows that the transport term is well-defined when $\bv\in L^p(\mathbb{T}^d)$ and $p>d$.

     Proceeding further, we apply Young's inequality with $p'= \frac{2}{1+d/p}=\frac{2p}{p+d}$ and $q' = \frac{2p}{p-d}$ to obtain
     \[
     |(\bv\cdotp \nabla \phi , \phi)| \leq \frac{p-d}{2p}c^{-\frac{2p}{p-d}}|\bv|_p^{\frac{2p}{p-d}} |\phi|^2 + \frac{p+d}{2p} c^{{\frac{2p}{p+d}}}\| \phi\|^2.
     \]
     Note that we require $p>d$ for this. Now choose $c$ such that $\frac{p+d}{2p} c^{{\frac{2p}{p+d}}} =\frac{\kappa}{4}$  then $c = \left( \frac{ p \kappa}{2 (p+d)} \right)^{\frac{p+d}{2p}}$ to obtain that
       \[
     (\bv\cdotp\nabla \phi , \phi) \leq \frac{p-d}{2p}\left( \frac{ p \kappa}{2 (p+d)} \right)^{-\frac{p+d}{p-d}}|\bv|_p^{\frac{2p}{p-d}} |\phi|^2 + \frac{\kappa}{4} \| \phi\|^2.
     \]
     Thus, for $p> d$, upon returning to the energy balance we deduce
     \[
     \frac{1}{2}\frac{d}{dt} |\phi|^2 + \kappa \| \phi\|^2 \leq  \frac{1}{\kappa}\|g\|_*^2 + \frac{\kappa}{4} \|\phi\|^2 + c |\bv|_p^{\frac{2p}{p-d}}  | \phi |^{2} + \frac{\kappa}{4} \| \phi\|^2 
     \]
     Gr\"onwall's inequality gives a.s. on $[0,T]$
     \[
     |\phi(t)|^2 + \int_0^t \| \phi(s)\|^2 ds \leq  e^{c \int_0^T |\bv(r)|_p^{\frac{2p}{p-d}}dr } \left(  | \phi(0)|   + 2 \int_0^t  \|g(s)\|_*^2 ds \right).
     \]
     Hence, for existence of weak solutions one needs $\bv \in L^{\frac{2p}{p-d}}(0,T; L^{p}(\T^d)^d)$, and $g \in L^2(0,T;V^*)$. Under these conditions one has weak solutions belonging to $L^\infty(0,T;L^2(\T^d)) \cap L^2(0,T;V)$.

     Let us now consider the time derivative
     \[
     \frac{\dd\phi}{\dd t} = - \bv\cdotp\nabla \phi + \kappa \Delta \phi + g.
     \]
     Since $g, \Delta \phi \in L^2(0,T;V^*)$ it remains to consider the transport term. For $\varphi \in V$, we have
     \[
     |(\bv\cdotp\nabla \phi, \varphi) | \leq |\bv|_d \| \phi \|\  | \varphi|_{\frac{2d}{d-2}}  
     \]
     as $\phi \in L^\infty(0,T;L^2(\T^d)) \cap L^2(0,T; H^1(\T^d))$ we need that $\bv \in L^\infty(0,T;L^d(\T^d)^d)$.
     On the other hand, we may also argue as follows: 
     \[
     |(\bv\cdotp\nabla \phi, \varphi) | \leq |((\nabla\cdotp \bv)\phi , \varphi) | + |(\bv\phi,  \nabla \varphi) |
     \]
     Then for the second summand,  we get
     \[ 
     |(\bv \phi,  \nabla \varphi) | \leq  |v|_p |\phi|_{\frac{2p}{p-2}}  \| \varphi \|
     \leq |\bv|_p \|\phi\|^{d/p}\ | \phi|^{1-d/p}
     \leq  \frac{d}{p} \| \phi\| + \frac{p-d}{p} |\bv|_p^{\frac{p}{p-d}} \ | \phi| 
     \]
     Using $\phi \in L^\infty(0,T;L^2(\T^d)) \cap L^2(0,T;H^1(\T^d))$ it is sufficient that $ \bv\in L^{\frac{2p}{p-d}}(0,T;L^p(\T^d)^d)$ in order for $\bv \phi\in L^2(0,T;H^{-1}(\T^d)^d)$. Note that the first summand, is zero when $\nabla\cdotp \bv=0$. Hence, the above considerations would be sufficient in this case.  
     On the other hand, if $\nabla\cdotp\bv \neq 0$, then we need an extra condition
     \[
     |((\nabla \cdotp\bv) \phi, \varphi) | \leq | \nabla\cdotp\bv |_p | \phi |_q \| \varphi\| 
     \]
     for $- d \leq -  \frac{d}{p}- \frac{d}{q} + 1 - \frac{d}{2} $ with strict inequality in the case $d=2$, that is $\frac{d}{2} + 1 \geq \frac{d}{p}+ \frac{d}{q}$. Then we interpolate with $-\frac{d}{q} = \alpha - \frac{d}{2}$ and use Young's inequality for $p'$ and $q'$ to get 
     \[
     |((\nabla \cdotp\bv)\phi , \varphi) |\leq | \nabla\cdotp \bv |_p \ | \phi |^{1-\alpha} \| \phi\|^\alpha\  \| \varphi\| \leq \left(\frac{1}{p'} | \nabla\cdotp \bv |^{p'}_p +\frac{1}{q'} | \phi |^{q'(1-\alpha)} \| \phi\|^{q'\alpha} \right)  \| \varphi\| 
     \]
     As  $\phi \in L^\infty(0,T; L^2(\T^d)) \cap L^2(0,T; H^1(\T^d))$, we have that  $| \phi |^{q'(1-\alpha)}$ is bounded by a constant and hence we need that $q'\alpha =1$. Hence $q' = 1/\alpha$ and we get that
     \[
     |((\nabla \cdotp\bv) \phi, \varphi) |\leq \left((1-\alpha) | \nabla\cdotp \bv |^{\frac{1}{1-\alpha}}_p +\alpha | \phi |^{\frac{1-\alpha}{\alpha}} \| \phi\| \right)  \| \varphi\| 
     \]
     and $\alpha = \frac{d}{2} - \frac{d}{q}\geq \frac{d}{p} -1 $ and so
     \[
     \frac{1}{1-\alpha} \geq \frac{p}{2p-d},
     \]
     with strict inequality in the case $d=2$. Hence we require $\nabla\cdotp \bv \in L^{\frac{2p}{2p-d}}(0,T; L^p(\T^d))$ or in $ \bigcap_{p' > \frac{2p}{2p-d}}$ $L^{p'}(0,T ;L^p(\T^d))$ for $d=2$, in order that  $(\nabla \cdotp\bv) \phi\in L^2(0,T; V^*)$.

     The a priori bound gives a subsequence $\phi_n$ which solves
     \[
     \frac{d}{dt} \phi_n + P_n(\bv\cdotp\nabla  \phi_n)- \kappa \Delta \phi_n = P_n g
     \]
     and converges weakly to $\phi$ in $  L^\infty(0,T;L^2(\T^d))$   and in $L^2(0,T;H^1(\T^d))$ and $\frac{d}{dt}\phi_n$ converges weakly to $\frac{d}{dt}\phi$ in $L^2(0,T; V^*)$. We can choose a further subsequence that converges a.s. in time and in $H^1$ in space. Hence in particular, by \cite[Theorem 7.2]{RobinsonBook} we can choose a version of $\phi\in C([0,T]; L^2(\T^d))$.
     Then $\Delta \phi_n$ and $P_n g$ converge weakly to $\Delta \phi$ and $g$ in $L^2(0,T;V^*)$. Recall that the set of all functions which have only finite many modes unequal to zero is dense in $V$. Let $\varphi$ such a function, then  we have that 
     \[
     ( P_n \bv\cdotp \nabla \phi_n, \varphi) = -(\phi_n, \bv \cdotp\nabla \varphi)
     \]
     We have that $\nabla P_n \varphi \in {C}^\infty(\T^d)\cap L^\infty(\T^d)$ and hence $\bv\cdotp\nabla  \varphi \in L^{\frac{2p}{p+2}}(\T^d) \subset V^*$, because $ - \frac{d(p+2)}{2p} = -\frac{d}{2} - \frac{d}{p} < -1 - \frac{d}{2}$.  Hence we obtain by the weak convergence in $V^*$ that
     \[
      (   \phi_n, \bv\cdotp \nabla P_n \varphi)  =   (   \phi_n, \bv\cdotp\nabla  \varphi)  \rightarrow  (   \phi, \bv\cdotp \nabla  \varphi) .
      \]
     Hence
     \[
     \frac{d}{dt} \phi + \bv\cdotp\nabla  \phi- \kappa \Delta \phi =  g
     \]
     holds a.s. in $t$ as an equation in $V^*$.
     
     In order to show that the we have the correct initial condition we note that for any smooth cylinder function $\varphi$ with values in $V$ holds for a.e. $t$ that for large enough $n$ (such that $P_n \varphi = \varphi$)
     \[
     ( \phi_n(t) , \varphi(t) ) - (\phi_n(0), \varphi(0) ) + \int_0^t (\bv(s)\cdotp\nabla  \phi_n(s), \varphi(s))- \kappa ( \Delta \phi_n(s), \varphi(s))  ds = \int_0^t  ( g(s), \varphi(s)) ds
     \]
     All terms converges in $n$, the only thing we have to check that the transport has dominating integrable bound:
     \[
     \left| (\bv(s)\cdotp\nabla  \phi_n(s), \varphi(s))  \right| \leq \| \bv(s) \| \| \varphi(s) \nabla \phi(s) \|_* \leq c   \| \bv(s) \|  \|\nabla \phi(s) \|_*.
     \]
     using that $\varphi \in {C}^\infty_c([0,T] \times \T^d)$.
 
 Next, in order to prove uniqueness, let $\phi$ and $\tilde{\phi}$ be to solutions, then
 \[
 \frac{d}{dt}(\phi - \tilde{\phi}) = - (\bv\cdotp\nabla)(\phi - \tilde{\phi}) - \kappa (\phi - \tilde{\phi})+( g - \tilde{g}).
 \]
As  $ \frac{\dd}{\dd t}(\phi - \tilde{\phi}) \in L^2(0,T, V^*)$ and $ \phi - \tilde{\phi} \in L^2(0,T, V)$ we get 
  \[
 \frac{1}{2}\frac{d}{dt}|\phi - \tilde{\phi}|^2 + \kappa \| \phi - \tilde{\phi}\| = -( (\bv\cdotp\nabla)(\phi - \tilde{\phi}), \phi - \tilde{\phi}) +( g - \tilde{g}, \phi - \tilde{\phi}) \
 \]   
 If $\nabla \cdotp\bv= 0$, then the transport term vanishes. 
 Otherwise, in general, we estimating as before we obtain that
 \[
 | ( (\bv\cdotp\nabla)(\phi - \tilde{\phi}), \phi - \tilde{\phi}) | \leq  c |\bv|_p^{\frac{2p}{p-d}}  \ | \phi - \tilde{\phi} |^{2} + \frac{\kappa}{4} \| \phi - \tilde{\phi} \|^2
 \]
 By the Cauchy-Schwarz inequality, we get that
  \[
 \frac{1}{2}\frac{d}{dt}|\phi - \tilde{\phi}|^2 + \kappa \| \phi - \tilde{\phi}\| = c |\bv|_p^{\frac{2p}{p-d}}  \ | \phi - \tilde{\phi} |^{2} + \frac{\kappa}{4} \| \phi - \tilde{\phi} \|^2
 \]   
which implies that
 \[
\frac{d}{dt}|\phi - \tilde{\phi}|^2 + \left(\lambda_1 \kappa - c |\bv|_p^{\frac{2p}{p-d}}  \right)  | \phi - \tilde{\phi}|  \leq  \frac{2}{\kappa} \| g - \tilde{g}\|^2_*   
 \]   
 Recall that when $\nabla\cdotp \bv=0$, then $c=0$. Thus
 \[
 | \phi(t) - \tilde{\phi}(t) |^2 \leq e^{\left(  c\int_0^t |\bv(s)|_p^{\frac{2p}{p-d}} ds  - \lambda_1 \kappa t \right)} \left(  | \phi(0) - \tilde{\phi}(0) |^2 + \frac{2T}{\kappa} \esssup_{t \in [0,T]}\| g(s) - \tilde{g}(s)\|^2_*  \right)
  \]
 Note that since $\T^d=[0,2\pi]^d$, we have $\lam_1=1$.
 
 Strong solutions are given by multiplying with $-P_n \Delta \phi$ to obtain
 \[
 \frac{1}{2} \frac{d}{dt} \| \phi_n\| + \kappa | \Delta \phi_n | = - (\bv\cdotp\nabla \phi_n , \Delta \phi_n) + (g, \Delta \phi_n) 
 \]
 The transport term can be estimated as follows: by integration by parts we obtain
 \[
- (\bv\cdotp\nabla \phi_n , \Delta \phi_n)= \sum_{k} ( \partial_k \bv\cdotp \nabla \phi_n, \partial_k \phi_n ) + \frac{1}{2}( \bv\cdotp\nabla ( \partial_k \phi_n)^2)
 \]
 If $\nabla\cdotp \bv =0$, then upon interpolating and applying Young's inequality we get
  \[
\left| \sum_{k} ( \partial_k \bv \cdotp\nabla \phi_n, \partial_k \phi_n )  \right| \leq c \|\bv\| | \nabla \phi_n|_4^2\leq c \|\bv\| \| \phi_n\|_2\, \| \phi_n \| \leq \frac{\kappa}{4} \| \phi_n\|_2^2 + \frac{c}{\kappa}  \| \bv\|^2 \| \phi_n\|^2
 \]
 So all together we get 
  \[
  \frac{d}{dt} \| \phi_n\| + \kappa | \Delta \phi_n | =  \frac{c}{\kappa}  \| \bv\|^2 \| \phi_n\|^2  + \frac{2}{\kappa} |g|^2 
 \]
 So if $\bv \in L^2(0,T; \bbV)$, we get an a priori bound and weak convergence for a subsequence in $L^\infty(0,T; H)\cap L^2(0,T; V)$.
 
 For $\varphi \in H$ we get for $-\frac{d}{p} < 1 - \frac{d}{2}$, which is $p > \frac{2d}{d-2}$ and $\frac{1}{p}+ \frac{1}{q} = \frac{1}{2}$ that
 \[
 |  \bv\cdotp\nabla \phi|   = |\bv|_p \, |\nabla \phi |_q.
 \]
 Upon interpolating with $ - \frac{d}{q} = \alpha (1 - \frac{d}{2}) + (1- \alpha) (- \frac{d}{2}) = \alpha - \frac{d}{2}$ , that is $\alpha = \frac{d}{2} - \frac{d}{q} =  \frac{d}{p}$, we get
 \[
 |  \bv\cdotp\nabla \phi| \leq c \| \bv\|\, \|\phi\|^{\frac{p-d}{p}} \| \phi\|_2^{\frac{d}{p}} 
 \]
 Using that $\phi \in L^\infty(0,T; V)\cap L^2(0,T; V)$, we then apply Young's inequality with $p = \frac{p}{d}$ and $q = \frac{p}{p-d}$ to get that
 \[
 \leq c \| \bv\|^{\frac{p}{p-d}} \| \phi\| + c \| \phi\|_2.
 \]
 The right-hand side is square-integrable over $[0,T)$ for all $T>0$, provided that $\bv\in L^{\frac{2p}{p-d}}(0,T; \bbV)$ 
 Hence we get that 
 \[
 \frac{d}{dt} \phi_n - P_n \bv\cdotp\nabla \phi_n - \kappa \Delta \phi_n = P_n g
 \]
 is uniformly bounded in $L^2(0,T;H)$, we get also the weak convergence of $\frac{d}{dt} \phi_n$ to $\frac{d}{dt} \phi$. So we get in particular that $\phi \in C([0,T]; V)$ and  for the integrated equation we get that
 \[
 (\phi_n(t), \varphi(t)) -( \phi_n(0), \varphi(0)) + \kappa \int_0^t ( \Delta \phi_n(s), \varphi(s)) ds - \int_0^t ( \phi_n(s), \bv(s)\cdotp\nabla \varphi(s) ) ds = \int_0^t (g(s), \varphi(s))ds
 \]
 where all terms converge in the limit $n \rightarrow \infty$ and hence we have that the  strong solution is also a weak solution.
       \end{proof}
       
\begin{Rmk}
Note that if $p=d >2$ then 
      \[
     (\bv\cdotp\nabla \phi , \phi) \leq |\bv|_p \| \phi\|^2.
     \]
In the energy balance, this would then imply
     \[
     \frac{1}{2}\frac{d}{dt} |\phi |^2 + \kappa \| \phi\|^2 \leq \|g\|_* \|\phi\| + c |\bv|_p \| \phi\|^2.
     \]
Hence, in this case, existence can only be guaranteed for $c|\bv|_p$ small enough relative to $\kap$. Thus, $p > d$ is necessary to accommodate large velocity fields within this framework.
\end{Rmk}

% 

%\pagebreak 

\begin{multicols}{2}
\noindent Jochen Broecker$^1$\\
{\footnotesize
School of Mathematical, Physical, and Computational Sciences and
Centre for the Mathematics of Planet Earth\\
University of Reading \\
Web: \url{https://www.met.reading.ac.uk/~pt904209/}\\
Email: \url{j.broecker@reading.ac.uk}\\
}

\noindent Giulia Carigi$^2$\\
{\footnotesize
Department of Statistics\\
Indiana University Bloomington \\
Web: \url{https://sites.google.com/view/giuliacarigi/home}\\
Email: \url{gcarigi@iu.edu} \\
}

\columnbreak

\noindent Tobias Kuna$^3$\\
{\footnotesize
Dipartimento di Ingegneria e Scienze dell'Informazione e Matematica \\
Universit\`a degli Studi Dell'Aquila \\
Web: \url{https://smaq.mat-univaq.com/people/tobias-kuna}\\
Email: \url{tobias.kuna@univaq.it}\\
}

\noindent Vincent R. Martinez$^4$\\
{\footnotesize
Department of Mathematics \& Statistics\\
CUNY Hunter College \\
Department of Mathematics \\
CUNY Graduate Center \\
Web: \url{http://math.hunter.cuny.edu/vmartine/}\\
Email: \url{vrmartinez@hunter.cuny.edu}\\
}

\end{multicols}

\end{document}